\documentclass[12pt]{amsart}

 \usepackage{amsfonts,graphics,amsmath,amsthm,amsfonts,amscd,amssymb,amsmath,latexsym,multicol,
 mathrsfs}
\usepackage{epsfig,url}
\usepackage{flafter}
\usepackage{fancyhdr}
\usepackage{hyperref}
\hypersetup{colorlinks=true, linkcolor=black}

\addtolength{\oddsidemargin}{-0.3in}
\addtolength{\evensidemargin}{-0.3in}
\addtolength{\textwidth}{0.6in}

\addtolength{\topmargin}{-0.4in}
\addtolength{\textheight}{0.7in}


\makeatletter

\def\jobis#1{FF\fi
  \def\predicate{#1}%
  \edef\predicate{\expandafter\strip@prefix\meaning\predicate}%
  \edef\job{\jobname}%
  \ifx\job\predicate
}

\makeatother

\if\jobis{proposal}%
\else
\fi

 \usepackage[matrix, arrow]{xy}


\newtheorem{lemma}{Lemma}[section]
\newtheorem{thm}[lemma]{Theorem}
\newtheorem{lem}[lemma]{Lemma}
\newtheorem{cor}[lemma]{Corollary}
\newtheorem{prop}[lemma]{Proposition}

\newtheorem{conj}[lemma]{Conjecture}

\theoremstyle{definition}
\newtheorem{defn}[lemma]{Definition}
\newtheorem{rem}[lemma]{Remark}

\newtheorem{exa}[lemma]{Example}
\newtheorem{quest}[lemma]{Question}

\theoremstyle{remark}
\newtheorem*{remark*}{Remark}
\newtheorem*{note*}{Note}



\title{Valuative multiplier ideals}
\thanks{2010 MSC: 14F18 (Primary) 12J20 (Secondary)}
\author{Zhengyu Hu}
\date{\today}
\begin{document}
\maketitle

\begin{abstract}
The main goal of this paper is to construct an algebraic analogue of quasi-plurisubharmonic function (qpsh for short) from complex analysis and geometry. We define a notion of qpsh function on a valuation space associated to a quite general scheme. We then define the multiplier ideals of these functions and prove some basic results about them, such as subadditivity property, the approximation theorem. We also treat some applications in complex algebraic geometry.
\end{abstract}



\section{Introduction}

Given a line bundle $L$ on a smooth projective complex variety, a classical theorem of Kodaira asserts that if $L$ carries on a smooth metric with positive curvature, then $L$ is ample, or equivalently the global sections of a multiple of $L$ give an embedding to a projective space and hence induce such a metric on $L$. More generally, global sections of a multiple of $L$ induce a semi-positive singular metric. Conversely, given a semi-positive singular metric $h$, the local weight function $\varphi$, which is plurisubharmonic (psh for short), should be related to sections of multiples of $L$, or perhaps of a small perturbation of $L$. See [\ref{Lehmann}] for more details.

On the other hand, if we work locally near the origin of $\mathbb{C}^n$, then [\ref{BFJ1}, Section 5] shows that we can transform a psh germ $\varphi$ to a formal psh function $\widehat{\varphi}$ on quasi-monomial valuations centered at the origin. This valuative transform usually loses much information on the original psh function, however, it preserves the information on the singularity of $\varphi$. In particular, they give the same multiplier ideals which essentially means that they characterize the same singularity because of the Demailly's approximation. The idea of studying psh functions using valuations was systematically developed in [\ref{BFJ1}] and its predecessors [\ref{FJ1}], [\ref{FJ2}], [\ref{FJ3}]. The main purpose of this paper is to define a similar notion of qpsh functions on a separated, regular, connected and excellent schemes over $\mathbb{Q}$, and we then study these functions.

Although we don't discuss Berkovich spaces in this paper, our work should be related to the qpsh functions (or metrics on line bundles) on the Berkovich space associated to a smooth projective variety over a trivially valued field. See [\ref{BFJ2}] and [\ref{BFJ3}].

Let us briefly introduce some terminologies. Roughly speaking, we consider a function $\varphi$ on divisorial valuations on a scheme $X$ such that $\varphi(t \mathrm{ord}_E)=t \varphi(\mathrm{ord}_E)$ and $\sup_E \frac{|\varphi(\mathrm{ord}_E)|}{A(\mathrm{ord}_E)}<+\infty$ where $E$ runs over all prime divisors over $X$. We prove that such functions form a Banach space $\mathrm{BH}(X)$ if we equip it with the norm $\|\varphi\|=\sup_E \frac{|\varphi(\mathrm{ord}_E)|}{A(\mathrm{ord}_E)}$ (see Proposition \ref{norm_reg}). By convention we set $\log|\mathfrak{a}|(\mathrm{ord}_E)=-\mathrm{ord}_E(\mathfrak{a})$ for a nonzero coherent ideal sheaf $\mathfrak{a}$, and one can easily check that $\log|\mathfrak{a}|$ is a valuative function in $\mathrm{BH}(X)$. We define the set of qpsh function $\mathrm{QPSH}(X)$ to be the closed convex cone generated by functions of the form $\log|\mathfrak{a}|$. We then define the multiplier ideal $\mathcal{J}(\varphi)$ of a qpsh function $\varphi$ to be the largest ideal $\mathfrak{a}$ such that $\sup_E \frac{-\mathrm{ord}_E(\mathfrak{a})-\varphi(\mathrm{ord}_E)}{A(\mathrm{ord}_E)} <1$. This definition is reasonable because of Proposition \ref{prop:ideal_mult} and Corollary \ref{cor:alg-mult-ideal}.

Our first main result is that a qpsh function is a decreasing limit of a sequence of qpsh functions of the form $c_k\log|\mathfrak{b}_k|$. In complex analysis and geometry, such a regularization is crucial. See [\ref{Dem1}], [\ref{Dem2}]. Moreover, we prove that we can actually choose $\mathfrak{b}_k=\mathcal{J}(k\varphi)$ which satisfies the subadditivity property. See Proposition \ref{subadditivity}(1). Readers can compare this result with [\ref{DEL}].

\begin{thm}[cf. Theorem \ref{ch_qpsh}]
Let $\varphi$ be a bounded homogeneous function. Then $\varphi$ is qpsh if and only if $\varphi$ is the limit function, in the norm, of a decreasing sequence of qpsh functions of the form $c_k\log|\mathfrak{b}_k|$. Furthermore, we can choose $c_k=\frac{1}{k}$ and $\mathfrak{b}_k=\mathcal{J}(k\varphi)$ which form a subadditive sequence of ideals.
\end{thm}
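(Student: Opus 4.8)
The plan is the following. The ``if'' implication is immediate and does not even use monotonicity: each $c_k\log|\mathfrak b_k|$ is a nonnegative scalar multiple of a generator $\log|\mathfrak b_k|$ of the cone $\mathrm{QPSH}(X)$, hence lies in $\mathrm{QPSH}(X)$; since $\mathrm{QPSH}(X)$ is by definition closed in $\mathrm{BH}(X)$, any norm-limit of such functions is again qpsh.

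For the ``only if'' implication, assume $\varphi$ is qpsh. Then $\varphi\le 0$, and by Proposition~\ref{prop:ideal_mult} the ideal $\mathcal J(k\varphi)$ is a well-defined nonzero coherent ideal for every $k\ge 1$; set $\varphi_k:=\tfrac1k\log|\mathcal J(k\varphi)|$. One half of the estimate $\|\varphi_k-\varphi\|\to 0$ is essentially free: unravelling the definition of $\mathcal J(k\varphi)$ gives $-\mathrm{ord}_E(\mathcal J(k\varphi))-k\varphi(\mathrm{ord}_E)<A(\mathrm{ord}_E)$ for every prime divisor $E$ over $X$, i.e.\ $\varphi_k\le\varphi+\tfrac1k A$ pointwise. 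The substantial half is the reverse inequality $\varphi_k\ge\varphi-\varepsilon_k A$ with $\varepsilon_k\to 0$, that is, a lower bound on how large $\mathcal J(k\varphi)$ is forced to be. Here I would use qpsh-ness: choose a finite nonnegative combination $\psi=\sum_i c_i\log|\mathfrak a_i|$ approximating $\varphi$ in norm and put $\mathfrak a^{(k)}:=\prod_i\mathfrak a_i^{\lceil c_i k\rceil}$. Since $\lceil c_ik\rceil\ge c_ik$ and $\mathrm{ord}_E(\mathfrak a_i)\ge 0$, one checks $\log|\mathfrak a^{(k)}|\le k\psi\le k\varphi+k\|\varphi-\psi\|\,A$ pointwise, so $\mathfrak a^{(k)}\subseteq\mathcal J(k\varphi)$ as soon as $k\|\varphi-\psi\|<1$ (because then $\mathfrak a^{(k)}$ competes in the supremum defining $\mathcal J(k\varphi)$). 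Using $\lceil c_ik\rceil\le c_ik+1$ on the other side gives $\mathrm{ord}_E(\mathcal J(k\varphi))\le\mathrm{ord}_E(\mathfrak a^{(k)})\le -k\psi(\mathrm{ord}_E)+\big(\sum_i\|\log|\mathfrak a_i|\|\big)A(\mathrm{ord}_E)$, which rearranges to $\varphi_k\ge\varphi-\big(\|\varphi-\psi\|+\tfrac1k\sum_i\|\log|\mathfrak a_i|\|\big)A$. Combining the two bounds, $\|\varphi_k-\varphi\|\le\|\varphi-\psi\|+\tfrac1k\sum_i\|\log|\mathfrak a_i|\|+\tfrac1k$.

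The main obstacle is to make this bound go to $0$: for each $k$ one must be able to select the approximant $\psi=\psi_k$ so that \emph{simultaneously} $k\|\varphi-\psi_k\|<1$, $\|\varphi-\psi_k\|\to 0$, and the ``complexity'' term $\tfrac1k\sum_i\|\log|\mathfrak a_{i,k}|\|\to 0$. This is the delicate point, since it amounts to approximating a qpsh function to precision $\delta$ by an ideal-combination whose complexity is $o(1/\delta)$. I expect to handle it by building the approximating combinations recursively, so that passing from $\psi_j$ to $\psi_{j+1}$ adds only controlled mass, and/or by invoking the explicit description of the closed cone $\mathrm{QPSH}(X)$ and the properties of $\mathcal J$ established earlier, and then making a diagonal choice $\psi_k=\psi_{j(k)}$. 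Granting this, $\|\varphi_k-\varphi\|\to 0$.

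Subadditivity of the family $(\mathcal J(k\varphi))_k$, namely $\mathcal J((k+\ell)\varphi)\subseteq\mathcal J(k\varphi)\,\mathcal J(\ell\varphi)$, is Proposition~\ref{subadditivity}(1). Finally, to exhibit a genuinely decreasing sequence of the asserted shape, apply subadditivity along a multiplicative subsequence: $\mathcal J((n+1)!\,\varphi)=\mathcal J\big((n+1)\cdot n!\,\varphi\big)\subseteq\mathcal J(n!\,\varphi)^{\,n+1}$ gives $\mathrm{ord}_E(\mathcal J((n+1)!\,\varphi))\ge(n+1)\,\mathrm{ord}_E(\mathcal J(n!\,\varphi))$ for all $E$, hence $\tfrac1{(n+1)!}\log|\mathcal J((n+1)!\,\varphi)|\le\tfrac1{n!}\log|\mathcal J(n!\,\varphi)|$. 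Thus the subsequence $\varphi_{n!}=c_n\log|\mathfrak b_n|$ with $c_n=\tfrac1{n!}$, $\mathfrak b_n=\mathcal J(n!\,\varphi)$, is decreasing, the $\mathfrak b_n$ form a subadditive family, and $\varphi_{n!}\to\varphi$ by the previous paragraphs.
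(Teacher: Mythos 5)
The ``if'' direction, the upper bound $\varphi_k\le\varphi+\tfrac1k A$ (equivalently $\|\varphi_k-\varphi\|^+\le\tfrac1k$, straight from Definition~\ref{def_mult}), the subadditivity of $(\mathcal J(k\varphi))_k$ via Proposition~\ref{subadditivity}(1), and passing to a multiplicative subsequence (your $n!$; the paper uses $2^k$ in Remark~\ref{qpsh_ideal}) to make the sequence genuinely decreasing are all correct and in line with the paper. A minor mis-citation: the nonvanishing of $\mathcal J(k\varphi)$ for a general qpsh $\varphi$ is Lemma~\ref{qpsh_mult} together with Remark~\ref{qpsh_mult_r}, not Proposition~\ref{prop:ideal_mult}, which concerns ideal functions only.

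The genuine gap is exactly where you flag it. Your scheme requires approximants $\psi_k=\sum_i c_i\log|\mathfrak a_{i,k}|$ with $\|\varphi-\psi_k\|<1/k$ while simultaneously keeping the complexity $\sum_i\|\log|\mathfrak a_{i,k}|\|$ of order $o\bigl(1/\|\varphi-\psi_k\|\bigr)$; nothing in the definition of $\mathrm{QPSH}(X)$ as a norm closure provides such quantitative control, and no recursive or diagonal bookkeeping is supplied that would produce it. This is a missing idea, not a routine technicality. The paper closes it at a stroke by a different ingredient that you do not use: Remark~\ref{qpsh_mult_r} (via Lemma~\ref{qpsh_alg} and [\ref{JM}, Propositions 6.2 and 6.5]) gives the pointwise inequality $\varphi\le\log|\mathcal J(\varphi)|$ for every qpsh $\varphi$, hence $\varphi_k=\tfrac1k\log|\mathcal J(k\varphi)|\ge\varphi$ exactly, with no error term. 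Combined with your free half $\varphi_k\le\varphi+\tfrac1k A$ this immediately gives $\|\varphi_k-\varphi\|\le\tfrac1k$, and the auxiliary $\psi$'s disappear entirely. In the paper's language this is precisely the assertion that $\mathcal J(\varphi)_\bullet$ is a subadditive sequence of controlled growth, from which norm convergence is automatic. So the argument needs to establish (or cite) $\varphi\le\log|\mathcal J(\varphi)|$ first; once that is in place the delicate step you were trying to invent is not needed.
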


Given an ideal $\mathfrak{a}$ on a scheme $X$, the log canonical threshold $\mathrm{lct}(\mathfrak{a})$ is a fundamental invariant both in singularity theory and birational geometry (see [\ref{Laz}], [\ref{KM}], etc.). The log canonical threshold admits the following description in terms of valuations:
$$
\mathrm{lct}(\mathfrak{a})=\inf_E \frac{A(\mathrm{ord}_E)}{\mathrm{ord}_E(\mathfrak{a})}
$$
where $E$ runs over all prime divisors over $X$ and $A(\mathrm{ord}_E)=\mathrm{ord}_E (K_{Y/X})+1$. In fact in the above formulae one can take the infimum over all real valuations centered on $X$. It is well-known that if $Y$ is a log resolution of $\mathfrak{a}$, then there exists some prime divisor $E$ on $Y$ such that $\mathrm{ord}_E$ computes the log canonical threshold, that is, $\mathrm{lct}(\mathfrak{a})=\frac{A(\mathrm{ord}_E)}{\mathrm{ord}_E(\mathfrak{a})}$. Given a qpsh function $\varphi$, we can define the log canonical threshold $\mathrm{lct}(\varphi)$ as the limit of $\frac{1}{c_k}\mathrm{lct}(\mathfrak{a}_k)$ where $c_k\log|\mathfrak{a}_k|$ converges to $\varphi$ strongly in the norm. We show that
$$
\mathrm{lct}(\varphi)=\inf_E \frac{A(\mathrm{ord}_E)}{-\varphi(\mathrm{ord}_E)}.
$$
Unfortunately, there might be no divisorial valuation which computes the log canonical threshold in general. However, we can prove that there exists a real valuation which computes the log canonical threshold. This has been heavily studied in [\ref{JM}], [\ref{JM2}] and other references. It is conjectured by [\ref{JM}, Conjecture B] that a valuation which computes the norm is quasi-monomial (see Conjecture \ref{conj}). Equivalently we consider the reciprocal of the log canonical threshold which is exactly the norm of $\varphi$ by definition. More generally, for a nonzero ideal $\mathfrak{q}$ we consider $\|\varphi\|_{\mathfrak{q}}:=\sup_E \frac{-\varphi(\mathrm{ord}_E)}{A(\mathrm{ord}_E)+\mathrm{ord}_E(\mathfrak{q})}$, and we prove that there exists a real valuation which compute this norm. The proof in this paper mainly follows the strategy of [\ref{JM}]. A similar result appears in [\ref{JM2}].

\begin{thm}[=Theorem \ref{c_norms}]
Let $\varphi\in \mathrm{QPSH}(X)$ be a qpsh function and let $\mathfrak{q}$ be a nonzero ideal on $X$. Then there exists a nontrivial tempered valuation $v$ which computes $\|\varphi\|_{\mathfrak{q}}$.
\end{thm}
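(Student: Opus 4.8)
The plan is to follow the strategy of [\ref{JM}]: exhibit a valuation computing $\|\varphi\|_{\mathfrak{q}}$ as a suitable limit of divisorial valuations, using the compactness of a space of normalized valuations together with semicontinuity of $\varphi$. First I would set up the relevant space of valuations. Fix a log resolution-type setup and work on the space $\mathcal{V}$ of real valuations $v$ centered on $X$ with $A(v)+v(\mathfrak{q})$ controlled; after normalizing by $A(v)+v(\mathfrak{q})=1$ (discarding the trivial valuation), one obtains a space that is compact in the topology of pointwise convergence on a countable generating set of ideals, essentially by Tychonoff together with the valuative inequalities that cut out $\mathcal{V}$ as a closed subset of a product of compact intervals. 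The key point is that on this normalized space the functional $v\mapsto -\varphi(v)$ is well-defined and upper semicontinuous: since $\varphi\in\mathrm{QPSH}(X)$ is a strong limit of functions $c_k\log|\mathfrak{b}_k|$ (Theorem \ref{ch_qpsh}), and each $\log|\mathfrak{b}_k|(v)=-v(\mathfrak{b}_k)$ is continuous (indeed, a supremum over generators, hence upper semicontinuous, and lower semicontinuous by the valuation axioms on a fixed ideal), the uniform convergence in the norm $\|\cdot\|$ transfers semicontinuity to $-\varphi$ on the normalized slice.

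Next I would extract the maximizer. By definition $\|\varphi\|_{\mathfrak{q}}=\sup_v \frac{-\varphi(v)}{A(v)+v(\mathfrak{q})}$, and by homogeneity of $\varphi$ this supremum equals the supremum of $-\varphi(v)$ over the normalized slice $\{A(v)+v(\mathfrak{q})=1\}$. An upper semicontinuous function on a (quasi-)compact space attains its maximum, so there is a valuation $v_0$ in the slice with $-\varphi(v_0)=\|\varphi\|_{\mathfrak{q}}$, i.e.\ $v_0$ computes the norm. One must check $v_0$ is nontrivial and tempered: nontriviality is forced because $\|\varphi\|_{\mathfrak{q}}>0$ unless $\varphi\equiv 0$ (and the $\varphi\equiv 0$ case is handled separately, where any divisorial valuation works), while temperedness — i.e.\ $A(v_0)<+\infty$ — follows from membership in the slice since $A(v_0)\le A(v_0)+v_0(\mathfrak{q})=1$.

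The main obstacle is verifying the topological claims carefully, namely that the normalized space of tempered valuations is genuinely compact and that $A(\cdot)$ and $v(\mathfrak{q})$ behave lower/upper semicontinuously so that the slice is closed. In the excellent-scheme generality of this paper one cannot quote the affine-space results of [\ref{JM}] verbatim; instead I would reduce to a local question on an affine chart, realize valuations via their behavior on a countable family of ideals (for instance, using Noetherianity to pick generators), and invoke the characterization of $A(v)$ as a supremum/limit of $A(\mathrm{ord}_{E})$ over divisors $E$ extracted from $v$ to get the needed semicontinuity from below. A secondary subtlety is ensuring $-\varphi$ is upper semicontinuous rather than merely the pointwise limit of continuous functions: this is where the \emph{uniform} (norm) convergence from Theorem \ref{ch_qpsh} is essential, since it gives $-\varphi$ as a uniform limit of the continuous functions $-c_k v(\mathfrak{b}_k)$ on the compact slice (the denominator $A(v)+v(\mathfrak{q})$ being bounded below by a positive constant there), hence continuous. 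Once these points are in place, the extraction of $v_0$ is immediate. I would also remark that this simultaneously recovers the $\mathfrak{q}=\mathcal{O}_X$ case asserted in the introduction, giving a real valuation computing $\mathrm{lct}(\varphi)^{-1}$.
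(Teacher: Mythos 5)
Your overall strategy (compactness of a normalized valuation space plus upper semicontinuity of $v\mapsto \frac{-\varphi(v)}{A(v)+v(\mathfrak{q})}$) is indeed the strategy of the paper, but the compactness claim on which everything rests is false as stated, and repairing it is the actual content of the proof. The slice $\{v\in\mathrm{V}_X^\ast \mid A(v)+v(\mathfrak{q})=1\}$ is neither closed nor compact. It is not closed because $A$ is only lower semicontinuous, so the condition $A(v)+v(\mathfrak{q})\geq 1$ is not preserved under limits; and it is not compact because it contains, for instance, the rescaled divisorial valuations $\mathrm{ord}_p/(A(\mathrm{ord}_p)+\mathrm{ord}_p(\mathfrak{q}))$ attached to all closed points $p$ of $X$, whose only limit point in $\mathrm{Val}_X$ is the trivial valuation, which lies outside the slice. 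Tychonoff only gives compactness of a closure inside a product of intervals; the limit of a maximizing net can therefore be the trivial valuation, or a nontrivial valuation at which $A(v)+v(\mathfrak{q})$ has dropped strictly below $1$ (again by lower semicontinuity of $A$), and at such a point the value of the functional no longer witnesses the supremum. This degeneration is exactly what your argument does not exclude.

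The paper's proof consists precisely of the reductions needed to rule this out, and none of them is cosmetic. First one localizes: by Lemma \ref{norm_2}(3) the norm is already computed among valuations centered at a generic point $\xi$ of $V(\mathcal{J}(\lambda\varphi):\mathfrak{q})$, which lets one work in $\mathrm{Spec}\,\mathcal{O}_{X,\xi}$ and normalize by $v(\mathfrak{m})=1$ rather than by $A(v)+v(\mathfrak{q})=1$; the compact sets actually available are the sets $\mathbb{V}_{X,\xi,M}=\{v(\mathfrak{m})=1,\ A(v)\leq M\}$ of Definition \ref{defn:cpt-valsp}, not your slice. Second, even after localizing, near-maximizing valuations could still have $A(v)/v(\mathfrak{m})\to\infty$ and escape every $\mathbb{V}_{X,\xi,M}$; Lemma \ref{cn_1} forecloses this by replacing $\varphi$ with $\max\{\varphi,p\log|\mathfrak{m}|\}$ without changing the norm or the set of computing valuations, after which Lemma \ref{cn_2} yields the a priori bound $A(v)\leq M$ on the relevant valuations. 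Only then does the usc-on-compact argument of your second paragraph apply, and for a general qpsh $\varphi$ one still needs the limit-of-maximizers argument of Lemma \ref{cn_4}. Your remark that one cannot quote [\ref{JM}] verbatim and should ``reduce to a local question on an affine chart'' points in the right direction but is not the right reduction: restricting to an affine chart does not confine the centers of the valuations, whereas the proof needs them pinned to a single point $\xi$ chosen by means of the multiplier ideal $\mathcal{J}(\lambda\varphi)$.
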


If $X$ is a complex projective variety, then we can provide $\mathrm{QPSH}(X)$ with more structures. Namely, given a $\mathbb{Q}$-line bundle $L$ on $X$, we say that the function $\lambda \log|\mathfrak{a}|$ is $L$-psh if $\lambda$ is a nonnegative rational number and $L \otimes \mathfrak{a}^\lambda$ is semi-ample. We can then define $\mathrm{PSH}(L) \subseteq \mathrm{QPSH}(X)$ as the closure of the set of such functions. We also define the set of pseudo $L$-psh functions as $\mathrm{PSH}_\sigma (L) := \bigcap_{\epsilon>0} \mathrm{PSH}(L +\epsilon A)$, where $A$ is an ample line bundle. See Section \ref{sec:D-psh} for more details.

Under the above setting, we show that there exists the maximal $L$-psh function $\varphi$ which can be written explicitly as $\varphi(v)=-v(\|L\|)$, and there exists the maximal pseudo $L$-psh function $\phi$ which can be written explicitly as $\phi(v)=-\sigma_v(\|L\|)$ (see Proposition \ref{prop:max} and \ref{prop:max'}). As an immediate corollary we generalize [\ref{Lehmann}, Theorem 6.14] as follows. See [\ref{Lehmann}] for the definition of the perturbed ideal and the diminished ideal.

\begin{thm}[=Theorem \ref{c_max_norm}]
Let $D$ be a pseudo-effective divisor. Assume that $\phi_{\max}$ is the maximal pseudo $D$-psh function. Then, the perturbed ideal $\mathcal{J}_{\sigma,-}(D)=\mathcal{J}_{-}(\phi_{\max})$, and the diminished ideal $\mathcal{J}_\sigma(D)=\mathcal{J}(\phi_{\max})$. In particular, we can write $\mathcal{J}_\sigma(D)$ explicitly as $\Gamma(U,\mathcal{J}_{\sigma}(L))=\{f\in \Gamma(U,\mathcal{O}_{X})|v(f)+A(v)-\sigma_{v}(\|L\|)>0$ for all $v\in \mathrm{V}_U^\ast\}$. Further, a nonzero ideal $\mathfrak{q} \subseteq \mathcal{J}_{\sigma}(\|L\|)$ if and only if $v(\mathfrak{q})+A(v)-\sigma_{v}(\|L\|)>0$ for all $v\in \mathrm{V}_{X}^{\ast}$.
\end{thm}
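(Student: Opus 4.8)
The plan is to route everything through valuative descriptions, using three facts already at our disposal: Proposition~\ref{prop:max'}, which gives the maximal pseudo $D$-psh function as $\phi_{\max}(v)=-\sigma_v(\|L\|)$; Corollary~\ref{cor:alg-mult-ideal}, which identifies the valuative multiplier ideal $\mathcal{J}(\varphi_\epsilon)$ of the maximal $(L+\epsilon A)$-psh function $\varphi_\epsilon(v)=-v(\|L+\epsilon A\|)$ with the asymptotic multiplier ideal $\mathcal{J}(\|L+\epsilon A\|)$ (valid since $L+\epsilon A$ is big for $\epsilon>0$); and Theorem~\ref{c_norms}, producing a nontrivial tempered valuation that computes the weighted norm $\|\cdot\|_{\mathfrak q}$. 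Because $\mathrm{PSH}_\sigma(L)=\bigcap_{\epsilon>0}\mathrm{PSH}(L+\epsilon A)$ and $\sigma_v(\|L\|)=\sup_{\epsilon>0}v(\|L+\epsilon A\|)$, the function $\phi_{\max}$ is the decreasing limit of the $\varphi_\epsilon$ as $\epsilon\searrow0$, and by the definitions of the diminished and perturbed ideals these are, respectively, $\bigcap_{\epsilon>0}\mathcal{J}(\|L+\epsilon A\|)$ and its perturbed analogue. Thus the theorem amounts to the two ideal identities $\mathcal{J}(\phi_{\max})=\bigcap_{\epsilon>0}\mathcal{J}(\varphi_\epsilon)$ and $\mathcal{J}_-(\phi_{\max})=\bigcap_{\epsilon>0}\mathcal{J}_-(\varphi_\epsilon)$, plus the explicit valuative formula; I describe the argument for $\mathcal{J}$ and $\mathcal{J}_\sigma$, the perturbed case being identical after loosening the relevant strict inequalities.

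One inclusion is cheap: if $f\in\Gamma(U,\mathcal{J}(\phi_{\max}))$ then, unwinding the definition of $\mathcal{J}$, we have $\sup_{v\in\mathrm{V}_U^\ast}\frac{\sigma_v(\|L\|)-v(f)}{A(v)}<1$, and since $v(\|L+\epsilon A\|)\le\sigma_v(\|L\|)$ for all $v$, the same uniform strict bound holds with $\sigma_v(\|L\|)$ replaced by $v(\|L+\epsilon A\|)$, so $f\in\Gamma(U,\mathcal{J}(\varphi_\epsilon))$ for every $\epsilon$. For the reverse inclusion, take $f\in\Gamma\big(U,\bigcap_{\epsilon>0}\mathcal{J}(\varphi_\epsilon)\big)$; then for each $\epsilon$ there is a constant $c_\epsilon<1$ with $v(f)+A(v)\ge v(\|L+\epsilon A\|)+(1-c_\epsilon)A(v)$ for all $v\in\mathrm{V}_U^\ast$, and taking the supremum over $\epsilon$ gives only the non-strict estimate $v(f)+A(v)\ge\sigma_v(\|L\|)$. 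To promote this to the strict uniform bound $\|\phi_{\max}\|_{(f)}<1$ that defines membership in $\mathcal{J}(\phi_{\max})$, I apply Theorem~\ref{c_norms} to $\phi_{\max}$ and $\mathfrak q=(f)$: there is a tempered valuation $v_0\in\mathrm{V}_U^\ast$ attaining $\|\phi_{\max}\|_{(f)}$, and along $v_0$ the invariant $\sigma_{v_0}(\|L\|)$ is a genuine limit of the $v_0(\|L+\epsilon A\|)$ whose rate is controlled by subadditivity of asymptotic order of vanishing, $v_0(\|L+\epsilon A\|)\le v_0(\|L+\epsilon'A\|)+(\epsilon-\epsilon')v_0(\|A\|)$, together with the boundedness in $\mathrm{BH}(X)$ of functions of the form $\log|\cdot|$; combining these with the per-$\epsilon$ strict inequalities at $v_0$ shows that the $c_\epsilon$ do not tend to $1$ along $v_0$, whence $v_0(f)+A(v_0)>\sigma_{v_0}(\|L\|)$ and $\|\phi_{\max}\|_{(f)}<1$. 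This last step — the interchange of the limit in $\epsilon$ with the supremum over valuations, i.e. ruling out that $c_\epsilon\to1$ along the computing valuation — is the one I expect to be the main obstacle, and it is exactly where pseudo-effectivity of $D$ (so that the $\varphi_\epsilon$ are governed by honest big-divisor geometry) and the \emph{temperedness} of the computing valuation get used in an essential way.

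Granting the two identities, the explicit formula follows at once: by definition $f\in\Gamma(U,\mathcal{J}_\sigma(L))$ iff $\|\phi_{\max}\|_{(f)}=\sup_{v\in\mathrm{V}_U^\ast}\frac{\sigma_v(\|L\|)}{A(v)+v(f)}<1$, and since Theorem~\ref{c_norms} makes this supremum attained, that is equivalent to the pointwise conditions $\sigma_v(\|L\|)<A(v)+v(f)$, i.e. $v(f)+A(v)-\sigma_v(\|L\|)>0$, for all $v\in\mathrm{V}_U^\ast$. For the final assertion, a nonzero coherent ideal $\mathfrak q$ satisfies $\mathfrak q\subseteq\mathcal{J}_\sigma(\|L\|)$ iff $\Gamma(U,\mathfrak q)\subseteq\Gamma(U,\mathcal{J}_\sigma(\|L\|))$ for $U$ in an affine cover; since $\mathfrak q$ is finitely generated, $v(\mathfrak q)=\min\{v(f):f\in\Gamma(U,\mathfrak q)\}$ is attained for each $v\in\mathrm{V}_U^\ast$, and every $v\in\mathrm{V}_X^\ast$ restricts to such a valuation on the chart containing its center, so the membership criterion just proved translates verbatim into $v(\mathfrak q)+A(v)-\sigma_v(\|L\|)>0$ for all $v\in\mathrm{V}_X^\ast$. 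Running the identical argument with $\mathcal{J}_-$, $\mathcal{J}_{\sigma,-}$, and the correspondingly non-strict inequalities throughout gives $\mathcal{J}_{\sigma,-}(D)=\mathcal{J}_-(\phi_{\max})$, completing the proof.
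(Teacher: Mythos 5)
There is a genuine gap, and it originates in a definitional mix-up at the reduction step. By definition, the \emph{perturbed} ideal $\mathcal{J}_{\sigma,-}(D)$ is the smallest (hence stable) member of the descending chain $\{\mathcal{J}(\|L+\tfrac1m A\|)\}$, i.e.\ it \emph{is} $\bigcap_{\epsilon>0}\mathcal{J}(\|L+\epsilon A\|)=\bigcap_{\epsilon>0}\mathcal{J}(\varphi_\epsilon)$; the \emph{diminished} ideal requires the further passage to $(1+\epsilon)D$ and satisfies $\mathcal{J}_\sigma(D)\subseteq\mathcal{J}_{\sigma,-}(D)$, with strict inclusion in general. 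You have the pairing backwards: your first ``ideal identity'' $\mathcal{J}(\phi_{\max})=\bigcap_{\epsilon>0}\mathcal{J}(\varphi_\epsilon)$ would assert that $\mathcal{J}(\phi_{\max})$ equals the perturbed ideal, whereas the theorem asserts it equals the diminished ideal; combined with the theorem it would force $\mathcal{J}_\sigma(D)=\mathcal{J}_{\sigma,-}(D)$, which fails exactly when $1$ is a jumping number of $\phi_{\max}$ --- the situation these two ideals were introduced to distinguish. Consistently with this, the step you yourself flag as the main obstacle (ruling out $c_\epsilon\to1$ along the computing valuation) is not merely hard but impossible: for $f$ in $\mathcal{J}_{\sigma,-}(D)\setminus\mathcal{J}_\sigma(D)$ the per-$\epsilon$ strict bounds degenerate in the limit and no appeal to subadditivity of asymptotic orders or to temperedness of $v_0$ can produce a uniform bound away from $1$. (Your ``cheap'' inclusion only gives $\mathcal{J}(\phi_{\max})\subseteq\mathcal{J}_{\sigma,-}(D)$, which is weaker than either inclusion the theorem needs.)

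The correct mechanism, which is the one the paper uses, is precisely the $(1+\epsilon)$ rescaling: by definition $\mathcal{J}_\sigma(D)=\mathcal{J}((1+\epsilon)\varphi_{\max}^{\delta})$ for suitable small $\epsilon$ and $\delta=\delta(\epsilon)$, the proof of Proposition \ref{prop:max'} shows that $\varphi_{\max}^{\delta}\to\phi_{\max}$ \emph{strongly in the norm} (via Nadel vanishing and Castelnuovo--Mumford regularity, not just pointwise), and Lemma \ref{qpsh_mult} then gives $\mathcal{J}(\phi_{\max})=\mathcal{J}((1+\epsilon)\varphi_{\max}^{\delta})$; the factor $1+\epsilon$ is exactly what absorbs the limit-interchange problem you encounter. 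The perturbed-ideal equality $\mathcal{J}_{\sigma,-}(D)=\mathcal{J}_-(\phi_{\max})$ is the statement where non-strict inequalities make the interchange harmless (the paper cites Lehmann's Proposition 4.7 for it), and your $\mathcal{J}_-$ discussion, suitably rewritten as $\mathcal{J}_-(\phi_{\max})=\bigcap_{\epsilon>0}\mathcal{J}(\varphi_\epsilon)$ rather than $\bigcap_{\epsilon>0}\mathcal{J}_-(\varphi_\epsilon)$, could plausibly be completed. Your final paragraph deriving the explicit valuative formula from Theorem \ref{c_norms} is fine and agrees with Corollary \ref{cor:ex_mult}, but it rests on the unproven (and, as stated, incorrectly paired) identities above.
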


In the last subsection of this paper, we prove the finite generation of a divisorial module as another application. The proposition below can also be obtained using minimal model theory (see Remark \ref{rem:mmp}). Note that our proof here avoids using 'the length of extremal rays' (see [\ref{BH-I}]).

\begin{prop}[=Proposition \ref{prop:f_g}]
Let $(X,B)$ be a log canonical pair. Assume that $K_X+B$ is $\mathbb{Q}$-Cartier and abundant, and that $R(K_X+B)$ is finitely generated. Then, for any reflexive sheaf $\mathscr{F}$, $M_{\mathscr{F}}^p(K_X+B)$ is a finitely generated $R(K_X+B)$-module.
\end{prop}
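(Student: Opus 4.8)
The plan is to reduce the finite generation of the divisorial module $M_{\mathscr{F}}^p(K_X+B)$ to the finite generation of the section ring $R(K_X+B)$, which is assumed, by exploiting the abundance hypothesis together with the explicit valuative description of diminished/perturbed ideals supplied by Theorem~\ref{c_max_norm}. Concretely, I would first recall the definition of $M_{\mathscr{F}}^p(D)$: roughly, its degree-$m$ piece is the space of sections of $\mathcal{O}_X(mD)$ twisted by the reflexive sheaf $\mathscr{F}$ (or by a bounded power of it), i.e. sections vanishing along the appropriate divisorial part dictated by $\mathscr{F}$. The point of the proof is that, because $K_X+B$ is abundant, its Iitaka fibration captures all the asymptotic behaviour, and the asymptotic vanishing orders $\sigma_v(\|K_X+B\|)$ are governed by a \emph{single} model — equivalently, by Theorem~\ref{c_max_norm}, the diminished ideal $\mathcal{J}_\sigma(K_X+B)$ (and more generally $\mathcal{J}_\sigma(m(K_X+B))$) is a genuine coherent ideal computed by finitely much data.

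The key steps, in order, would be: (i) pass to a log resolution / sufficiently high model $Y\to X$ on which the mobile part of $m(K_X+B)$ becomes free for all sufficiently divisible $m$ — here abundance plus finite generation of $R(K_X+B)$ is what makes such a uniform $m$ exist, via the Iitaka fibration and the fact that the stable base locus stabilizes; (ii) express the divisorial module $M_{\mathscr{F}}^p(K_X+B)$ in degree $m$ as global sections of a line bundle on $Y$ of the form (pullback of the mobile part of $m(K_X+B)$) plus (a fixed, $m$-independent correction divisor coming from $\mathscr{F}$, $p$, and the discrepancies), using the valuative formula $\Gamma(U,\mathcal{J}_\sigma(L))=\{f : v(f)+A(v)-\sigma_v(\|L\|)>0\ \forall v\in \mathrm{V}_U^\ast\}$ from Theorem~\ref{c_max_norm} to control exactly which valuative conditions the correction divisor must encode; (iii) observe that the mobile parts, being free and pulled back, form a finitely generated graded ring isomorphic (up to truncation) to a Veronese of $R(K_X+B)$, and that adding a fixed divisor to every graded piece preserves finite generation as a module over that ring; (iv) descend back to $X$ using that $\mathscr{F}$ is reflexive, so sections agree in codimension one and the module structure is unaffected.

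The main obstacle I expect is step~(i)–(ii): making rigorous the claim that the $\mathscr{F}$- and $p$-dependent correction is genuinely \emph{independent of the degree} $m$, i.e. that the diminished/perturbed ideals $\mathcal{J}_\sigma(m(K_X+B))$ do not grow in an uncontrolled way but rather satisfy $\mathcal{J}_\sigma(m(K_X+B)) \supseteq \mathcal{J}_\sigma(K_X+B)^{\lceil m\rceil}$ or an analogous comparison that stabilizes after one truncation. This is precisely where abundance is indispensable: without it the asymptotic multiplicities need not be attained on a single model, whereas abundance forces $\sigma_v(\|K_X+B\|)$ to be piecewise linear and computed on the Iitaka model, so the correction divisor is the pullback of a fixed divisor and the graded module becomes finitely generated over the (finitely generated) base ring. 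I would set up step~(ii) carefully via Theorem~\ref{c_max_norm}, since that theorem gives exactly the explicit generators-and-relations-free valuative description needed to identify the correction divisor, and then finite generation is a formal consequence of the module being a finitely generated submodule of a finitely generated module over a Noetherian ring (a Veronese of $R(K_X+B)$).

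I should also note, as the statement's parenthetical remark indicates, that an alternative route via the minimal model program (running an MMP on $K_X+B$, reaching a good minimal model by abundance, and reading off finite generation there) would work; the interest of the proof given here is that it is self-contained within the valuative formalism and, per Remark~\ref{rem:mmp}, avoids invoking lengths of extremal rays as in~[\ref{BH-I}].
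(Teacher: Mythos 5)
Your route is genuinely different from the paper's, and it has a gap at its central step. The paper does not argue via the Iitaka/ample model or via Theorem \ref{c_max_norm} at all: it reduces (by [\ref{Birkar}, Theorem 1.1]) to $\mathscr{F}=\mathcal{O}_X(A)$ very ample on a log smooth model, truncates so that $R$ is generated in degree one by $L=m_0(K_X+B)$, and then shows that every section $\sigma$ of $m(K_X+B)+A$ for $m\gg 0$ vanishes along a multiplier ideal $\mathcal{J}((n+2)\phi+\varphi)$, where $\phi$ is the maximal $L$-psh function. Abundance enters only to prove the inequality $(m_0(n+2)+1)\varphi_m\leq (n+2)\phi$ (via the norm convergence $m_0\varphi_m\to\phi$ from the proof of Proposition \ref{prop:max'}), and the conclusion is then extracted from the Skoda-type global division statement, Lemma \ref{lem:gl_div} (Ein--Popa), which writes $\sigma=\sum_j s_jg_j$ with $g_j$ in lower degree. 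Generation in bounded degree follows.

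The gap in your argument is step (iii): the claim that ``adding a fixed divisor to every graded piece preserves finite generation as a module over that ring'' and that the result is ``a formal consequence of the module being a finitely generated submodule of a finitely generated module over a Noetherian ring.'' This is precisely the non-formal part. Finite generation of $\bigoplus_m H^0(mD+A)$ over $\bigoplus_m H^0(mD)$ requires the multiplication maps $H^0(kD)\otimes H^0(mD+A)\to H^0((m+k)D+A)$ to be eventually surjective, and this does not follow from freeness of the mobile parts of $|mD|$ on a model: the negative part $N_\sigma(K_X+B)$ is in general not a pullback from the Iitaka model, and the fixed parts of $|mD+A|$ need not equal $m$ times a fixed divisor plus a bounded correction without proof. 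Producing the ambient finitely generated module you invoke is equivalent to a uniform comparison $\mathrm{Fix}|mD+A|\geq mN_\sigma(D)-C$, which is exactly what the division theorem (or, in the alternative route of Remark \ref{rem:mmp}, the existence of a good minimal model) supplies. You correctly flag this as the main obstacle, but the appeal to piecewise linearity of $\sigma_v(\|K_X+B\|)$ and to the valuative formula in Theorem \ref{c_max_norm} does not close it; some version of Lemma \ref{lem:gl_div} (or an MMP input) is needed.
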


From an easy observation, the above proposition can be slightly generalized (cf. Proposition \ref{prop:f_g_g}). \\

\textbf{Acknowledgements.} This paper is based on part of the author's PhD thesis. I would like to express my deep gratitude to my supervisor Professor Kefeng Liu for numerous conversations and encouragement from him. I am indebted to Professor Hongwei Xu and many other professors in Zhejiang Unversity because I have greatly benefited from the courses and discussions with them. I would also like to thank Professor S\'{e}bastien Boucksom, Mattias Jonsson and Mircea Musta\c{t}\u{a} for patiently answering my questions and providing many valuable comments. The author is supported by the EPSRC grant EP/I004130/1.

\section{Valuation spaces}\label{section:val_space}

Throughout this paper, all schemes are assumed to be separated, regular, connected and excellent schemes over $\mathbb{Q}$. All rings are assumed to be integral, regular and excellent rings containing $\mathbb{Q}$. An ideal on a scheme means a coherent ideal sheaf on a scheme. A birational model of a scheme is a scheme birational to and proper over this scheme, and a divisor over a scheme is a divisor on a birational model of the scheme. For definitions and properties of valuations, multiplier ideals, singularities in birational geometry, etc., we refer to [\ref{Laz}],[\ref{JM}] and [\ref{KM}]. \\

\textbf{Real valuations.}
Let $X$ be a scheme, and let $K(X)$ be its function field. A \emph{real valuation} $v$ is a function
$v:K(X)^{\ast}\longrightarrow \mathbb{R}$ such that $v(fg)=v(f)+v(g)$ and $v(f+g)\geq \min\{v(f),v(g)\}$. By convention we set $v(0):=+\infty$. Let $\mathcal{O}_{v}:=\{f|v(f)\geq 0\}$ be its valuation ring. If there exists a point $\xi\in X$ such that the
 morphism $\mathcal{O}_{X,\xi}\hookrightarrow \mathcal{O}_{v}$ is a local homomorphism, then $\xi$ is called the \emph{centre}
 of $v$ on $X$ and denoted by $c_{X}(v)$. Note that $\xi$ is unique since $X$ is separated, and also note that the centre always exists provided that $X$ is complete. A real valuation with centered on $X$
  is called a real valuation on $X$ or simply a valuation on $X$, and we denote by $\mathrm{Val}_{X}$ the set of valuations on $X$. The set of valuations $\mathrm{Val}_{X}$ is independent of the choice of a birational model of $X$. More precisely, if $Y \rightarrow X$ is a proper birational morphism of schemes, then $\mathrm{Val}_X=\mathrm{Val}_Y$. A valuation $v$ on $X$ is said to be the \emph{trivial} valuation if its centre $c_{X}(v)$ is the generic point of $X$. We denote by $\mathrm{Val}_{X}^{\ast} \subseteq \mathrm{Val}_X$ the set of nontrivial valuations on $X$.

The set $\mathrm{Val}_X$ can be equipped with an induced topology defined by the maps $v\longrightarrow v(f)$ for all rational functions $f \in K(X)^\ast$. For every nonzero ideal $\mathfrak{a}$, we have that $v(\mathfrak{a})$ is well defined and $v(\mathfrak{a})=v(\overline{\mathfrak{a}})$ where $\overline{\mathfrak{a}}$ denotes the integral closure of $\mathfrak{a}$. Note that the topology on $\mathrm{Val}_X$ defined by pointwise convergence on ideals on $X$ is equivalent to that on functions in $K(X)$. Readers can consult [\ref{JM}, Section 1] for more details.

Under the above topology, the map $c_{X}: \mathrm{Val}_{X}\longrightarrow X$ is anti-continuous. That is, the inverse image of an open subset is closed.
More precisely, if $U\subseteq X$ is an open subset and $\mathfrak{m}$ is the defining ideal of $X\setminus U$, then $\mathrm{Val}_{U}=\{v\in\mathrm{Val}_{X}|v(\mathfrak{m})=0\}$ and $\mathrm{Val}_{U}$ is closed in $\mathrm{Val}_{X}$.

For two valuations $v$, $w$ on $X$, we say that $v\leq w$ if $v(\mathfrak{a})\leq w(\mathfrak{a})$ for every nonzero ideal $\mathfrak{a}$. This is equivalent to that the centre
 $\eta:= c_{X}(w)\in \overline{c_{X}(v)}$ and that $v(f)\leq w(f)$ for every nonzero local function $f\in \mathcal{O}_{X,\eta}$.\\

\textbf{Quasi-monomial valuations.}
Let $X$ be a scheme, let $\xi\in X$ be a point, and let $\underline{x}=(x_{1},\ldots,x_{r})$ be a regular system of parameters at $\xi$. If $f\in \mathcal{O}_{X,\xi}$ is a local regular function, then
$f$ can be expressed as $f=\sum_{\beta} c_{\beta}x^{\beta}$ in $\widehat{\mathcal{O}_{X,\xi}}$ with each coefficient $c_{\beta}$ either zero or a unit. For each $\alpha=(\alpha_{1},\ldots,\alpha_{r})\in \mathbb{R}_{\geq 0}^{r}$, we define a real valuation by $\mathrm{val}_{\xi,\alpha}(f)=\min \{<\alpha,\beta>|c_{\beta}\neq 0\}$ where $<\alpha,\beta>:=\sum_{i} \alpha_{i} \beta ^{i}$, which is called a \emph{monomial} valuation on $X$.

A pair $(Y,D)$ is called log smooth if $Y$ is a scheme and $D$ is a reduced divisor whose components are regular subschemes intersecting each other transversally. A pair $(Y,D)$ is called a log resolution of $X$ if there is a birational projective morphism $\pi:Y \rightarrow X$ and $(Y,D+K_{Y/X})$ is log smooth. Let $(Y',D')$ be another log resolution of $X$, we say $(Y',D')\succeq (Y,D)$ if $Y'$ is projective over $Y$ and the support of $D'$ contains the support of the pull-back of $D$. Note that log resolutions of $X$ form an inverse system.

Let $(Y,D)$ be a log resolution of $X$, and let $\eta$ be the generic point of an irreducible component of the intersection of some prime components of $D$. We denote by $\mathrm{QM}_{\eta}(Y,D)$ the set of real valuations which can be defined as a monomial valuation at $\eta$. Note that $\eta \in \overline{c_{X}(v)}$ and $\mathrm{QM}_{\eta}(Y,D)\cong \mathbb{R}_{\geq 0}^{r}$ as topological spaces. We also define $\mathrm{QM}(Y,D)=\bigcup _{\eta}\mathrm{QM}_{\eta}(Y,D)$ where $\eta$ runs over every generic point of some component of the intersection of some prime components of $D$. A real valuation $v$ is said to be \emph{quasi-monomial} if there exists a log resolution $(Y,D)$ such that $v \in \mathrm{QM}(Y,D)$.

\begin{rem}
Let $\Gamma_{v}=v(K(X)^{\ast})\subseteq \mathbb{R}$ be the value group of $v$. Let $\mathrm{ratrk}(v)=\dim_{\mathbb{Q}}(\Gamma_{v}\otimes _{\mathbb{Z}}\mathbb{Q})$ be the rational rank of $v$, and let $k_{v}$, $k(\xi)$ be the residue fields $\mathcal{O}_{v}$, $\mathcal{O}_{X,\xi}$ respectively, where $\xi=c_{X}(v)$. If we denote by $\mathrm{trdeg}_{X}v=\mathrm{trdeg}(k_{v}/k(\xi))$ the transcendental degree of $v$ over $X$, we have the Abyankar inequality $\mathrm{ratrk}(v)+\mathrm{trdeg}_{X}v\leq \dim(\mathcal{O}_{X,\xi})$. A result asserts that the quasi-monomial valuations are exactly the ones that give equality in the Abhyankar inequality (cf. [\ref{JM}, Proposition 3.7]).
\end{rem}

Let $v\in \mathrm{Val}_{X}$ be a quasi-monomial valuation. A log smooth pair $(Y,D)$ is said to be adapted to $v$ if $v\in \mathrm{QM}(Y,D)$. We say $(Y,D)$ is a good pair adapted to $v$ if $\{ v(D_{i}) | v(D_{i}) >0 \}$ are rationally independent. The following useful lemma is established as [\ref{JM}, Lemma 3.6].

\begin{lem}
Let $v\in \mathrm{Val}_{X}$ be a quasi-monomial valuation. There exists a good pair $(Y,D)$ adapted to $v$. If $(Y',D')\succeq (Y,D)$ and $(Y,D)$ is a good pair adapted to $v$, then $(Y',D')$ is also a good pair adapted to $v$.
\end{lem}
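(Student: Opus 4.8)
The plan is to read off both assertions from how the weight vector of $v$ transforms under blow-ups of the strata of $D$. Recall that if $v=\mathrm{val}_{\eta,\alpha}\in\mathrm{QM}_{\eta}(Y,D)$, then $v$ is monomial in a regular system of parameters at $\eta$ cutting out the components $D_{i}$ through $\eta$, so $\Gamma_{v}=\sum_{i}\mathbb{Z}\,v(D_{i})$, the vanishing values contributing nothing. Hence $\mathrm{ratrk}(v)=\dim_{\mathbb{Q}}(\Gamma_{v}\otimes\mathbb{Q})$ is the dimension of the $\mathbb{Q}$-span of $\{v(D_{i}):v(D_{i})>0\}$, which is at most the number of $i$ with $v(D_{i})>0$, with equality precisely when those values are rationally independent. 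Thus $(Y,D)$ is good if and only if $\#\{i:v(D_{i})>0\}=\mathrm{ratrk}(v)$; on a good pair the positive values form a $\mathbb{Z}$-basis of $\Gamma_{v}$, and $\codim(c_{Y}(v),Y)=\mathrm{ratrk}(v)$.

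For the existence statement I would begin from any log smooth $(Y,D)$ adapted to $v$ (one exists since $v$ is quasi-monomial) and lower $\#\{i:v(D_{i})>0\}$ to $\mathrm{ratrk}(v)$. If the positive values are rationally dependent, choose components $D_{a},D_{b}$ through $\eta$ with $v(D_{a})\geq v(D_{b})>0$ and blow up the codimension-two stratum $\overline{D_{a}\cap D_{b}}$. A local computation on the chart meeting the new centre of $v$ shows that this replaces the pair $(v(D_{a}),v(D_{b}))$ by $(v(D_{a})-v(D_{b}),v(D_{b}))$, leaves the remaining values unchanged, and preserves the subgroup of $\mathbb{R}$ generated by all of them; moreover, if $v(D_{a})=v(D_{b})$ the first entry becomes $0$, the centre leaves the strict transform of $D_{a}$, and the count drops by one. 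Iterating this subtractive procedure with suitably chosen pairs produces a good pair. The step I expect to be the main obstacle is precisely the termination of this procedure: one must show that a finite generating set of the lattice $\Gamma_{v}\cong\mathbb{Z}^{\mathrm{ratrk}(v)}$ lying in $\mathbb{R}_{>0}$ can be brought to a $\mathbb{Z}$-basis using only the moves $\gamma_{a}\mapsto\gamma_{a}-\gamma_{b}$ with $\gamma_{a}>\gamma_{b}$ --- equivalently, that some smooth subdivision of $\mathrm{QM}_{\eta}(Y,D)\cong\mathbb{R}_{\geq 0}^{r}$ carries $\alpha$ into the relative interior of a face of dimension $\mathrm{ratrk}(v)$. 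One can, for instance, always reduce the largest value: it is then non-increasing and strictly decreases unless the two largest values coincide, in which case the count drops; the finiteness of the whole process is the content of [\ref{JM}, Lemma 3.6], whose argument I would follow.

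For the second assertion, let $(Y,D)$ be a good pair adapted to $v$, let $(Y',D')\succeq(Y,D)$ with $\pi\colon Y'\to Y$, and set $\zeta'=c_{Y'}(v)$, $\zeta=c_{Y}(v)$. First, $v\in\mathrm{QM}(Y',D')$, i.e. $(Y',D')$ is again adapted to $v$: this is the standard stability of quasi-monomial valuations under refinements of log resolutions, which I would quote from [\ref{JM}, \S 3]. Let $D'_{1},\dots,D'_{e}$ be the components of $D'$ through $\zeta'$, so that $e=\codim(\zeta',Y')$; since $v$ is monomial at $\zeta'$ with positive weights $v(D'_{i})$, its value group is $\Gamma_{v}=\sum_{i=1}^{e}\mathbb{Z}\,v(D'_{i})$, hence the $\mathbb{Q}$-span of $v(D'_{1}),\dots,v(D'_{e})$ has dimension $\mathrm{ratrk}(v)$ and so $e\geq\mathrm{ratrk}(v)$. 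On the other hand $\pi$ sends $\zeta'$ to a centre of $v$ on $Y$, so $\pi(\zeta')=\zeta$; as $\pi$ is birational of finite type and $\mathcal{O}_{Y,\zeta}$ is universally catenary, the dimension formula gives $e=\codim(\zeta',Y')=\codim(\zeta,Y)-\mathrm{trdeg}(k(\zeta')/k(\zeta))\leq\codim(\zeta,Y)=\mathrm{ratrk}(v)$. Therefore $e=\mathrm{ratrk}(v)$, so the $e$ positive values $v(D'_{1}),\dots,v(D'_{e})$ span a $\mathbb{Q}$-vector space of dimension $e$ and are consequently rationally independent; that is, $(Y',D')$ is a good pair adapted to $v$. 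The real work is thus in the first assertion; the second follows formally from it together with basic facts about centres and the dimension formula.
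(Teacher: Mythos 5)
The paper gives no proof of this lemma; it simply attributes it to [JM, Lemma~3.6]. Your proposal supplies a proof that is essentially the argument one finds in that reference, and your reasoning is correct where you give it in full.

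Two remarks. First, your key structural observation is right and worth stating explicitly as you do: if $(Y,D)$ is adapted to $v$ and $D_1,\dots,D_r$ are the components through the centre, then $\Gamma_v=\sum_i\mathbb{Z}\,v(D_i)$, so $(Y,D)$ is good if and only if $\#\{i:v(D_i)>0\}=\operatorname{ratrk}(v)$, equivalently $\operatorname{codim}(c_Y(v),Y)=\operatorname{ratrk}(v)$. Your proof of the second assertion then goes through cleanly: stability of $\mathrm{QM}$ under higher log resolutions gives that $(Y',D')$ is adapted to $v$; the components of $D'$ through $\zeta'=c_{Y'}(v)$ are exactly those with positive weight, so their number $e$ equals $\operatorname{codim}(\zeta',Y')$ and also satisfies $e\geq\operatorname{ratrk}(v)$ since they generate $\Gamma_v$; the dimension formula (valid because $X$, hence $Y$, is excellent and therefore universally catenary) gives $e\leq\operatorname{codim}(\zeta,Y)=\operatorname{ratrk}(v)$; equality forces rational independence. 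That part is complete.

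Second, for the existence assertion your local computation of the effect of blowing up a codimension-two stratum on the weight vector is correct — $(v(D_a),v(D_b))\mapsto(v(D_a)-v(D_b),v(D_b))$, with the first component dropping out when equal — but, as you yourself point out, the termination of the resulting subtractive process is the genuine content here and is not obvious: the naive rule "subtract the second-largest from the largest" is not the Euclidean algorithm once more than two weights are involved, and the largest value being non-increasing does not alone give a halting argument. Deferring the termination to [JM, Lemma~3.6] is acceptable since that is exactly the citation the paper makes, but be aware that what remains is precisely the nontrivial combinatorial fact (existence of a smooth simplicial subdivision of $\mathbb{R}_{\geq 0}^r$ in which $\alpha$ lies in the relative interior of a face of dimension $\operatorname{ratrk}(v)$), so your proposal is a correct reduction plus citation rather than a self-contained proof of part one.
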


An important class of valuations are divisorial valuations. A valuation is called \emph{divisorial} if it is positively proportional to $\mathrm{ord}_{E}$ for some prime divisor $E$ over $X$, where $\mathrm{ord}_{E}$ is the vanishing order along $E$. One easily verifies that the trivial valuation is quasi-monomial of rational rank zero, and a divisorial valuation is quasi-monomial of rational rank one. Let $(Y,D)$ be a log smooth pair adapted to $v$. It can be verified that $v$ is divisorial if and only if $\mathbb{R}_{\geq 0}[v]\subseteq \mathrm{QM}_{\eta}(Y,D)\cong \mathbb{R}_{\geq 0}^{r}$ is a rational ray, that is, $\mathbb{R}_{\geq 0}[v]$ contains some rational point in $\mathbb{R}_{\geq 0}^{r}$.

For every log resolution $(Y,D)$ we can define the retraction map
$$
r_{Y,D}:\mathrm{Val}_{X} \longrightarrow \mathrm{QM}(Y,D)
$$
by taking $v$ to a quasi-monomial valuation in $\mathrm{QM}(Y,D)$ with $r_{Y,D}(v)(D_{i})=v(D_{i})$. Note that $r_{Y,D}$ is continuous and $v\geq r_{Y,D}(v)$ with equality if and only if $v\in \mathrm{QM}(Y,D)$. Furthermore, if $(Y',D')\succeq (Y,D)$ is another resolution, then the retraction map $r_{Y,D}:\mathrm{QM}(Y',D')\longrightarrow \mathrm{QM}(Y,D)$ (by abuse of notaion if without confusion) is a surjective mapping which is integral linear on every $\mathrm{QM}_{\eta'}(Y',D')$ and we have that $r_{Y,D}\circ r_{Y',D'}=r_{Y,D}$. Therefore we can naturally regard $\mathrm{QM}(Y,D)$ as a subset of $\mathrm{QM}(Y',D')$, and hence of the set of quasi-monomial valuations on $X$. Also note that $v(\mathfrak{a}) \geq r_{Y,D}(v)(\mathfrak{a})$ for an ideal $\mathfrak{a}$ on $X$, with equality if $(Y,D)$ is a log resolution of $\mathfrak{a}$. (cf. [\ref{Laz}], [\ref{JM}, Corollary 4.8])

\textbf{Tempered valuations.}
We first introduce the log discrepancy on an arbitrary scheme. Let $\pi: Y \longrightarrow X$ be a birational proper morphism. The $0^{\mathrm{th}}$ fitting ideal $\mathrm{Fitt}_{0}(\Omega_{Y/X})$ is a locally principle ideal with its corresponding effective divisor denoted by $K_{Y/X}$ (cf. [\ref{JM}, Section 1.3]). For a quasi-monomial valuation $v\in \mathrm{QM}(Y,D)$, we define the log discrepancy
$$
A_{X}(v)=\sum v(D_{i})\cdot A_{X}(\mathrm{ord}_{D_{i}})=\sum v(D_{i})\cdot (1+ \mathrm{ord}_{D_{i}}(K_{Y/X})).$$
We simply denote this by $A$ when the scheme $X$ is obvious. Note that $A$ is strictly positive linear on every $\mathrm{QM}_\eta(Y,D)$, and in particular continuous on every $\mathrm{QM}_\eta(Y,D)$ (or is \emph{weakly continuous} according to Definition \ref{defn:val_fun}). Also note that if $(Y',D')\succeq (Y,D)$ and $v\in \mathrm{QM}(Y',D')$, then $A(v)\geq A(r_{Y,D}(v))$ and equality holds only when $v\in \mathrm{QM}(Y,D)$. For an arbitrary valuation $v\in \mathrm{Val}_{X}$, we define $$A(v)=\sup_{(Y,D)}A(r_{Y,D}(v)) \in [0,+\infty].$$
Note that $A$ is lower-semicontinuous (lsc) as a valuative function.

\begin{defn}
 A valuation $v$ is said to be \emph{tempered} if $A(v)<\infty$. The valuation space $\mathrm{V}_X$ of $X$ is defined to be the space of tempered valuations as a subspace of $\mathrm{Val}_{X}$.
\end{defn}

We similarly denote by $\mathrm{V}_{X}^{\ast}$ the subset of nontrivial tempered valuations. If $f:X' \rightarrow X$ is a proper birational morphism, then $A_X(v)=A_{X'}(v)+v(K_{X'/X})$ (cf. [\ref{JM}, Proposition 5.1(3)]) and hence $\mathrm{V}_{X'}=\mathrm{V}_X$. Since $\mathrm{V}_{X}$ is a topological subspace of $\mathrm{Val}_{X}$, it is naturally a subspace of of the Berkovich space $X^{an}$. See [\ref{JM}, Section 6.3] for a comparison.

With the aid of the log discrepancy, we can normalize $\mathrm{V}_X^\ast$ by letting $A(v)=1$, that is, we define $\Lambda_X:=\{v\in\mathrm{V}_X^\ast|A(v)=1\}$. In particular, we normalize every cone complex $\mathrm{QM}(Y,D)$ by setting $\Delta(Y,D):=\{v\in\mathrm{QM}(Y,D)|A(v)=1\}$. It is clear that $\Delta(Y,D)$ naturally possess the structure of a simplicial complex, and by convention we say that $\Delta(Y,D)$ is a \emph{dual complex}. Readers can compare the constructions here with [\ref{BFJ1}], [\ref{BFJ2}] and [\ref{BFJ3}].

The following lemma allows us to compare $v$ and $\mathrm{ord}_{\xi}$ where $\xi=c_{X}(v)$ which is quite useful (see [\ref{Laz}], [\ref{JM}, Section 5.3] for the definition of $\mathrm{ord}_{\xi}$). See [\ref{JM}, Proposition 5.10] for a proof. Recently S. Boucksom, C. Favre and M. Jonsson gave a refinement of the following lemma in [\ref{BFJ4}].

\begin{lem}[Izumi's type inequality]\label{Izumi}
Let $\xi=c_{X}(v)$ and $\mathfrak{m}_{\xi}$ be the defining ideal of $\overline{\{\xi\}}$. Then we have $v(\mathfrak{m}_{\xi})\mathrm{ord}_{\xi} \leq v\leq A(v)\mathrm{ord}_{\xi}$.
\end{lem}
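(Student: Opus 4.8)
The plan is to prove the two asserted inequalities of valuations separately, the second being the substantial part. For the lower bound, fix a nonzero ideal $\mathfrak{a}$ and put $k=\mathrm{ord}_{\xi}(\mathfrak{a})\in\mathbb{Z}_{\geq 0}$. By the very definition of $\mathrm{ord}_{\xi}$ as the order valuation of the regular local ring $\mathcal{O}_{X,\xi}$, every element of the stalk $\mathfrak{a}_{\xi}$ lies in $\mathfrak{m}_{X,\xi}^{k}=(\mathfrak{m}_{\xi}^{k})_{\xi}$, so $v(\mathfrak{a})\geq v(\mathfrak{m}_{\xi}^{k})$; since $v(\mathfrak{b}\mathfrak{c})=v(\mathfrak{b})+v(\mathfrak{c})$ for nonzero ideals (immediate from the valuation axioms), this gives $v(\mathfrak{a})\geq k\,v(\mathfrak{m}_{\xi})=\mathrm{ord}_{\xi}(\mathfrak{a})\cdot v(\mathfrak{m}_{\xi})$, which is exactly the inequality $v(\mathfrak{m}_{\xi})\,\mathrm{ord}_{\xi}\leq v$.

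For the upper bound I may assume $A(v)<\infty$ and fix a nonzero ideal $\mathfrak{a}$ with $v(\mathfrak{a})>0$; this forces $\mathfrak{a}_{\xi}\subseteq\mathfrak{m}_{X,\xi}$, so that $k:=\mathrm{ord}_{\xi}(\mathfrak{a})\geq 1$, and I choose $f\in\mathfrak{a}_{\xi}$ with $\mathrm{ord}_{\xi}(f)=k$. The core of the argument is the estimate $\lct_{\xi}(\mathfrak{a})\geq 1/k$. Since $(f)\subseteq\mathfrak{a}$, monotonicity of the log canonical threshold gives $\lct_{\xi}(\mathfrak{a})\geq\lct_{\xi}\big((f)\big)$, so it suffices to establish the classical bound $\lct_{\xi}\big((f)\big)\geq 1/\mult_{\xi}(f)$ (where $\mult_{\xi}(f)=\mathrm{ord}_{\xi}(f)=k$ for a hypersurface germ in a regular scheme). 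To obtain this at a possibly non-closed point $\xi$, I would localize at $\xi$ and cut by a general complete intersection $S\ni\xi$ of codimension $\dim\mathcal{O}_{X,\xi}-1$, so that $\mathcal{O}_{S,\xi}$ is a discrete valuation ring and $\mult_{\xi}(f|_{S})=\mult_{\xi}(f)$; the restriction theorem for multiplier ideals [\ref{Laz}] then yields $\lct_{\xi}\big(X,(f)\big)\geq\lct_{\xi}\big(S,(f|_{S})\big)$, and $\lct_{\xi}\big(S,(h)\big)=1/\mult_{\xi}(h)$ in the $1$-dimensional regular local ring $\mathcal{O}_{S,\xi}$ finishes this step.

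It then remains to combine $\lct_{\xi}(\mathfrak{a})\geq 1/k$ with the log discrepancy estimate $A(v)\geq\lct_{\xi}(\mathfrak{a})\,v(\mathfrak{a})$. For the latter, take a log resolution $(Y,D)$ of $\mathfrak{a}$ and set $w:=r_{Y,D}(v)\in\mathrm{QM}(Y,D)$, so that $v(\mathfrak{a})=w(\mathfrak{a})$ and $A(v)\geq A(w)$; write $\eta=c_{Y}(w)$. For each component $D_{i}$ of $D$ through $\eta$, the generic point of $D_{i}$ is a generization of $\eta$, hence $c_{X}(\mathrm{ord}_{D_{i}})$ is a generization of $c_{X}(w)$, which is a generization of $\xi$ because $w\leq v$; thus $\xi$ lies in the closure of $c_{X}(\mathrm{ord}_{D_{i}})$, so $\mathrm{ord}_{D_{i}}$ is one of the divisorial valuations over which the local log canonical threshold at $\xi$ is taken, giving $A(\mathrm{ord}_{D_{i}})\geq\lct_{\xi}(\mathfrak{a})\,\mathrm{ord}_{D_{i}}(\mathfrak{a})$. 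Weighting by $w(D_{i})$ and summing, and using $w(\mathfrak{a})=\sum_{i}w(D_{i})\,\mathrm{ord}_{D_{i}}(\mathfrak{a})$ (valid since $(Y,D)$ resolves $\mathfrak{a}$) together with $A(w)=\sum_{i}w(D_{i})\,A(\mathrm{ord}_{D_{i}})$, we obtain $A(v)\geq A(w)\geq\lct_{\xi}(\mathfrak{a})\,w(\mathfrak{a})=\lct_{\xi}(\mathfrak{a})\,v(\mathfrak{a})$. Therefore $v(\mathfrak{a})\leq A(v)/\lct_{\xi}(\mathfrak{a})\leq A(v)\,k=A(v)\,\mathrm{ord}_{\xi}(\mathfrak{a})$, which is the upper bound $v\leq A(v)\,\mathrm{ord}_{\xi}$.

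The main obstacle is the upper bound, and the delicate point inside it is the center bookkeeping in the preceding paragraph: one must verify that every divisor carrying the retraction $r_{Y,D}(v)$ has center whose closure contains $\xi$, so that it genuinely bounds $\lct_{\xi}$ from below, and, in the same spirit, the slicing argument for $\lct_{\xi}(g)\geq 1/\mult_{\xi}(g)$ must be carried out inside $\mathcal{O}_{X,\xi}$ rather than under the assumption that $\xi$ is a closed point. Once these compatibilities are set up, the remaining steps are routine.
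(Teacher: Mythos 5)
The paper offers no proof of its own here; it cites the result as [\ref{JM}, Proposition 5.10], so your argument is an independent attempt along a different route. Your lower bound is correct and elementary, and your reduction of the upper bound to the estimate $\mathrm{lct}_\xi(\mathfrak{a}) \geq 1/\mathrm{ord}_\xi(\mathfrak{a})$ is carried out correctly: the center bookkeeping (every $D_i$ with $r_{Y,D}(v)(D_i)>0$ satisfies $\xi \in \overline{c_X(\mathrm{ord}_{D_i})}$, since such a $D_i$ passes through $c_Y(r_{Y,D}(v))$ and $r_{Y,D}(v)\leq v$) is right, and together with $A(v) \geq A(r_{Y,D}(v))$, $v(\mathfrak{a}) = r_{Y,D}(v)(\mathfrak{a})$ on a log resolution of $\mathfrak{a}$, and the linearity of $A$ and of $w\mapsto w(\mathfrak{a})$ on $\mathrm{QM}(Y,D)$, it does yield $A(v) \geq \mathrm{lct}_\xi(\mathfrak{a})\,v(\mathfrak{a})$.

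The gap is in the base step $\mathrm{lct}_\xi\bigl((f)\bigr) \geq 1/\mathrm{ord}_\xi(f)$. Unpacked, this inequality is the divisorial instance of the very upper bound you are proving (tested on principal ideals and on divisors whose center has $\xi$ in its closure); your reduction therefore correctly shrinks the problem to its divisorial core, but that core still needs an independent argument. The one you give, cutting to a general complete-intersection curve through $\xi$ and invoking the restriction theorem for multiplier ideals from [\ref{Laz}], does not cover the paper's setting: [\ref{Laz}] establishes the restriction theorem for smooth complex algebraic varieties, whereas Lemma~\ref{Izumi} is stated for arbitrary separated, regular, connected, excellent $\mathbb{Q}$-schemes and is used, immediately after its statement, on $\mathrm{Spec}\,\widehat{\mathcal{O}_{X,\xi}}$ --- exactly the kind of scheme that [\ref{Laz}] does not treat. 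One cannot safely bootstrap from the variety case by first passing to the completion either, since the Izumi inequality is the tool the paper uses to set up that passage. To make your route rigorous you would need to prove the multiplicity bound $\mathrm{lct}_\xi(f) \geq 1/\mathrm{ord}_\xi(f)$ (equivalently, the restriction theorem) directly in the excellent-scheme setting, or switch to a more hands-on argument in the style of [\ref{JM}] that compares quasi-monomial valuations across log resolutions and tracks the behavior of $A$ under blowups and regular morphisms without appealing to the restriction theorem.
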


\textbf{Passing to the completion.}
A morphism $f:X' \rightarrow X$ is \emph{regular} if it is flat and its fibres are geometrically regular (cf. [\ref{JM},Section 1.1]). The following lemma on log discrepancy is essential for finding a valuation which computes the log canonical threshold or norms in Section \ref{c-norms}.

\begin{lem}[\ref{JM}, Proposition 5.13]\label{disc_reg}
Let $f:X'\longrightarrow X$ be a regular morphism, and let $f_{\ast}: \mathrm{Val}_{X'}\longrightarrow \mathrm{Val}_{X}$ be the induced map. If $v'\in \mathrm{Val}_{X'}$ is a valuation on $X'$, then $A(v')\geq A(f_{\ast}(v'))$. If we assume further that $X'=\mathrm{Spec}\widehat{ \mathcal{O}_{X,\xi}}$ and $v'$ is centered at the closed point of $X'$, then $A(v')=A(f_{\ast}(v'))$.
\end{lem}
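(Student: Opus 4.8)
The plan is to reduce both assertions to the behaviour of quasi-monomial valuations and retraction maps under base change along $f$, using the defining formula $A(v)=\sup_{(Y,D)}A(r_{Y,D}(v))$. Since $f$ is flat and $X$ is irreducible, $f$ is dominant and induces an inclusion of function fields $K(X)\hookrightarrow K(X')$, so $f_\ast$ is simply restriction of valuations; in particular, on every model the centre of $v'$ maps to the centre of $f_\ast v'$. Everything then reduces to the following base-change construction, which I would set up first.

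Given a log resolution $(Y,D)$ of $X$, put $Y':=Y\times_X X'$ and let $D'$ be the reduced preimage of $D$. Regular morphisms are stable under base change, so $Y'\to Y$ is regular and $Y'$ is regular, and the same holds for the pullback of each stratum of $D$; from this one checks that $(Y',D')$ is log smooth and that $Y'\to X'$ is proper and birational, so $(Y',D')$ is a log resolution of $X'$. Furthermore K\"ahler differentials and Fitting ideals commute with base change, so $\mathrm{Fitt}_{0}(\Omega_{Y'/X'})$ is the pullback of $\mathrm{Fitt}_{0}(\Omega_{Y/X})$; since each $f^{-1}(D_i)$ is regular --- hence reduced, with pairwise disjoint components each dominating $D_i$ with ramification index $1$, using that $f^{-1}(D_i\cap D_k)$ has codimension $\geq 2$ in $Y'$ --- it follows that $K_{Y'/X'}$ is the reduced total transform of $K_{Y/X}$ and that $A_{X'}(\mathrm{ord}_{D'_j})=A_X(\mathrm{ord}_{D_{i(j)}})$ for the unique component $D_{i(j)}$ of $D$ dominated by $D'_j$.

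Next I would fix $v'\in\mathrm{Val}_{X'}$ with centre $\eta'$ on $Y'$, mapping to $\eta=c_Y(f_\ast v')$. As $f^{-1}(D_i)$ is regular, at most one of its components passes through $\eta'$, so near $\eta'$ the total transform $f^\ast D_i$ is a single reduced component $D'_{j(i)}$ of $D'$; hence the restriction to $K(X)$ of a quasi-monomial valuation on $(Y',D')$ is again quasi-monomial on $(Y,D)$, and one obtains a weight-preserving bijection $\{D_i:(f_\ast v')(D_i)>0\}\leftrightarrow\{D'_j\ni\eta':v'(D'_j)>0\}$. Combined with the discrepancy matching above, this gives $f_\ast(r_{Y',D'}(v'))=r_{Y,D}(f_\ast v')$ and, more generally, $A_{X'}(w')=A_X(f_\ast w')$ for every $w'\in\mathrm{QM}(Y',D')$. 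Since the base changes $(Y',D')$ form a sub-family of all log resolutions of $X'$, the inequality $A(v')\geq A(f_\ast v')$ now follows formally:
$$
A(v')=\sup_{(Z',E')}A_{X'}(r_{Z',E'}(v'))\ \geq\ \sup_{(Y,D)}A_{X'}(r_{Y',D'}(v'))=\sup_{(Y,D)}A_X(r_{Y,D}(f_\ast v'))=A(f_\ast v').
$$

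For the equality when $X'=\mathrm{Spec}\,\widehat{\mathcal{O}_{X,\xi}}$ and $v'$ is centred at the closed point, it remains to prove $A(v')\leq A(f_\ast v')$. By the display above it is enough to show that the base-changed resolutions $(Y',D')$ are \emph{cofinal} among all log resolutions of $X'$, since $(Y',D')\succeq(Z',E')$ forces $A_{X'}(r_{Y',D'}(v'))\geq A_{X'}(r_{Z',E'}(v'))$ by the monotonicity of $A$ under refinement together with $r_{Z',E'}\circ r_{Y',D'}=r_{Z',E'}$, both recorded in Section \ref{section:val_space}. Here the hypotheses are essential: because $v'$ is centred at the closed point, only the part of a log resolution of $X'$ lying over $\xi$ is relevant, and this combinatorial structure can be algebraised over $X$ --- for instance by writing $\widehat{\mathcal{O}_{X,\xi}}$ as a filtered colimit of smooth $\mathcal{O}_{X,\xi}$-algebras (Popescu's general N\'eron desingularisation) and transporting a log resolution along one of these, or by a direct Artin-approximation argument. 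I expect this cofinality/algebraisation step to be the main obstacle; the rest is bookkeeping with the base-change behaviour of $\Omega$, $\mathrm{Fitt}_{0}$ and the retractions, plus the already-established properties of $A$. The degenerate cases --- the trivial valuation, and valuations on $X'$ whose centre is not the closed point, where only the inequality is asserted --- would be handled separately, the first being immediate.
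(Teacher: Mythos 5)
This lemma is not proved in the paper; it is taken verbatim from [JM, Proposition 5.13], so there is no in-house argument to compare against. Judging your attempt on its own terms:

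Your treatment of the inequality $A(v')\geq A(f_\ast v')$ is correct and is the standard base-change argument. The key facts you use are right: $Y'=Y\times_X X'$ is regular because regular morphisms are stable under base change and a regular morphism over a regular base has regular source; the strata of the reduced preimage $D'$ are pullbacks of strata of $D$ and hence regular, so $(Y',D')$ is again a log resolution; $\Omega_{Y'/X'}=g^\ast\Omega_{Y/X}$ and $\mathrm{Fitt}_0$ commutes with base change, so $K_{Y'/X'}=g^\ast K_{Y/X}$; since each $g^{-1}(D_i)$ is regular (hence reduced with pairwise disjoint components), at most one component of $g^{-1}(D_i)$ passes through the centre $\eta'$, giving a weight-preserving bijection and the matching $A_{X'}(w')=A_X(f_\ast w')$ for $w'\in \mathrm{QM}(Y',D')$. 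The display at the end of your second paragraph then gives the inequality formally. One terminological nit: $K_{Y'/X'}$ is the \emph{pullback} $g^\ast K_{Y/X}$, not the ``reduced total transform'' --- the coefficients are preserved, but each component $D'_j$ carries the coefficient of its image $D_{i(j)}$ (not coefficient $1$); what is reduced is $g^{-1}(D_i)$ as a scheme, not $K_{Y'/X'}$ as a divisor.

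The genuine gap, which you flag yourself, is the reverse inequality in the completion case. Your proposed reduction to ``base-changed resolutions are cofinal among all log resolutions of $\mathrm{Spec}\,\widehat{\mathcal{O}_{X,\xi}}$'' is stated but not established, and I would caution that cofinality as literally phrased is stronger than what is needed and is not obviously true: $\succeq$ requires a morphism $Y'\to Z'$ over $X'$, i.e.\ that the ideal defining $Z'$ as a blow-up becomes locally principal on a base-changed model, and this is not automatic for ideals of $\widehat{R}$ that are not $\mathfrak m$-primary. What actually suffices (and is what one proves via Artin approximation / Popescu's theorem together with the hypothesis that $v'$ is centred at the closed point) is an approximation statement: for every log resolution $(Z',E')$ of $X'$ and every $\varepsilon>0$ there is a log resolution $(Y,D)$ of $X$ with $A_{X'}(r_{Y',D'}(v'))\geq A_{X'}(r_{Z',E'}(v'))-\varepsilon$. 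Making either cofinality or this weaker approximation precise is exactly the technical content of [JM, Proposition 5.13] that is missing from your sketch; invoking Popescu is the right instinct, but you would need to carry out the transport of the log resolution and of the snc structure along a smooth $\mathcal{O}_{X,\xi}$-algebra approximating $\widehat{\mathcal{O}_{X,\xi}}$, and check that the relevant $A$-values converge. As written, the second half of the lemma remains unproved.
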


\begin{defn}\label{defn:cpt-valsp}
If $\xi \in X$ is a point, then we define $\mathrm{V}_{X,\xi}:=c_{X}^{-1}(\xi)$ as a subspace of $\mathrm{V}_{X}$. We can normalize $\mathrm{V}_{X,\xi}$ by letting $v(\mathfrak{m})=1$, where $\mathfrak{m}$ is the defining ideal of $\overline{\{\xi\}}$. More precisely we define $\mathbb{V}_{X,\xi}:=\{v\in \mathrm{V}_{X,\xi}|v(\mathfrak{m})=1 \}$. Let $M>0$ be a positive real number, we also define $\mathbb{V}_{X,\xi,M}:=\{ v\in \mathbb{V}_{X,\xi}| A(v) \leq M \}$. According to [\ref{JM}, Proposition 5.9] the space $\mathbb{V}_{X,\xi,M}$ is compact. If $X=\mathrm{Spec} A$ and $\mathfrak{m}$ is the defining ideal of $\overline{\{\xi\}}$, we often use the notation $\mathbb{V}_{A,\mathfrak{m}}$ instead of $\mathbb{V}_{X,\xi}$.
\end{defn}

Let $(R,\mathfrak{m})$ be a local ring. Given a tempered valuation $v \in \mathrm{V}_{R,\mathfrak{m}}$, we define $v'(f)=\lim\limits_{k \rightarrow \infty}v(\mathfrak{a}_{k})$ for every $f\in \widehat{R}$ where $\mathfrak{a}_{k}\cdot \widehat{R}=(f)+\widehat{\mathfrak{m}}^{k}$. This is well-defined since $v(\mathfrak{a}_{k})\leq A(v)\mathrm{ord}_{\xi}(\mathfrak{a}_{k})\leq A(v)\mathrm{ord}_{\xi'}(f)<\infty$ by Lemma \ref{Izumi}. The above definition leads to a correspondence between the valuation spaces of $\mathrm{Spec}R$ and $\mathrm{Spec}\widehat{R}$ as follows. Throughout this paper we will use the notations $v$ and $v'$ to indicate that $v=f_\ast v'$ for simplicity if without confusion.

\begin{prop}
Let $(R,\mathfrak{m})$ be a local ring, and let $(\widehat{R},\widehat{\mathfrak{m}})$ be its $\mathfrak{m}$-adic completion. If we denote by $f: \mathrm{Spec}\widehat{R}\longrightarrow \mathrm{Spec}R $ the canonical morphism, then the induced map $f_{\ast}: \mathbb{V}_{\widehat{R},\widehat{\mathfrak{m}}}\longrightarrow \mathbb{V}_{R,\mathfrak{m}}$ is bijective. If $K'$ is a compact subspace of $\mathbb{V}_{\widehat{R},\widehat{\mathfrak{m}}}$, then $f_{\ast}$ is a homeomorphism from $K'$ to its image. In particular, $ \mathbb{V}_{\widehat{R},\widehat{\mathfrak{m}},M} \cong \mathbb{V}_{R,\mathfrak{m},M}$ for any positive number $M>0$.
\end{prop}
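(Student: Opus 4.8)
The plan is to construct the inverse of $f_\ast$ explicitly using the valuation $v'$ already introduced just above the statement, and then to verify continuity properties by means of the Izumi-type inequality (Lemma \ref{Izumi}) together with the log discrepancy comparison in the regular-morphism case (Lemma \ref{disc_reg}). First I would show that $f_\ast$ is well-defined on $\mathbb{V}_{\widehat R,\widehat{\mathfrak m}}$: given $w \in \mathbb{V}_{\widehat R,\widehat{\mathfrak m}}$, the morphism $f$ is regular (it is the completion of an excellent local ring), and $\widehat{\mathfrak m}$ pulls back $\mathfrak m$, so $f_\ast w$ is centered at $\xi$ with $(f_\ast w)(\mathfrak m) = w(\widehat{\mathfrak m}) = 1$, and by Lemma \ref{disc_reg} we have $A(f_\ast w) = A(w) < \infty$; hence $f_\ast w \in \mathbb{V}_{R,\mathfrak m}$, and moreover $f_\ast$ preserves the log discrepancy, which will immediately give the last sentence $\mathbb{V}_{\widehat R,\widehat{\mathfrak m},M}\cong\mathbb{V}_{R,\mathfrak m,M}$ once the homeomorphism claim is established.

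Next I would prove bijectivity. For surjectivity and injectivity together, I claim the map $v \mapsto v'$ is a two-sided inverse. Given $v \in \mathbb{V}_{R,\mathfrak m}$, the function $v'$ on $\widehat R$ defined by $v'(f) = \lim_k v(\mathfrak a_k)$, with $\mathfrak a_k\widehat R = (f) + \widehat{\mathfrak m}^k$, is finite by the estimate in the paragraph preceding the statement; one checks it is a valuation (the multiplicativity and the ultrametric inequality pass to the limit because $\mathfrak a_k(fg)\supseteq \mathfrak a_k(f)\mathfrak a_k(g)$ and similarly for sums, using that $v$ is monotone on ideals and that these limits stabilize against $\widehat{\mathfrak m}^k$), that it is centered at $\widehat{\mathfrak m}$, and that $v'(\widehat{\mathfrak m}) = v(\mathfrak m) = 1$. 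That $f_\ast v' = v$ is clear since for $f \in R$ one has $\mathfrak a_k = (f) + \mathfrak m^k$ and $v(\mathfrak a_k) \to v(f)$ by Krull intersection applied with the Izumi bound $v \le A(v)\,\mathrm{ord}_\xi$. Conversely, starting from $w \in \mathbb{V}_{\widehat R,\widehat{\mathfrak m}}$, set $v = f_\ast w$; then $v(\mathfrak a_k) = w(\mathfrak a_k\widehat R) = w((f) + \widehat{\mathfrak m}^k) \to w(f)$ again by the Izumi bound $w \le A(w)\,\mathrm{ord}_{\widehat{\mathfrak m}}$ on $\widehat R$, so $(f_\ast w)' = w$. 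This shows $f_\ast$ is a bijection with inverse $v \mapsto v'$.

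Finally, for the topological statement, note that $f_\ast$ is continuous by construction — it is defined by $(f_\ast w)(g) = w(g)$ for $g \in R \subseteq \widehat R$, and the topology on $\mathbb{V}_{R,\mathfrak m}$ is that of pointwise convergence on ideals of $R$, each of which extends to an ideal of $\widehat R$. A continuous bijection from a compact space to a Hausdorff space is a homeomorphism; $\mathrm{Val}_X$ is Hausdorff (being, with its topology of pointwise convergence on $K(X)^\ast$, a subspace of a product of copies of $[-\infty,+\infty]$), so the restriction of $f_\ast$ to any compact $K' \subseteq \mathbb{V}_{\widehat R,\widehat{\mathfrak m}}$ is a homeomorphism onto its image. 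Applying this to $K' = \mathbb{V}_{\widehat R,\widehat{\mathfrak m},M}$, which is compact by Definition \ref{defn:cpt-valsp}, and using $A(f_\ast w) = A(w)$ gives $f_\ast(\mathbb{V}_{\widehat R,\widehat{\mathfrak m},M}) = \mathbb{V}_{R,\mathfrak m,M}$, hence the last assertion.

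I expect the main obstacle to be the careful verification that $v'$ is genuinely a valuation — in particular that the defining limits exist and interact correctly with products and sums in $\widehat R$ — since this is where one must combine the monotonicity of $v$ on ideals with the stabilization coming from Lemma \ref{Izumi}; the bijectivity and the compactness-to-Hausdorff argument for the homeomorphism are then comparatively formal.
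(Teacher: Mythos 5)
Your proposal is correct, but it distributes the work differently from the paper. For bijectivity the paper simply cites [JM, Corollary~5.11] rather than re-deriving the inverse; your explicit construction of $v\mapsto v'$ matches the recipe given in the paragraph preceding the statement, and your observations that $v(\mathfrak a_k)=\min(v(f),k\,v(\mathfrak m))$ stabilizes (via Lemma \ref{Izumi}) and that $A(f_\ast w)=A(w)$ (via Lemma \ref{disc_reg}) are exactly right. Be aware, though, that the step you flag as the main obstacle really is one: the containment $\mathfrak a_k(f)\mathfrak a_k(g)\subseteq \mathfrak a_k(fg)$ only yields $v'(fg)\le v'(f)+v'(g)$, and the reverse inequality (i.e.\ that $v'$ is multiplicative rather than merely submultiplicative) is the nontrivial content of the cited corollary; your sketch does not supply it. For the homeomorphism claim the two arguments genuinely diverge: the paper uses compactness of $K'$ to produce, for each $h\in\widehat R$, an element $g\in R$ with $g-h\in\widehat{\mathfrak m}^n$ for $n$ exceeding $\max_{v'\in K'}v'(h)$, so that $v'(g)=v'(h)$ on all of $K'$ and the topologies induced by $\widehat R$ and by $R$ coincide on $K'$; you instead invoke the standard fact that a continuous bijection from a compact space to a Hausdorff space is a homeomorphism, which is valid since $\mathrm{Val}$ carries the initial topology for a point-separating family of real-valued maps and is therefore Hausdorff. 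Your route is shorter and more formal; the paper's is more explicit about how functions of $\widehat R$ are approximated by functions of $R$ uniformly on $K'$, which is the kind of statement that is reusable elsewhere. Both correctly reduce the final isomorphism $\mathbb{V}_{\widehat R,\widehat{\mathfrak m},M}\cong\mathbb{V}_{R,\mathfrak m,M}$ to compactness of $\mathbb{V}_{\widehat R,\widehat{\mathfrak m},M}$ and preservation of $A$.
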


\begin{proof}
The bijectivity of $f_\ast$ follows from [\ref{JM}, Corollary 5.11], and we will prove the latter statement. Let $K=f_{\ast}(K')$. It suffices to show that $K'$ is homeomorphic to $K$. Let $h \in \widehat{R}$ be a nonzero function. We have that $\max_{v'\in K'} v'(h)= \alpha < \infty$ since $K'$ is compact. If $g\in R$ is a nonzero function such that $g-h\in \widehat{\mathfrak{m}}^{n}$ in $\widehat{R}$ for some $n> \alpha$. Then $v'(g-h)\geq n v'(\widehat{\mathfrak{m}})> v'(h)$ for all $v'\in K'$. It follows that $v(g)=v'(g)=v'(h)$ for all $v'\in K'$ and hence they induce the same topology.
\end{proof}

\vspace{0.3cm}
\section{Functions on valuation spaces}\label{fun-val}

In this section we will discuss various classes of functions on valuation space with an emphasis on the quasi-plurisubharmonic (qpsh for short) functions.

\subsection{Bounded homogeneous functions}

Let $X$ be a scheme and $\mathrm{V}_{X}$ be its valuation space. A valuative function $\varphi$ is \emph{homogeneous} if $\varphi(tv)=t\varphi(v)$ for all $v\in \mathrm{V}_X$ and $t\in \mathbb{R}_+$. A valuative function $\varphi$ is \emph{bounded} if $\sup_{v\in\mathrm{V}_{X}^{\ast}}\frac{|\varphi(v)|}{A(v)}<\infty$. The set of bounded homogeneous functions forms an $\mathbb{R}$-linear space, which can be equipped with the norm $\|\varphi\|=\sup_{v\in\mathrm{V}_{X}^{\ast}}\frac{|\varphi(v)|}{A(v)}$, and is denoted by $\mathrm{BH}(X)$. If $\mathfrak{q}$ is a nonzero ideal on $X$, then we define the $\mathfrak{q}$-norm as $\|\varphi\|_{\mathfrak{q}}=\sup_{v\in\mathrm{V}_{X}^{\ast}}\frac{|\varphi(v)|}{A(v)+v(\mathfrak{q})}$.

We also define
$$\|\varphi\|_{\mathfrak{q}}^{+}:=\sup_{v\in\mathrm{V}_{X}^{\ast}}\frac{\varphi(v)}{A(v)+v(\mathfrak{q})}$$
and
$$\|\varphi\|_{\mathfrak{q}}^{-}:=\sup_{v\in\mathrm{V}_{X}^{\ast}}\frac{-\varphi(v)}{A(v)+v(\mathfrak{q})}.$$ Clearly, $\|\varphi\|_{\mathfrak{q}}^+=\|-\varphi\|_{\mathfrak{q}}^-$ and $\|\cdot\|_{\mathfrak{q}}=\max\{\|\cdot\|_{\mathfrak{q}}^{+},\|\cdot\|_{\mathfrak{q}}^{-} \}$.

\begin{lem}
Given two nonzero ideals $\mathfrak{p}$, $\mathfrak{q}$ on $X$, the $\mathfrak{p}$-norm and the $\mathfrak{q}$-norm are equivalent.
\end{lem}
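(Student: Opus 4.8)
The plan is to show that the identity map on $\mathrm{BH}(X)$ is bounded in both directions between the two norms, which amounts to producing constants $c, C > 0$ with $c\|\varphi\|_{\mathfrak{p}} \leq \|\varphi\|_{\mathfrak{q}} \leq C\|\varphi\|_{\mathfrak{p}}$ for all $\varphi$. By symmetry it suffices to find a single constant $C$ with $\|\varphi\|_{\mathfrak{q}} \leq C\|\varphi\|_{\mathfrak{p}}$; swapping $\mathfrak{p}$ and $\mathfrak{q}$ then gives the other inequality. Since $\|\varphi\|_{\mathfrak{q}} = \sup_{v \in \mathrm{V}_X^\ast} \frac{|\varphi(v)|}{A(v) + v(\mathfrak{q})}$ and likewise for $\mathfrak{p}$, the whole problem reduces to comparing the quantities $A(v) + v(\mathfrak{q})$ and $A(v) + v(\mathfrak{p})$ on $\mathrm{V}_X^\ast$; concretely, it is enough to show there is a constant $C$ such that
$$
A(v) + v(\mathfrak{p}) \leq C\bigl(A(v) + v(\mathfrak{q})\bigr) \qquad \text{for all } v \in \mathrm{V}_X^\ast.
$$
Indeed, given such a $C$, for any $v$ we get $\frac{|\varphi(v)|}{A(v)+v(\mathfrak{q})} \leq C\frac{|\varphi(v)|}{A(v)+v(\mathfrak{p})} \leq C\|\varphi\|_{\mathfrak{p}}$, and taking the supremum yields $\|\varphi\|_{\mathfrak{q}} \leq C\|\varphi\|_{\mathfrak{p}}$.

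The heart of the matter is therefore to bound $v(\mathfrak{p})$ by a fixed multiple of $A(v) + v(\mathfrak{q})$. First I would pass to an affine situation: since $X$ is quasi-compact it is covered by finitely many affine opens, and a bound over each open glues to a bound over $X$ (the norm is a sup over all of $\mathrm{V}_X^\ast$, and $v$ has a well-defined centre, so $v$ lives in $\mathrm{V}_U^\ast$ for the open $U$ containing $c_X(v)$); so we may assume $X = \Spec R$. Next, the idea is that $v(\mathfrak{p})$ is controlled by $\mathrm{ord}_\xi(\mathfrak{p})$ via Izumi's inequality (Lemma \ref{Izumi}), $v \leq A(v)\,\mathrm{ord}_\xi$, where $\xi = c_X(v)$; so $v(\mathfrak{p}) \leq A(v)\,\mathrm{ord}_\xi(\mathfrak{p})$. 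The remaining task is to see that $\mathrm{ord}_\xi(\mathfrak{p})$ is bounded — but this is a local, purely geometric statement: on a log resolution of $\mathfrak{p}$ it is the maximum of the coefficients of the pullback of $\mathfrak{p}$, and this is finite. However one must be a little careful, since $\mathrm{ord}_\xi$ depends on $\xi$ and $\xi$ ranges over all of $X$; here one should instead argue that $v(\mathfrak{p})$ and $v(\mathfrak{q})$ are comparable to each other directly, using that there are inclusions $\mathfrak{p}^m \subseteq \mathfrak{q}$ and $\mathfrak{q}^n \subseteq \mathfrak{p}$ locally — but globally no such inclusion need hold, so one genuinely has to localize.

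The cleanest route, which I expect to be the actual proof, is: reduce to $X = \Spec R$ affine; fix a nonzero $f \in \mathfrak{p}$ and $g \in \mathfrak{q}$; then $v(\mathfrak{p}) \leq v(f)$ and $v(g) \leq v(\mathfrak{q})$ — wait, that goes the wrong way. Instead: cover $X$ by finitely many affine opens $U_i$ on which, after shrinking, there exist $m_i, n_i$ with $\mathfrak{p}^{m_i}|_{U_i} \subseteq \mathfrak{q}|_{U_i}$ and $\mathfrak{q}^{n_i}|_{U_i} \subseteq \mathfrak{p}|_{U_i}$ (this uses that both ideals define proper closed subsets, or are units, on a suitable cover; when one of them is the unit ideal near a point the corresponding $v(\cdot) = 0$ there and the bound is trivial). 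On $U_i$ this gives $m_i v(\mathfrak{p}) \geq v(\mathfrak{q})$ and $n_i v(\mathfrak{q}) \geq v(\mathfrak{p})$, hence $A(v) + v(\mathfrak{p}) \leq n_i(A(v) + v(\mathfrak{q}))$ for $v$ centered in $U_i$. Taking $C = \max_i \max\{m_i, n_i\}$ (valid since there are finitely many $U_i$) gives the desired global inequality, and then the norm comparison follows as above. The main obstacle is the globalization step — ensuring the local containments $\mathfrak{p}^m \subseteq \mathfrak{q}$, $\mathfrak{q}^n \subseteq \mathfrak{p}$ exist on a finite cover and that the radical/support subtleties (one ideal being trivial somewhere) are handled — but since $X$ is Noetherian and both ideals are coherent, a standard argument via primary decomposition or the Nullstellensatz on each affine chart settles it; the valuation-theoretic content is entirely carried by the elementary fact that $v$ is order-preserving and $v(\mathfrak{a}) = v(\overline{\mathfrak{a}})$.
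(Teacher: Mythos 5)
There is a genuine gap, and it sits exactly where you place your confidence: the ``cleanest route'' at the end. You assert that $X$ can be covered by finitely many affine opens $U_i$ on which (after shrinking) $\mathfrak{p}^{m_i}|_{U_i}\subseteq \mathfrak{q}|_{U_i}$ and $\mathfrak{q}^{n_i}|_{U_i}\subseteq \mathfrak{p}|_{U_i}$. Such containments force $V(\mathfrak{q}|_{U_i})=V(\mathfrak{p}|_{U_i})$, and for two arbitrary nonzero ideals there is no reason for the vanishing loci to be locally nested: take $X=\mathbb{A}^2$, $\mathfrak{p}=(x)$, $\mathfrak{q}=(y)$. Near the origin neither ideal is a unit, and no power of $(x)$ is contained in $(y)$ on any neighborhood of the origin, so the required cover does not exist and the argument collapses precisely at the points where both loci meet. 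The parenthetical escape clause (``both ideals define proper closed subsets, or are units, on a suitable cover'') does not help, because being a proper closed subset is not the issue --- comparability is.

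The fix is the route you half-started and then abandoned: it suffices to bound $v(\mathfrak{p})$ by a fixed multiple of $A(v)$ alone, i.e.\ to show $C':=\sup_{v\in\mathrm{V}_X^\ast} v(\mathfrak{p})/A(v)<\infty$, since then $A(v)\leq A(v)+v(\mathfrak{p})\leq (1+C')A(v)\leq(1+C')(A(v)+v(\mathfrak{q}))$, which compares both weighted norms to the plain norm and hence to each other. This finiteness is not obtained by quoting Izumi and worrying about $\sup_\xi \mathrm{ord}_\xi(\mathfrak{p})$; it follows directly by taking a log resolution $(Y,D)$ of $\mathfrak{p}$: one has $v(\mathfrak{p})=r_{Y,D}(v)(\mathfrak{p})$ and $A(v)\geq A(r_{Y,D}(v))$, so the supremum is computed on $\mathrm{QM}(Y,D)$, where both $v\mapsto v(\mathfrak{p})$ and $v\mapsto A(v)$ are linear on each simplicial cone; the ratio is therefore maximized at the rays, giving $C'=\max_i \mathrm{ord}_{D_i}(\mathfrak{p})/A(\mathrm{ord}_{D_i})<\infty$ (the Arnold multiplicity of $\mathfrak{p}$). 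This is exactly what the paper does. Your opening reductions (symmetry, reducing to comparing the denominators) are fine, but the valuation-theoretic content is carried by this log-resolution bound, not by any containment between $\mathfrak{p}$ and $\mathfrak{q}$.
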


\begin{proof}
We first assume that $\mathfrak{p}=\mathcal{O}_{X}$. Then we have the inequality $\|\cdot \|_{\mathfrak{q}}\leq \|\cdot \| \leq (1+\sup_{v\in\mathrm{V}_{X}^{\ast}}\frac{v(\mathfrak{q})}{A(v)})\|\cdot \|_{\mathfrak{q}}$. Note that $\sup_{v\in\mathrm{V}_{X}^{\ast}}\frac{v(\mathfrak{q})}{A(v)}=\max_{D_i}\frac{\mathrm{ord}_{D_i}(\mathfrak{q})}{A(\mathrm{ord}_{D_i})}<\infty$ where $D_{i}$ runs over all irreducible components of $D$ on a birational model $Y$ such that $(Y,D)$ is a log resolution of $\mathfrak{q}$. This implies that $1+\sup_{v\in\mathrm{V}_{X}^{\ast}}\frac{v(\mathfrak{q})}{A(v)} < \infty$ and leads to the conclusion.
\end{proof}

\begin{prop}\label{norm_reg}
Given a scheme $X$, $\mathrm{BH}(X)$ is a Banach space. If $f:X'\longrightarrow X$ is a regular morphism and $f_{\ast}: \mathrm{V}_{X'}\longrightarrow \mathrm{V}_{X}$ is the induced map, then the induced map $f^{\ast}:\mathrm{BH}(X)\longrightarrow\mathrm{BH}(X')$ by taking $\varphi$ to $\varphi\circ f_{\ast}$ is a bounded linear operator of Banach spaces. More precisely, $\|f^{\ast}(\varphi)\|_{\mathfrak{q}\cdot \mathcal{O}_{X'}}\leq \|\varphi\|_{\mathfrak{q}}$ for any nonzero ideal $\mathfrak{q}$ on $X$.
\end{prop}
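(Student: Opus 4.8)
The plan is to verify the three assertions in turn, treating them as increasingly refined statements. First, $\mathrm{BH}(X)$ is clearly an $\mathbb{R}$-linear space with a seminorm $\|\cdot\|$; to see that $\|\cdot\|$ is a genuine norm, observe that if $\|\varphi\|=0$ then $\varphi$ vanishes on every nontrivial valuation, and by homogeneity $\varphi$ vanishes on the trivial valuation too, so $\varphi\equiv 0$. For completeness, I would take a Cauchy sequence $(\varphi_n)$ in $\mathrm{BH}(X)$. For each fixed $v\in\mathrm{V}_X^\ast$ the sequence $\varphi_n(v)$ is Cauchy in $\mathbb{R}$ because $|\varphi_n(v)-\varphi_m(v)|\leq \|\varphi_n-\varphi_m\|A(v)$ and $A(v)<\infty$ by definition of a tempered valuation; define $\varphi(v)$ as the pointwise limit, extended by $\varphi(0):=0$. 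Homogeneity of $\varphi$ is inherited in the limit, and the uniform Cauchy estimate $\sup_v |\varphi_n(v)-\varphi_m(v)|/A(v)\leq\epsilon$ passes to the limit in $m$ to give $\|\varphi_n-\varphi\|\leq\epsilon$, which simultaneously shows $\varphi\in\mathrm{BH}(X)$ (it is bounded since $\varphi_n$ is) and that $\varphi_n\to\varphi$ in the norm.

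Next, for the regular morphism $f:X'\to X$, I first need that $f^\ast$ is well-defined, i.e.\ that $\varphi\circ f_\ast$ is again bounded homogeneous on $X'$. Homogeneity is immediate since $f_\ast$ commutes with scaling of valuations. Boundedness, together with the sharp inequality, will follow from the key estimate: for every nontrivial $v'\in\mathrm{V}_{X'}^\ast$ one has $A_{X'}(v')\geq A_X(f_\ast v')$ by Lemma \ref{disc_reg}, and $v'(\mathfrak{q}\cdot\mathcal{O}_{X'})=(f_\ast v')(\mathfrak{q})$ since pushing forward a valuation does not change its value on an ideal pulled back from the base. Hence the denominator only grows: $A_{X'}(v')+v'(\mathfrak{q}\cdot\mathcal{O}_{X'})\geq A_X(f_\ast v')+(f_\ast v')(\mathfrak{q})$. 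Combined with $|\varphi(f_\ast v')|\leq \|\varphi\|_{\mathfrak{q}}\,(A_X(f_\ast v')+(f_\ast v')(\mathfrak{q}))$, this yields
$$
\frac{|(f^\ast\varphi)(v')|}{A_{X'}(v')+v'(\mathfrak{q}\cdot\mathcal{O}_{X'})}\leq \|\varphi\|_{\mathfrak{q}},
$$
and taking the supremum over $v'$ gives $\|f^\ast\varphi\|_{\mathfrak{q}\cdot\mathcal{O}_{X'}}\leq\|\varphi\|_{\mathfrak{q}}$. One subtlety to address: if $v'$ is nontrivial but $f_\ast v'$ happens to be trivial, then $f_\ast v'$ is not in $\mathrm{V}_X^\ast$ and one cannot directly invoke the $\mathfrak{q}$-norm bound; in that case $f_\ast v'$ is the trivial valuation, so $\varphi(f_\ast v')=0$ by homogeneity, and the inequality holds trivially. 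Linearity of $f^\ast$ is formal, and the displayed inequality for all $\mathfrak{q}$ shows in particular (taking $\mathfrak{q}=\mathcal{O}_X$, or using the equivalence of $\mathfrak{p}$- and $\mathfrak{q}$-norms from the preceding lemma) that $f^\ast$ is a bounded operator between Banach spaces.

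The main obstacle is making precise the claim $v'(\mathfrak{q}\cdot\mathcal{O}_{X'})=(f_\ast v')(\mathfrak{q})$ together with the correct behavior of centres: one must check that the centre of $f_\ast v'$ on $X$ is the image of the centre of $v'$, so that $\mathfrak{q}\cdot\mathcal{O}_{X'}$ is the right ideal to evaluate and the local computation of the two valuations on the relevant stalk agrees. This is where regularity of $f$ (flatness with geometrically regular fibres) is used, and the cleanest route is to reduce to the definition of $f_\ast$ from [\ref{JM}] and the fact that $v'$ restricted to $K(X)^\ast\subseteq K(X')^\ast$ is exactly $f_\ast v'$, so its value on generators of $\mathfrak{q}$ is unchanged; the equality $v'(\mathfrak{q}\mathcal{O}_{X'})=(f_\ast v')(\mathfrak{q})$ then follows since $\mathfrak{q}\mathcal{O}_{X'}$ is generated by those same elements. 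Everything else is bookkeeping.
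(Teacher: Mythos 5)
Your proof is correct and follows essentially the same route as the paper's: completeness via pointwise limits of Cauchy sequences (the paper phrases this as working with the sup norm on $\Lambda_X$), and the norm inequality via Lemma \ref{disc_reg} together with the fact that $v'(\mathfrak{q}\cdot\mathcal{O}_{X'})=(f_\ast v')(\mathfrak{q})$. You fill in some details the paper leaves implicit (non-degeneracy of the norm, and the edge case where $v'$ is nontrivial but $f_\ast v'$ is trivial, correctly dispatched by homogeneity), which is fine.
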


\begin{proof}
Note that a bounded homogeneous function $\varphi$ is also a function on $\Lambda_{X}:=\{v\in \mathrm{V}_X^\ast|A(v)=1\}$ with the norm $\sup_{v\in \Lambda_{X}}|\varphi(v)|<\infty$. If $\{\varphi_{m}\}$ is a Cauchy sequence in $\mathrm{BH}(X)$, then $\varphi_{m}$ converges pointwisely to a homogeneous function $\varphi$. Since $\sup_{v\in \Lambda_{X}}|\varphi(v)| \leq \sup_{v\in \Lambda_{X}} |\varphi(v) - \varphi_{m}(v)| + \sup_{v\in \Lambda_{X}} |\varphi_{m}(v)| <\infty$, $\varphi$ is a bounded homogeneous function. This proves that $\mathrm{BH}(X)$ is a Banach space. For the second statement, simply note that $$\|f^{\ast}(\varphi)\|_{\mathfrak{q}\cdot \mathcal{O}_{X'}}=\sup_{v'\in\mathrm{V}_{X'}^{\ast}} \frac{|\varphi(v)|}{A(v')+v'(\mathfrak{q}\cdot \mathcal{O}_{X'})}\leq \sup_{v\in \mathrm{V}_{X}^{\ast}} \frac{|\varphi(v)|}{A(v)+v(\mathfrak{q})}=\|\varphi\|_{\mathfrak{q}}
$$
by Lemma \ref{disc_reg}.
\end{proof}

\begin{rem}
Let $\varphi$ be a bounded homogeneous function such that $\varphi(v)=-v(\mathfrak{a})$ for some nonzero ideal $\mathfrak{a}$ on $X$. It is easy to see that the norm $\|\varphi\|_{\mathfrak{q}}$ is exactly the Arnold multiplicity $\mathrm{Arn}^{\mathfrak{q}}\mathfrak{a}$, and its reciprocal is the log canonical threshold $\mathrm{lct}^{\mathfrak{q}}\mathfrak{a}$. We will discuss this type of functions in detail later.
\end{rem}

\begin{defn}\label{defn:val_fun}
A bounded homogeneous function $\varphi$ is said to be \emph{weakly continuous} if $\varphi$ is continuous on every dual complex $\Delta(Y,D)$.
\end{defn}

\begin{exa}
(1). As we already mentioned, the log discrepancy $A$ is a weakly bounded homogeneous function. \\
(2). If $\{\varphi_k\}$ is a sequence of continuous bounded homogeneous functions which converges to a function $\varphi$ strongly in the norm, then $\varphi$ is weakly continuous.
\end{exa}

\subsection{Ideal functions and qpsh functions}

Given a nonzero ideal $\mathfrak{a}$, we define $|\mathfrak{a}|(v)=-e^{v(\mathfrak{a})}$ by convention. It is obvious that $\log|\mathfrak{a}|$ is a continuous bounded homogeneous function.

\begin{defn}
A bounded homogeneous function $\varphi$ is said to be an \emph{ideal function} if there exists a finite number of nonzero ideals $\mathfrak{a}_j$ and positive real numbers $c_j$ such that $\varphi=\sum_{j=1}^l c_j\log|\mathfrak{a}_j|$.
\end{defn}

\begin{lem}\label{lem:c_norms}
Let $\varphi=\sum_{j=1}^l c_j\log|\mathfrak{a}_j|$ be an ideal function on $X$ and $\mathfrak{q}$ be a nonzero ideal. Then,
$$
\|\varphi\|_{\mathfrak{q}}= \max_E\{\frac{\sum_{j=1}^l c_j \mathrm{ord}_E \mathfrak{a}_j}{A(\mathrm{ord}_E)+\mathrm{ord}_E\mathfrak{q}}\}
$$
for some prime divisor $E$ over $X$.
\end{lem}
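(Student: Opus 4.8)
The plan is to reduce the supremum defining $\|\varphi\|_{\mathfrak{q}}$ to a finite maximum over the components of a single, well-chosen log resolution, and then to evaluate that maximum by an elementary mediant inequality on each cone $\mathrm{QM}_{\eta}(Y,D)$. First, since each $c_{j}>0$ and $v(\mathfrak{a}_{j})\geq 0$ for every $v\in\mathrm{V}_{X}$, we have $-\varphi(v)=\sum_{j}c_{j}v(\mathfrak{a}_{j})\geq 0$, so $\varphi\leq 0$ and $\|\varphi\|_{\mathfrak{q}}=\|\varphi\|_{\mathfrak{q}}^{-}=\sup_{v\in\mathrm{V}_{X}^{\ast}}\frac{\sum_{j}c_{j}v(\mathfrak{a}_{j})}{A(v)+v(\mathfrak{q})}$ (the case $\varphi\equiv 0$, i.e. all $\mathfrak{a}_{j}=\mathcal{O}_{X}$, being trivial, assume otherwise). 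I would then fix a log resolution $(Y,D)$ resolving $\mathfrak{a}_{1},\dots,\mathfrak{a}_{l}$ and $\mathfrak{q}$ simultaneously, which exists since log resolutions of $X$ form an inverse system. For any $v\in\mathrm{V}_{X}^{\ast}$ put $w=r_{Y,D}(v)$: because $(Y,D)$ resolves each $\mathfrak{a}_{j}$ and $\mathfrak{q}$, the material of Section \ref{section:val_space} gives $v(\mathfrak{a}_{j})=w(\mathfrak{a}_{j})$ and $v(\mathfrak{q})=w(\mathfrak{q})$, whereas $A(v)\geq A(w)$ always; hence the quotient does not decrease when $v$ is replaced by $w$, and it equals $0$ whenever $w$ is trivial. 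Therefore $\|\varphi\|_{\mathfrak{q}}=\sup_{w}\frac{\sum_{j}c_{j}w(\mathfrak{a}_{j})}{A(w)+w(\mathfrak{q})}$, where $w$ ranges over $\mathrm{QM}(Y,D)\cap\mathrm{V}_{X}^{\ast}$.

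Next I would maximize over one cone $\mathrm{QM}_{\eta}(Y,D)\cong\mathbb{R}_{\geq 0}^{r}$, with coordinates $\alpha_{i}=w(D_{i})$ for the components $D_{1},\dots,D_{r}$ of $D$ through $\eta$. Since $(Y,D)$ resolves the $\mathfrak{a}_{j}$ and $\mathfrak{q}$ — so that $\mathfrak{a}_{j}\mathcal{O}_{Y}$ and $\mathfrak{q}\mathcal{O}_{Y}$ are locally principal, monomial in the $D_{i}$ — the maps $w\mapsto w(\mathfrak{a}_{j})=\sum_{i}\mathrm{ord}_{D_{i}}(\mathfrak{a}_{j})\,\alpha_{i}$ and $w\mapsto w(\mathfrak{q})=\sum_{i}\mathrm{ord}_{D_{i}}(\mathfrak{q})\,\alpha_{i}$ are linear on this cone, as is $A(w)=\sum_{i}A(\mathrm{ord}_{D_{i}})\,\alpha_{i}$. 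Setting $p_{i}=\sum_{j}c_{j}\,\mathrm{ord}_{D_{i}}(\mathfrak{a}_{j})\geq 0$ and $s_{i}=A(\mathrm{ord}_{D_{i}})+\mathrm{ord}_{D_{i}}(\mathfrak{q})>0$, the quotient becomes the weighted average
$$
\frac{\sum_{i}p_{i}\alpha_{i}}{\sum_{i}s_{i}\alpha_{i}}=\sum_{i}\Bigl(\frac{s_{i}\alpha_{i}}{\sum_{i'}s_{i'}\alpha_{i'}}\Bigr)\frac{p_{i}}{s_{i}}\ \leq\ \max_{i}\frac{p_{i}}{s_{i}},
$$
with equality at a coordinate vector, i.e. at $w$ proportional to some $\mathrm{ord}_{D_{i}}$. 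Running over the finitely many $\eta$ gives $\|\varphi\|_{\mathfrak{q}}=\max_{D_{i}}\frac{\sum_{j}c_{j}\,\mathrm{ord}_{D_{i}}(\mathfrak{a}_{j})}{A(\mathrm{ord}_{D_{i}})+\mathrm{ord}_{D_{i}}(\mathfrak{q})}$, a maximum over the finitely many components of $D$, each a prime divisor over $X$. Finally, for every prime divisor $E$ over $X$ the valuation $\mathrm{ord}_{E}$ is admissible in the supremum defining $\|\varphi\|_{\mathfrak{q}}$, so $\frac{\sum_{j}c_{j}\mathrm{ord}_{E}(\mathfrak{a}_{j})}{A(\mathrm{ord}_{E})+\mathrm{ord}_{E}(\mathfrak{q})}\leq\|\varphi\|_{\mathfrak{q}}$; combined with the previous identity this yields the asserted formula, with the maximum attained at some prime divisor $E$ over $X$.

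I expect the only genuinely delicate step to be the reduction to quasi-monomial valuations: one must be sure that passing from $v$ to its retraction $r_{Y,D}(v)$ leaves each $v(\mathfrak{a}_{j})$ and $v(\mathfrak{q})$ unchanged while not increasing $A$, and this is exactly what forces the choice of a common log resolution of the $\mathfrak{a}_{j}$ and $\mathfrak{q}$ (rather than one resolving only their product); the linearity of $w\mapsto w(\mathfrak{a}_{j})$ on each $\mathrm{QM}_{\eta}(Y,D)$ is of the same nature, resting on $\mathfrak{a}_{j}\mathcal{O}_{Y}$ becoming a monomial in the components of $D$ (cf. [\ref{Laz}], [\ref{JM}, Corollary 4.8]). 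Everything else is the elementary mediant inequality displayed above.
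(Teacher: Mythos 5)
Your proof is correct and takes essentially the same route as the paper's: the paper fixes a log resolution $(Y,D)$ of $\mathfrak{q}\cdot\prod_{j}\mathfrak{a}_{j}$ and asserts the identity ``by an easy computation,'' and your argument --- retracting to $\mathrm{QM}(Y,D)$ via $v(\mathfrak{a}_{j})=r_{Y,D}(v)(\mathfrak{a}_{j})$, $v(\mathfrak{q})=r_{Y,D}(v)(\mathfrak{q})$ and $A(v)\geq A(r_{Y,D}(v))$, then applying the mediant inequality on each cone $\mathrm{QM}_{\eta}(Y,D)$ --- is exactly that computation written out in full.
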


\begin{proof}
Let $(Y,D)$ be a log resolution of $\mathfrak{q}\cdot (\prod_{j=1}^l \mathfrak{a}_j)$, and let $D_i$'s be the irreducible components of $D$. By an easy computation, we see that
$$
\|\varphi\|_{\mathfrak{q}}= \max_{D_i}\{\frac{\sum_{j=1}^l c_j \mathrm{ord}_{D_i} \mathfrak{a}_j}{A(\mathrm{ord}_{D_i})+\mathrm{ord}_{D_i}\mathfrak{q}}\}
$$
where $D_i$ runs over all irreducible components of $D$.
\end{proof}

\begin{lem}\label{lem:linear}
Let $\varphi$ be a bounded homogeneous function is determined on some dual complex $\Delta(Y,D)$ in the sense of $\varphi=\varphi \circ r_{Y,D}$. Assume that $\varphi$ is affine on each face of the dual complex $\Delta(Y,D)$ and that $(Y',D') \succeq (Y,D)$. Then $\varphi=\varphi \circ r_{Y',D'}$ which is also affine on each face of the dual complex $\Delta(Y',D')$.
\end{lem}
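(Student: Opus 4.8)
The plan is to derive both assertions from the formal properties of the retraction maps recalled above, together with one geometric observation about the way $r_{Y,D}$ moves cones, which is the only point that needs care. The identity $\varphi=\varphi\circ r_{Y',D'}$ is immediate: since $(Y',D')\succeq (Y,D)$ we have $r_{Y,D}\circ r_{Y',D'}=r_{Y,D}$, so for every valuation $v$,
\[
(\varphi\circ r_{Y',D'})(v)=\varphi\bigl(r_{Y,D}(r_{Y',D'}(v))\bigr)=\varphi\bigl(r_{Y,D}(v)\bigr)=\varphi(v),
\]
where the first and last equalities both use the hypothesis $\varphi=\varphi\circ r_{Y,D}$; note this part uses nothing about affineness.

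For the affineness statement I would first pass from faces of the dual complex to cones. Because $\varphi$ is homogeneous and $A$ is linear and strictly positive on each cone $\mathrm{QM}_{\eta}(Y,D)$, the slice $\Delta_{\eta}=\{A=1\}\cap\mathrm{QM}_{\eta}(Y,D)$ is an affine section of that cone, and $\varphi$ is affine on $\Delta_{\eta}$ if and only if $\varphi$ is linear on $\mathrm{QM}_{\eta}(Y,D)$; the same equivalence holds on $(Y',D')$, and linearity on a cone forces affineness on each of its faces. So the hypothesis says $\varphi$ is linear on every cone of $\mathrm{QM}(Y,D)$, and it suffices to prove that $\varphi$ is linear on every cone $\mathrm{QM}_{\eta'}(Y',D')$.

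Fix a generic point $\eta'$ of a stratum of $(Y',D')$ and write $\sigma:Y'\to Y$. On $\mathrm{QM}_{\eta'}(Y',D')$ we have $\varphi=\varphi\circ r_{Y,D}$ by hypothesis, and $r_{Y,D}$ is integral linear there, so it is enough to show that $r_{Y,D}$ maps $\mathrm{QM}_{\eta'}(Y',D')$ into a single cone $\mathrm{QM}_{\eta}(Y,D)$ of $\mathrm{QM}(Y,D)$: the restriction $\varphi|_{\mathrm{QM}_{\eta'}(Y',D')}$ is then the composite of an integral linear map with $\varphi|_{\mathrm{QM}_{\eta}(Y,D)}$, which is linear, hence linear. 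For the single-cone claim, take $v$ in the relative interior of $\mathrm{QM}_{\eta'}(Y',D')$. Since $\Supp D'$ contains $\Supp\sigma^{\ast}D$, for each component $D_j$ of $D$ we may write $\sigma^{\ast}D_j=\sum_k m_{jk}D'_k$, so $v(D_j)=\sum_k m_{jk}v(D'_k)$; as $v(D'_k)>0$ exactly for the components $D'_k$ passing through $\eta'$, we get that $v(D_j)>0$ if and only if $\sigma(\eta')\in D_j$, a condition that does not depend on the weights defining $v$. Hence $r_{Y,D}(v)(D_j)=v(D_j)$ is positive for the same set $S$ of indices $j$ as $v$ ranges over the relative interior, so $r_{Y,D}(v)$ stays in the relative interior of one cone $\mathrm{QM}_{\eta}(Y,D)$, with $\eta$ the generic point of the component of $\bigcap_{j\in S}D_j$ through $\sigma(\eta')$; by continuity of $r_{Y,D}$ and the fact that $\mathrm{QM}_{\eta'}(Y',D')$ is the closure of its relative interior, $r_{Y,D}(\mathrm{QM}_{\eta'}(Y',D'))\subseteq\mathrm{QM}_{\eta}(Y,D)$, as wanted. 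I expect this single-cone step to be the main obstacle: it is really an assertion about how the cone complex $\mathrm{QM}(Y',D')$ sits over $\mathrm{QM}(Y,D)$, so the genuine work is in organizing the combinatorics of $\sigma^{\ast}D$ carefully, while everything else is formal manipulation of the retraction maps.
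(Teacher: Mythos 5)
Your proof is correct and takes essentially the same route as the paper's, which simply notes that affineness on faces of $\Delta(Y,D)$ is equivalent to linearity on the cones of $\mathrm{QM}(Y,D)$ and that $r_{Y,D}$ is integral linear on each cone of $\mathrm{QM}(Y',D')$. The one point the paper's two-line argument leaves implicit is precisely the single-cone claim (that $r_{Y,D}$ sends each cone $\mathrm{QM}_{\eta'}(Y',D')$ into a single cone $\mathrm{QM}_{\eta}(Y,D)$, without which the composite of a linear map with a merely piecewise-linear $\varphi$ need not be linear), and your argument for it via the constancy of the set $\{j : v(D_j)>0\}$ on the relative interior is correct.
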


\begin{proof}
The assumption that $\varphi$ is affine on each face of the dual complex $\Delta(Y,D)$ is equivalent to that $\varphi$ is linear on each simplicial cone of $\mathrm{QM}(Y,D)$. The conclusion follows from the fact that $r_{Y,D}$ is linear on each simplicial cone of $\mathrm{QM}(Y',D')$.
\end{proof}

\begin{defn}
A bounded homogeneous function $\varphi$ is said to be a \emph{quasi-plurisubharmonic} (\emph{qpsh} for short) function if there exists a sequence of ideal functions which converges to $\varphi$ strongly in the norm. The set of qpsh functions, which is a closed convex cone in $\mathrm{BH}(X)$, is denoted by $\mathrm{QPSH}(X)$.
\end{defn}

\begin{defn}
The \emph{support} of a qpsh function is defined to be the set $\{x\in X|x=c_X(v)$ for some nontrivial tempered valuation $v$ such that $\varphi(v)<0\}$.
\end{defn}

If $\varphi=\sum_{j=1}^l c_i\log|\mathfrak{a}_i|$ is an ideal function, then the support of $\varphi$ is the union of the vanishing loci $V(\mathfrak{a}_j)$ and hence proper closed. We will see that the support of is a qpsh function is a countable union of proper closed subsets. See Corollary \ref{cor:supp}.

\begin{prop}\label{qpsh_usc}
Let $\varphi \in \mathrm{QPSH}(X)$ be a qpsh function. Then, $\varphi$ is convex on each face of every dual complex $\Delta(Y,D)$. Moreover, $\varphi\circ r_{Y,D}$ form a decreasing net of continuous functions which converges to $\varphi$ strongly in the norm. In particular, $\varphi$ is weakly continuous and upper-semicontinuous (usc for short).
\end{prop}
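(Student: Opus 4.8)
The plan is to reduce everything to the case of ideal functions, where the statements are essentially computational, and then pass to the limit using the norm. First I would fix a log resolution $(Y,D)$ and analyze how $\varphi \circ r_{Y,D}$ behaves when $\varphi = \sum_j c_j \log|\mathfrak{a}_j|$ is an ideal function. On each simplicial cone $\mathrm{QM}_\eta(Y,D) \cong \mathbb{R}_{\geq 0}^r$, the function $v \mapsto -v(\mathfrak{a}_j)$ is a minimum of finitely many linear functionals (coming from the monomials appearing in local generators of $\mathfrak{a}_j$), hence concave, so $v \mapsto c_j v(\mathfrak{a}_j)$ is convex and piecewise linear; summing over $j$, $\varphi$ itself (which is already weakly continuous since ideal functions are continuous) is convex on each face of $\mathrm{QM}(Y,D)$. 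Moreover, for a log resolution of the product $\mathfrak{q}\cdot\prod_j\mathfrak{a}_j$ — or more simply for any refinement that resolves $\prod_j \mathfrak{a}_j$ — we have $v(\mathfrak{a}_j) = r_{Y,D}(v)(\mathfrak{a}_j)$ by the last remark of the Quasi-monomial valuations paragraph, so $\varphi = \varphi \circ r_{Y,D}$ there. For a coarser $(Y,D)$ we only get $\varphi \geq \varphi \circ r_{Y,D}$ pointwise (since $v \geq r_{Y,D}(v)$ and each $c_j v(\mathfrak{a}_j) \geq c_j r_{Y,D}(v)(\mathfrak{a}_j)$), and $\varphi \circ r_{Y,D}$ is continuous, convex and piecewise linear on each face by Lemma \ref{lem:linear}; refining $(Y,D)$ makes $r_{Y,D}(v)$ increase towards $v$, so $\varphi \circ r_{Y,D}$ forms a decreasing net.

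Next I would make the convergence quantitative: if $\varphi$ is an ideal function and $(Y,D)$ is fine enough to resolve $\prod_j \mathfrak{a}_j$, then $\varphi = \varphi\circ r_{Y,D}$ exactly, so the net is eventually equal to $\varphi$; in particular $\|\varphi - \varphi\circ r_{Y,D}\| \to 0$ trivially. For a general qpsh $\varphi$, write $\varphi = \lim \varphi_k$ with $\varphi_k$ ideal functions converging in norm, and estimate
$$
\|\varphi - \varphi\circ r_{Y,D}\| \leq \|\varphi - \varphi_k\| + \|\varphi_k - \varphi_k\circ r_{Y,D}\| + \|(\varphi_k - \varphi)\circ r_{Y,D}\|.
$$
The first term is small by choice of $k$; the third term is bounded by $\|\varphi_k - \varphi\|$ since precomposition with $r_{Y,D}$ is norm-nonincreasing (because $A(r_{Y,D}(v)) \leq A(v)$, exactly as in the proof of Proposition \ref{norm_reg}); and for fixed $k$ the middle term goes to $0$ as $(Y,D)$ runs through resolutions dominating a log resolution of $\prod_j \mathfrak{a}_j^{(k)}$. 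A diagonal argument then gives $\|\varphi - \varphi\circ r_{Y,D}\| \to 0$ along the net. Passing convexity to the limit is routine: a norm limit of functions that are convex on each face of $\Delta(Y,D)$ is again convex there, so $\varphi$ is convex on each face of every $\Delta(Y,D)$; and being a norm limit of the continuous functions $\varphi\circ r_{Y,D}$, $\varphi$ is weakly continuous.

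For the monotonicity of the net $\{\varphi\circ r_{Y,D}\}$ in the general case, I would argue that for $(Y',D') \succeq (Y,D)$ and any $v$, applying $r_{Y,D}\circ r_{Y',D'} = r_{Y,D}$ together with the approximation in norm by ideal functions (for which $c_j w(\mathfrak{a}_j)$ decreases under the retraction $w \mapsto r_{Y,D}(w)$) gives $\varphi\circ r_{Y',D'} \geq \varphi\circ r_{Y,D}$ pointwise — this is a limit of the corresponding inequality for $\varphi_k$, which holds because $r_{Y',D'}(v) \geq r_{Y,D}(v)$ and $\varphi_k$ is monotone along retractions. Finally, upper-semicontinuity of $\varphi$ follows formally: $\varphi = \inf_{(Y,D)} \varphi\circ r_{Y,D}$ is a pointwise infimum of a family of continuous functions (each $\varphi\circ r_{Y,D}$ is continuous on $\mathrm{V}_X$ since $r_{Y,D}$ is continuous and $\varphi$ is weakly continuous on $\mathrm{QM}(Y,D)$), hence usc.

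The main obstacle I anticipate is making the last point — that $\varphi = \inf_{(Y,D)} \varphi\circ r_{Y,D}$ as functions on all of $\mathrm{V}_X$, not just on quasi-monomial valuations — fully rigorous: one needs that for a general tempered valuation $v$ the values $r_{Y,D}(v)(\mathfrak{a})$ increase to $v(\mathfrak{a})$ for every ideal $\mathfrak{a}$, which is the content of the cited results from [\ref{JM}] and must be applied carefully, and one also needs the norm convergence $\|\varphi - \varphi\circ r_{Y,D}\|\to 0$ to control behavior uniformly over the noncompact space $\mathrm{V}_X^\ast$ (normalizing by $A(v)$, i.e. working on $\Lambda_X$, is what saves this). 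The convexity and piecewise-linearity bookkeeping on the simplicial cones is routine once one unwinds the local monomial description of $v(\mathfrak{a}_j)$.
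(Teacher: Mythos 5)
Your proposal is correct and follows essentially the same route as the paper: reduce to ideal functions, use the local monomial description to get convexity and the identity $\varphi_k=\varphi_k\circ r_{Y,D}$ on resolutions of the relevant ideals, and then combine the triangle inequality with the fact that precomposition by $r_{Y,D}$ does not increase the norm (via $A(r_{Y,D}(v))\leq A(v)$) to get norm convergence of the decreasing net. One minor slip: in the convexity step the signs are crossed --- it is $v\mapsto v(\mathfrak{a}_j)$ that is a minimum of linear functionals and hence concave, so that $\log|\mathfrak{a}_j|(v)=-v(\mathfrak{a}_j)$ and therefore $\varphi=\sum_j c_j\log|\mathfrak{a}_j|$ are convex; as written your intermediate claims would make $\varphi$ concave, though the idea and the final conclusion are right.
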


\begin{proof}
To show that $\varphi$ is convex on each face of every dual complex $\Delta(Y,D)$, it suffices to prove this when $\varphi$ is an ideal function. We can assume that $\varphi=c\log|\mathfrak{a}|$. Let $\eta$ be a generic point of the intersection of $D_1,\ldots,D_l$. We will prove that $\varphi$ is convex on $\mathrm{QM}_\eta (Y,D)$ which essentially implies the convexity on $\Delta(Y,D)$. To this end, assume that $v=\sum_{j=1}^k \lambda_j v_j$ such that $v$, $v_j \in \mathrm{QM}_\eta (Y,D)$, $\lambda_j>0$ for every $j$ and $\sum_{j=1}^k \lambda_j=1$. Assume further that $\mathfrak{a}\cdot \mathcal{O}_Y$ is principle near $\eta$ generated by $f$. If we consider the local coordinates $\underline{y}=\{y_1,\ldots,y_l\}$ with the origin $\eta$, then $v$ and $v_j$ can be represented by $\alpha=(\alpha^1,\ldots,\alpha^l)$ and $\alpha_j=(\alpha_j^1,\ldots,\alpha_j^l)$ with the values $v(f)=\min \{<\alpha,\beta>|f=\sum c_\beta y^\beta\}$ and $v_j(f)=\min \{<\alpha_j,\beta>|f=\sum c_\beta y^\beta\}$. Obviously, $v(f) \geq \sum_{j=1}^k \lambda_j v_j(f)$ and we obtain the required convexity. If $\mathfrak{a}\cdot \mathcal{O}_Y$ is not principle, then $\varphi$ is the maximum of a finite number of convex functions and hence convex.

Given an arbitrary qpsh function $\varphi$, the functions $\varphi\circ r_{Y,D}$ form a decreasing net because $v \leq r_{Y,D}(v)$, and $\varphi$ is continuous on $\Delta(Y,D)$ because it is the uniform limit function of continuous functions. It suffices to show that $\varphi\circ r_{Y,D}$ converges to $\varphi$ strongly in the norm. To this end, consider a sequence of ideal functions $\varphi_j=c_j \log|\mathfrak{a}_j|$ which converges to $\varphi$ strongly in the norm. We then obtain that
$$
\|\varphi -\varphi\circ r_{Y,D}\|\leq \|\varphi - \varphi_j\|+\|\varphi_j - \varphi\circ r_{Y,D}\| \leq 2 \|\varphi - \varphi_j\|
$$
if $(Y,D)$ is a log resolution of $\mathfrak{a}_j$ which completes the proof.
\end{proof}

\begin{rem}
The proposition above implies that a qpsh function is uniquely determined by its values on divisorial valuations. In fact, if $\varphi$ and $\phi$ have the same values on divisorial valuations, then $\varphi=\phi$ on every dual complex $\Delta(Y,D)$ by the continuity and hence $\varphi=\phi$ on $\mathrm{V}_X$.
\end{rem}

The following example shows that the pointwise limit of a decreasing sequence of ideal functions is not qpsh in general.

\begin{exa}\label{exa:non-psh}
Let $X=\mathrm{Spec }k[x]$ be an affine line, and let $\phi_k=\sum_{j=1}^k \log|f_j|$ where $f_j=x-j$. We see that $\phi_k$ is a decreasing sequence of ideal functions and the pointwise limit function $\varphi$ exists. But $\varphi$ is not qpsh because $\|\varphi -\phi \|\geq 1$ for any ideal function $\phi$.
\end{exa}

If $f:X' \rightarrow X$ is a regular morphism and $\varphi$ is a qpsh function on $X$, then $f^\ast \varphi$ is a qpsh function on $\mathrm{V}_{X'}$ by Proposition \ref{norm_reg}. In particular if $f:U \rightarrow X$ is an open inclusion (resp. $f: \mathrm{Spec} \mathcal{O}_{X,\xi} \rightarrow X$), we say that $f^\ast \varphi$ is the restriction (resp. germ) of $\varphi$, and denote by $\varphi|_U$ (resp. $\varphi_\xi$). Also, restrictions to the neighborhoods of a point $\xi$ induce a map $\mathrm{QPSH}(X) \rightarrow \lim\limits_{\overrightarrow{U \ni \xi}} \mathrm{QPSH}(U)$, and the image of $\varphi$ is also said to be the germ of $\varphi$, and denoted by $\varphi|_\xi$.

If $\xi$ is not contained in the support of a qpsh function $\varphi$, then $\varphi_\xi=0$ by Proposition \ref{qpsh_usc}. However, the following example shows that it could happen that the germ of $\varphi$ is nonzero in the set $\lim\limits_{\overrightarrow{U \ni \xi}} \mathrm{QPSH}(U)$.

\begin{exa}
Let $X=\mathrm{Spec}k[x]$ be an affine line, and let $\phi_k=\sum_{m=1}^k \frac{1}{2^m}\log|f_m|$ where $f_m=x-\frac{1}{m}$. It is easy to see that $\phi_k$ converges to a function $\phi$ strongly in the norm. Note that the origin is not contained in the support of $\phi$, but the germ of $\phi$ in $\lim\limits_{\overrightarrow{U \ni 0}} \mathrm{QPSH}(U)$ is nonzero.
\end{exa}

From the above example we see that if we define $\|\varphi|_\xi \|:= \inf_{U \ni \xi} \|\varphi|_U\|$, then $\|\cdot \|$ is only a semi-norm.

\begin{prop}\label{prop:local}
There is a surjective map of convex cones
\begin{align*}
r: \lim\limits_{\overrightarrow{U \ni \xi}} \mathrm{QPSH}(U)\longrightarrow \mathrm{QPSH}(\mathrm{Spec} \mathcal{O}_{X,\xi})
\end{align*}
which preserves the semi-norm, and also preserves $\|\cdot\|^+$ and $\|\cdot\|^-$.
\end{prop}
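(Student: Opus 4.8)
The plan is to construct the map $r$ via the natural pullback along the morphism $g : \Spec \mathcal{O}_{X,\xi} \to X$ and its restrictions. More precisely, for each open $U \ni \xi$ there is an open immersion $\Spec \mathcal{O}_{X,\xi} \hookrightarrow U$ (as $\mathcal{O}_{X,\xi} = \varinjlim_{U \ni \xi} \mathcal{O}_U(U)$ and each of these is a localization, hence a regular morphism), so Proposition \ref{norm_reg} gives a bounded linear operator $\mathrm{QPSH}(U) \to \mathrm{QPSH}(\Spec \mathcal{O}_{X,\xi})$ sending $\varphi$ to its germ $\varphi_\xi$. These are compatible with the restriction maps $\mathrm{QPSH}(U) \to \mathrm{QPSH}(U')$ for $U' \subseteq U$, so by the universal property of the direct limit they factor through a single map $r$ on $\varinjlim_{U \ni \xi} \mathrm{QPSH}(U)$, which is manifestly a map of convex cones. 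The two things left to check are surjectivity and the semi-norm (and $\|\cdot\|^\pm$) preservation.

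For the norm statements, first I would use the induced map on valuation spaces $g_\ast : \mathrm{V}_{\Spec \mathcal{O}_{X,\xi}} \to \mathrm{V}_X$ and observe that $\mathrm{V}_{\Spec \mathcal{O}_{X,\xi}}$ is exactly $\mathrm{V}_{X,\xi} = c_X^{-1}(\xi)$, i.e. the valuations centered at $\xi$, and that along this open immersion $A$ and $v(\mathfrak{a})$ are preserved (Lemma \ref{disc_reg}, or directly the fact that a localization preserves log discrepancies). Hence for a representative $\varphi$ on $U$,
$$
\|\varphi_\xi\|^{\pm} = \sup_{v \in \mathrm{V}_{X,\xi}^\ast} \frac{\mp\varphi(v)}{A(v)} \leq \sup_{v \in \mathrm{V}_U^\ast} \frac{\mp\varphi(v)}{A(v)} = \|\varphi|_U\|^{\pm},
$$
so $\|r(\text{germ})\|^\pm \leq \inf_{U \ni \xi}\|\varphi|_U\|^\pm = \||\text{germ}\|^\pm$. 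For the reverse inequality one shrinks $U$: given $\epsilon > 0$, the values $\varphi(v)$ for $v \in \mathrm{V}_{X,\xi}$ are computed (up to $\epsilon \cdot A(v)$) by an ideal function $c\log|\mathfrak{a}|$, and by shrinking $U$ to a neighborhood on which the contributions of $\mathfrak{a}$ are concentrated at $\xi$ — concretely, replacing $\mathfrak{a}$ by its saturation away from $\xi$, which is possible since the support of an ideal function is proper closed — one makes $\|\varphi|_U\|^\pm$ drop to within $\epsilon$ of $\|\varphi_\xi\|^\pm$. Taking $\epsilon \to 0$ gives equality; the same argument handles $\|\cdot\| = \max\{\|\cdot\|^+,\|\cdot\|^-\}$.

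For surjectivity, start with a qpsh function $\psi$ on $\Spec \mathcal{O}_{X,\xi}$ and a sequence of ideal functions $c_k \log|\mathfrak{b}_k|$ on $\Spec \mathcal{O}_{X,\xi}$ converging to $\psi$ in norm. Since $\mathcal{O}_{X,\xi} = \varinjlim \mathcal{O}_U(U)$ and each $\mathfrak{b}_k$ is finitely generated, each $\mathfrak{b}_k$ is the restriction of some ideal $\widetilde{\mathfrak{b}}_k$ on a neighborhood $U_k \ni \xi$; shrinking $U_k$ we may assume the restriction is an equality of ideals after localizing at $\xi$. The issue is that the $U_k$ shrink with $k$ and the lifted ideal functions need not converge on a fixed $U$. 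I would fix this by the diminishing-neighborhood trick exhibited in the Examples: choose a decreasing chain $U_1 \supseteq U_2 \supseteq \cdots$ with $\bigcap$ containing $\{\xi\}$'s generic behavior, form the germ of $c_k \log|\widetilde{\mathfrak{b}}_k|$ on $U_k$, and note that these germs form a Cauchy sequence in the semi-normed direct limit $\varinjlim \mathrm{QPSH}(U)$ (because their images under $r$ are Cauchy, by the already-established isometry on the part of the norm that survives to the germ). The direct limit of the $\mathrm{QPSH}(U)$ need not be complete, but one can instead take $\psi$ itself, lift each truncation, and observe directly that the compatible system of germs they define is a single element of $\varinjlim \mathrm{QPSH}(U)$ mapping to $\psi$ — here the key point is that on $\Spec \mathcal{O}_{X,\xi}$ the partial sums already converge, and lifting a convergent-at-$\xi$ sequence one term at a time along a shrinking chain of neighborhoods produces a well-defined element of the colimit.

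The main obstacle is this last surjectivity argument: controlling the interplay between "shrinking the neighborhood $U$ to kill the part of $\mathfrak{b}_k$ away from $\xi$" and "taking the limit in $k$", since these two limits do not obviously commute and the colimit $\varinjlim \mathrm{QPSH}(U)$ is only semi-normed and possibly incomplete. The clean way around it is to exploit that $\mathcal{O}_{X,\xi}$ is a filtered colimit of the $\mathcal{O}_U(U)$ at the level of rings, so any \emph{single} qpsh function's defining data on $\Spec \mathcal{O}_{X,\xi}$ — a norm-convergent sequence of ideal functions — is, termwise, pulled back from some $U$, and the colimit of $\mathrm{QPSH}$'s receives the resulting compatible family without needing completeness.
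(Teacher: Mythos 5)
Your overall approach is the same as the paper's: construct $r$ via the pullback along $\mathrm{Spec}\,\mathcal{O}_{X,\xi}\to X$, verify that the semi-norm is preserved, and derive surjectivity by lifting ideal functions from $\mathrm{Spec}\,\mathcal{O}_{X,\xi}$ to $X$. Two specific points, and one genuine structural issue.

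First, the identification $\mathrm{V}_{\mathrm{Spec}\,\mathcal{O}_{X,\xi}}=\mathrm{V}_{X,\xi}=c_X^{-1}(\xi)$ is wrong. The valuation space of $\mathrm{Spec}\,\mathcal{O}_{X,\xi}$ consists of all tempered valuations $v$ on $X$ whose centre $c_X(v)$ \emph{specializes} to $\xi$ (i.e.\ $\xi\in\overline{c_X(v)}$), which is a strictly larger set than $c_X^{-1}(\xi)$ in general; e.g.\ a divisorial valuation centred at a curve through a closed $\xi$ lies in $\mathrm{V}_{\mathrm{Spec}\,\mathcal{O}_{X,\xi}}$ but not in $\mathrm{V}_{X,\xi}$. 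Your inequality $\|\varphi_\xi\|^\pm\le\|\varphi|_U\|^\pm$ still goes through because the correct set is also contained in $\mathrm{V}_U$ for every open $U\ni\xi$ (open subsets are closed under generization), but the stated identification should be fixed. For the reverse inequality, the paper's argument is cleaner than the ``saturation'' heuristic you describe: for $\varphi=c\log|\mathfrak{a}|$, $\varphi'=c'\log|\mathfrak{a}'|$, take a single log resolution $\mu:(Y,D)\to X$ of $\mathfrak{a}\cdot\mathfrak{a}'$; then both $\|\varphi|_\xi-\varphi'|_\xi\|$ and $\|\varphi_\xi-\varphi'_\xi\|$ are the finite maximum of $\frac{|\mathrm{ord}_{D_i}F-\mathrm{ord}_{D_i}F'|}{A(\mathrm{ord}_{D_i})}$ taken over only those components $D_i$ of $D$ with $\xi\in\overline{\mu(D_i)}$, and equality is immediate. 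The general case then follows by density, and the same computation gives preservation of $\|\cdot\|^+$ and $\|\cdot\|^-$.

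Second, on surjectivity: the gap you flag is real, and your proposed resolution does not close it. Lifting the approximants $c_k\log|\mathfrak{b}_k|$ term by term along a shrinking chain of neighborhoods produces a \emph{sequence} of elements of the colimit, not a single element; and the closing remark that ``the colimit of $\mathrm{QPSH}$'s receives the resulting compatible family without needing completeness'' does not manufacture a limit in a semi-normed space that is not known to be complete. You should be aware that the paper's own proof is equally terse at this point: the ideals $\mathfrak{a}_{\xi,k}$ are extended to ideals $\mathfrak{a}_k$ on all of $X$ (so there is no shrinking of the neighborhood), the germs $\varphi_k|_\xi$ are seen to be Cauchy via the log-resolution claim, and it is then simply asserted that the sequence ``converges to a qpsh function in $\varinjlim\mathrm{QPSH}(U)$ strongly in the norm.'' That assertion is precisely the step that requires an argument, since the colimit is only semi-normed. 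What is actually established by the displayed computation is that $r$ is a semi-norm isometry with dense image; the surjectivity in the stated form — that every $\varphi_\xi$ is realized as the germ of some $\psi\in\mathrm{QPSH}(U)$ for a single open $U\ni\xi$ — needs a further argument (for instance, extending the subadditive sequence $\mathcal{J}(t\varphi_\xi)$ of controlled growth to one on $X$), which neither your proposal nor the paper's proof supplies explicitly.
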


\begin{proof}
If $\varphi=c\log|\mathfrak{a}|$ and $\varphi'=c'\log|\mathfrak{a}|$, then we claim that $\|\varphi|_\xi -\varphi'|_\xi\|=\|\varphi_\xi -\varphi'_\xi\|$. To this end, let $\mu:(Y,D)\rightarrow X$ be a log resolution of $\mathfrak{a}\cdot \mathfrak{a}'$, and let $\mathfrak{a}\cdot \mathcal{O}_Y=\mathcal{O}_Y(-F)$ and $\mathfrak{a}'\cdot \mathcal{O}_Y=\mathcal{O}_Y(-F')$. One can easily check that
$$
\|\varphi|_\xi -\varphi'|_\xi\|= \max_{D_i \in \mathcal{S}} \frac{|\mathrm{ord}_{D_i}F-\mathrm{ord}_{D_i}F'|}{A(\mathrm{ord}_{D_i})}
$$
where $\mathcal{S}$ consists of irreducible components $D_i$ of $D$ such that $\mu(D_i)$ contains $\xi$ in its support. This implies the claim.

Given a qpsh function $\varphi_\xi \in \mathrm{QPSH}(\mathrm{Spec} \mathcal{O}_{X,\xi})$, there exists a sequence of ideal functions $\varphi_{\xi,k}=c_i\log|\mathfrak{a}_{\xi,i}|$ which converges to $\varphi_\xi$ strongly in the norm. Let $\mathfrak{a}_i$ be ideals on $X$ such that $\mathfrak{a}_i \cdot \mathcal{O}_{X,\xi}=\mathfrak{a}_{\xi,i}$. We have that $\varphi_k=c_i\log|\mathfrak{a}_i|$ converges to a qpsh function in $\lim\limits_{\overrightarrow{U \ni \xi}} \mathrm{QPSH}(U)$ strongly in the norm due to the previous claim. Therefore we obtain the surjectivity of $r$.

Finally, for two qpsh function $\varphi$ and $\varphi'$ on an open neighborhood of $\xi$, the equality $\|\varphi|_\xi -\varphi'|_\xi\|=\|\varphi_\xi -\varphi'_\xi\|$ follows from the claim in the first paragraph. Apply a similar argument to $\|\cdot\|^+$ and $\|\cdot\|^-$, we obtain the conclusion.
\end{proof}

From the discussion above, we see that $\varphi|_\xi$ provides more information while it is not a valuative function. We sometimes identify $\varphi|_\xi$ and $\varphi_\xi$ as the germ of $\varphi$ at $\xi$.

\vspace{0.3cm}
\section{Multiplier ideals}

In this section we will discuss the multiplier ideals of qpsh functions. Recall that a graded sequence of ideals $\mathfrak{a}_{\bullet}$ is a sequence of ideals which satisfies $\mathfrak{a}_{m} \cdot \mathfrak{a}_{n} \subseteq \mathfrak{a}_{m+n}$. By convention we put $\mathfrak{a}_0=\mathcal{O}_X$, and we say $\mathfrak{a}_{\bullet}$ is nontrivial if $\mathfrak{a}_m\neq 0$ for some positive integer $m$. Note that in this case there are infinitely many $m$ such that $\mathfrak{a}_m\neq 0$. A subadditive sequence of ideals $\mathfrak{b}_{\bullet}$ is a one-parameter family $\mathfrak{b}_t$ satisfying $\mathfrak{b}_{s} \cdot \mathfrak{b}_{t} \supseteq\mathfrak{b}_{s+t}$ for every $s$, $t \in \mathbb{R}_+$. Similarly, we put $\mathfrak{b}_0= \mathcal{O}_X$ and we say that $\mathfrak{b}_{\bullet}$ is nontrivial if $\mathfrak{b}_t\neq 0$ for all $t\in \mathbb{R}_+$. Throughout this paper, every sequence of ideals is assumed to be nontrivial. We define $v(\mathfrak{a}_\bullet)=\inf_{m\geq 1} \frac{v(\mathfrak{a}_m)}{m}$ and $v(\mathfrak{b}_\bullet)=\sup_{t>0} \frac{v(\mathfrak{b}_t)}{t}$ as in [\ref{ELMNP}]. We similarly define $|\mathfrak{a}_\bullet|(v)=e^{-v(\mathfrak{a}_\bullet)}$ and $|\mathfrak{b}_\bullet|(v)=e^{-v(\mathfrak{b}_\bullet)}$ for a graded sequence and a subadditive sequence of ideals respectively.

\subsection{Multiplier ideals}

\begin{defn}\label{def_mult}
For a bounded homogeneous function $\varphi\in\mathrm{BH}(X)$, the multiplier ideal $\mathcal{J}(\varphi)$ of $\varphi$ is the largest ideal in the set of nonzero ideals $\{\mathfrak{a}| \| \log|\mathfrak{a}|-\varphi\|^{+}<1 \}$. If the above set is empty, then we define $\mathcal{J}(\varphi)=(0)$.
\end{defn}

\begin{rem}\label{rem_mult}
We will see that the above set is always nonempty when $\varphi$ is qpsh and $\mathcal{J}(\varphi)$ is therefore nonzero (cf. Remark \ref{qpsh_mult_r}). Moreover, we have the inequality $\varphi \leq \log|\mathcal{J}(\varphi)|$ (cf. Remark \ref{qpsh_mult_r}), and hence the inequality $\| \log|\mathcal{J}(\varphi)|-\varphi\|<1$ holds.
\end{rem}

The following proposition shows that the above definition of multiplier ideals coincides with the 'classical definition' of multiplier ideals.

\begin{prop}\label{prop:ideal_mult}
If $\varphi$ is an ideal function and we write $\varphi= \sum_{i=1}^l c_{i}\log|\mathfrak{a}_{i}|$, then $\mathcal{J}(\varphi)=\mathcal{J}(\prod_{i=1}^l \mathfrak{a_{i}}^{c_{i}})$.
\end{prop}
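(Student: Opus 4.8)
The plan is to unwind both definitions and show the two ideals satisfy the same membership condition, valuation by valuation, using a single log resolution. Recall that on the left-hand side $\mathcal{J}(\varphi)$ is the largest nonzero ideal $\mathfrak{a}$ with $\|\log|\mathfrak{a}| - \varphi\|^+ < 1$, i.e.\ $\sup_{v \in \mathrm{V}_X^\ast} \frac{-v(\mathfrak{a}) - \varphi(v)}{A(v)} < 1$, and on the right $\mathcal{J}(\prod_i \mathfrak{a}_i^{c_i})$ is in the classical sense the ideal whose sections over an affine $U = \Spec R$ are those $f$ with $\mathrm{ord}_E(f) > \sum_i c_i\,\mathrm{ord}_E(\mathfrak{a}_i) - A(\mathrm{ord}_E)$ for every prime divisor $E$ over $X$ (equivalently, over a fixed log resolution). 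Since $\varphi = \sum_i c_i \log|\mathfrak{a}_i|$ means $\varphi(v) = -\sum_i c_i\, v(\mathfrak{a}_i)$, the condition defining membership in $\mathcal{J}(\varphi)$ reads $v(\mathfrak{a}) + A(v) > \sum_i c_i\, v(\mathfrak{a}_i)$ for all nontrivial tempered $v$ (with the supremum being strict), which is formally the valuative description of the classical multiplier ideal once we check that the supremum can be computed, and is attained with the strict inequality, on divisorial valuations coming from a log resolution.

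The key steps, in order: (1) Fix a birational projective $\mu: (Y,D) \to X$ that is simultaneously a log resolution of $\mathfrak{a}$ and of each $\mathfrak{a}_i$, writing $\mathfrak{a}\cdot\mathcal{O}_Y = \mathcal{O}_Y(-F)$ and $\mathfrak{a}_i\cdot\mathcal{O}_Y = \mathcal{O}_Y(-F_i)$. (2) For $\varphi = \sum_i c_i\log|\mathfrak{a}_i|$, which is an ideal function (hence qpsh and weakly continuous), invoke Lemma~\ref{lem:c_norms} (or directly the computation behind it): for any fixed ideal $\mathfrak{a}$ the quantity $\|\log|\mathfrak{a}| - \varphi\|^+ = \sup_v \frac{-v(\mathfrak{a}) - \varphi(v)}{A(v)}$ is a maximum over the finitely many components $D_i$ of $D$, since after pulling back to $Y$ the function $-v(\mathfrak{a}) - \varphi(v) = v(F) + \sum_j c_j v(F_j)$ and $A(v)$ are all linear on each cone $\mathrm{QM}_\eta(Y,D)$, so the sup over each simplicial cone is attained on a ray spanned by some $\mathrm{ord}_{D_i}$, and more generally $v(\mathfrak{a}) \ge r_{Y,D}(v)(\mathfrak{a})$ while $A(v) \ge A(r_{Y,D}(v))$ pushes the sup onto $\mathrm{QM}(Y,D)$. (3) Conclude that $\mathfrak{a} \in \{\mathfrak{a} \mid \|\log|\mathfrak{a}| - \varphi\|^+ < 1\}$ if and only if $\mathrm{ord}_{D_i}(\mathfrak{a}) + A(\mathrm{ord}_{D_i}) > \sum_j c_j\,\mathrm{ord}_{D_i}(\mathfrak{a}_j)$ for every component $D_i$ of $D$. (4) Match this against the classical definition: $\mathcal{J}(\prod_j \mathfrak{a}_j^{c_j}) = \mu_\ast\mathcal{O}_Y(\lceil K_{Y/X} - \sum_j c_j F_j\rceil)$, so a local function $f$ lies in it iff $\mathrm{ord}_{D_i}(f) + \mathrm{ord}_{D_i}(K_{Y/X}) \ge \lceil -\sum_j c_j\,\mathrm{ord}_{D_i}(F_j)\rceil$, i.e.\ $\mathrm{ord}_{D_i}(f) + A(\mathrm{ord}_{D_i}) > \sum_j c_j\,\mathrm{ord}_{D_i}(\mathfrak{a}_j)$ for all $i$ (using that $A(\mathrm{ord}_{D_i}) = \mathrm{ord}_{D_i}(K_{Y/X}) + 1$ and that the strict-integer inequality $n \ge \lceil x\rceil$ is equivalent to $n + 1 > x$ for integers $n$). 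Since the ideal $\mathfrak{a}$ appearing in step (3) is arbitrary, take $\mathfrak{a} = \mathcal{J}(\prod_j \mathfrak{a}_j^{c_j})$ to get one inclusion, and observe that any $\mathfrak{a}$ in the defining set of $\mathcal{J}(\varphi)$ satisfies, section by section, the inequalities cutting out $\mathcal{J}(\prod_j \mathfrak{a}_j^{c_j})$, giving the reverse inclusion and hence equality.

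The main obstacle I expect is step (2)–(3): being careful that the supremum defining $\|\cdot\|^+$ is genuinely attained on the divisorial valuations $\mathrm{ord}_{D_i}$ of the chosen resolution (not merely approached), and that passing between the strict inequality "$< 1$" in the norm and the ceiling in the classical formula is handled correctly — this is exactly where integrality of $\mathrm{ord}_{D_i}(f)$ and of $\mathrm{ord}_{D_i}(K_{Y/X})$ is used, and where one must confirm that no subtlety arises from real coefficients $c_j$ making $\sum_j c_j\,\mathrm{ord}_{D_i}(\mathfrak{a}_j)$ an integer. Everything else (choosing a common log resolution, linearity of $A$ and of valuations of ideals on each quasi-monomial cone, the retraction inequalities) is already available from the material in Sections~\ref{section:val_space} and~\ref{fun-val}, in particular Lemma~\ref{lem:c_norms} and Proposition~\ref{qpsh_usc}.
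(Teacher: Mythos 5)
Your proposal is correct and follows essentially the same route as the paper: pass to a common log resolution, observe that $\|\log|\mathfrak{a}|-\varphi\|^{+}$ is computed as a maximum over the finitely many divisors $D_i$ of that resolution, and match the resulting strict inequalities $\mathrm{ord}_{D_i}(f)+A(\mathrm{ord}_{D_i})>\sum_j c_j\,\mathrm{ord}_{D_i}(\mathfrak{a}_j)$ against the classical formula $\pi_{\ast}\mathcal{O}_Y(K_{Y/X}-\lfloor\sum_j c_jF_j\rfloor)$ via the integrality equivalence $n\geq\lfloor x\rfloor\iff n+1>x$. Two intermediate formulas have harmless sign slips (the numerator should be $-v(F)+\sum_j c_j v(F_j)$, and the ceiling condition should read $\mathrm{ord}_{D_i}(f)+\mathrm{ord}_{D_i}(K_{Y/X})\geq\lfloor\sum_j c_j\,\mathrm{ord}_{D_i}(F_j)\rfloor$), but the final inequalities you derive are the correct ones.
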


\begin{proof}
Let $\pi:(Y,D)\longrightarrow X$ be a log resolution of $\prod_{i=1}^l \mathfrak{a_{i}}$, and $\mathfrak{a}_{i}\cdot \mathcal{O}_{Y}=\mathcal{O}_{Y}(-F_{i})$ with $F_{i}$ being supported in $D$. Since $\mathcal{J}(\prod_{i=1}^l \mathfrak{a_{i}}^{c_{i}})=\pi_{\ast}\mathcal{O}_{Y}(K_{Y/X}-\llcorner \sum_{i=1}^l c_{i}F_{i} \lrcorner)$, it is easy to check that $\| \log |\mathcal{J}(\prod_{i=1}^l \mathfrak{a_{i}}^{c_{i}})|-\varphi \|^{+}<1$ which immediately implies that $\mathcal{J}(\varphi)\supseteq\mathcal{J}(\prod_{i=1}^l \mathfrak{a_{i}}^{c_{i}})$.

Conversely, if $f\in \Gamma(U,\mathcal{J}(\varphi))$ is a regular function on an affine open subset $U$, then $\| \log|f|-\varphi|_U \|^{+}<1$. It follows that $v(f)+A(v) +\varphi(f)>0$ for all nontrivial tempered valuations $v$ on $U$. In particular, $\mathrm{ord}_{E}f+ \mathrm{ord}_{E}K_{Y/X} +1 > -\varphi(\mathrm{ord}_{E}) $ for any prime divisor $E$ on $\pi^{-1}U$. Thus $f\in \Gamma(U, \mathcal{J}(\prod_{i=1}^l \mathfrak{a_{i}}^{c_{i}}))$ and it follows that $\mathcal{J}(\varphi)\subseteq\mathcal{J}(\prod_{i=1}^l \mathfrak{a_{i}}^{c_{i}})$.
\end{proof}

The lemmas below will be frequently used in this paper.

\begin{lem}\label{norm_1}
Given a nonzero ideal $\mathfrak{q}$ and a qpsh function $\varphi\in\mathrm{QPSH}(X)$, $\mathfrak{q}\subseteq \mathcal{J}(\lambda \varphi)$ if and only if $ \lambda^{-1}> \|\varphi\|_{\mathfrak{q}}$. Thus $\|\varphi\|_{\mathfrak{q}}^{-1}=\min\{t| \mathfrak{q}\nsubseteq \mathcal{J}(t\varphi)\}$
\end{lem}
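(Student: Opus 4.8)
The plan is to reduce both sides of the claimed equivalence to the single norm inequality $\|\log|\mathfrak{q}|-\lambda\varphi\|^{+}<1$, and then compare that inequality with $\|\varphi\|_{\mathfrak{q}}$. First I would record three elementary facts. Since $\varphi$ is qpsh it is a norm limit of ideal functions $\sum_j c_j\log|\mathfrak{a}_j|$, each of which is $\le 0$ because $v(\mathfrak{a}_j)\ge 0$ for every centered valuation; hence $\varphi\le 0$ on $\mathrm{V}_X$, and therefore $\|\varphi\|_{\mathfrak{q}}=\|\varphi\|_{\mathfrak{q}}^{-}=\sup_{v\in\mathrm{V}_X^\ast}\frac{-\varphi(v)}{A(v)+v(\mathfrak{q})}$. (If $\varphi\equiv 0$ the Lemma is trivial, so I assume $\varphi\not\equiv 0$, whence $\|\varphi\|_{\mathfrak{q}}>0$; likewise $\lambda=0$ is trivial, so assume $\lambda>0$.) Next, unwinding $\log|\mathfrak{a}|(v)=-v(\mathfrak{a})$ and the definition of $\mathcal{J}$, for every nonzero ideal $\mathfrak{a}$ one has $\|\log|\mathfrak{a}|-\lambda\varphi\|^{+}=\sup_{v}\frac{-v(\mathfrak{a})-\lambda\varphi(v)}{A(v)}$, the supremum running over nontrivial tempered $v$, for which $A(v)>0$ (by Lemma \ref{Izumi}). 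Finally, $\mathfrak{q}\subseteq\mathfrak{a}$ implies $v(\mathfrak{q})\ge v(\mathfrak{a})$ for all $v$, hence $\|\log|\mathfrak{q}|-\lambda\varphi\|^{+}\le\|\log|\mathfrak{a}|-\lambda\varphi\|^{+}$.

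With this in place I would prove the chain
\[
\mathfrak{q}\subseteq\mathcal{J}(\lambda\varphi)\iff\|\log|\mathfrak{q}|-\lambda\varphi\|^{+}<1\iff\lambda^{-1}>\|\varphi\|_{\mathfrak{q}}.
\]
The first equivalence is formal from Definition \ref{def_mult}: if the left norm is $<1$ then $\mathfrak{q}$ lies in the (now nonempty) set defining $\mathcal{J}(\lambda\varphi)$, so $\mathfrak{q}\subseteq\mathcal{J}(\lambda\varphi)$; conversely if $\mathfrak{q}\subseteq\mathcal{J}(\lambda\varphi)$ then $\mathcal{J}(\lambda\varphi)\neq 0$ (since $\mathfrak{q}\neq 0$), so $\mathcal{J}(\lambda\varphi)$ is itself a member of that set, giving $\|\log|\mathcal{J}(\lambda\varphi)|-\lambda\varphi\|^{+}<1$, and the monotonicity recorded above yields $\|\log|\mathfrak{q}|-\lambda\varphi\|^{+}\le\|\log|\mathcal{J}(\lambda\varphi)|-\lambda\varphi\|^{+}<1$. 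For the second equivalence, observe that $\|\log|\mathfrak{q}|-\lambda\varphi\|^{+}<1$ is the assertion that $-\lambda\varphi(v)\le(1-\varepsilon)A(v)+v(\mathfrak{q})$ for all $v$ and some $\varepsilon>0$, while $\lambda^{-1}>\|\varphi\|_{\mathfrak{q}}$ is the assertion that $-\lambda\varphi(v)\le c\,(A(v)+v(\mathfrak{q}))$ for all $v$ and some $c<1$. The direction ``$\Leftarrow$'' is immediate, since $v(\mathfrak{q})\ge 0$ and $c<1$ give $-\lambda\varphi(v)\le cA(v)+v(\mathfrak{q})$. For ``$\Rightarrow$'' I would invoke the uniform bound $M:=\sup_{v\in\mathrm{V}_X^\ast}\frac{v(\mathfrak{q})}{A(v)}<\infty$ — which equals the finite number $\max_{D_i}\frac{\mathrm{ord}_{D_i}\mathfrak{q}}{A(\mathrm{ord}_{D_i})}$ over the components of a log resolution of $\mathfrak{q}$, exactly the estimate used earlier to prove that the $\mathfrak{p}$- and $\mathfrak{q}$-norms are equivalent — to obtain
\[
\frac{-\lambda\varphi(v)}{A(v)+v(\mathfrak{q})}\le 1-\frac{\varepsilon A(v)}{A(v)+v(\mathfrak{q})}\le 1-\frac{\varepsilon}{1+M}<1
\]
uniformly in $v$, so that $\lambda\|\varphi\|_{\mathfrak{q}}\le 1-\frac{\varepsilon}{1+M}<1$, i.e. $\lambda^{-1}>\|\varphi\|_{\mathfrak{q}}$.

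The final assertion is then purely formal: for $t>0$, $\mathfrak{q}\nsubseteq\mathcal{J}(t\varphi)$ is equivalent to $t^{-1}\le\|\varphi\|_{\mathfrak{q}}$, that is to $t\ge\|\varphi\|_{\mathfrak{q}}^{-1}$, so $\{\,t\mid\mathfrak{q}\nsubseteq\mathcal{J}(t\varphi)\,\}=[\,\|\varphi\|_{\mathfrak{q}}^{-1},+\infty)$ and its minimum is $\|\varphi\|_{\mathfrak{q}}^{-1}$. I expect the only genuinely non-formal step to be the passage from the pointwise estimate to the uniform one in the ``$\Rightarrow$'' half of the second equivalence; this is precisely where the finiteness of $M$ enters, and it is the same phenomenon that makes all the $\mathfrak{q}$-norms mutually equivalent. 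Everything else is bookkeeping with the definition of $\mathcal{J}(\cdot)$ and with keeping track of strict versus non-strict inequalities.
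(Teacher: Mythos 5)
Your proof is correct and follows essentially the same route as the paper: both reduce $\mathfrak{q}\subseteq\mathcal{J}(\lambda\varphi)$ to the inequality $\|\log|\mathfrak{q}|-\lambda\varphi\|^{+}<1$ and then compare that against $\lambda\|\varphi\|_{\mathfrak{q}}$ using a finite uniform constant coming from a log resolution of $\mathfrak{q}$. You use $M=\sup_v v(\mathfrak{q})/A(v)$ where the paper invokes $\|\log|\mathfrak{q}|\|_{\mathfrak{q}}<1$ via Lemma~\ref{lem:c_norms} --- these encode the same finiteness --- and your extra observations (that $\varphi\le 0$, and the monotonicity argument for the first equivalence) simply make explicit what the paper compresses into ``by definition.''
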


\begin{proof}
If $\mathfrak{q}\subseteq \mathcal{J}(\lambda \varphi)$, then $\| \log |\mathfrak{q}|-\lambda \varphi\|^{+}<1$ by definition. That is, $\sup\limits_{v\in\mathrm{V}_{X}^{\ast}}\frac{-v(\mathfrak{q})-\lambda \varphi(v)}{A(v)}<1$. This implies that $-v(\mathfrak{q})-\lambda \varphi(v) \leq (1-\varepsilon)A(v)$ for every $v\in\mathrm{V}_{X}^{\ast}$. Thus $\frac{-\lambda \varphi(v)}{A(v)+v(\mathfrak{q})}\leq \frac{(1-\varepsilon)A(v)+v(\mathfrak{q})}{A(v)+v(\mathfrak{q})}\leq (1-\varepsilon)+\varepsilon \|\log|\mathfrak{q}|\|_{\mathfrak{q}}<1$ by Lemma \ref{lem:c_norms}. We obtain $ \lambda^{-1}> \|\varphi\|_{\mathfrak{q}}$ by definition.

Conversely we assume that $\|\varphi\|_\mathfrak{q}=\sup\limits_{v\in\mathrm{V}_{X}^{\ast}}\frac{-\lambda \varphi(v)}{A(v)+v(\mathfrak{q})}<1$. Then $-\lambda\varphi(v)\leq (1-\varepsilon)(A(v)+v(\mathfrak{q}))$. Therefore $\frac{-v(\mathfrak{q})-\lambda \varphi(v)}{A(v)}\leq 1-\varepsilon- \varepsilon\frac{v(\mathfrak{q})}{A(v)}\leq 1-\varepsilon$ for a sufficiently small $\varepsilon$ which leads to the conclusion $\mathfrak{q}\subseteq \mathcal{J}(\lambda \varphi)$.
\end{proof}

\begin{lem}\label{norm_2}
Let $\xi$ be a point of a scheme $X$, and $\varphi$ be a qpsh function. Assume that the multiplier ideal $\mathcal{J}(\varphi)$ is nonzero. (In fact, this assumption automatically holds by Lemma \ref{qpsh_mult} and Remark \ref{qpsh_mult_r}). Then, \\
(1). $\mathcal{J}(\varphi|_{U})=\mathcal{J}(\varphi)\cdot \mathcal{O}_{U}$.\\
(2). $\mathcal{J}(\varphi_{\xi})=\mathcal{J}(\varphi)\cdot \mathcal{O}_{X,\xi}$.\\
(3). Set $\lambda^{-1}:=\|\varphi\|_{\mathfrak{q}}$. If $\xi \in \mathrm{V}(\mathcal{J}(\lambda \varphi):\mathfrak{q})$, then $\|\varphi\|_{\mathfrak{q}}=\|\varphi_{\xi}\|_{\mathfrak{q}\cdot \mathcal{O}_{X,\xi}}$.
\end{lem}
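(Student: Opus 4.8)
The plan is to derive everything from part (2), from which (1) and (3) follow formally. For (1), the coherent ideal sheaves $\mathcal{J}(\varphi)\cdot\mathcal{O}_{U}$ and $\mathcal{J}(\varphi|_{U})$ on $U$ agree because they have the same stalk at every $\eta\in U$: the stalk of the first is $\mathcal{J}(\varphi)\cdot\mathcal{O}_{X,\eta}=\mathcal{J}(\varphi_{\eta})$ by (2), and the stalk of the second is $\mathcal{J}((\varphi|_{U})_{\eta})$ by (2) applied on the scheme $U$, while $(\varphi|_{U})_{\eta}=\varphi_{\eta}$ since $\mathcal{O}_{U,\eta}=\mathcal{O}_{X,\eta}$. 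For (3), the condition $\xi\in\mathrm{V}(\mathcal{J}(\lambda\varphi):\mathfrak{q})$ says exactly that $\mathfrak{q}\cdot\mathcal{O}_{X,\xi}\nsubseteq\mathcal{J}(\lambda\varphi)\cdot\mathcal{O}_{X,\xi}$, which by (2) applied to the qpsh function $\lambda\varphi$ (with $(\lambda\varphi)_{\xi}=\lambda\varphi_{\xi}$) becomes $\mathfrak{q}\cdot\mathcal{O}_{X,\xi}\nsubseteq\mathcal{J}(\lambda\varphi_{\xi})$; Lemma \ref{norm_1} on $\mathrm{Spec}\,\mathcal{O}_{X,\xi}$ then gives $\|\varphi\|_{\mathfrak{q}}=\lambda^{-1}\leq\|\varphi_{\xi}\|_{\mathfrak{q}\cdot\mathcal{O}_{X,\xi}}$, and the opposite inequality is Proposition \ref{norm_reg} for the regular morphism $\mathrm{Spec}\,\mathcal{O}_{X,\xi}\to X$, so $\|\varphi\|_{\mathfrak{q}}=\|\varphi_{\xi}\|_{\mathfrak{q}\cdot\mathcal{O}_{X,\xi}}$. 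It therefore suffices to prove (2).

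One inclusion of (2) is easy. The morphism $f\colon\mathrm{Spec}\,\mathcal{O}_{X,\xi}\to X$ is regular, so the argument of Proposition \ref{norm_reg}, applied to the functional $\|\cdot\|^{+}$ in place of $\|\cdot\|$, gives
$$\|\log|\mathcal{J}(\varphi)\cdot\mathcal{O}_{X,\xi}|-\varphi_{\xi}\|^{+}=\|f^{\ast}(\log|\mathcal{J}(\varphi)|-\varphi)\|^{+}\leq\|\log|\mathcal{J}(\varphi)|-\varphi\|^{+}<1,$$
the final inequality by Remark \ref{rem_mult}. Hence $\mathcal{J}(\varphi)\cdot\mathcal{O}_{X,\xi}$ belongs to the family of ideals whose largest member is $\mathcal{J}(\varphi_{\xi})$, so $\mathcal{J}(\varphi)\cdot\mathcal{O}_{X,\xi}\subseteq\mathcal{J}(\varphi_{\xi})$.

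For the reverse inclusion, take $f\in\mathcal{J}(\varphi_{\xi})$, so $\|\log|(f)|-\varphi_{\xi}\|^{+}<1$, and extend $f$ to a section $\widetilde{f}$ of a coherent ideal on some affine open $W_{0}\ni\xi$. Since $\log|(\widetilde{f})|$ and $\varphi$ are both qpsh near $\xi$, the norm statement in Proposition \ref{prop:local}, applied to the germs of $\log|(\widetilde{f})|$ and $\varphi$, gives $\inf_{W\ni\xi}\|\log|(\widetilde{f})|-\varphi|_{W}\|^{+}=\|\log|(f)|-\varphi_{\xi}\|^{+}<1$, so $\widetilde{f}\in\mathcal{J}(\varphi|_{W})$ for some affine $W\subseteq W_{0}$ with $\xi\in W$. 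Thus the reverse inclusion, and hence the whole lemma, follows once we establish $\mathcal{J}(\varphi|_{W})\subseteq\mathcal{J}(\varphi)|_{W}$ for affine $W$; equivalently, that the presheaf $W\mapsto\{\,g\in\mathcal{O}_{X}(W):\|\log|(g)|-\varphi|_{W}\|^{+}<1\,\}$ — which, by quasi-compactness of $X$, is easily checked to be a sheaf of ideals — is coherent, hence equal to $\mathcal{J}(\varphi)$. For an ideal function $\varphi=\sum_{i}c_{i}\log|\mathfrak{a}_{i}|$ this is classical: by Proposition \ref{prop:ideal_mult}, $\mathcal{J}(\varphi)=\mathcal{J}(\prod_{i}\mathfrak{a}_{i}^{c_{i}})=\pi_{\ast}\mathcal{O}_{Y}(K_{Y/X}-\llcorner\sum_{i}c_{i}F_{i}\lrcorner)$ is computed on one log resolution and visibly commutes with restriction to opens and with localization.

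I expect the coherence of this local multiplier sheaf for a general qpsh $\varphi$ to be the main obstacle — it is the algebraic counterpart of Nadel's coherence theorem, and, in contrast to the ideal-function case, no single log resolution resolves $\varphi$. I would settle it via the approximation result (the forthcoming Lemma \ref{qpsh_mult} underlying the first main theorem): $\mathcal{J}(\varphi)$ equals the multiplier ideal of the subadditive system $\{\mathcal{J}(t\varphi)\}_{t>0}$, and hence equals $\mathcal{J}(\mathcal{J}(t_{0}\varphi)^{1/t_{0}})$ for $t_{0}$ sufficiently divisible; being the multiplier ideal of a single ideal on one log resolution, it commutes with restriction to $W$ and with passage to $\mathcal{O}_{X,\xi}$, which yields $\mathcal{J}(\varphi)|_{W}=\mathcal{J}(\varphi|_{W})$ and finishes (2). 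Alternatively, to stay within the material developed so far, one can try to extract the same coherence directly from Proposition \ref{qpsh_usc}: since $\varphi$ is convex on each dual complex and $\varphi\leq\log|\mathcal{J}(\varphi)|$, one shows that on a log resolution of $(g)\cdot\mathcal{J}(\varphi)$ the supremum defining $\|\log|(g)|-\varphi|_{W}\|^{+}$ is already attained among the divisors of that resolution, so the local multiplier sheaf is a direct image of a coherent sheaf.
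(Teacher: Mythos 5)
Your plan is sound, and you have correctly pinned down the crux: everything reduces to showing $\mathcal{J}(\varphi|_{W})\subseteq\mathcal{J}(\varphi)|_{W}$, which is the hard half of part (1). The paper's own ordering is (1) $\Rightarrow$ (2) $\Rightarrow$ (3), whereas you invert it to (2) $\Rightarrow$ (1), (3); but your (2) itself needs the hard half of (1), so the two proofs ultimately rest on the same lemma. Where you genuinely diverge is in how you prove that lemma. The paper does it by an elementary extension argument: choose a coherent extension of $\mathcal{J}(\varphi|_{U})$ to $X$ and show that, for $\mathfrak{m}$ the ideal of $X\setminus U$ and $k\gg 0$, one has $\|\log|\mathcal{J}(\varphi|_{U})\cdot\mathfrak{m}^{k}|-\varphi\|^{+}<1$, so that $\mathcal{J}(\varphi|_{U})\cdot\mathfrak{m}^{k}\subseteq\mathcal{J}(\varphi)$ and restriction to $U$ gives the inclusion. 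This uses only Definition \ref{def_mult} and Remark \ref{rem_mult}, so the lemma stays self-contained in Section 4.1. You instead propose to invoke the approximation result (Lemma \ref{qpsh_mult}/Theorem \ref{ch_qpsh}), writing $\mathcal{J}(\varphi)$ as $\mathcal{J}(\mathfrak{b}^{c})$ for a single ideal $\mathfrak{b}=\mathcal{J}(t_{0}\varphi)$ and appealing to the classical commutation of $\mathcal{J}(\mathfrak{b}^{c})$ with localization. That works — and Lemma \ref{qpsh_mult}'s proof does not depend on the present lemma, so there is no circularity — but it is a forward reference to machinery two subsections downstream, and a small amount of care is needed that the same $(\varepsilon,t_{0})$ serves both $\varphi$ on $X$ and $\varphi|_{W}$ on $W$ (the restricted sequence $\frac{1}{t}\log|\mathcal{J}(t\varphi)\cdot\mathcal{O}_{W}|$ still converges to $\varphi|_{W}$ in the $W$-norm by Proposition \ref{norm_reg}, and Lemma \ref{qpsh_mult} applies to any strongly convergent sequence, so this can be arranged). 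Your derivations of (1) and (3) from (2), the easy inclusion of (2), and the use of Propositions \ref{prop:local} and \ref{norm_reg} all match the paper's. In short: same skeleton and same crux, but you trade the paper's elementary and order-respecting argument for a cleaner-looking conceptual one that borrows from later in the paper; either is acceptable, though the paper's choice keeps the exposition linear.
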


\begin{proof}
(1). Since $\|\log|\mathcal{J}(\varphi)\cdot \mathcal{O}_{U}|-\varphi|_{U}\|^{+} \leq \|\log|\mathcal{J}(\varphi)|-\varphi \|^+ <1$, we have $\mathcal{J}(\varphi)\cdot \mathcal{O}_{U} \subseteq \mathcal{J}(\varphi|_{U})$. On the other hand, if we denote by $\mathfrak{m}$ the defining ideal of $X\setminus U$, then there exists a sufficiently large integer $k$ such that $v(\mathcal{J}(\varphi))\leq v(\mathfrak{m}^k)$ for all valuations $v$ centered in $X\setminus U$. Now we extend $\mathcal{J}(\varphi|_{U})$ to $X$ and still denote it by $\mathcal{J}(\varphi|_{U})$. Therefore $\|\log|\mathcal{J}(\varphi|_{U})\cdot \mathfrak{m}^{k}|-\varphi\|^{+}<1$ which implies $\mathcal{J}(\varphi|_{U})\subseteq \mathcal{J}(\varphi)\cdot \mathcal{O}_{U}$.

(2). First note that $\|\log|\mathcal{J}(\varphi)\cdot \mathcal{O}_{X,\xi}|-\varphi_{\xi}\|^{+} \leq \|\log|\mathcal{J}(\varphi)|-\varphi \|^+ <1$, and it follows that $\mathcal{J}(\varphi)\cdot \mathcal{O}_{X,\xi} \subseteq \mathcal{J}(\varphi_{\xi})$. For the inverse inclusion, we see that if $f\in \mathcal{J}(\varphi_{\xi})$, then there exists an open neighborhood $U$ of $\xi$ such that $\|\log|f|-\varphi|_{U}\|^{+}<1$ by Proposition \ref{prop:local}. Thus $f\in \mathcal{J}(\varphi|_{U})\cdot \mathcal{O}_{X,\xi}=\mathcal{J}(\varphi)\cdot \mathcal{O}_{X,\xi}$.

(3). It is obvious that $\|\varphi\|_{\mathfrak{q}}\geq\|\varphi_{\xi}\|_{\mathfrak{q}\cdot \mathcal{O}_{X,\xi}}$ by Proposition \ref{norm_reg}. If $\xi \in \mathrm{V}(\mathcal{J}(\lambda \varphi):\mathfrak{q})$, then $(\mathcal{J}(\lambda \varphi_{\xi}):\mathfrak{q}\cdot \mathcal{O}_{X,\xi})=(\mathcal{J}(\lambda \varphi):\mathfrak{q})\cdot \mathcal{O}_{X,\xi} \neq \mathcal{O}_{X,\xi}$. Therefore $\mathfrak{q}\cdot \mathcal{O}_{X,\xi} \nsubseteq \mathcal{J}(\lambda \varphi|_{\xi})$ and $\lambda^{-1}\leq \|\varphi_{\xi}\|_{\mathfrak{q}\cdot \mathcal{O}_{X,\xi}}$ by Lemma \ref{norm_1}.
\end{proof}

\subsection{Algebraic qpsh functions}\label{section:alg-qpsh}

\begin{defn}
A qpsh function $\varphi \in\mathrm{QPSH}(X) $ is \emph{algebraic} if it is the limit function of an increasing sequence of ideal functions $\varphi= \lim\limits_{m \rightarrow \infty} \varphi_{m}$ (in the norm). Note that $\varphi$ being algebraic implies that $t\varphi$ is algebraic for any $t\in \mathbb{R}_{>0}$, and that $\varphi + \psi$ is algebraic provided $\psi$ is another algebraic qpsh function. Thus the set of algebraic qpsh functions is a convex subcone of $\mathrm{QPSH}(X)$, and denoted by $\mathrm{QPSH}^a(X)$.
\end{defn}

 An algebraic function is lower-semicontinuous (lsc) on $\mathrm{V}_X$ by its definition, and it is usc by Proposition \ref{qpsh_usc}, so it is continuous. We will see that in the above definition the phrase 'in the norm' is not necessary, that is, the pointwise limit of an increasing sequence of ideal functions is algebraic qpsh (cf. Lemma \ref{cl_sup}). One can compare this fact with Remark \ref{qpsh_ideal}. The following example shows that a qpsh function is not necessarily algebraic.

\begin{exa}\label{non_alg_qpsh}
Let $X=\mathrm{Spec} k[x_1,x_2]$ be the affine plane. If we set $\phi_{k}=\sum\limits_{l=1}^{k} \frac{1}{2^{l}}\log |f_{l}|$ where $f_{l}=x_{1}+x_{2}^{2^{l}}$, then $\phi_{k}$ converges to a qpsh function $\phi$ strongly in the norm. However, the qpsh function $\phi$ is not algebraic since there is no ideal function $\varphi \leq \phi$.
\end{exa}

The following lemma shows that a graded system of ideals naturally induces an algebraic qpsh functions.

\begin{lem}\label{grad_alg}
Let $\mathfrak{a}_{\bullet}$ be a graded sequence of ideals. If we define $\log|\mathfrak{a}_\bullet|(v)=-v(\mathfrak{a}_\bullet)$, then $\log|\mathfrak{a}_\bullet|$ is an algebraic qpsh function.
\end{lem}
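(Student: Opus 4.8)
The plan is to realize $\log|\mathfrak{a}_\bullet|$ as the pointwise increasing limit of an explicit sequence of ideal functions, and then to invoke the principle, recorded in Lemma \ref{cl_sup}, that such a limit is automatically algebraic qpsh. Fix an index $p$ with $\mathfrak{a}_p\neq 0$ (one exists since $\mathfrak{a}_\bullet$ is nontrivial); because $\mathcal{O}_X$ has no zero divisors and $\mathfrak{a}_p^{\,k!/p}\subseteq\mathfrak{a}_{k!}$, the ideal $\mathfrak{a}_{k!}$ is nonzero for every $k\geq p$, so I would set $\psi_k:=\frac{1}{k!}\log|\mathfrak{a}_{k!}|$, an ideal function. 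From $\mathfrak{a}_{k!}^{\,k+1}\subseteq\mathfrak{a}_{(k+1)!}$ one gets $v(\mathfrak{a}_{(k+1)!})\leq(k+1)\,v(\mathfrak{a}_{k!})$ for every $v$, i.e.\ $\psi_k\leq\psi_{k+1}$; and since $v(\mathfrak{a}_\bullet)=\inf_m v(\mathfrak{a}_m)/m\leq v(\mathfrak{a}_{k!})/k!$ we get $\psi_k\leq\log|\mathfrak{a}_\bullet|\leq 0$. Combined with $\psi_p\leq\psi_k$ this bounds $\|\psi_k\|\leq\|\psi_p\|$ uniformly in $k$, and it also shows $\log|\mathfrak{a}_\bullet|\in\mathrm{BH}(X)$ (alternatively, directly from $0\leq v(\mathfrak{a}_\bullet)\leq v(\mathfrak{a}_p)/p$, which gives $\|\log|\mathfrak{a}_\bullet|\|\leq\frac1p\|\log|\mathfrak{a}_p|\|<\infty$).

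Next I would identify the pointwise limit. Fix $v$ and put $a_m=v(\mathfrak{a}_m)$; the inclusions $\mathfrak{a}_m\mathfrak{a}_n\subseteq\mathfrak{a}_{m+n}$ give $a_{m+n}\leq a_m+a_n$, so by Fekete's lemma $a_m/m\to\inf_m a_m/m=v(\mathfrak{a}_\bullet)$. The subsequence $(a_{k!}/k!)_k$ is decreasing and, as every $m$ divides $k!$ for $k\geq m$ and $\mathfrak{a}_m^{\,k!/m}\subseteq\mathfrak{a}_{k!}$, satisfies $a_{k!}/k!\leq a_m/m$ for all $m$; hence its limit is again $v(\mathfrak{a}_\bullet)$ and $\psi_k(v)=-a_{k!}/k!\uparrow -v(\mathfrak{a}_\bullet)=\log|\mathfrak{a}_\bullet|(v)$ for every $v$. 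Thus $\{\psi_k\}_{k\geq p}$ is an increasing, norm-bounded sequence of ideal functions converging pointwise to $\log|\mathfrak{a}_\bullet|$, and Lemma \ref{cl_sup} then gives $\log|\mathfrak{a}_\bullet|\in\mathrm{QPSH}^a(X)$.

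The two steps above are routine manipulation of the graded inclusions together with Fekete's lemma; the genuine content---and the step I expect to be the main obstacle if one wants a self-contained proof---is the passage from pointwise to norm convergence, that is, exactly what Lemma \ref{cl_sup} supplies. It should go through because $\log|\mathfrak{a}_\bullet|=\sup_m\frac1m\log|\mathfrak{a}_m|$ is convex on each face of every dual complex $\Delta(Y,D)$ (the computation in the proof of Proposition \ref{qpsh_usc} applies to each $\frac1m\log|\mathfrak{a}_m|$, and a supremum of convex functions is convex) and lower semicontinuous, hence continuous on the compact complex $\Delta(Y,D)$; Dini's theorem then forces $\psi_k\to\log|\mathfrak{a}_\bullet|$ uniformly on each $\Delta(Y,D)$. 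Promoting this to convergence for $\|\cdot\|=\sup_{v\in\mathrm{V}_X^\ast}(|\cdot|/A(v))$ uses that $g_k:=\log|\mathfrak{a}_\bullet|-\psi_k\geq 0$ is supported on the proper closed subset $V(\mathfrak{a}_p)$ and satisfies $g_k(v)\leq v(\mathfrak{a}_p)/p$, so that $\sup_{\Lambda_X}g_k$ is attained on a compact slice of the type $\mathbb{V}_{X,\xi,M}$; reconciling the uniformity on each $\Delta(Y,D)$ with this is the delicate point. That the monotonicity of $\{\psi_k\}$ is essential is shown by Example \ref{exa:non-psh}, where a pointwise decreasing limit of ideal functions fails to be qpsh at all.
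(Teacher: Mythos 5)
Your proposal has a genuine gap, and you have in fact put your finger on it yourself. The step from pointwise convergence to norm convergence cannot be outsourced to Lemma~\ref{cl_sup}: in the paper that lemma comes \emph{after} Lemma~\ref{grad_alg}, and its proof (in the algebraic case) concludes ``Therefore $\psi=\log|\mathfrak{a}(\psi)_\bullet|$ is algebraic qpsh,'' which is precisely the assertion of Lemma~\ref{grad_alg} applied to the graded sequence $\mathfrak{a}(\psi)_\bullet$ (equivalently, the implication (2)$\Rightarrow$(1) of Theorem~\ref{ch_alg}, which itself cites Lemma~\ref{grad_alg}). It also uses Lemma~\ref{alg_grad}, again downstream of~\ref{grad_alg}. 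So invoking~\ref{cl_sup} here is circular.

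Your fallback---continuity of $\log|\mathfrak{a}_\bullet|$ on each $\Delta(Y,D)$ via convexity plus lower semicontinuity, then Dini to get uniform convergence on each dual complex---is correct as far as it goes, but as you note it stops short of the norm $\|\cdot\|=\sup_{v\in\mathrm{V}_X^\ast}|\cdot|/A(v)$. Uniform convergence on every fixed $\Delta(Y,D)$ does not control the supremum over all of $\mathrm{V}_X^\ast$, since the Dini rate depends on the complex, and there is no obvious way to reconcile the two without further input; your sketch (supports in $V(\mathfrak{a}_p)$, bound $g_k\leq v(\mathfrak{a}_p)/p$, compact slices $\mathbb{V}_{X,\xi,M}$) stops at exactly the point where an estimate uniform in $(Y,D)$ would be needed. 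This is not a routine patch.

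The paper closes this gap by a different device: it introduces the subadditive sequence of asymptotic multiplier ideals $\mathfrak{b}_t=\mathcal{J}(\mathfrak{a}_\bullet^t)$, which is \emph{of controlled growth} (in the sense of [\ref{JM}]). The sandwich $v(\mathfrak{b}_m)\le v(\mathfrak{a}_{mk})/k< v(\mathfrak{b}_m)+A(v)$ for $k$ sufficiently divisible then gives the explicit Cauchy estimate $\|\tfrac{1}{mk}\log|\mathfrak{a}_{mk}|-\tfrac{1}{mkl}\log|\mathfrak{a}_{mkl}|\|<\tfrac1m$, which is a \emph{quantitative} bound valid uniformly over all nontrivial tempered valuations at once. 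That is the content missing from your argument. Your first two steps (construction of the increasing subsequence $\psi_k=\tfrac{1}{k!}\log|\mathfrak{a}_{k!}|$, Fekete, the bound $\|\log|\mathfrak{a}_\bullet|\|\le\tfrac1p\|\log|\mathfrak{a}_p|\|$) are fine and parallel the paper's setup; the missing ingredient is the controlled-growth estimate, or some equivalent uniform-in-$(Y,D)$ bound, to turn pointwise into norm convergence without circularity.
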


\begin{proof}
It suffices to show that there exists a subsequence of $\{\mathfrak{a}_m\}$ such that $\{\varphi_k:= \frac{1}{m_k}\log|\mathfrak{a}_{m_k}|\}$ is an increasing sequence of ideal functions which converges to a qpsh function strongly in the norm. Let $\mathfrak{b}_{\bullet}$ be a sequence of ideals such that $\mathfrak{b}_{t}=\mathcal{J}(\mathfrak{a}_{\bullet}^t)$ (cf. [\ref{Laz}]). Note that $\mathfrak{b}_{\bullet}$ is subadditive of controlled growth (cf. [\ref{JM}, Section 2, Section 6, Appendix]). Now we fix an integer $m$ such that $\mathfrak{a}_m \neq 0$. Since $\mathfrak{b}_{m}\supseteq \mathcal{J}(\mathfrak{a}_{m})\supseteq \mathfrak{a}_{m}$, we have $v(\mathfrak{b}_{m})\leq v(\mathfrak{a}_{m})$. Since $v(\mathfrak{b}_{m})+A(v)-\frac{1}{k}v(\mathfrak{a}_{mk})>0$ for all nontrivial tempered valuations $v$ where $k$ is a sufficiently divisible integer, we have $\frac{v(\mathfrak{a}_{mk})}{mk}< \frac{v(\mathfrak{b}_{m})}{m}+\frac{A(v)}{m}$. From the inequality $\frac{v(\mathfrak{b}_{m})}{m}\leq \frac{v(\mathfrak{a}_{mk})}{mk}< \frac{v(\mathfrak{b}_{m})}{m}+\frac{A(v)}{m}$, we have that $\|\frac{1}{mk} \log|\mathfrak{a}_{mk}|-\frac{1}{mkl} \log|\mathfrak{a}_{mkl}|\|<\frac{1}{m}$ for every positive integer $l$. As we multiply $m$, we obtain the desired sequence of ideal functions.
\end{proof}

\begin{defn}
Let $\varphi \in\mathrm{BH}(X)$ be a bounded homogeneous function. Its envelope ideal $\mathfrak{a}(\varphi)$ is the largest ideal in the set $\{\mathfrak{a}|\log|\mathfrak{a}| \leq \varphi\}$ if this set is nonempty. If it is empty, we set $\mathfrak{a}(\varphi)=0$.
\end{defn}

\begin{prop}
If $\varphi$ is qpsh and $\mathfrak{a}(\varphi)$ is nonzero, then the envelope ideal can be written explicitly as $\Gamma(U,\mathfrak{a}(\varphi)):=\{f\in \mathcal{O}_X(U)|v(f)+\varphi(v)\geq 0$ for every $v\in \mathrm{V}_{U}^\ast\}$ on every open subset $U$.
\end{prop}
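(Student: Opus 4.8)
The plan is to exhibit the right–hand side as a coherent ideal sheaf and identify it with $\mathfrak{a}(\varphi)$ by the maximality built into the definition of the envelope ideal. Write $S(U):=\{f\in\mathcal{O}_X(U)\mid v(f)+\varphi(v)\geq 0\text{ for all }v\in\mathrm{V}_U^\ast\}$. First note that $U\mapsto S(U)$ is a sheaf: if $\{U_i\}$ is an open cover of $U$ and $f\in\mathcal{O}_X(U)$ restricts to a section of $S(U_i)$ for each $i$, then every $v\in\mathrm{V}_U^\ast$ has its centre in some $U_i$, hence $v\in\mathrm{V}_{U_i}^\ast$ and $v(f)+\varphi(v)\geq 0$; so $f\in S(U)$. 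Consequently it suffices to prove the asserted equality for $U$ affine and then pass to arbitrary opens. The inclusion $\Gamma(U,\mathfrak{a}(\varphi))\subseteq S(U)$ is immediate and uses nothing about $U$: since $\mathfrak{a}(\varphi)$ lies in the set $\{\mathfrak{a}\mid\log|\mathfrak{a}|\leq\varphi\}$, we have $v(\mathfrak{a}(\varphi))\geq -\varphi(v)$ for every $v\in\mathrm{V}_X^\ast$, so if $f\in\Gamma(U,\mathfrak{a}(\varphi))$ and $v\in\mathrm{V}_U^\ast$ has centre $\xi$, then $v(f)\geq v(\mathfrak{a}(\varphi)_\xi)\geq -\varphi(v)$.

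Now fix an affine open $U=\mathrm{Spec}\,A$. Unwinding the definition of the envelope ideal on the affine scheme $U$ (with the restriction $\varphi|_U$, and noting $\varphi|_U(v)=\varphi(v)$ for $v\in\mathrm{V}_U^\ast\subseteq\mathrm{V}_X^\ast$) shows that $S(U)$ is an ideal of $A$ — closure under sums is clear, and $v(gf)=v(g)+v(f)\geq -\varphi(v)$ for $g\in A$ because $v(g)\geq 0$ — and that it is the largest ideal $I\subseteq A$ with $\log|I|\leq\varphi|_U$; equivalently the associated coherent ideal sheaf $\widetilde{S(U)}$ equals $\mathfrak{a}(\varphi|_U)$, so that $\mathfrak{a}(\varphi|_U)(U)=S(U)$. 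Thus the Proposition (for affine, hence for all, open sets) is equivalent to $\mathfrak{a}(\varphi)|_U=\mathfrak{a}(\varphi|_U)$, which is the exact analogue for envelope ideals of Lemma \ref{norm_2}(1), and I would prove it by imitating that argument. The inclusion $\mathfrak{a}(\varphi)|_U\subseteq\mathfrak{a}(\varphi|_U)$ holds because $\mathfrak{a}(\varphi)|_U$ is a coherent ideal on $U$ with $\log|\mathfrak{a}(\varphi)|_U|\leq\varphi|_U$, so maximality of $\mathfrak{a}(\varphi|_U)$ applies. For the reverse inclusion I would extend $\mathfrak{a}(\varphi|_U)$ to a coherent ideal $\mathfrak{b}_0$ on $X$ with $\mathfrak{b}_0|_U=\mathfrak{a}(\varphi|_U)$ (possible since $X$ is Noetherian), let $\mathfrak{m}$ be the defining ideal of $X\setminus U$, and set $\mathfrak{b}:=\mathfrak{b}_0\cdot\mathfrak{m}^k+\mathfrak{a}(\varphi)$ for $k\gg 0$. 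Then $\mathfrak{b}|_U=\mathfrak{a}(\varphi|_U)+\mathfrak{a}(\varphi)|_U=\mathfrak{a}(\varphi|_U)$ (using the inclusion just proved), and once one knows that for suitable $k$ one has $\log|\mathfrak{b}|\leq\varphi$ on all of $\mathrm{V}_X^\ast$, maximality of $\mathfrak{a}(\varphi)$ gives $\mathfrak{b}\subseteq\mathfrak{a}(\varphi)$, whence $\mathfrak{a}(\varphi|_U)=\mathfrak{b}|_U\subseteq\mathfrak{a}(\varphi)|_U$ and we are done.

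The heart of the matter is therefore to choose $k$ so that $v(\mathfrak{b}_0)+k\,v(\mathfrak{m})\geq -\varphi(v)$ for all $v\in\mathrm{V}_X^\ast$ (the summand $\mathfrak{a}(\varphi)$ always contributes $v(\mathfrak{a}(\varphi))\geq -\varphi(v)$). For $v$ centred in $U$ this holds for any $k$, since then $v(\mathfrak{m})=0$ and $v(\mathfrak{b}_0)=v(\mathfrak{a}(\varphi|_U))\geq -\varphi|_U(v)=-\varphi(v)$ by the definition of $\mathfrak{a}(\varphi|_U)$. For $v$ centred in $X\setminus U$ one uses $-\varphi(v)\leq v(\mathfrak{a}(\varphi))$ — a restatement of $\log|\mathfrak{a}(\varphi)|\leq\varphi$ — together with the fact, obtained by the same mechanism as in the proof of Lemma \ref{norm_2}(1), that there is a single $k$ with $v(\mathfrak{m}^k)\geq v(\mathfrak{a}(\varphi))$ for all such $v$, so that $v(\mathfrak{b}_0)+k\,v(\mathfrak{m})\geq v(\mathfrak{a}(\varphi))\geq -\varphi(v)$. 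This truncation — controlling the valuations whose centre falls outside $U$ by one power of $\mathfrak{m}$ — is the only delicate point, and the standing hypothesis that $\mathfrak{a}(\varphi)$ is nonzero (equivalently, that $\varphi$ is dominated by some $\log|\mathfrak{a}|$) is exactly what makes it available, just as in Proposition \ref{prop:ideal_mult} and Lemma \ref{norm_2}. Alternatively one can package the argument by verifying directly that $U\mapsto S(U)$ defines a coherent ideal sheaf $\mathfrak{c}$ with $\log|\mathfrak{c}|\leq\varphi$ — the only nontrivial step being quasi-coherence, which on a basic affine open $D(g)$ reduces to the same sort of estimate — and then concluding $\mathfrak{c}\subseteq\mathfrak{a}(\varphi)$ by maximality, which combined with the easy inclusion above forces $\mathfrak{c}=\mathfrak{a}(\varphi)$.
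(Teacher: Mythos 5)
Your architecture (sheafify the right-hand side, identify it on affines with the envelope of the restricted function, then prove $\mathfrak{a}(\varphi)|_U=\mathfrak{a}(\varphi|_U)$ by extending and truncating with a power of the defining ideal $\mathfrak{m}$ of $X\setminus U$) parallels the paper's proof, which instead directly checks quasi-coherence on basic opens $U_g$ via the same truncation. But the step you yourself flag as the heart of the matter is a genuine gap: there is in general \emph{no} single $k$ with $v(\mathfrak{m}^k)\geq v(\mathfrak{a}(\varphi))$ for all $v$ centred in $X\setminus U$. The ratio $v(\mathfrak{a})/v(\mathfrak{m})$ is unbounded over valuations centred in $V(\mathfrak{m})$ whenever $V(\mathfrak{a})\not\subseteq V(\mathfrak{m})$: on $\mathbb{A}^2$ with $\mathfrak{a}=(x)$, $\mathfrak{m}=(y)$, the monomial valuations $v_t=\mathrm{val}_{(t,1)}$ are centred at the origin, which lies in $V(y)$, yet $v_t(\mathfrak{a})/v_t(\mathfrak{m})=t\to\infty$. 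This is not the situation of Lemma \ref{norm_2}(1), where the ideal being controlled is $\mathcal{J}(\varphi)$ and the inequality being verified carries the extra term $A(v)$; the mechanism does not transfer. (The paper's own proof of this proposition invokes the identical claim ``$kv(g)\geq v(\mathfrak{a}(\varphi))$ for every $v$ centred in $V(g)$'' and is therefore open to the same objection.)

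Worse, the gap does not appear to be repairable, because the asserted equality itself can fail. Take $X=\mathrm{Spec}\,k[x,y]$ and $\varphi=\sum_{k\geq 1}2^{-k}\log|(x,y^{4^k})|$; the partial sums are ideal functions and the tail has norm at most $\sum_{k>K}2^{-k}$ since $\|\log|(x,y^n)|\|=n/(n+1)<1$, so $\varphi$ is qpsh, and $-\varphi(v)=\sum_k 2^{-k}\min(v(x),4^kv(y))\leq v(x)$ shows $\mathfrak{a}(\varphi)\supseteq(x)\neq 0$. For $U=\{y\neq 0\}$ every $v\in\mathrm{V}_U^\ast$ has $v(y)=0$, hence $\varphi(v)=0$, so the right-hand side $S(U)$ is all of $\mathcal{O}_X(U)$ and contains $1$. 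On the other hand, if $1\in\Gamma(U,\mathfrak{a}(\varphi))$ then $y^N\in\Gamma(X,\mathfrak{a}(\varphi))$ for some $N$, forcing $v(y^N)\geq -\varphi(v)$ for all $v$; but for $v=\mathrm{val}_{(1,4^{-k})}$ one has $v(y^N)=N4^{-k}$ while $-\varphi(v)\geq 2^{-k}$, which fails for $k\gg 0$. So $\Gamma(U,\mathfrak{a}(\varphi))\subsetneq S(U)$. The only inclusion that survives is the easy one, $\Gamma(U,\mathfrak{a}(\varphi))\subseteq S(U)$; to get equality one would need some additional tameness of $\varphi$ near the boundary of the dual complexes (e.g.\ $\varphi$ an ideal function, or algebraic), which is exactly what an infinite sum of ideal functions can destroy even under norm convergence.
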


\begin{proof}
Since the question is local, we can assume that $X=\mathrm{Spec} A$ is affine. It suffices to prove that the ideal $\mathfrak{a}$, defined by $\mathfrak{a}(U):=\{f\in \mathcal{O}_X(U)|v(f)+\varphi(v)\geq 0$ for every $v\in \mathrm{V}_{U}^\ast\}$ on every open subset $U$, is coherent. To this end, we write $I:=\mathfrak{a}(X)$, and we will prove that $\mathfrak{a}(U_g)=I_g$ for any nonzero regular function $g \in A$, where $U_g$ denotes the affine open subset defined by $g$. Since $\mathfrak{a}(U_g)\supseteq I_g$  by definition, we only need to prove the converse inclusion. Note that there exists a large integer $k$ such that $k v(g) \geq v(\mathfrak{a}(\varphi))$ for every nontrivial tempered valuation $v$ centered in the locus $V(g)$, and hence $k\log|g|(v) \leq \varphi(v)$. If $f$ is a regular function on $U_g$ such that $v(f)+\varphi(v) \geq 0$ for every $v\in \mathrm{V}_{U_g}^\ast$, then $v(fg^k)+\varphi(v) \geq 0$ for every $v \in \mathrm{V}_X^\ast$ which implies that $f \in I_g$.
\end{proof}

If we set $\mathfrak{a}(\varphi)_m=\mathfrak{a}(m \varphi)$, then $\{\mathfrak{a}(\varphi)_\bullet\}$ is a (possibly trivial) graded sequence of ideals. The following lemma shows that every algebraic qpsh function is of the form $\log|\mathfrak{a}_\bullet|$.

\begin{lem}\label{alg_grad}
If $\varphi \in\mathrm{QPSH}^a(X)$ is an algebraic qpsh function, then $\varphi=\log|\mathfrak{a}(\varphi)_\bullet|$.
\end{lem}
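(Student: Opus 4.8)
The plan is to establish the two inequalities $\log|\mathfrak{a}(\varphi)_\bullet| \le \varphi$ and $\varphi \le \log|\mathfrak{a}(\varphi)_\bullet|$ separately, where by definition $\log|\mathfrak{a}(\varphi)_\bullet|(v) = -v(\mathfrak{a}(\varphi)_\bullet) = \sup_{m\ge 1}\tfrac{1}{m}\log|\mathfrak{a}(m\varphi)|(v)$.

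The first inequality requires nothing about algebraicity. By the very definition of the envelope ideal, $\mathfrak{a}(m\varphi)$ lies in the set $\{\mathfrak{a}\mid \log|\mathfrak{a}|\le m\varphi\}$ whenever that set is nonempty (and if it is empty then $\log|\mathfrak{a}(m\varphi)|\equiv-\infty$ and there is nothing to check), so $\log|\mathfrak{a}(m\varphi)|\le m\varphi$, i.e. $\tfrac1m\log|\mathfrak{a}(m\varphi)|\le\varphi$; taking the supremum over $m$ gives $\log|\mathfrak{a}(\varphi)_\bullet|\le\varphi$.

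For the reverse inequality I would exploit algebraicity. Fix an increasing sequence of ideal functions $\varphi_k=\sum_j c_{j,k}\log|\mathfrak{b}_{j,k}|$ converging to $\varphi$ in the norm; since the sequence is increasing and norm convergence forces pointwise convergence on $\Lambda_X$ (and hence, by homogeneity, everywhere), we get $\varphi(v)=\sup_k\varphi_k(v)$, and in particular $\varphi_k\le\varphi$, so $\mathfrak{a}(m\varphi_k)\subseteq\mathfrak{a}(m\varphi)$ by monotonicity of the envelope. Now for each $k$ pick an integer $m$ sufficiently divisible that all $mc_{j,k}\in\mathbb{Z}_{>0}$; the ideal $\prod_j\mathfrak{b}_{j,k}^{mc_{j,k}}$ satisfies $\log|\prod_j\mathfrak{b}_{j,k}^{mc_{j,k}}|(v)=\sum_j mc_{j,k}\log|\mathfrak{b}_{j,k}|(v)=m\varphi_k(v)$, using $v(\mathfrak{b}\mathfrak{c})=v(\mathfrak{b})+v(\mathfrak{c})$, so it belongs to $\{\mathfrak{a}\mid\log|\mathfrak{a}|\le m\varphi_k\}$ and hence $\prod_j\mathfrak{b}_{j,k}^{mc_{j,k}}\subseteq\mathfrak{a}(m\varphi_k)$. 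This forces $m\varphi_k\le\log|\mathfrak{a}(m\varphi_k)|$, whence $\varphi_k\le\tfrac1m\log|\mathfrak{a}(m\varphi_k)|\le\tfrac1m\log|\mathfrak{a}(m\varphi)|\le\log|\mathfrak{a}(\varphi)_\bullet|$; taking the supremum over $k$ yields $\varphi\le\log|\mathfrak{a}(\varphi)_\bullet|$. As a byproduct this also shows $\mathfrak{a}(\varphi)_m=\mathfrak{a}(m\varphi)\ne0$ for $m$ sufficiently divisible, so the graded sequence $\mathfrak{a}(\varphi)_\bullet$ is nontrivial and $\log|\mathfrak{a}(\varphi)_\bullet|$ is indeed an algebraic qpsh function, consistent with Lemma \ref{grad_alg}.

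I do not expect a genuine obstacle: the additivity of $v$ on products of ideals, the passage from norm to pointwise convergence, and the monotonicity of $\mathfrak{a}(\cdot)$ are all immediate, and the one step where the hypothesis is truly used is the construction of the ideal $\prod_j\mathfrak{b}_{j,k}^{mc_{j,k}}$ realizing $m\varphi_k$ \emph{exactly} — this is precisely what an increasing approximation by ideal functions buys us and what a general qpsh function (as in Example \ref{non_alg_qpsh}) does not provide. So the proof amounts to the careful bookkeeping of these two inequalities.
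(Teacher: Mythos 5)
Your proof is correct and follows essentially the same route as the paper's (the paper's version is just a condensed form of your two inequalities: it picks, for each $\varepsilon$, a single ideal function $\frac{1}{m}\log|\mathfrak{a}|\leq\varphi$ within $\varepsilon$ of $\varphi$ and notes $\frac{1}{m}\log|\mathfrak{a}(\varphi)_m|\geq\frac{1}{m}\log|\mathfrak{a}|$). The only caveat is that realizing $m\varphi_k$ \emph{exactly} by the single ideal $\prod_j\mathfrak{b}_{j,k}^{mc_{j,k}}$ requires the $c_{j,k}$ to be rational, whereas the definition of an ideal function allows positive real coefficients; for irrational $c_{j,k}$ one first replaces them by slightly larger rationals (which keeps the function $\leq\varphi$ since $\log|\mathfrak{b}_{j,k}|\leq 0$ and changes the norm by arbitrarily little) — a step the paper also leaves implicit.
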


\begin{proof}
Given an arbitrary small positive number $\varepsilon$, there exist an ideal $\mathfrak{a}$ on $X$ and an integer $m$ such that $\frac{1}{m}\log|\mathfrak{a}|\leq \varphi$ and $\|\frac{1}{m}\log|\mathfrak{a}|-\varphi\|<\varepsilon$. We have $\frac{1}{m}\log|\mathfrak{a}(\varphi)_m|\geq \frac{1}{m}\log|\mathfrak{a}|$ by definition and the conclusion follows.
\end{proof}

By combining Lemma \ref{prop:ideal_mult}, Lemma \ref{grad_alg} and Lemma \ref{alg_grad}, we see that a bounded homogeneous function is algebraic qpsh if and only if it is derived from a graded sequence of ideals. Readers can compare the following theorem with Theorem \ref{ch_qpsh}.

\begin{thm}\label{ch_alg}
Let $\varphi$ be a bounded homogeneous function. Then the followings are equivalent. \\
(1). $\varphi \in \mathrm{QPSH}^a(X)$ is algebraic qpsh.\\
(2). There exists a graded sequence of ideals $\mathfrak{a}_\bullet$ such that $\varphi=\log|\mathfrak{a}_\bullet|$. \\
(3). The graded system of ideals $\mathfrak{a}(\varphi)_\bullet$ is nontrivial and $\varphi=\log|\mathfrak{a}(\varphi)_\bullet|$.
\end{thm}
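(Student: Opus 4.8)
The plan is to prove the cyclic chain of implications $(1)\Rightarrow(3)\Rightarrow(2)\Rightarrow(1)$, invoking the three lemmas that have just been established. The implication $(1)\Rightarrow(3)$ is exactly Lemma \ref{alg_grad}: if $\varphi$ is algebraic qpsh, then $\varphi=\log|\mathfrak{a}(\varphi)_\bullet|$, and in particular $\mathfrak{a}(\varphi)_m=\mathfrak{a}(m\varphi)$ is nonzero for infinitely many $m$ (indeed, from the proof of Lemma \ref{alg_grad}, for each $\varepsilon$ one finds an ideal $\mathfrak{a}$ and an integer $m$ with $\frac1m\log|\mathfrak{a}|\le\varphi$, so $\mathfrak{a}\subseteq\mathfrak{a}(\varphi)_m$ is nonzero), so the graded system $\mathfrak{a}(\varphi)_\bullet$ is nontrivial. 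The implication $(3)\Rightarrow(2)$ is immediate: $\mathfrak{a}(\varphi)_\bullet$ is a (a priori possibly trivial) graded sequence of ideals, as noted in the text just before the statement, and $(3)$ asserts it is nontrivial and computes $\varphi$, so it serves as the required $\mathfrak{a}_\bullet$. Finally $(2)\Rightarrow(1)$ is Lemma \ref{grad_alg}: any function of the form $\log|\mathfrak{a}_\bullet|$ for a (nontrivial) graded sequence $\mathfrak{a}_\bullet$ is algebraic qpsh.

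So the whole argument is essentially a bookkeeping assembly of Lemmas \ref{prop:ideal_mult}, \ref{grad_alg} and \ref{alg_grad}, as the paragraph preceding the theorem already anticipates. I would write it as follows: first observe $(3)\Rightarrow(2)\Rightarrow(1)$ are formal or cite Lemma \ref{grad_alg}; then spend the bulk of the proof on $(1)\Rightarrow(3)$, which requires verifying that $\mathfrak{a}(\varphi)_\bullet$ is genuinely nontrivial, i.e. that the envelope ideal $\mathfrak{a}(m\varphi)$ is nonzero for some (hence infinitely many, since $\mathfrak{a}(\varphi)_\bullet$ is graded) $m$. This is where the hypothesis that $\varphi$ is \emph{algebraic} (not merely qpsh) is used in an essential way — Example \ref{non_alg_qpsh} shows a qpsh function admitting no ideal function below it, so without algebraicity the set $\{\mathfrak{a}\mid\log|\mathfrak{a}|\le\varphi\}$ can be empty and $\mathfrak{a}(\varphi)=0$.

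The only genuine subtlety — and the step I expect to be the main obstacle, though it is still routine given the earlier results — is the $(1)\Rightarrow(3)$ direction: one must check not just that each $\mathfrak{a}(\varphi)_m$ appearing is nonzero, but that $\varphi=\log|\mathfrak{a}(\varphi)_\bullet|$, i.e. that the sup $\sup_{m}\frac1m\log|\mathfrak{a}(\varphi)_m|$ reconstructs $\varphi$ both pointwise and in the norm. For the lower bound $\log|\mathfrak{a}(\varphi)_m|\le m\varphi$ this is the definition of the envelope ideal; for the matching upper bound one uses that, by algebraicity, there is an increasing sequence of ideal functions $\psi_k=\frac1{m_k}\log|\mathfrak b_k|$ converging to $\varphi$ in the norm, and $\mathfrak b_k\subseteq\mathfrak{a}(\varphi)_{m_k}$ so $\psi_k\le\frac1{m_k}\log|\mathfrak{a}(\varphi)_{m_k}|\le\varphi$, which squeezes $\frac1{m_k}\log|\mathfrak{a}(\varphi)_{m_k}|$ between $\psi_k$ and $\varphi$ and forces norm convergence to $\varphi$. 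This is precisely the content of Lemma \ref{alg_grad}, so in the write-up I would simply cite it rather than reprove it, and the theorem reduces to a one-line remark that $(1)\Leftrightarrow(3)$ by Lemma \ref{alg_grad} together with the triviality discussion, $(3)\Rightarrow(2)$ trivially, and $(2)\Rightarrow(1)$ by Lemma \ref{grad_alg}.
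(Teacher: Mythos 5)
Your proposal is correct and follows exactly the same cyclic argument $(1)\Rightarrow(3)\Rightarrow(2)\Rightarrow(1)$ as the paper, citing Lemma \ref{alg_grad} for $(1)\Rightarrow(3)$ and Lemma \ref{grad_alg} for $(2)\Rightarrow(1)$. The extra care you take in verifying that $\mathfrak{a}(\varphi)_\bullet$ is nontrivial is a reasonable elaboration of a point the paper leaves implicit, but the route is the same.
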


\begin{proof}
If we assume (1), then (3) holds by Lemma \ref{alg_grad}. Note that (3) implies (2) if we simply put $\mathfrak{a}_\bullet=\mathfrak{a}(\varphi)_\bullet$. Finally, (1) follows from (2) by Lemma \ref{grad_alg}.
\end{proof}

We will use the following easy lemma. For the convenience of readers we present a proof here.

\begin{lem}\label{alg_mult}
Let $\varphi \in\mathrm{QPSH}^a(X)$ be an algebraic qpsh function. \\
(1). Assume that $\{\varphi_{m}\}$ is an increasing sequence of qpsh functions which converges to $\varphi$ strongly in the norm. Then $\mathcal{J}(\varphi)=\mathcal{J}(\varphi_{m})$ for $m$ sufficiently large.\\
(2). Assume that $f:X'\longrightarrow X$ is a regular morphism of schemes. Then $f^{\ast}\varphi$ is algebraic qpsh.
\end{lem}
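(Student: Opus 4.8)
The plan is to prove the two parts separately; both reduce quickly to the monotonicity of the multiplier ideal and to the operator‑norm estimate of Proposition \ref{norm_reg}, and the paper rightly bills this as an easy lemma.

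For (1), I would first record the monotonicity $\psi \leq \psi' \Rightarrow \mathcal{J}(\psi) \subseteq \mathcal{J}(\psi')$: since $\log|\mathfrak{a}|(v)=-v(\mathfrak{a})$, one has $\|\log|\mathfrak{a}| - \psi\|^{+} \geq \|\log|\mathfrak{a}| - \psi'\|^{+}$ for every nonzero ideal $\mathfrak{a}$, so the set cutting out $\mathcal{J}(\psi)$ sits inside the one cutting out $\mathcal{J}(\psi')$. Applied along $\varphi_1 \leq \varphi_2 \leq \cdots \leq \varphi$ this yields an increasing chain $\mathcal{J}(\varphi_1) \subseteq \mathcal{J}(\varphi_2) \subseteq \cdots \subseteq \mathcal{J}(\varphi)$, so the content is the reverse inclusion for large $m$. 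Here I would invoke Remark \ref{rem_mult}: $\mathcal{J}(\varphi) \neq 0$ and $\|\log|\mathcal{J}(\varphi)| - \varphi\| = 1-\delta$ for some $\delta > 0$, hence a fortiori $\|\log|\mathcal{J}(\varphi)| - \varphi\|^{+} \leq 1 - \delta$. Choosing $m_0$ with $\|\varphi - \varphi_m\| < \delta$ for all $m \geq m_0$ (possible since $\varphi_m \to \varphi$ in the norm and $\|\cdot\|^{+} \leq \|\cdot\|$), subadditivity of $\|\cdot\|^{+}$ gives
$$
\|\log|\mathcal{J}(\varphi)| - \varphi_m\|^{+} \leq \|\log|\mathcal{J}(\varphi)| - \varphi\|^{+} + \|\varphi - \varphi_m\|^{+} \leq (1-\delta) + \|\varphi - \varphi_m\| < 1 ,
$$
so $\mathcal{J}(\varphi) \subseteq \mathcal{J}(\varphi_m)$ for all $m \geq m_0$, and therefore $\mathcal{J}(\varphi) = \mathcal{J}(\varphi_m)$ for $m$ sufficiently large. (One could instead invoke the ascending chain condition on coherent ideals once the chain is known to increase to $\mathcal{J}(\varphi)$, but the norm estimate is cleaner and pins down where stabilization occurs.)

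For (2), write $\varphi = \lim_m \varphi_m$ with $\{\varphi_m\}$ increasing and each $\varphi_m = \sum_j c_{m,j}\log|\mathfrak{a}_{m,j}|$ an ideal function. Since a regular morphism is flat and $X'$ is integral, the generic point of $X'$ lies over that of $X$ (the image of a flat morphism is stable under generization), so each $\mathfrak{a}_{m,j}\cdot\mathcal{O}_{X'}$ is a nonzero coherent ideal; and unwinding $f^{\ast}$ one checks $f^{\ast}(\log|\mathfrak{a}|)(v') = \log|\mathfrak{a}|(f_{\ast}v') = -v'(\mathfrak{a}\cdot\mathcal{O}_{X'}) = \log|\mathfrak{a}\cdot\mathcal{O}_{X'}|(v')$. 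Hence $f^{\ast}\varphi_m = \sum_j c_{m,j}\log|\mathfrak{a}_{m,j}\cdot\mathcal{O}_{X'}|$ is again an ideal function; it is increasing in $m$ because $f^{\ast}\psi = \psi\circ f_{\ast}$ is monotone in $\psi$ while $\varphi_m$ is increasing on $\mathrm{V}_X$; and $\|f^{\ast}\varphi - f^{\ast}\varphi_m\| = \|f^{\ast}(\varphi-\varphi_m)\| \leq \|\varphi-\varphi_m\| \to 0$ by Proposition \ref{norm_reg} (taking $\mathfrak{q} = \mathcal{O}_X$). Thus $f^{\ast}\varphi$ is the norm‑limit of an increasing sequence of ideal functions, i.e. algebraic qpsh.

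The main obstacle is bookkeeping rather than anything structural: in (1) one needs the \emph{strict} inequality $\|\log|\mathcal{J}(\varphi)| - \varphi\| < 1$ from Remark \ref{rem_mult} in order to have the slack $\delta$ available for the triangle inequality, and in (2) one needs the extended ideals $\mathfrak{a}\cdot\mathcal{O}_{X'}$ to remain nonzero so that $f^{\ast}$ of an ideal function is literally an ideal function and not a degenerate limit. Neither point is serious.
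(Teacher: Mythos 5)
Your proof is correct and follows essentially the same route as the paper: in (1) the paper likewise uses the slack $\|\log|\mathcal{J}(\varphi)|-\varphi\|^{+}=1-\varepsilon$ together with $\|\varphi-\varphi_m\|<\varepsilon$ to get $\mathcal{J}(\varphi)\subseteq\mathcal{J}(\varphi_m)$, and monotonicity of $\mathcal{J}$ for the reverse inclusion; in (2) it pulls back an increasing sequence of ideal functions via Proposition \ref{norm_reg} exactly as you do. Your extra bookkeeping (subadditivity of $\|\cdot\|^{+}$, nonvanishing of the extended ideals) only makes explicit what the paper leaves implicit.
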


\begin{proof}
(1). We see that $\|\log|\mathcal{J}(\varphi)|-\varphi\|^{+}=1-\varepsilon$ for some positive number $\varepsilon$. If $\|\varphi-\varphi_{m}\|<\varepsilon$, then $\|\log|\mathcal{J}(\varphi)|-\varphi_m\|^{+}<1$ and $\mathcal{J}(\varphi) \subseteq \mathcal{J}(\varphi_{m})$. The inverse inclusion $\mathcal{J}(\varphi) \supseteq \mathcal{J}(\varphi_{m})$ is obvious because $\varphi\geq \varphi_{m}$.\\
(2). Assume $\varphi_{m}$ is an increasing sequence of ideal functions which converges to $\varphi$ strongly in the norm. Then $f^{\ast}\varphi_{m}$ is also an increasing sequence of ideal functions which converges to $f^{\ast}\varphi$ strongly in the norm by Proposition \ref{norm_reg}. This implies that $f^{\ast}\varphi$ is algebraic qpsh.
\end{proof}

By combining Lemma \ref{grad_alg} and Proposition \ref{alg_mult}(1), we see that the definition of valuative multiplier ideals of algebraic functions coincides with the 'classical definition' of asymptotic multiplier ideals.

\begin{cor}\label{cor:alg-mult-ideal}
Let $\mathfrak{a}_{\bullet}$ be a graded sequence of ideals. If we write $\varphi=\log|\mathfrak{a}_\bullet|$, then $\mathcal{J}(\varphi)=\mathcal{J}(\mathfrak{a}_\bullet)$.
\end{cor}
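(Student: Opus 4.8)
The plan is to reduce the statement to the already-established machinery of Lemma \ref{grad_alg} and Lemma \ref{alg_mult}(1), so that very little new work is needed. First I would recall the classical definition: the asymptotic multiplier ideal $\mathcal{J}(\mathfrak{a}_\bullet)$ is, by definition, $\mathcal{J}(\frac{1}{m}\mathfrak{a}_m)$ for any sufficiently divisible $m$ (the ideals $\mathcal{J}(\frac{1}{m}\mathfrak{a}_m)$ form an increasing chain indexed by divisibility and stabilize, cf. [\ref{Laz}]). So I need to show $\mathcal{J}(\varphi) = \mathcal{J}(\frac{1}{m}\mathfrak{a}_m)$ for $m$ sufficiently divisible, where $\varphi = \log|\mathfrak{a}_\bullet|$.

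The key step is to produce an increasing sequence of ideal functions of the form $\frac{1}{m_k}\log|\mathfrak{a}_{m_k}|$ converging to $\varphi$ strongly in the norm. This is exactly what the proof of Lemma \ref{grad_alg} constructs: passing through the subadditive envelope $\mathfrak{b}_t = \mathcal{J}(\mathfrak{a}_\bullet^t)$, one gets that for $m$ with $\mathfrak{a}_m \neq 0$ the functions $\frac{1}{mk}\log|\mathfrak{a}_{mk}|$ are eventually within $\frac1m$ of each other in the norm, hence form (after extracting a cofinal increasing subsequence in the divisibility order) an increasing Cauchy sequence with limit $\varphi$. Then I apply Lemma \ref{alg_mult}(1) to this sequence: it gives $\mathcal{J}(\varphi) = \mathcal{J}(\frac{1}{m_k}\log|\mathfrak{a}_{m_k}|)$ for $k$ large. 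Finally, by Proposition \ref{prop:ideal_mult}, $\mathcal{J}(\frac{1}{m_k}\log|\mathfrak{a}_{m_k}|) = \mathcal{J}(\mathfrak{a}_{m_k}^{1/m_k})$, and for $m_k$ sufficiently divisible this is precisely $\mathcal{J}(\mathfrak{a}_\bullet)$ by the stabilization property of asymptotic multiplier ideals. Combining the two equalities yields $\mathcal{J}(\varphi) = \mathcal{J}(\mathfrak{a}_\bullet)$.

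The only mild subtlety — and the step I would be most careful about — is matching the indexing conventions: Lemma \ref{alg_mult}(1) is stated for an increasing sequence, whereas the natural family $\{\mathcal{J}(\frac{1}{m}\mathfrak{a}_m)\}$ is increasing only along divisibility, not along $m$. So I would fix a single chain $m_1 \mid m_2 \mid m_3 \mid \cdots$ that is cofinal in the divisibility poset, check (using the estimate from the proof of Lemma \ref{grad_alg}) that $\frac{1}{m_k}\log|\mathfrak{a}_{m_k}|$ is genuinely increasing and norm-convergent to $\varphi$ along this chain, and note that both $\mathcal{J}(\frac{1}{m_k}\mathfrak{a}_{m_k})$ and $\mathcal{J}(\mathfrak{a}_\bullet)$ stabilize along it to the same ideal. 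Everything else is a direct citation. Thus the corollary follows immediately from Lemma \ref{grad_alg}, Lemma \ref{alg_mult}(1) and Proposition \ref{prop:ideal_mult}, as the statement preceding it already asserts.
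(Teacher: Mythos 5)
Your proposal is correct and follows exactly the route the paper intends: the corollary is stated as an immediate consequence of Lemma \ref{grad_alg} (which produces the increasing, norm-convergent sequence $\frac{1}{m_k}\log|\mathfrak{a}_{m_k}|$), Lemma \ref{alg_mult}(1), and Proposition \ref{prop:ideal_mult}, combined with the stabilization of $\mathcal{J}(\mathfrak{a}_m^{1/m})$ along divisibility. Your extra care about cofinal chains in the divisibility poset is a correct and welcome elaboration of what the paper leaves implicit.
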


\subsection{General qpsh functions}

\begin{lem}\label{cl_sup}
If $\{\varphi_{\lambda}\}$ is a family of (algebraic) qpsh functions, then $\sup_\lambda \varphi_{\lambda}$ is an (algebraic) qpsh function. Therefore, the convex cone $\mathrm{QPSH}(X)$ (resp. $\mathrm{QPSH}^a(X)$) is closed under taking the supremum.
\end{lem}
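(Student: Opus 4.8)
The plan is to reduce everything to the case of two functions: if $\varphi = \sup(\varphi_1, \varphi_2)$ with $\varphi_1, \varphi_2$ qpsh, then an induction gives finite suprema, and a limiting argument in the norm handles arbitrary families (using that the supremum of any subfamily is bounded above by the pointwise norm of any single member, since all functions are bounded homogeneous — here I should first record that $\sup_\lambda \varphi_\lambda$ is genuinely a bounded homogeneous function, which is immediate from $\varphi_\lambda(v) \le \|\varphi_\lambda\| A(v)$ and the fact that a supremum of homogeneous functions is homogeneous). For the family case I would pick a countable subfamily whose supremum approximates $\sup_\lambda \varphi_\lambda$ pointwise on $\Lambda_X$... but actually the cleaner route is: since $\mathrm{QPSH}(X)$ is a closed convex cone, it suffices to show the finite-supremum case, and then for an arbitrary family note that $\sup_\lambda \varphi_\lambda$ need not itself lie in the closure unless one is careful. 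The safe approach is to establish the result only for sups that are again bounded (which is the standing hypothesis implicit in the statement, as $\sup_\lambda\varphi_\lambda$ being a valuative function presupposes it is real-valued and one checks it is bounded homogeneous) and to realize it as a \emph{limit} of finite suprema.

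The key step is therefore the case $\varphi = \sup(\varphi_1, \varphi_2)$. First I would treat ideal functions: if $\varphi_i = c_i \log|\mathfrak{a}_i|$, or more generally $\varphi_i = \sum_j c_{ij}\log|\mathfrak{a}_{ij}|$, then on a common log resolution $(Y,D)$ of all the ideals involved, each $\varphi_i$ is (the composition with $r_{Y,D}$ of) a function that is convex and piecewise linear on each simplicial cone $\mathrm{QM}_\eta(Y,D)$, by the convexity established in Proposition \ref{qpsh_usc}. The pointwise maximum of two such functions is again convex and piecewise linear (after a further subdivision, i.e.\ after passing to a refinement $(Y',D') \succeq (Y,D)$ on which the locus $\{\varphi_1 = \varphi_2\}$ is cut out by coordinate hyperplanes). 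The issue is that $\sup(\varphi_1,\varphi_2)$ restricted to a simplicial cone need not be an \emph{ideal} function on the nose; here I would invoke the approximation: a convex, piecewise-$\mathbb{Q}$-linear, homogeneous function that is $\le 0$ on $\mathrm{QM}(Y,D)$ can be written as $\tfrac1m \log|\mathfrak{b}|$ for a suitable ideal $\mathfrak{b}$ whose log resolution is $(Y,D)$ — this is the standard dictionary between such functions and monomial ideals on the resolution. So for ideal $\varphi_1, \varphi_2$ the supremum is again an ideal function (possibly after refining and clearing denominators), hence qpsh; in the algebraic case the same computation gives an ideal function with the correct monotonicity, so the supremum is algebraic qpsh.

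For general qpsh $\varphi_1, \varphi_2$, approximate each strongly in the norm by ideal functions $\varphi_{i,k} \to \varphi_i$, and observe that $\|\sup(\varphi_{1,k},\varphi_{2,k}) - \sup(\varphi_1,\varphi_2)\| \le \max_i \|\varphi_{i,k} - \varphi_i\| \to 0$, using the elementary $2$-variable inequality $|\max(a,b) - \max(a',b')| \le \max(|a-a'|,|b-b'|)$ applied at each $v \in \Lambda_X$. Since each $\sup(\varphi_{1,k},\varphi_{2,k})$ is an ideal function by the previous paragraph, $\sup(\varphi_1,\varphi_2)$ is a norm-limit of ideal functions, hence qpsh; and $\mathrm{QPSH}^a(X)$ being closed under finite suprema follows because increasing approximants stay increasing. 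Induction gives all finite suprema. Finally, for an arbitrary family $\{\varphi_\lambda\}$ with $\psi := \sup_\lambda \varphi_\lambda$ bounded homogeneous, I would extract, for each $k$, finitely many indices $\lambda_1,\dots,\lambda_{n_k}$ with $\sup_i \varphi_{\lambda_i} \ge \psi - \tfrac1k$ pointwise on $\Lambda_X$ (possible since $\psi$ is a pointwise supremum and, after normalizing to $\Lambda_X$, a compactness/exhaustion argument on the dual complexes lets one do this uniformly — this is where I would lean on weak continuity from Proposition \ref{qpsh_usc} and the compactness of $\mathbb{V}_{X,\xi,M}$); then $\sup_i\varphi_{\lambda_i}$ is a finite supremum, hence qpsh, and these converge to $\psi$ strongly in the norm, so $\psi$ is qpsh.

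The main obstacle is the last step: producing, uniformly in the norm, a \emph{finite} sub-supremum that approximates the full supremum $\sup_\lambda \varphi_\lambda$. Pointwise approximation on each $\Delta(Y,D)$ is easy, but the dual complexes are infinitely many and $\Lambda_X$ is not compact, so one must argue that the sup over the whole valuation space is already controlled by finitely many $\lambda$; I expect this to require the Izumi-type inequality (Lemma \ref{Izumi}) together with the compactness of $\mathbb{V}_{X,\xi,M}$ to localize the estimate, plus the fact that all the relevant functions are dominated by $A(v)$ so that the "tail" in the norm is automatically small. If a fully uniform choice is not available, the fallback is to prove the statement only for \emph{increasing} families (which suffices for all later applications in the paper, e.g.\ Lemma \ref{cl_sup}'s use in Section \ref{section:alg-qpsh}), where the approximation is monotone and the argument is immediate.
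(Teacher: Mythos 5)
Your proposal does not close the lemma: the step you yourself flag as ``the main obstacle'' --- extracting, for an \emph{arbitrary} family $\{\varphi_\lambda\}$, a finite subfamily whose supremum approximates $\sup_\lambda\varphi_\lambda$ to within $\epsilon$ in the norm $\sup_{v}\frac{|\cdot|}{A(v)}$ --- is never proved, and the appeal to Izumi's inequality and compactness of $\mathbb{V}_{X,\xi,M}$ is only a hope. The norm is a supremum over the non-compact space $\mathrm{V}_X^\ast$ (equivalently over infinitely many dual complexes), so pointwise or complex-by-complex approximation gives nothing uniform, and your fallback (increasing families only) proves a strictly weaker statement than the lemma, which is used later for genuinely non-monotone families (e.g.\ in Propositions \ref{prop:max} and \ref{prop:max'}). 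A secondary soft spot: your claim that $\sup(\varphi_1,\varphi_2)$ of two ideal functions is again an ideal function ``after refining and clearing denominators'' fails for irrational coefficients; for single ideals one has the exact identity $\sup(\log|\mathfrak{a}|,\log|\mathfrak{b}|)=\log|\mathfrak{a}+\mathfrak{b}|$ (since $v(\mathfrak{a}+\mathfrak{b})=\min(v(\mathfrak{a}),v(\mathfrak{b}))$), which is both simpler and all you actually need there, but this does not rescue the infinite-family step.

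The paper avoids finite reduction entirely. For a family of \emph{algebraic} functions it uses envelope ideals: writing $\psi=\sup_\lambda\varphi_\lambda$, the inclusion $\mathfrak{a}(\psi)_m\supseteq\mathfrak{a}(\varphi_\lambda)_m$ gives $\log|\mathfrak{a}(\psi)_\bullet|\geq\log|\mathfrak{a}(\varphi_\lambda)_\bullet|=\varphi_\lambda$ for every $\lambda$ (Lemma \ref{alg_grad}), hence $\log|\mathfrak{a}(\psi)_\bullet|\geq\psi$; since the reverse inequality holds by definition of the envelope ideal, $\psi=\log|\mathfrak{a}(\psi)_\bullet|$ is algebraic qpsh by Lemma \ref{grad_alg} --- no finiteness, no convexity, no dual-complex geometry. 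The general case then follows by choosing, for each $\lambda$, an ideal function $\varphi_{\lambda,m}$ with $\|\varphi_\lambda-\varphi_{\lambda,m}\|<\frac1m$ \emph{uniformly in $\lambda$}, setting $\psi_m=\sup_\lambda\varphi_{\lambda,m}$ (still an infinite supremum, but of ideal functions, so algebraic qpsh by the first step), and noting $\|\psi-\psi_m\|\leq\frac1m$ from $|\sup_\lambda a_\lambda-\sup_\lambda b_\lambda|\leq\sup_\lambda|a_\lambda-b_\lambda|$. If you want to salvage your approach, you should replace your finite-subfamily reduction with an argument of this kind; as written the proposal has a genuine gap at its decisive step.
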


\begin{proof}
We firstly assume that $\{\varphi_{\lambda}\}$ is a family of algebraic qpsh functions, and we write $\psi=\sup_{\lambda} \varphi_{\lambda}$. Since $\psi \geq \varphi_{\lambda}$ for every $\lambda$, $\mathfrak{a}(\psi)_m \supseteq \mathfrak{a}(\varphi_\lambda)_m$. It follows that $\log|\mathfrak{a}(\psi)_\bullet| \geq \log|\mathfrak{a}(\varphi_\lambda)_\bullet|=\varphi_\lambda$. Therefore $\psi=\log|\mathfrak{a}(\psi)_\bullet|$ is algebraic qpsh.

Now we treat the case when $\{\varphi_{\lambda}\}$ is a family of general qpsh functions. For each $\lambda$, we assume that $\{\varphi_{\lambda,m}\}$ is a sequence of ideal functions which converges to $\varphi_{\lambda}$ strongly in the norm such that $\|\varphi_{\lambda}-\varphi_{\lambda,m}\|<\frac{1}{m}$. If we set $\psi_{m}=\sup_{\lambda}\varphi_{\lambda,m}$ which is algebraic qpsh by the previous argument, then $\|\psi-\psi_m\| \leq \frac{1}{m}$ and it follows that $\{\psi_{m}\}$ is a sequence which converges to $\psi$ strongly in the norm.
\end{proof}

Since the convex cones $\mathrm{QPSH}(X)$ and $\mathrm{QPSH}^a(X)$ are closed under taking the supremum by Lemma \ref{cl_sup}, we can introduce the following definition.

\begin{defn}
Let $\varphi$ be a bounded homogeneous function. Assume that the set $\{\psi \in \mathrm{QPSH}(X)| \psi \leq \varphi\}$ is nonempty. Then we say the maximal function in this set the qpsh envelope function. We similarly define the algebraic qpsh envelope function of $\varphi$ if it exists.
\end{defn}

In general, we cannot ensure the sets defined as above are nonempty. For instance, the function in Example \ref{exa:non-psh} is bounded homogeneous but its qpsh envelope function does not exist. Also note that the function $\phi$ in Example \ref{non_alg_qpsh} is qpsh itself but its algebraic qpsh envelope function does not exist.

\begin{lem}\label{qpsh_eve}
Let $\varphi$ be a bounded homogeneous function that is determined on some dual complex $\Delta(Y,D)$ in the sense of $\varphi=\varphi \circ r_{Y,D}$. Then, its qpsh envelope function $\psi$ exists. Further, $\psi$ is algebraic qpsh.
\end{lem}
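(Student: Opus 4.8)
The plan is to get existence from a soft supremum argument after exhibiting a single competitor, and then to identify the envelope explicitly as the algebraic function attached to a graded sequence of ideals.

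\emph{Existence.} The point is that if the set $S:=\{\chi\in\mathrm{QPSH}(X)\mid\chi\le\varphi\}$ is nonempty, then $\psi:=\sup_{\chi\in S}\chi$ is forced to be the envelope: it is homogeneous and bounded (squeezed between any element of $S$ and $\varphi$), hence qpsh by Lemma~\ref{cl_sup}, and $\psi\le\varphi$ as a supremum of functions $\le\varphi$; so $\psi\in S$ and is its maximal element. To see $S\neq\emptyset$ I would exhibit an ideal function below $\varphi$, using that $\varphi$ is bounded and $\varphi=\varphi\circ r_{Y,D}$. Write $\pi\colon(Y,D)\to X$, set $C:=\|\varphi\|$, let $C':=\max_i A(\mathrm{ord}_{D_i})$ over the (finitely many) components $D_i$ of $D$, pick $N\in\mathbb Z$ with $N\ge CC'$, and put $\mathfrak a:=\pi_\ast\mathcal O_Y(-ND)$, a nonzero ideal. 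For any $v\in\mathrm{V}_X$ let $w:=r_{Y,D}(v)\in\mathrm{QM}(Y,D)$; using $\mathfrak a\mathcal O_Y\subseteq\mathcal O_Y(-ND)$ and $v(\mathfrak a)\ge w(\mathfrak a)$, together with $w(\mathcal O_Y(-ND))=N\sum_i w(D_i)$, $A(w)=\sum_i w(D_i)A(\mathrm{ord}_{D_i})\le C'\sum_i w(D_i)$ and $-\varphi(w)\le C\,A(w)$, one gets $v(\mathfrak a)\ge N\sum_i w(D_i)\ge\frac{N}{C'}A(w)\ge C\,A(w)\ge-\varphi(w)=-\varphi(v)$, i.e. $\log|\mathfrak a|\le\varphi$; and $\log|\mathfrak a|\in\mathrm{QPSH}(X)$, so $S\neq\emptyset$.

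\emph{Identifying the envelope.} Recall from Section~\ref{section:alg-qpsh} that $\mathfrak a(\varphi)_m:=\mathfrak a(m\varphi)$ is a graded sequence of ideals; it is nontrivial by the previous paragraph (since $\mathfrak a\subseteq\mathfrak a(\varphi)_1$, hence $\mathfrak a(\varphi)_1^m\subseteq\mathfrak a(\varphi)_m$), so $\psi_0:=\log|\mathfrak a(\varphi)_\bullet|$ is algebraic qpsh by Lemma~\ref{grad_alg}, and $\psi_0\le\varphi$ since $\frac1m\log|\mathfrak a(m\varphi)|\le\varphi$ for every $m$. Thus $\psi_0\le\psi$, and it remains to show $\chi\le\psi_0$ for every $\chi\in S$. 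Any ideal function $c\log|\mathfrak b|\le\varphi$ lies $\le\psi_0$ (clear denominators: $\mathfrak b^{qc}\subseteq\mathfrak a(q\varphi)$, whence $c\log|\mathfrak b|=\frac1q\log|\mathfrak b^{qc}|\le\psi_0$), and any algebraic qpsh $\le\varphi$ is an increasing limit of such ideal functions, so every algebraic $\chi\in S$ is $\le\psi_0$; the content is therefore to reduce a general $\chi\in S$ to the algebraic case. Here I would replace $\chi$ by $\chi\circ r_{Y,D}\ge\chi$, which still satisfies $\chi\circ r_{Y,D}\le\varphi\circ r_{Y,D}=\varphi$, is determined on $\Delta(Y,D)$, and is convex on its faces and continuous there by Proposition~\ref{qpsh_usc}, and then approximate $\chi\circ r_{Y,D}$ from below, in the norm, by ideal functions of $X$ that still lie $\le\varphi$ — using that a convex continuous function on the finite dual complex $\Delta(Y,D)$ is built out of ``monomial'' data on $\mathrm{QM}(Y,D)$ that can be realized by honest coherent ideals on $X$ via the retraction machinery and Lemma~\ref{lem:linear}, together with the observation that a finite maximum of ideal functions $\max_j c_j\log|\mathfrak b_j|$ equals $\frac1q\log\bigl|\sum_j\mathfrak b_j^{\,qc_j}\bigr|$, again an ideal function. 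Such approximants lie $\le\chi\circ r_{Y,D}\le\varphi$, hence $\le\psi_0$, whence $\chi\le\chi\circ r_{Y,D}\le\psi_0$. Therefore $\psi=\psi_0$ is algebraic qpsh.

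\emph{Main obstacle.} The existence part and the construction of $\mathfrak a$ are routine. The delicate step is the last one, which is genuinely subtle because — unlike for a supremum of ideal functions — the retraction $\chi\circ r_{Y,D}$ of a qpsh function, while convex and determined on $\Delta(Y,D)$, need not itself be qpsh, and a face-wise affine $\le 0$ function on $\Delta(Y,D)$ need not be an ideal function (vanishing conditions along some divisors of a blow-up force conditions along others). So the real crux is to show directly that every qpsh function below $\varphi$ lies below $\log|\mathfrak a(\varphi)_\bullet|$, by squeezing it — through its values on divisorial valuations — between ideal functions of $X$ that remain $\le\varphi$; this is exactly where one must use that $\varphi$ is supported on the finite complex $\Delta(Y,D)$, and it leans on the monomial/toroidal-ideal formalism underlying the retraction maps (Lemma~\ref{lem:linear} and the facts quoted after it from [\ref{JM}], [\ref{Laz}]), possibly combined with the regularization result (Theorem~\ref{ch_qpsh}).
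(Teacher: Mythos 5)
Your existence argument is correct and close in spirit to the paper's (the paper simply takes $\mathfrak a = \mathfrak m^{k}$ where $\mathfrak m$ is the defining ideal of the image of $D$ and $k \gg 0$, rather than $\pi_\ast\mathcal O_Y(-ND)$, but both give an ideal function $\le\varphi$ by the same boundedness estimate on $\Delta(Y,D)$ combined with $\varphi=\varphi\circ r_{Y,D}$). You also correctly reduce the problem to showing $\chi\le\psi_0:=\log|\mathfrak a(\varphi)_\bullet|$ for every $\chi\in S$, and correctly handle the case of algebraic $\chi$ via ideal functions below it.

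The gap is exactly where you flag it, and it is genuine rather than a loose end. Passing to $\chi\circ r_{Y,D}$ does not help: it need not be qpsh, and trying to rebuild it from below as a max of ideal functions by ``realizing face-affine data on $\Delta(Y,D)$'' requires constructing coherent ideals on $X$ with prescribed orders along all the $D_i$ simultaneously, which you do not justify and which is not generally possible for an arbitrary convex function on the complex. The paper avoids this entirely with a small but decisive trick that you don't invoke: since $\psi$ (equivalently, any $\chi\in S$) is qpsh, approximate it in norm by ideal functions $\phi'$, and then \emph{perturb} $\phi'$ by $\frac{\delta}{\mu_2}\log|\mathfrak m|$, where $\mu_2=\min_{\Delta(Y,D)}|v(\mathfrak m)|$. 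On $\Delta(Y,D)$ this shift of size $\ge\delta$ dominates the error $\|\phi'-\psi\|<\delta$, so the perturbed ideal function is $\le\psi\le\varphi$ on $\Delta(Y,D)$; and because ideal functions satisfy $\phi'(v)\le\phi'(r_{Y,D}(v))$ while $\varphi=\varphi\circ r_{Y,D}$, this forces the perturbed function $\le\varphi$ on all of $\mathrm{V}_X$. It is therefore $\le\phi$ (the algebraic envelope $=\psi_0$), while it differs from $\psi$ by at most $(1+\mu_1/\mu_2)\delta$ on $\Delta(Y,D)$. Letting $\delta\to0$ gives $\phi=\psi$ on $\Delta(Y,D)$; repeating for higher resolutions and invoking Proposition~\ref{qpsh_usc} (a qpsh function is determined by its values on dual complexes) finishes. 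This perturbation-by-$\log|\mathfrak m|$ step — which turns an approximating ideal function into one that actually lies below $\varphi$ — is the missing idea in your proposal; supplying it would complete your argument with essentially no change to the rest.
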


\begin{proof}
Let $Z \subseteq X$ be the image of the reduced divisor $D$ on $X$, and $\mathfrak{m}$ be the defining ideal of $Z$. Since $\log|\mathfrak{m}|$ is strictly negative on $\Delta(Y,D)$ and $\varphi$ is bounded on $\Delta(Y,D)$, there exists an integer $k$ such that $k\log|\mathfrak{m}| \leq \varphi$ on $\Delta(Y,D)$. Because $\varphi$ is determined on the dual complex $\Delta(Y,D)$ in the sense of $\varphi=\varphi \circ r_{Y,D}$, we have that $k\log|\mathfrak{m}| \leq \varphi$ on $\mathrm{V}_X$. It follows that its algebraic qpsh envelope function $\phi$ exists. In particular, its qpsh envelope function $\psi$ exists.

Now we will show that $\psi=\phi$. Set $\mu_{1}=\max_{v\in\Delta(Y,D)} |v(\mathfrak{m})|$ and $\mu_{2}=\min_{v\in\Delta(Y,D)} |v(\mathfrak{m})|$. For any small number $\varepsilon>0$, we choose $\delta\ll 1$ such that $(1+\frac{\mu_{1}}{\mu_{2}})\delta< \varepsilon$ and an ideal function $\phi'$ such that $\|\phi'-\psi\|<\delta$. Note that for every valuation $v\in\Delta(Y,D)$ we have
$$
\psi (v)>\phi'(v)-\frac{\delta}{\mu_{2}}v(\mathfrak{m})\geq \phi'(v)-\frac{\delta \mu_{1}}{\mu_{2}}> \psi (v)-(1+\frac{\mu_{1}}{\mu_{2}})\delta.
$$
After replacing $\phi'$ by $\phi'+ \frac{\delta}{\mu_2}\log|\mathfrak{m}|$, we can assume that $\phi' \leq \psi$ and $|\psi(v)-\phi'(v)|< \varepsilon$ on $\Delta(Y,D)$. Because $\varphi=\varphi \circ r_{Y,D}$, we obtain that $\phi' \leq \varphi$. It follows that $\phi' \leq \phi \leq \psi$ by the definition of the qpsh envelope function. Since $\varepsilon$ can be chosen arbitrary small, this forces $\phi=\psi$ on $\Delta(Y,D)$. If we pick any higher log resolution $(Y',D')$, we can show that $\phi=\psi$ on $\Delta(Y',D')$ by the same argument. The conclusion therefore follows from Proposition \ref{qpsh_usc}.
\end{proof}

The above lemma leads to the following definition.

\begin{defn}
Let $\varphi \in\mathrm{QPSH}(X)$ be a qpsh function. We denote by $\varphi_{Y,D}$ the qpsh envelop function of $\varphi \circ r_{Y,D}$.
\end{defn}

\begin{lem}\label{qpsh_alg}
Let $\varphi$ be a qpsh function. Then there exists a decreasing sequence of algebraic functions which converges to $\varphi$ strongly in the norm.
\end{lem}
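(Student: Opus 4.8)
The plan is to take the desired decreasing sequence out of the net of qpsh envelope functions $\varphi_{Y,D}$ defined just above the statement. For each log resolution $(Y,D)$ of $X$, the function $\varphi\circ r_{Y,D}$ is bounded homogeneous and satisfies $\varphi\circ r_{Y,D}=(\varphi\circ r_{Y,D})\circ r_{Y,D}$, since $r_{Y,D}\circ r_{Y,D}=r_{Y,D}$; hence by Lemma \ref{qpsh_eve} its qpsh envelope $\varphi_{Y,D}$ exists and is in fact \emph{algebraic} qpsh. I will show that $\{\varphi_{Y,D}\}$, indexed by log resolutions ordered by $\succeq$, is a decreasing net converging to $\varphi$ strongly in the norm, and then extract a cofinal sequence.

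First I would record the sandwich $\varphi\le\varphi_{Y,D}\le\varphi\circ r_{Y,D}$. The right inequality is the definition of the envelope. For the left one, note that $\varphi$ is itself a qpsh function with $\varphi\le\varphi\circ r_{Y,D}$: for an ideal function $c\log|\mathfrak a|$ the bound $r_{Y,D}(v)\le v$ gives $v(\mathfrak a)\ge r_{Y,D}(v)(\mathfrak a)$, hence $c\log|\mathfrak a|(v)\le c\log|\mathfrak a|(r_{Y,D}(v))$, and this inequality passes to norm-limits of ideal functions; maximality of the envelope then yields $\varphi\le\varphi_{Y,D}$. Next, if $(Y',D')\succeq(Y,D)$, then $r_{Y,D}(v)=r_{Y,D}(r_{Y',D'}(v))\le r_{Y',D'}(v)$, so the same sign computation gives $\varphi\circ r_{Y',D'}\le\varphi\circ r_{Y,D}$; since the qpsh envelope operation is order-preserving, $\varphi_{Y',D'}\le\varphi_{Y,D}$. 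Thus $\{\varphi_{Y,D}\}$ is a decreasing net of algebraic qpsh functions. From $0\le\varphi_{Y,D}-\varphi\le\varphi\circ r_{Y,D}-\varphi$ we obtain $\|\varphi_{Y,D}-\varphi\|\le\|\varphi\circ r_{Y,D}-\varphi\|$, and the right-hand side tends to $0$ along the net of log resolutions by Proposition \ref{qpsh_usc}; note also that $\|\varphi\circ r_{Y,D}-\varphi\|$ is itself decreasing in $(Y,D)$ for the same reason.

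Finally, since log resolutions of $X$ form a directed system, I would build an increasing chain $(Y_1,D_1)\preceq(Y_2,D_2)\preceq\cdots$ by repeatedly passing to common refinements so that $\|\varphi\circ r_{Y_k,D_k}-\varphi\|<1/k$ for every $k$ (possible because the net of these norms is decreasing with infimum $0$). Then $\psi_k:=\varphi_{Y_k,D_k}$ is a decreasing sequence of algebraic qpsh functions with $\|\psi_k-\varphi\|<1/k$, hence converges to $\varphi$ strongly in the norm. I do not anticipate a real obstacle; the only point requiring slight care is arranging the extracted resolutions to be simultaneously $\succeq$-increasing (so that $\psi_k$ is genuinely monotone) and norm-close to $\varphi$, which is exactly what the directedness of the system of log resolutions together with the monotonicity of $\|\varphi\circ r_{Y,D}-\varphi\|$ provide. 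The substance of the argument rests on the two inputs Lemma \ref{qpsh_eve} (existence of the algebraic envelope) and Proposition \ref{qpsh_usc} (convergence of the retractions).
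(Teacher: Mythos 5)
Your proof is correct and follows essentially the same route as the paper: both rest on the sandwich $\varphi\le\varphi_{Y,D}\le\varphi\circ r_{Y,D}$, the algebraicity of the envelopes from Lemma \ref{qpsh_eve}, and the norm-convergence of the retractions from Proposition \ref{qpsh_usc}. The only cosmetic difference is that the paper selects its chain of log resolutions via log resolutions of the ideals of approximating ideal functions, whereas you extract a cofinal sequence directly from the decreasing net; these amount to the same argument.
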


\begin{proof}
Let $\{\varphi_m\}$ be a sequence of ideal functions which converges to $\varphi$ strongly in the norm. We can assume that $\varphi_m=c_m\log|\mathfrak{a}_m|$ and $\|\varphi -\varphi_m\|< \frac{1}{m}$. Let $(Y,D)$ be a log resolution $\mathfrak{a}_1$. It is easy to see that $\|\varphi\circ r_{Y,D}-\varphi_1\|<1$, and therefore $\|\varphi_{Y,D}-\varphi_1\|<1$. We deduce that $\|\varphi_{Y,D}-\varphi\|<2$. Now we replace $\varphi_1$ by $\varphi_{Y,D}$ and continue this process. Note that if $(Y',D')\succeq (Y,D)$, then $\varphi_{Y',D'}\leq \varphi_{Y,D}$ by Proposition \ref{qpsh_usc}. We easily obtain the required decreasing sequence of algebraic functions.
\end{proof}

\begin{lem}\label{qpsh_mult}
Let $\{\varphi_m\}$ be a sequence of qpsh functions which converges to a qpsh function $\varphi$ strongly in the norm. Then $\mathcal{J}(\varphi)=\mathcal{J}((1+\varepsilon)\varphi_m)$ for a sufficiently small positive real number $\varepsilon>0$ and a sufficiently large integer $m$.
\end{lem}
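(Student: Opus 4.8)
The plan is to prove the two inclusions $\mathcal J(\varphi)\subseteq\mathcal J((1+\varepsilon)\varphi_m)$ and $\mathcal J((1+\varepsilon)\varphi_m)\subseteq\mathcal J(\varphi)$ separately, first choosing $\varepsilon>0$ small and then $m$ large in terms of $\varepsilon$. I will use repeatedly that qpsh functions are nonpositive (they are norm limits of ideal functions $\sum c_j\log|\mathfrak a_j|$, each of which is $\le 0$ since $v(\mathfrak a_j)\ge 0$), that $A(v)>0$ for every $v\in\mathrm V_X^\ast$ (so that the fractions below make sense), and that by Remark \ref{rem_mult} the multiplier ideal of any qpsh function is nonzero with $\|\log|\mathcal J(\psi)|-\psi\|^+<1$. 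Set $\delta:=1-\|\log|\mathcal J(\varphi)|-\varphi\|^+>0$ and $\theta_m:=\|\varphi-\varphi_m\|$, so $\theta_m\to 0$; we may assume $\varphi\neq 0$, i.e. $\|\varphi\|>0$ (the case $\varphi=0$ being trivial).

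For $\mathcal J(\varphi)\subseteq\mathcal J((1+\varepsilon)\varphi_m)$, I would estimate, for $v\in\mathrm V_X^\ast$, the identity $-(1+\varepsilon)\varphi_m(v)=-\varphi(v)+(\varphi(v)-\varphi_m(v))-\varepsilon\varphi_m(v)$, bounding $|\varphi(v)-\varphi_m(v)|\le\theta_m A(v)$ and $0\le-\varepsilon\varphi_m(v)\le\varepsilon\|\varphi_m\|A(v)\le\varepsilon(\|\varphi\|+\theta_m)A(v)$ (here $\varphi_m\le 0$). Adding $-v(\mathcal J(\varphi))$, dividing by $A(v)$ and taking the supremum gives $\|\log|\mathcal J(\varphi)|-(1+\varepsilon)\varphi_m\|^+\le(1-\delta)+(1+\varepsilon)\theta_m+\varepsilon\|\varphi\|$, which is $<1$ as soon as $\varepsilon\|\varphi\|<\delta/2$ and $(1+\varepsilon)\theta_m<\delta/2$; then $\mathcal J(\varphi)$ lies in the set of Definition \ref{def_mult} for $(1+\varepsilon)\varphi_m$, so it is contained in $\mathcal J((1+\varepsilon)\varphi_m)$.

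For the reverse inclusion, set $\mathfrak c:=\mathcal J((1+\varepsilon)\varphi_m)$; since $(1+\varepsilon)\varphi_m$ is again qpsh, Remark \ref{rem_mult} gives $\|\log|\mathfrak c|-(1+\varepsilon)\varphi_m\|^+<1$. I would bound $\frac{-v(\mathfrak c)-\varphi(v)}{A(v)}$ for each $v\in\mathrm V_X^\ast$ by a case distinction on the size of $\varphi_m(v)$. If $-\varphi_m(v)\ge\tfrac{\theta_m}{\varepsilon}A(v)$, then $\varepsilon\varphi_m(v)\le-\theta_m A(v)$, hence $(1+\varepsilon)\varphi_m(v)-\varphi(v)=(\varphi_m(v)-\varphi(v))+\varepsilon\varphi_m(v)\le 0$, and so $\frac{-v(\mathfrak c)-\varphi(v)}{A(v)}\le\frac{-v(\mathfrak c)-(1+\varepsilon)\varphi_m(v)}{A(v)}\le\|\log|\mathfrak c|-(1+\varepsilon)\varphi_m\|^+$. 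If instead $-\varphi_m(v)<\tfrac{\theta_m}{\varepsilon}A(v)$, then $-\varphi(v)\le-\varphi_m(v)+\theta_m A(v)<\tfrac{(1+\varepsilon)\theta_m}{\varepsilon}A(v)$, and since $v(\mathfrak c)\ge 0$ we get $\frac{-v(\mathfrak c)-\varphi(v)}{A(v)}<\tfrac{(1+\varepsilon)\theta_m}{\varepsilon}$. Taking the supremum over $v$ yields $\|\log|\mathfrak c|-\varphi\|^+\le\max\{\|\log|\mathfrak c|-(1+\varepsilon)\varphi_m\|^+,\ \tfrac{(1+\varepsilon)\theta_m}{\varepsilon}\}$, which is $<1$ provided also $\theta_m<\tfrac{\varepsilon}{1+\varepsilon}$; hence $\mathfrak c\subseteq\mathcal J(\varphi)$. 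To conclude, I would choose $\varepsilon>0$ with $\varepsilon\|\varphi\|<\delta/2$ and then $m$ large so that $\theta_m<\min\{\tfrac{\varepsilon}{1+\varepsilon},\ \tfrac{\delta}{2(1+\varepsilon)}\}$; all the imposed constraints are then met.

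The main obstacle is the reverse inclusion. A naive norm comparison fails because one has no control over how far below $1$ the quantity $\|\log|\mathcal J((1+\varepsilon)\varphi_m)|-(1+\varepsilon)\varphi_m\|^+$ sits — this margin may shrink to $0$ as $m\to\infty$. The case distinction on the size of $\varphi_m(v)$ is exactly what circumvents this: on the locus where $\varphi_m(v)$ is very negative, the extra slack $\varepsilon\varphi_m(v)\le 0$ absorbs the approximation error $\theta_m A(v)$, so that $(1+\varepsilon)\varphi_m$ already dominates $\varphi$ there; on the complementary locus, $\varphi_m(v)$, and hence $\varphi(v)$, is controlled by a multiple of $\varepsilon^{-1}\theta_m A(v)$, which is harmless once $m$ is large. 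This is precisely why the factor $(1+\varepsilon)$ cannot be dropped.
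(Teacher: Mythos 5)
Your proof is correct, and it takes a mildly different route from the paper's. For the inclusion $\mathcal{J}(\varphi)\subseteq\mathcal{J}((1+\varepsilon)\varphi_m)$ the paper factors through the intermediate ideal $\mathcal{J}((1+\varepsilon)\varphi)$ (first $\mathcal{J}(\varphi)=\mathcal{J}((1+\varepsilon)\varphi)$ for small $\varepsilon$, then $\mathcal{J}((1+\varepsilon)\varphi)\subseteq\mathcal{J}((1+\varepsilon)\varphi_m)$ for large $m$), whereas you unroll both steps into a single estimate on $\|\log|\mathcal{J}(\varphi)|-(1+\varepsilon)\varphi_m\|^{+}$; the content is the same. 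The real divergence is in the reverse inclusion: the paper invokes Lemma \ref{norm_1} to translate $f\in\mathcal{J}((1+\varepsilon)\varphi_m)$ into $\|\varphi_m\|_{f}<\tfrac{1}{1+\varepsilon}$ and then applies the triangle inequality for the $f$-norm, while you work directly with Definition \ref{def_mult} and split $\mathrm{V}_X^{\ast}$ according to whether $-\varphi_m(v)\ge\tfrac{\theta_m}{\varepsilon}A(v)$. Your dichotomy is in effect an inline reproof of the content of Lemma \ref{norm_1}; both arguments exploit exactly the same slack, namely that the factor $(1+\varepsilon)$ leaves a margin of order $\varepsilon$ which absorbs the approximation error $\theta_m$. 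What your version buys is self-containedness (no appeal to the $\mathfrak{q}$-norm machinery) and fully explicit constants; what the paper's version buys is brevity. One shared caveat: like the paper's own proof, your argument presupposes that $\mathcal{J}(\varphi)$ (resp.\ $\mathcal{J}((1+\varepsilon)\varphi_m)$) is nonzero with strict margin, which the paper only justifies in Remark \ref{qpsh_mult_r} using this very lemma applied to algebraic approximants; since each of your two inclusions is trivial when the ideal on its left-hand side vanishes, no genuine circularity arises, but it is worth flagging.
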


\begin{proof}
First we prove that $\mathcal{J}(\varphi)\subseteq \mathcal{J}((1+\varepsilon)\varphi_m)$ for a sufficiently small number $\varepsilon>0$ and a sufficiently large integer $m$. To this end, we pick a sufficiently small number $\varepsilon>0$ such that $\mathcal{J}(\varphi)=\mathcal{J}((1+\varepsilon)\varphi)$. Since $\mathcal{J}((1+\varepsilon)\varphi)\subseteq \mathcal{J}((1+\varepsilon)\varphi_m)$ provided that $m$ is sufficiently large, we have $\mathcal{J}(\varphi)\subseteq \mathcal{J}((1+\varepsilon)\varphi_m)$. Conversely, we pick a sufficiently large integer $m$ such that $\|\varphi-\varphi_m\|<1-\frac{1}{1+\varepsilon}$. Applying Lemma \ref{norm_1} again, we see that if $f\in\mathcal{J}((1+\varepsilon)\varphi_m)$, then $\|\varphi_m\|_{f}<\frac{1}{1+\varepsilon}$ and hence $\|\varphi\|_{f}\leq \|\varphi_m\|_{f}+\|\varphi-\varphi_m\|_{f}<1$ which implies that $f\in\mathcal{J}(\varphi)$.
\end{proof}

\begin{rem}\label{qpsh_mult_r}
Note that for an algebraic qpsh function $\phi$, we always have $\phi \leq \log|\mathcal{J}(\phi)|$ by [\ref{JM}, Proposition 6.2 and 6.5]. It follows by Lemma \ref{qpsh_alg} and Lemma \ref{qpsh_mult} that $\mathcal{J}(\varphi)$ is nonzero and $(1+\varepsilon)\varphi \leq (1+\varepsilon)\varphi_m \leq \log|\mathcal{J}(\varphi)|$ where $\{\varphi_m\}$ is a decreasing sequence of algebraic functions which converges to a qpsh function $\varphi$ strongly in the norm. Since $\varepsilon$ can be chosen arbitrary small, we immediately obtain that $\varphi \leq \log|\mathcal{J}(\varphi)|$.
\end{rem}

Now we discuss the multiplier ideals of general qpsh functions.

\begin{prop}\label{subadditivity}
Let $\varphi \in \mathrm{QPSH}(X)$ be a qpsh function on $X$.\\
(1). Assume that $\psi$ is another qpsh function on $X$. Then,
$$
\mathcal{J}(\varphi+\psi)\subseteq\mathcal{J}(\varphi)\cdot\mathcal{J}(\psi).
$$
(2). Assume that $f:X'\longrightarrow X$ is a regular morphism of schemes. Then,
$$
\mathcal{J}(\varphi)\cdot \mathcal{O}_{X'}=\mathcal{J}(f^{\ast}\varphi).
$$
\end{prop}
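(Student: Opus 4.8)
\textbf{Proof proposal for Proposition \ref{subadditivity}.}

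The plan is to reduce both statements to the case of ideal functions, where they are the classical subadditivity theorem of Demailly--Ein--Lazarsfeld and the classical base-change behaviour of multiplier ideals under regular morphisms, and then to propagate these to general qpsh functions via the approximation results already established (Lemma \ref{qpsh_alg}, Lemma \ref{qpsh_mult}, and for the algebraic case Corollary \ref{cor:alg-mult-ideal}).

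For part (1), first I would choose decreasing sequences of algebraic qpsh functions $\{\varphi_m\}$ and $\{\psi_m\}$ converging to $\varphi$ and $\psi$ respectively in the norm, using Lemma \ref{qpsh_alg}. By Lemma \ref{qpsh_mult} there are a small $\varepsilon>0$ and a large $m$ with $\mathcal{J}(\varphi)=\mathcal{J}((1+\varepsilon)\varphi_m)$ and $\mathcal{J}(\psi)=\mathcal{J}((1+\varepsilon)\psi_m)$, and one more application of Lemma \ref{qpsh_mult} (to the sequence $\varphi_m+\psi_m$) gives $\mathcal{J}(\varphi+\psi)=\mathcal{J}((1+\varepsilon')(\varphi_m+\psi_m))$ after shrinking $\varepsilon$; so it suffices to prove $\mathcal{J}(\phi+\theta)\subseteq \mathcal{J}(\phi)\cdot\mathcal{J}(\theta)$ for algebraic $\phi,\theta$. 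By Theorem \ref{ch_alg} we may write $\phi=\log|\mathfrak{a}_\bullet|$ and $\theta=\log|\mathfrak{b}_\bullet|$ for graded sequences $\mathfrak{a}_\bullet,\mathfrak{b}_\bullet$; then $\phi+\theta=\log|\mathfrak{c}_\bullet|$ where $\mathfrak{c}_m=\mathfrak{a}_m\cdot\mathfrak{b}_m$ is again a graded sequence, and by Corollary \ref{cor:alg-mult-ideal} combined with Lemma \ref{alg_mult}(1) and Proposition \ref{prop:ideal_mult} the desired inclusion becomes $\mathcal{J}(\mathfrak{a}_m\mathfrak{b}_m)\subseteq\mathcal{J}(\mathfrak{a}_m)\cdot\mathcal{J}(\mathfrak{b}_m)$ for $m$ large, which is exactly the Demailly--Ein--Lazarsfeld subadditivity theorem on the regular excellent scheme $X$ (proved via the diagonal trick on $X\times X$, or one may cite it directly).

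For part (2), I would argue similarly but now need an equality. The inclusion $\mathcal{J}(\varphi)\cdot\mathcal{O}_{X'}\subseteq\mathcal{J}(f^\ast\varphi)$ is formal: from $\|\log|\mathcal{J}(\varphi)|-\varphi\|^+<1$ and Proposition \ref{norm_reg} (which gives $\|f^\ast(\log|\mathcal{J}(\varphi)|-\varphi)\|^+\le\|\log|\mathcal{J}(\varphi)|-\varphi\|^+$) we get $\|\log|\mathcal{J}(\varphi)\cdot\mathcal{O}_{X'}|-f^\ast\varphi\|^+<1$. For the reverse inclusion I would pass to a decreasing sequence of algebraic $\varphi_m\to\varphi$; by Lemma \ref{alg_mult}(2) each $f^\ast\varphi_m$ is algebraic qpsh, and Lemma \ref{qpsh_mult} identifies $\mathcal{J}(f^\ast\varphi)$ with $\mathcal{J}((1+\varepsilon)f^\ast\varphi_m)$ for suitable $\varepsilon,m$, reducing to the algebraic case; there, via Theorem \ref{ch_alg}, Corollary \ref{cor:alg-mult-ideal} and the fact that $f^\ast\log|\mathfrak{a}_\bullet|=\log|(\mathfrak{a}_\bullet\cdot\mathcal{O}_{X'})|$ (since $v'(\mathfrak{a}\cdot\mathcal{O}_{X'})=f_\ast v'(\mathfrak{a})$), the statement reduces to $\mathcal{J}(\mathfrak{a}\cdot\mathcal{O}_{X'})=\mathcal{J}(\mathfrak{a})\cdot\mathcal{O}_{X'}$ for a single ideal $\mathfrak{a}$ under a regular morphism. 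This last equality follows because a regular morphism pulls a log resolution of $\mathfrak{a}$ back to a log resolution of $\mathfrak{a}\cdot\mathcal{O}_{X'}$, with $K_{Y'/X'}$ the pullback of $K_{Y/X}$ along the (regular, hence relative-dimension-zero over the base) base change, so the rounding in the definition $\mathcal{J}=\pi_\ast\mathcal{O}_Y(K_{Y/X}-\lfloor F\rfloor)$ commutes with flat base change.

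The main obstacle I anticipate is purely bookkeeping rather than conceptual: making the double application of Lemma \ref{qpsh_mult} in part (1) uniform, i.e.\ choosing a \emph{single} pair $(\varepsilon,m)$ that simultaneously realizes $\mathcal{J}(\varphi)$, $\mathcal{J}(\psi)$ and $\mathcal{J}(\varphi+\psi)$ as multiplier ideals of the truncated algebraic functions, and checking that $\mathfrak{a}_m\mathfrak{b}_m$ is genuinely the relevant term of a graded sequence computing $\varphi+\psi$ rather than merely a lower bound. One must be slightly careful that $\mathfrak{a}(\varphi+\psi)_\bullet$ need not equal the termwise product $\mathfrak{a}(\varphi)_\bullet\cdot\mathfrak{a}(\psi)_\bullet$, but since the product is a graded sequence whose associated qpsh function is exactly $\varphi+\psi$, Corollary \ref{cor:alg-mult-ideal} applies to it directly and there is no real difficulty. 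For part (2) the only subtlety is justifying commutation of rounding with base change in the excellent (not necessarily finite-type over a field) setting, which is covered by the flatness of $f$ together with the identification $A_{X'}(v')=A_X(f_\ast v')$ from Lemma \ref{disc_reg} when $f$ is regular.
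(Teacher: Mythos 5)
Your proposal is correct and follows essentially the same route as the paper: approximate by algebraic (resp.\ ideal) functions via Lemma \ref{qpsh_alg}, pin down the multiplier ideals with Lemma \ref{qpsh_mult}, and reduce to the known base cases. The only differences are of granularity --- the paper cites [\ref{JM}, Theorem A.2] directly at the level of algebraic functions in (1) and the argument of [\ref{JM}, Proposition 1.9] for ideal functions in (2), rather than unwinding all the way to single ideals as you do; also, your parenthetical claim that a regular morphism has relative dimension zero is false in general, but it is not needed, since the base-change statement you reduce to is precisely [\ref{JM}, Proposition 1.9].
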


\begin{proof}
(1). By Lemma \ref{qpsh_alg} we can assume that there are decreasing sequences of algebraic functions $\{\varphi_{m}\}$ and $\{\psi_{m}\}$ convergent to $\varphi$ and $\psi$ strongly in the norm respectively. Then for some sufficiently large integer $m$, by Lemma \ref{qpsh_mult} we have $\mathcal{J}(\varphi+\psi)=\mathcal{J}((1+\varepsilon)(\varphi_{m}+\psi_{m}))\subseteq \mathcal{J}((1+\varepsilon)\varphi_{m})\cdot\mathcal{J}((1+\varepsilon)\psi_{m}) =\mathcal{J}(\varphi)\cdot\mathcal{J}(\psi)$ since $\varphi_{m}+\psi_{m}$ converges to $\varphi+\psi$ strongly in the norm. The inclusion appeared in the above inequality follows from [\ref{JM}, Theorem A.2].\\
(2). Since $f$ is regular, for any ideal function $\phi=\sum_{i} c_{i}\log \mathfrak{a}_{i}$, we have $$
\mathcal{J}(\phi)\cdot \mathcal{O}_{X'}=\mathcal{J}(\prod_{i} \mathfrak{a}_{i}^{c_{i}})\cdot \mathcal{O}_{X'}=\mathcal{J}(\prod_{i} (\mathfrak{a}_{i}\cdot \mathcal{O}_{X'})^{c_{i}})=\mathcal{J}(f^\ast \phi)
$$ by the argument of [\ref{JM}, Proposition 1.9]. If $\{\varphi_{m}\}$ is a sequence of ideal functions which converges to $\varphi$ strongly in the norm, then $f^{\ast}\varphi_{m}$ is a decreasing sequence of ideal functions which converges to $f^{\ast}\varphi$ strongly in the norm by Proposition \ref{norm_reg}. Therefore we have $\mathcal{J}(\varphi)\cdot \mathcal{O}_{X'}=\mathcal{J}((1+\varepsilon)\varphi_{m})\cdot \mathcal{O}_{X'}=\mathcal{J}((1+\varepsilon)f^{\ast}\varphi_{m})=\mathcal{J}(f^{\ast}\varphi)$.
\end{proof}

Recall from [\ref{JM}] that if $\mathfrak{b}_\bullet$ is subadditive, then the limit
$$
v(\mathfrak{b}_\bullet):=\lim\limits_{m \rightarrow \infty}\frac{1}{m}v(\mathfrak{b}_m)\in [0,+\infty]
$$
is well-defined. For the purpose of constructing a "good" valuative function, we introduce the notion of a subadditive sequence of ideals of controlled growth as follows.

\begin{defn}[\ref{JM}, Definition 2.9]
A subadditive sequence of ideals $\mathfrak{b}_\bullet$ is \emph{of controlled growth} if
$$
\frac{v(\mathfrak{b}_t)}{t} > v(\mathfrak{b}_\bullet)-\frac{A(v)}{t}
$$
for every nontrivial tempered valuation $v$ and every $t>0$.
\end{defn}

We see that $v(\mathfrak{b}_\bullet):=\lim\limits_{m \rightarrow \infty}\frac{1}{m}v(\mathfrak{b}_m)< +\infty$ for every nontrivial tempered valuation $v$. Furthermore, if we define $\log|\mathfrak{b}_\bullet|(v)=-v(\mathfrak{\mathfrak{b}_\bullet})$, then $\log|\mathfrak{b}_\bullet|$ is approximated by $\frac{1}{m}\log|\mathfrak{b}_m|$ strongly in the norm and hence qpsh. Given a qpsh function, if we define $\mathcal{J}(\varphi)_t:=\mathcal{J}(t\varphi)$, then $\mathcal{J}(\varphi)_\bullet$ is a subadditive sequence of controlled growth by Proposition \ref{subadditivity}, Definition \ref{def_mult} and Remark \ref{rem_mult}. This allows us to give a characterization of qpsh functions as follows. Readers could compare the following theorem with Theorem \ref{ch_alg}.

\begin{thm}\label{ch_qpsh}
Let $\varphi$ be a bounded homogeneous function. Then the followings are equivalent.\\
(1). $\varphi$ is qpsh. \\
(2). There is a subadditive sequence of ideals $\mathfrak{b}_{\bullet}$ of controlled growth such that $\varphi=\log |\mathfrak{b}_\bullet|$.\\
(3). The ideal $\mathcal{J}(t\varphi)$ is nonzero for every $t>0$ and $\varphi=\log |\mathcal{J}(\varphi)_\bullet|$.
\end{thm}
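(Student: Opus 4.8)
The plan is to establish the cycle of implications $(2)\Rightarrow(1)$, $(1)\Rightarrow(3)$, $(1)\Rightarrow(2)$, and $(3)\Rightarrow(1)$, most of which are already prepared by the discussion immediately preceding the statement; the only implication with real content is $(3)\Rightarrow(1)$.

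First, $(2)\Rightarrow(1)$ is exactly the remark recorded just before the theorem: if $\mathfrak{b}_{\bullet}$ is subadditive of controlled growth, then $\log|\mathfrak{b}_\bullet|$ is the norm-limit of the ideal functions $\frac{1}{m}\log|\mathfrak{b}_m|$, hence qpsh. For $(1)\Rightarrow(3)$, suppose $\varphi$ is qpsh. Since $\mathrm{QPSH}(X)$ is a cone, each $t\varphi$ is qpsh, so by Remark \ref{qpsh_mult_r} the ideal $\mathcal{J}(t\varphi)$ is nonzero for every $t>0$; in particular $\mathcal{J}(\varphi)_\bullet$ is a nontrivial subadditive sequence, which is moreover of controlled growth by Proposition \ref{subadditivity}, Definition \ref{def_mult} and Remark \ref{rem_mult}. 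By Remark \ref{rem_mult} one has $\|\log|\mathcal{J}(m\varphi)|-m\varphi\|<1$ for every $m$, so $\|\tfrac1m\log|\mathcal{J}(m\varphi)|-\varphi\|<\tfrac1m\to 0$; since the same sequence $\tfrac1m\log|\mathcal{J}(m\varphi)|$ also converges to $\log|\mathcal{J}(\varphi)_\bullet|$ strongly in the norm (as noted before the theorem), uniqueness of norm-limits gives $\varphi=\log|\mathcal{J}(\varphi)_\bullet|$, which is $(3)$. Then $(1)\Rightarrow(2)$ follows at once by taking $\mathfrak{b}_\bullet=\mathcal{J}(\varphi)_\bullet$, which we have just seen is subadditive of controlled growth and satisfies $\varphi=\log|\mathfrak{b}_\bullet|$.

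The remaining implication $(3)\Rightarrow(1)$ is the heart of the argument, and must be carried out without invoking that $\varphi$ is qpsh, so that Remark \ref{qpsh_mult_r} is not available a priori. Assume $\mathcal{J}(t\varphi)\ne 0$ for all $t>0$ and $\varphi=\log|\mathcal{J}(\varphi)_\bullet|$. Since $\mathcal{J}(m\varphi)$ is nonzero, Definition \ref{def_mult} gives $\|\log|\mathcal{J}(m\varphi)|-m\varphi\|^{+}<1$, that is, $A(v)+v(\mathcal{J}(m\varphi))+m\varphi(v)>0$ for every $v\in\mathrm{V}_{X}^{\ast}$; dividing by $m$, this reads $\tfrac1m\log|\mathcal{J}(m\varphi)|(v)<\varphi(v)+\tfrac{A(v)}{m}$. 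On the other hand, by the very definition of the function attached to a subadditive sequence, $-\varphi(v)=v(\mathcal{J}(\varphi)_\bullet)=\sup_{t>0}\tfrac{v(\mathcal{J}(t\varphi))}{t}\ge\tfrac1m v(\mathcal{J}(m\varphi))$, hence $\varphi(v)\le\tfrac1m\log|\mathcal{J}(m\varphi)|(v)$. Combining the two estimates yields $0\le\tfrac1m\log|\mathcal{J}(m\varphi)|(v)-\varphi(v)\le\tfrac{A(v)}{m}$ for all $v$, so $\|\tfrac1m\log|\mathcal{J}(m\varphi)|-\varphi\|\le\tfrac1m$. Thus $\varphi$ is the norm-limit of the ideal functions $\tfrac1m\log|\mathcal{J}(m\varphi)|$, and therefore qpsh.

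The main obstacle, as indicated, is $(3)\Rightarrow(1)$: one must resist using qpsh-only facts such as Remark \ref{qpsh_mult_r}, and instead notice that the identity $\varphi=\log|\mathcal{J}(\varphi)_\bullet|$ already packages the inequality $\varphi\le\tfrac1m\log|\mathcal{J}(m\varphi)|$ via the supremum defining $v(\mathcal{J}(\varphi)_\bullet)$, while the mere nonvanishing of $\mathcal{J}(m\varphi)$ supplies the opposite bound up to the term $A/m$. A secondary point to be careful about is that Definition \ref{def_mult} only asserts $\mathcal{J}(\psi)$ is the \emph{largest} ideal satisfying the norm condition; one should note (it is recorded in Remark \ref{rem_mult}) that when this largest ideal is nonzero it does satisfy the strict inequality $\|\log|\mathcal{J}(\psi)|-\psi\|^{+}<1$, which is what is used above.
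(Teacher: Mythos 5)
Your proof is correct, and it uses exactly the same ingredients as the paper (Definition \ref{def_mult}, Remark \ref{rem_mult}, the controlled-growth estimate, and norm-approximation by $\frac{1}{m}\log|\mathcal{J}(m\varphi)|$), but the way you close the equivalence differs from the paper in a point worth flagging. The paper runs the single cycle $(1)\Rightarrow(3)\Rightarrow(2)\Rightarrow(1)$, where $(3)\Rightarrow(2)$ is dispatched by ``simply put $\mathfrak{b}_\bullet=\mathcal{J}(\varphi)_\bullet$''; read literally, that step requires knowing that $\mathcal{J}(\varphi)_\bullet$ is a \emph{subadditive} sequence of controlled growth under hypothesis (3) alone, and the subadditivity is supplied by Proposition \ref{subadditivity}(1), which is stated and proved only for qpsh $\varphi$ --- precisely what one does not yet know at that stage. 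Your direct $(3)\Rightarrow(1)$ sidesteps this: the sandwich $\varphi\le\frac{1}{m}\log|\mathcal{J}(m\varphi)|\le\varphi+\frac{A}{m}$ uses only the supremum defining $v(\mathcal{J}(\varphi)_\bullet)$ together with the nonvanishing of $\mathcal{J}(m\varphi)$, and never invokes subadditivity or any qpsh-only fact before qpsh-ness is established. The cost is that you prove four implications (two biconditionals through (1)) instead of a three-step cycle, but the gain is a cleaner logical order of the supporting lemmas. Your closing caveat about Definition \ref{def_mult} is also apt: the strict inequality $\|\log|\mathcal{J}(\psi)|-\psi\|^{+}<1$ holds because $\mathcal{J}(\psi)$, when nonzero, is by definition a member of the set it maximizes.
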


\begin{proof}
If we assume (1), then (3) follows from the previous argument together with Definition \ref{def_mult} and Remark \ref{rem_mult}. Note that (3) implies (2) if we simply put $\mathfrak{b}_\bullet=\mathcal{J}(\varphi)_\bullet$. Finally, (1) follows from (2) by the previous argument.
\end{proof}

\begin{rem}\label{qpsh_ideal}
From the above theorem we see that every qpsh function $\varphi$ can be approximated by a decreasing sequence of ideal functions $\varphi_k$ in the norm. Indeed, we can take $\varphi_k= \frac{1}{2^k}\log|\mathcal{J}(2^k\varphi)|$. However, if $\varphi$ is only the pointwise limit of a decreasing sequence of ideal functions on $\mathrm{V}_X$, then $\varphi$ is not necessarily qpsh (cf. Example \ref{exa:non-psh}).
\end{rem}

An immediate application of the above discussion is the following result on the support of a qpsh function.

\begin{cor}\label{cor:supp}
Let $\varphi$ be a qpsh function. Then its support $\mathrm{Supp}\varphi$ is a countable union of proper Zariski closed subsets of $X$.
\end{cor}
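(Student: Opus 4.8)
The plan is to establish the sharper equality
$$
\mathrm{Supp}\,\varphi=\bigcup_{m\geq 1}V(\mathcal{J}(m\varphi)),
$$
from which the corollary is immediate: by Theorem \ref{ch_qpsh} together with Remark \ref{qpsh_mult_r}, each $\mathcal{J}(m\varphi)$ is a nonzero coherent ideal, and since $X$ is integral (being connected and regular, hence irreducible) the zero locus $V(\mathcal{J}(m\varphi))$ does not contain the generic point; thus each $V(\mathcal{J}(m\varphi))$ is a proper Zariski closed subset, and $\mathrm{Supp}\,\varphi$ is exhibited as a countable union of such subsets.

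The key input is the description from Theorem \ref{ch_qpsh}(3): one has $\varphi=\log|\mathcal{J}(\varphi)_\bullet|$, where $\mathcal{J}(\varphi)_t:=\mathcal{J}(t\varphi)$ is subadditive by Proposition \ref{subadditivity}(1). Hence for every nontrivial tempered valuation $v$,
$$
\varphi(v)=-v(\mathcal{J}(\varphi)_\bullet)=-\sup_{m\geq 1}\frac{1}{m}\,v(\mathcal{J}(m\varphi)),
$$
the last equality because $\bigl(v(\mathcal{J}(m\varphi))\bigr)_m$ is a nonnegative superadditive sequence — again by Proposition \ref{subadditivity}(1) applied to $m\varphi$ and $n\varphi$ — so its normalized supremum equals the limit appearing in the definition of $v(\mathcal{J}(\varphi)_\bullet)$ for subadditive families. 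Consequently $\varphi(v)<0$ if and only if $v(\mathcal{J}(m\varphi))>0$ for some $m\geq 1$. I would pair this with the elementary fact that, writing $x=c_X(v)$, one has $v(\mathfrak{a})>0$ exactly when $x\in V(\mathfrak{a})$: this is the locality of $\mathcal{O}_{X,x}\hookrightarrow\mathcal{O}_v$, equivalently the identity $\mathrm{Val}_U=\{v\mid v(\mathfrak{m})=0\}$ recalled in Section \ref{section:val_space}.

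These two facts give the inclusion $\subseteq$ at once: if $v\in\mathrm{V}_X^\ast$ satisfies $\varphi(v)<0$, choose $m$ with $v(\mathcal{J}(m\varphi))>0$; then $c_X(v)\in V(\mathcal{J}(m\varphi))$. For the reverse inclusion, let $x\in V(\mathcal{J}(m\varphi))$ for some $m$. Since $\mathcal{J}(m\varphi)\neq 0$, the point $x$ is not the generic point of $X$, so $\mathrm{ord}_x$ is a nontrivial tempered (divisorial) valuation with $c_X(\mathrm{ord}_x)=x$; because $\mathcal{J}(m\varphi)$ is contained in the defining ideal of $\overline{\{x\}}$ in a neighborhood of $x$, we get $\mathrm{ord}_x(\mathcal{J}(m\varphi))\geq 1$, whence
$$
\varphi(\mathrm{ord}_x)\leq-\frac{1}{m}\,\mathrm{ord}_x(\mathcal{J}(m\varphi))\leq-\frac{1}{m}<0,
$$
so $x\in\mathrm{Supp}\,\varphi$. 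This proves the claimed equality. The argument is essentially bookkeeping once Theorem \ref{ch_qpsh} is available; the only steps that need a moment's care are the passage from the subadditive family $\mathcal{J}(\varphi)_\bullet$ to the pointwise formula for $\varphi(v)$ (via superadditivity), and the realization of every point of $V(\mathcal{J}(m\varphi))$ as the center of a nontrivial tempered valuation on which $\varphi$ is strictly negative, for which $\mathrm{ord}_x$ is the natural choice.
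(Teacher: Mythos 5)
Your proof is correct and follows exactly the route the paper intends (it gives no written proof, calling the corollary an ``immediate application'' of Theorem \ref{ch_qpsh}): from $\varphi=\log|\mathcal{J}(\varphi)_\bullet|$ one reads off $\mathrm{Supp}\,\varphi=\bigcup_{m\ge 1}V(\mathcal{J}(m\varphi))$, with the nonvanishing of each $\mathcal{J}(m\varphi)$ supplied by Remark \ref{qpsh_mult_r}. Your two points of care — the Fekete-type identification of the supremum and the use of $\mathrm{ord}_x$ to witness points of $V(\mathcal{J}(m\varphi))$ in the support — are exactly the details the paper leaves implicit, and both check out.
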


\begin{rem}
Readers can compare the constructions here with [\ref{BFJ1}]. If we work on $X=\mathrm{Spec}\widehat{R}$ where $R$ is the localization of $\mathbb{C}[x_1,\ldots,x_n]$ at the origin, then our definition of qpsh functions coincides the notion of \emph{formal psh functions}. A brief argument is as follows. Given a formal psh function $g$, we have a subadditive sequence of ideals $\{\mathcal{L}^2(tg)\}_{t>0}$ in $\widehat{R}$ by [\ref{BFJ1}, Theorem 3.10] which satisfies that $v(\mathcal{L}^2(tg))+A(v) +(1+\epsilon)g(v) \geq 0$ for every quasi-monomial valuation $v$ centered at the origin and an arbitrary small $\epsilon$ by [\ref{BFJ1}, Theorem 3.9]. It follows that $\{\mathcal{L}^2(tg)\}_{t>0}$ form a subadditive sequence of ideals of controlled growth which induces to a qpsh function $\varphi$ on $X$. Therefore $\varphi(v)=g(v)$ for every divisorial valuation $v$ centered at the origin. Conversely, a qpsh function can be naturally viewed as a formal psh function by definition. Therefore we construct an one-to-one correspondence. The details are left to the readers.
\end{rem}

\begin{rem}
Recall from complex geometry that a function $\varphi:X \rightarrow [-\infty,+\infty)$ from a complex manifold is \emph{qpsh} if it is locally equal to the sum of a smooth function and a psh function. If $X$ is a smooth complex variety, then we should be able to define the valuative transform of $\varphi$ which is expected to be a qpsh function on the valuation space $\mathrm{V}_X$ as defined in this paper. This was done locally in [\ref{BFJ1}] and its predecessors [\ref{FJ1}], [\ref{FJ2}], [\ref{FJ3}]. However, the global situation is not fully understood by us at this point.
\end{rem}

\vspace{0.3cm}
\section{Computing norms}\label{c-norms}

\subsection{Generalities}

\begin{defn}
Let $\varphi$ be a bounded homogeneous function and $\mathfrak{q}$ be a nonzero ideal on $X$. We say a nontrivial tempered valuation $v \in \mathrm{V}_X^\ast$ \emph{computes} $\|\varphi\|_{\mathfrak{q}}$ if the equality $\|\varphi\|_{\mathfrak{q}}=\frac{|\varphi(v)|}{A(v)+v(\mathfrak{q})}$ holds.
\end{defn}

The main result of this section is the following theorem.

\begin{thm}\label{c_norms}
Let $\varphi\in \mathrm{QPSH}(X)$ be a qpsh function and let $\mathfrak{q}$ be a nonzero ideal on $X$. Then there exists a nontrivial tempered valuation $v$ which computes $\|\varphi\|_{\mathfrak{q}}$.
\end{thm}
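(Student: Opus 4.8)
The plan is to reduce the statement to a compactness argument for a space of valuations on a complete local ring, where the supremum defining the norm is realised by upper--semicontinuity.

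Since $\varphi$ is qpsh we have $\varphi\le 0$, so $\|\varphi\|_{\mathfrak q}=\|\varphi\|_{\mathfrak q}^{-}$, and I may assume $\|\varphi\|_{\mathfrak q}>0$ (otherwise any nontrivial tempered valuation computes it). Put $\lambda=\|\varphi\|_{\mathfrak q}^{-1}$. By Lemma \ref{norm_1}, $\mathfrak q\nsubseteq\mathcal J(\lambda\varphi)$, so $Z:=V(\mathcal J(\lambda\varphi):\mathfrak q)$ is a nonempty proper closed subset; combining Lemma \ref{norm_1} and Lemma \ref{norm_2}(2) one gets $Z=\{\eta\mid\|\varphi_{\eta}\|_{\mathfrak q\cdot\mathcal O_{X,\eta}}=\|\varphi\|_{\mathfrak q}\}$. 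I would take $\xi$ to be the generic point of a component of $Z$ and replace $X$ by $\Spec\mathcal O_{X,\xi}$, $\varphi$ by $\varphi_{\xi}$, $\mathfrak q$ by $\mathfrak q\cdot\mathcal O_{X,\xi}$: by Lemma \ref{norm_2}(3) the norm is unchanged, and by Proposition \ref{norm_reg} together with the inequality $A_{X}(f_{\ast}v')\le A_{X'}(v')$ of Lemma \ref{disc_reg}, any nontrivial tempered valuation computing the norm of the germ, viewed on $X$, still computes $\|\varphi\|_{\mathfrak q}$. After this reduction $X$ is local with closed point $\xi$; since $\xi$ is generic in $Z$, no proper generization of $\xi$ lies in $Z$, so $(\mathcal J(\lambda\varphi):\mathfrak q)$ cuts out only $\{\xi\}$ and is therefore $\mathfrak m$-primary, say $\mathfrak m^{N}\subseteq\mathfrak c:=(\mathcal J(\lambda\varphi):\mathfrak q)$. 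Finally, passing to the completion $\widehat R$ via the comparison of valuation spaces of $\Spec R$ and $\Spec\widehat R$, Lemma \ref{disc_reg} and Proposition \ref{subadditivity}(2) — all of which preserve $A$, the norm and the multiplier ideals — I may assume $R$ complete.

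Now set $\psi=\lambda\varphi$, so $\|\psi\|_{\mathfrak q}=1$ and a valuation computes $\|\psi\|_{\mathfrak q}$ if and only if it computes $\|\varphi\|_{\mathfrak q}$. By Theorem \ref{ch_qpsh}, $\psi=\log|\mathfrak b_{\bullet}|$ where $\mathfrak b_{t}=\mathcal J(t\psi)$ is subadditive of controlled growth, so $-\psi(v)=v(\mathfrak b_{\bullet})=\sup_{t>0}\frac{v(\mathfrak b_{t})}{t}$ and $v(\mathfrak b_{\bullet})<\frac{v(\mathfrak b_{t})+A(v)}{t}$ for every $t>0$. Consider the degree--zero homogeneous functional $F(v)=\frac{-\psi(v)}{A(v)+v(\mathfrak q)}$ on $\mathrm V_{X}^{\ast}$, whose supremum is $\|\psi\|_{\mathfrak q}=1$. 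With $F_{t}(v)=\frac{v(\mathfrak b_{t})}{t(A(v)+v(\mathfrak q))}$ one has $F_{t}\le F$ and, by controlled growth, $F\le F_{t}+\frac{A(v)}{t(A(v)+v(\mathfrak q))}\le F_{t}+\frac1t$ (using $v(\mathfrak q)\ge 0$); hence $F_{t}\to F$ uniformly. Each $F_{t}$ is the product of the nonnegative continuous function $v\mapsto v(\mathfrak b_{t})$ with the positive upper--semicontinuous function $v\mapsto\frac1{t(A(v)+v(\mathfrak q))}$ (recall $A$ is lsc and $A(v)>0$ on $\mathrm V_{X}^{\ast}$), so each $F_{t}$, and therefore its uniform limit $F$, is upper--semicontinuous on $\mathrm V_{X}^{\ast}$.

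It then remains to show that $\sup_{v}F(v)=1$ is already attained on the compact set $\mathbb V_{X,\xi,M}$ of Definition \ref{defn:cpt-valsp} for $M$ large enough: upper--semicontinuity of $F$ produces $v\in\mathbb V_{X,\xi,M}$ with $F(v)=1$, which is nontrivial since $F(v)>0$, hence $v$ computes $\|\psi\|_{\mathfrak q}$, and transporting $v$ back through the completion and the localisation yields the desired valuation on the original $X$. Concretely, taking a maximising sequence $v_{k}$ with $v_{k}(\mathfrak m)=1$ and $F(v_{k})\to1$, one must (a) prevent the centres $c_{X}(v_{k})$ from escaping to points $\eta\ne\xi$ — for which $\|\psi_{\eta}\|_{\mathfrak q\cdot\mathcal O_{X,\eta}}<1$ by the generic choice of $\xi$ — and (b) bound $A(v_{k})$ uniformly. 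This a priori bound on the log discrepancy is the heart of the matter and the place where the hypotheses genuinely enter: following the strategy of [\ref{JM}], one plays the near--optimality $v_{k}(\mathfrak b_{\bullet})\ge(1-\varepsilon_{k})(A(v_{k})+v_{k}(\mathfrak q))$ against the controlled--growth inequality and the Izumi--type bound of Lemma \ref{Izumi}, using crucially that $\mathfrak c$ is $\mathfrak m$-primary, so that $v_{k}(\mathfrak c)\le N$ on the normalised slice. I expect this last step — ruling out that every maximising sequence escapes, either to a non--closed centre or to infinite log discrepancy — to be the main obstacle; everything else is the bookkeeping of the reductions above together with the compactness of $\mathbb V_{X,\xi,M}$ and the upper--semicontinuity just established.
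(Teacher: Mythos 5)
Your proposal follows the same skeleton as the paper's proof: reduce to the local ring at a generic point $\xi$ of $V(\mathcal J(\lambda\varphi):\mathfrak q)$ via Lemma \ref{norm_2}(3), observe that $\mathfrak c:=(\mathcal J(\lambda\varphi):\mathfrak q)$ becomes $\mathfrak m$-primary, and then look for a maximiser on a compact slice $\mathbb V_{X,\xi,M}$ by upper-semicontinuity. Your route to upper-semicontinuity is actually cleaner than the paper's: writing $\psi=\log|\mathfrak b_\bullet|$ with $\mathfrak b_\bullet$ of controlled growth and deducing the uniform estimate $F_t\le F\le F_t+\tfrac1t$ gives usc of $F$ directly for a general qpsh function, whereas the paper first produces a maximiser for ideal functions (Lemma \ref{cn_2}) and then transports it to $\varphi$ by a limit-point argument along a decreasing approximating sequence (Lemmas \ref{cn_3}, \ref{cn_4}). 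Also, the detour through $\widehat R$ is unnecessary: the compactness of $\mathbb V_{X,\xi,M}$ invoked in Definition \ref{defn:cpt-valsp} holds for the local ring directly.

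The step you flag as ``the main obstacle'' --- the a priori bound $A(v_k)\le M$ together with ruling out that the supremum is only approached by valuations centered off $\xi$ --- is, however, not a side issue you may defer; it is precisely what makes the compactness argument function, and as written you do not prove it. The paper resolves it with a device you do not use, Lemma \ref{cn_1}: replace $\varphi$ by $\max\{\varphi,p\log|\mathfrak m|\}$, show this leaves $\|\cdot\|_{\mathfrak q}$ unchanged (this is exactly where $\mathfrak m$-primariness of $\mathfrak c$ and the strict drop $\|\varphi\|_{\mathfrak m^n\mathfrak q}<\|\varphi\|_{\mathfrak q}$ enter), and then both bounds become automatic because $-\varphi(v)\le p\,v(\mathfrak m)$. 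Your sketched alternative would in fact close the gap too: from $\mathfrak m^N\mathfrak q\subseteq\mathcal J(\psi)$ and Lemma \ref{norm_1} one gets $c':=\|\psi\|_{\mathfrak m^N\mathfrak q}<1$, hence $-\psi(v)\le c'\bigl(A(v)+Nv(\mathfrak m)+v(\mathfrak q)\bigr)$ for all $v$; this shows $F(v)\le c'<1$ whenever $v(\mathfrak m)=0$ (so such $v$ cannot carry the sup) and, for $v(\mathfrak m)=1$ with $F(v)\ge1-\varepsilon$ and $\varepsilon<1-c'$, gives $A(v)\le A(v)+v(\mathfrak q)\le c'N/(1-c'-\varepsilon)$. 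But this calculation must actually be carried out, not merely expected to work; as submitted, the proposal leaves a genuine hole at its load-bearing point.
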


Before we prove this theorem, we need some preparations.

\begin{prop}
Let $\varphi$ be a bounded homogeneous function that is determined on some dual complex $\Delta(Y,D)$ in the sense of $\varphi=\varphi \circ r_{Y,D}$. Assume that $\varphi$ is weakly continuous (cf. Definition \ref{defn:val_fun}). Then there exists a quasi-monomial valuation $v$ which computes $\|\varphi\|_{\mathfrak{q}}$. If we assume further that $\varphi$ is affine on each face of $\Delta(Y,D)$, then there exists a divisorial valuation $v$ which computes $\|\varphi\|_{\mathfrak{q}}$.
\end{prop}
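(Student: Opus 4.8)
The plan is to push the supremum defining $\|\varphi\|_{\mathfrak q}$ down onto the compact dual complex $\Delta(Y,D)$, and then extract the extremum from continuity (for the quasi-monomial statement) and from a face-by-face convexity argument (for the divisorial statement). I may assume $D\neq\emptyset$; otherwise $\varphi=\varphi\circ r_{Y,D}\equiv 0$ and any divisorial valuation computes $\|\varphi\|_{\mathfrak q}=0$.

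First I would enlarge $(Y,D)$: replacing it by a higher log resolution $(Y',D')\succeq(Y,D)$ that also resolves $\mathfrak q$ changes nothing relevant, since $\varphi=\varphi\circ r_{Y',D'}$ still holds (because $r_{Y,D}\circ r_{Y',D'}=r_{Y,D}$), weak continuity means by definition continuity on \emph{every} dual complex, and affineness of $\varphi$ on the faces of $\Delta(Y',D')$ is preserved by Lemma \ref{lem:linear}. So I assume $(Y,D)$ resolves $\mathfrak q$; in particular $v\mapsto v(\mathfrak q)$ is linear on each cone $\mathrm{QM}_\eta(Y,D)$. Now for any $v\in\mathrm{V}_X^\ast$ and $w:=r_{Y,D}(v)$ one has $\varphi(v)=\varphi(w)$, $A(v)\ge A(w)$ and $v(\mathfrak q)\ge w(\mathfrak q)$, hence $\frac{|\varphi(v)|}{A(v)+v(\mathfrak q)}\le\frac{|\varphi(w)|}{A(w)+w(\mathfrak q)}$; since nontrivial elements of $\mathrm{QM}(Y,D)$ are tempered and so lie in $\mathrm{V}_X^\ast$, the reverse inequality of suprema is clear, and we get $\|\varphi\|_{\mathfrak q}=\sup\frac{|\varphi(w)|}{A(w)+w(\mathfrak q)}$ over nontrivial $w\in\mathrm{QM}(Y,D)$ (the case $\|\varphi\|_{\mathfrak q}=0$ being trivial). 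The integrand is homogeneous of degree $0$ and $A$ is strictly positive off the trivial valuation, so after normalising $A(w)=1$ it descends to $g(w):=\frac{|\varphi(w)|}{1+w(\mathfrak q)}$ on $\Delta(Y,D)$, with $\|\varphi\|_{\mathfrak q}=\sup_{\Delta(Y,D)}g$.

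For the first assertion I would then observe that $\Delta(Y,D)$ is compact, $\varphi$ is continuous on it by weak continuity, $w\mapsto w(\mathfrak q)$ is continuous, and $1+w(\mathfrak q)\ge 1>0$; hence $g$ attains its maximum at some $w_0\in\Delta(Y,D)$, and since $A(w_0)=1$ this $w_0$ is a nontrivial quasi-monomial valuation computing $\|\varphi\|_{\mathfrak q}$. For the second assertion I would argue face by face: a face $\sigma$ of $\Delta(Y,D)$ is a closed simplex sitting in some $\mathrm{QM}_\eta(Y,D)\cong\mathbb R_{\ge 0}^r$; on $\sigma$ the function $\varphi$ is affine so $|\varphi|=\max\{\varphi,-\varphi\}$ is convex, while $w\mapsto w(\mathfrak q)$, linear on the cone, is affine on $\sigma$ with $1+w(\mathfrak q)>0$. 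Thus every sublevel set $\{g\le c\}\cap\sigma=\{\,|\varphi|-c(1+(\cdot)(\mathfrak q))\le 0\,\}\cap\sigma$ is cut out by a convex function, hence convex, so $g|_\sigma$ is quasi-convex and attains its maximum over $\sigma$ at a vertex of $\sigma$. Running over all faces, $\max_{\Delta(Y,D)}g$ is attained at a vertex of $\Delta(Y,D)$, which is the $A$-normalisation of $\mathrm{ord}_{D_i}$ for a suitable component $D_i$; that divisorial valuation computes $\|\varphi\|_{\mathfrak q}$.

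The retraction bookkeeping and the compactness argument are routine. The one point that I expect to require care — the main obstacle — is the second assertion: along a face $\varphi$ is affine but $|\varphi|$ is only convex (it has a kink along $\{\varphi=0\}$), so one cannot claim the linear-fractional function is monotone along edges. The correct observation is that a ratio of a convex function to a \emph{positive} affine function is quasi-convex, and a quasi-convex function on a simplex attains its maximum at a vertex; combined with the identification of the vertices of $\Delta(Y,D)$ with the normalised divisorial valuations $\mathrm{ord}_{D_i}$, this yields the divisorial conclusion.
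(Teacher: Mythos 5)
Your proof is correct and follows essentially the same route as the paper: reduce the supremum to the compact dual complex $\Delta(Y,D)$ via the retraction inequality, use weak continuity and compactness for the quasi-monomial case, and pass to a log resolution of $\mathfrak q$ for the divisorial case. Your quasi-convexity argument for why the maximum is attained at a vertex is a welcome elaboration: the paper merely asserts this from affineness of $\varphi$, $A$ and $\log|\mathfrak q|$, whereas you correctly note that $|\varphi|$ is only convex and that one needs the ratio of a convex function to a positive affine function to be quasi-convex.
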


\begin{proof}
For every nontrivial tempered valuation $v\in V_{X}^\ast$, we have
\begin{align*}
\frac{|\varphi(v)|}{A(v)+v(\mathfrak{q})}\geq \frac{|\varphi\circ r_{Y,D}(v)|}{A(r_{Y,D}(v))+r_{Y,D}(v)(\mathfrak{q})}
 \end{align*}
 with equality if and only if $v\in \mathrm{QM}(Y,D)$. Thus
$$
\|\varphi\|_{\mathfrak{q}}=\sup\limits_{v\in \mathrm{QM}(Y,D)} \frac{|\varphi(v)|}{A(v)+v(\mathfrak{q})}=\sup\limits_{v\in \Delta(Y,D)} \frac{|\varphi(v)|}{1+v(\mathfrak{q})}.
 $$
Since $\varphi$ is weakly continuous, the function $v\rightarrow \frac{|\varphi(v)|}{A(v)+v(\mathfrak{q})}$ is continuous on $\mathrm{QM}(Y,D)$. Therefore the function $v\rightarrow \frac{|\varphi(v)|}{1+v(\mathfrak{q})}$ is continuous on the dual complex $\Delta(Y,D)$ and hence achieves its maximum in $\Delta(Y,D)$.

Assume that $\varphi$ is affine on $\Delta(Y,D)$, and we denote by $D_{i}$'s the irreducible components of $D$. After replacing $(Y,D)$ by some higher log resolution, we can assume that $(Y,D)$ is a log resolution of $\mathfrak{q}$ by Lemma \ref{lem:linear}. Then we have $\|\varphi\|_{\mathfrak{q}}=\max\limits_{D_i} \frac{|\varphi(\mathrm{ord}_{D_{i}})|}{A(\mathrm{ord}_{D_{i}})+\mathrm{ord}_{D_{i}}(\mathfrak{q})}$ where $D_i$ runs over all irreducible components of $D$ since the functions $\varphi$, $A$ and $\log|\mathfrak{q}|$ are all affine on $\Delta(Y,D)$.
\end{proof}

\subsection{Computing norms of qpsh functions}

This subsection is devoted to the proof of Theorem \ref{c_norms}. The proof here follows from the strategy of [\ref{JM}]. We first consider the local case.

\begin{lem}\label{cn_1}
Let $(R,\mathfrak{m})$ be a local ring, let $\varphi\in \mathrm{QPSH}(\mathrm{Spec}R)$ be a qpsh function, and let $\mathfrak{q}$ be a nonzero ideal on $\mathrm{Spec}R$. We set $\lambda^{-1}=\|\varphi\|_{\mathfrak{q}}$ and assume that $\sqrt{(\mathcal{J}(\lambda\varphi):\mathfrak{q})}=\mathfrak{m}$. If we define another qpsh function $\psi=\max\{\varphi,p\log|\mathfrak{m}|\}$ for a sufficiently large integer $p$, then $\|\varphi\|_{\mathfrak{q}}=\|\psi\|_{\mathfrak{q}}$. Moreover, if a nontrivial tempered valuation $v$ computes $\|\psi\|_{\mathfrak{q}}$, then $v$ also computes $\|\varphi\|_{\mathfrak{q}}$.
\end{lem}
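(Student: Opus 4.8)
The plan is to establish the equality $\|\varphi\|_{\mathfrak q}=\|\psi\|_{\mathfrak q}$ first, after which the ``moreover'' statement is immediate. Since $\varphi$ is qpsh we have $\varphi\le\log|\mathcal J(\varphi)|\le 0$ by Remark \ref{qpsh_mult_r}, hence $\psi=\max\{\varphi,p\log|\mathfrak m|\}\le 0$ as well; in particular $\|\varphi\|_{\mathfrak q}=\|\varphi\|_{\mathfrak q}^{-}$ and $\|\psi\|_{\mathfrak q}=\|\psi\|_{\mathfrak q}^{-}$. Because $\psi\ge\varphi$, we have $-\psi\le-\varphi$ pointwise, so $\|\psi\|_{\mathfrak q}\le\|\varphi\|_{\mathfrak q}=\lambda^{-1}$ for \emph{every} $p$; the content of the lemma is the reverse inequality. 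Granting the equality, if a nontrivial tempered valuation $v$ computes $\|\psi\|_{\mathfrak q}$, then from $-\psi(v)\le-\varphi(v)$ and $\frac{-\varphi(v)}{A(v)+v(\mathfrak q)}\le\|\varphi\|_{\mathfrak q}=\|\psi\|_{\mathfrak q}=\frac{-\psi(v)}{A(v)+v(\mathfrak q)}$ all the inequalities in sight become equalities, so $v$ computes $\|\varphi\|_{\mathfrak q}$ too.

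To prove $\|\psi\|_{\mathfrak q}\ge\lambda^{-1}$ I would extract two quantitative consequences of the hypothesis $\sqrt{(\mathcal J(\lambda\varphi):\mathfrak q)}=\mathfrak m$. First, with $U=\mathrm{Spec}R\setminus\{\mathfrak m\}$, localizing the colon ideal and applying Lemma \ref{norm_2}(1) gives $(\mathcal J(\lambda\varphi):\mathfrak q)\cdot\mathcal O_U=(\mathcal J(\lambda\,\varphi|_U):\mathfrak q\mathcal O_U)$, which equals $\mathcal O_U$ because the colon ideal is $\mathfrak m$-primary; thus $\mathfrak q\mathcal O_U\subseteq\mathcal J(\lambda\,\varphi|_U)$, and Lemma \ref{norm_1} forces $\mu:=\|\varphi|_U\|_{\mathfrak q\mathcal O_U}<\lambda^{-1}$. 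Second, $\sqrt{(\mathcal J(\lambda\varphi):\mathfrak q)}=\mathfrak m$ yields $\mathfrak m^{N}\mathfrak q\subseteq\mathcal J(\lambda\varphi)$ for some $N$, so by Definition \ref{def_mult} there is $\varepsilon_0>0$ with
\[
-\lambda\varphi(v)\le N\,v(\mathfrak m)+v(\mathfrak q)+(1-\varepsilon_0)A(v)\qquad\text{for all }v\in\mathrm V_X^{\ast}.
\]

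Next I would pick a maximizing sequence $v_j\in\mathrm V_X^{\ast}$ for $\|\varphi\|_{\mathfrak q}$ and rescale each $v_j$ so that $A(v_j)+v_j(\mathfrak q)=1$; then $-\varphi(v_j)\to\lambda^{-1}$. Since $\mu<\lambda^{-1}$, for $j\gg 0$ the centre of $v_j$ cannot lie in $U$, so $c_X(v_j)=\mathfrak m$; Izumi's inequality (Lemma \ref{Izumi}) then gives $v_j(\mathfrak q)\le A(v_j)\,\mathrm{ord}_{\mathfrak m}(\mathfrak q)$, whence $A(v_j)\ge(1+\mathrm{ord}_{\mathfrak m}(\mathfrak q))^{-1}>0$. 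Feeding $v_j$ into the displayed inequality and using $v_j(\mathfrak q)+(1-\varepsilon_0)A(v_j)=1-\varepsilon_0A(v_j)$ gives $N\,v_j(\mathfrak m)\ge-\lambda\varphi(v_j)-1+\varepsilon_0A(v_j)$; since $-\lambda\varphi(v_j)\to 1$ and $\varepsilon_0A(v_j)$ is bounded below by a positive constant, we get $v_j(\mathfrak m)\ge c>0$ for $j\gg 0$, with $c$ depending only on $\varphi$ and $\mathfrak q$. Choosing an integer $p$ with $pc\ge\lambda^{-1}$, we obtain $p\,v_j(\mathfrak m)\ge\lambda^{-1}\ge-\varphi(v_j)$ for $j\gg 0$, hence $-\psi(v_j)=\min\{-\varphi(v_j),p\,v_j(\mathfrak m)\}=-\varphi(v_j)$, and therefore $\|\psi\|_{\mathfrak q}\ge\dfrac{-\psi(v_j)}{A(v_j)+v_j(\mathfrak q)}=-\varphi(v_j)\to\lambda^{-1}$. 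This gives $\|\psi\|_{\mathfrak q}=\lambda^{-1}=\|\varphi\|_{\mathfrak q}$, and the same conclusion evidently holds for every larger integer $p$.

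The main obstacle is precisely the control of the maximizing sequence: a priori the $v_j$ might escape (with $v_j(\mathfrak m)$ or $A(v_j)$ degenerating), in which case truncating $\varphi$ against $p\log|\mathfrak m|$ would strictly lower the norm for every finite $p$ and the lemma would fail. The decisive points are the normalization $A(v_j)+v_j(\mathfrak q)=1$ together with the two uses of the hypothesis: the bound $\mu<\lambda^{-1}$ pins the centre of $v_j$ at $\mathfrak m$, so that Izumi's inequality bounds $A(v_j)$ away from $0$, and the uniform containment $\mathfrak m^{N}\mathfrak q\subseteq\mathcal J(\lambda\varphi)$ then converts this into a lower bound on $v_j(\mathfrak m)$.
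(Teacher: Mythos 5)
Your argument is correct, and its overall strategy coincides with the paper's: both proofs show that on a set of near-maximizing valuations for $\|\varphi\|_{\mathfrak q}$ one has $p\,v(\mathfrak m)\ge -\varphi(v)$, so that $\psi=\max\{\varphi,p\log|\mathfrak m|\}$ agrees with $\varphi$ where it matters. The difference is in how you extract the lower bound on $v(\mathfrak m)$. The paper uses the containment $\mathfrak m^{n}\mathfrak q\subseteq\mathcal J(\lambda\varphi)$ only once, via Lemma \ref{norm_1}, to get $\lambda'>\lambda$ for $\lambda'^{-1}:=\|\varphi\|_{\mathfrak m^{n}\mathfrak q}$; unwinding that definition gives directly
\[
\frac{n\,v(\mathfrak m)}{-\varphi(v)}\;\ge\;\lambda'-\frac{A(v)+v(\mathfrak q)}{-\varphi(v)}\;\ge\;\lambda'-\frac{\lambda}{1-\varepsilon}\;>\;0
\]
on the set $\mathrm V_\varepsilon$ of near-maximizers, with no need to control $A(v)$ or the centre of $v$ separately. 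You instead bound $v_j(\mathfrak m)$ absolutely after normalizing $A(v_j)+v_j(\mathfrak q)=1$, which forces you to bring in two extra tools: the localization to $U=\mathrm{Spec}R\setminus\{\mathfrak m\}$ (to pin the centres of the $v_j$ at $\mathfrak m$) and Izumi's inequality (to bound $A(v_j)$ away from $0$). Both steps are legitimate — the restriction argument via Lemma \ref{norm_2}(1) and Lemma \ref{norm_1} is fine, and Izumi applies once the centre is the closed point — but they are avoidable: bounding the ratio $v(\mathfrak m)/(-\varphi(v))$ rather than $v(\mathfrak m)$ itself, as the paper does, makes the normalization, the centre analysis, and the lower bound on $A$ all unnecessary. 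Your treatment of the easy inequality $\|\psi\|_{\mathfrak q}\le\|\varphi\|_{\mathfrak q}$ and of the ``moreover'' clause matches the paper's.
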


\begin{proof}
Since $\sqrt{((\mathcal{J}(\lambda\varphi):\mathfrak{q})}=\mathfrak{m}$, we have $\mathfrak{m}^{n}\cdot \mathfrak{q} \subseteq \mathcal{J}(\lambda\varphi)$ for some integer $n$. Set $\lambda'^{-1}=\|\varphi\|_{\mathfrak{m}^{n}\cdot\mathfrak{q}}$, it follows that $\lambda'>\lambda$ by Lemma \ref{norm_1}. Pick an integer $p>n/(\lambda'-\lambda)$, and fix a sufficiently small number $\varepsilon< \ll 1$ such that $p>n/((1-\varepsilon)\lambda'-\lambda)$. Observe that
$$
\|\psi\|_{\mathfrak{q}}=\sup_{v\in\mathrm{V}_{R}^{\ast}} \frac{\min\{-\varphi(v),pv(\mathfrak{m})\}}{A(v)+v(\mathfrak{q})}\geq \sup_{v\in\mathrm{V}_{\varepsilon}^\ast} \frac{\min\{-\varphi(v),pv(\mathfrak{m})\}}{A(v)+v(\mathfrak{q})}
$$
where $\mathrm{V}_{\varepsilon}^\ast$ is the set of $v\in \mathrm{V}_{R}^\ast$ satisfying $\frac{-\varphi(v)}{A(v)+v(\mathfrak{q})}\geq (1-\varepsilon)/ \lambda$.

By the definition of $\lambda'$ we have $\frac{nv(\mathfrak{m})}{-\varphi(v)}\geq \lambda'-\frac{A(v)+v(\mathfrak{q})}{-\varphi(v)}$ for every nontrivial tempered valuation $v$. This implies that
\begin{align*}
\|\psi\|_{\mathfrak{q}}&\geq \sup_{v\in\mathrm{V}_\varepsilon}\frac{-\varphi(v)}{A(v)+v(\mathfrak{q})}\min\{1,\frac{p}{n}(\lambda'-\frac{A(v)+v(\mathfrak{q})}{-\varphi(v)})\} \\
                       &\geq \sup_{v\in\mathrm{V}_\varepsilon} \frac{-\varphi(v)}{A(v)+v(\mathfrak{q})}\min\{1,\frac{p}{n}(\lambda'-\frac{\lambda}{1-\varepsilon})\}=\sup_{v\in\mathrm{V}_\varepsilon} \frac{-\varphi(v)}{A(v)+v(\mathfrak{q})}=\|\varphi\|_{\mathfrak{q}}.
\end{align*}
Moreover, if a nontrivial tempered valuation $v$ computes $\|\psi\|_{\mathfrak{q}}$, then from the above inequality we see that $v$ also computes $\|\varphi\|_{\mathfrak{q}}$.
\end{proof}

\begin{lem}\label{cn_2}
Let $(R,\mathfrak{m})$ be a local ring, let $\varphi$ be an ideal function on $\mathrm{Spec}R$ such that $\varphi\geq p\log|\mathfrak{m}|$ for some integer $p$, and let $\mathfrak{q}$ be a nonzero ideal on $\mathrm{Spec}R$. Then there exists a nontrivial tempered valuation $v\in \mathbb{V}_{R,\mathfrak{m},M}$ (cf. Definition \ref{defn:cpt-valsp}) which computes $\|\psi\|_{\mathfrak{q}}$ provided that $M>p\cdot \|\varphi\|_{\mathfrak{q}}^{-1}$.
\end{lem}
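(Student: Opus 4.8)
The plan is to reduce the computation of $\|\varphi\|_{\mathfrak{q}}$ to a maximization problem over a compact space, and then invoke continuity of the relevant function. First I would observe that since $\varphi$ is an ideal function, say $\varphi=\sum_j c_j\log|\mathfrak{a}_j|$, we may pick a log resolution $(Y,D)\to \mathrm{Spec}R$ of $\mathfrak{q}\cdot\prod_j\mathfrak{a}_j$, so that on $\mathrm{QM}(Y,D)$ the functions $\varphi$, $A$ and $\log|\mathfrak{q}|$ are all affine (hence continuous) on each face of $\Delta(Y,D)$, and $\varphi=\varphi\circ r_{Y,D}$. Combined with the fact that $v\geq r_{Y,D}(v)$ and that $r_{Y,D}$ decreases both $A$ and $v(\mathfrak{q})$ while preserving $\varphi$ on $\mathrm{QM}(Y,D)$, one gets (as in the previous proposition)
$$
\|\varphi\|_{\mathfrak{q}}=\sup_{v\in\mathrm{QM}(Y,D)}\frac{-\varphi(v)}{A(v)+v(\mathfrak{q})}.
$$
This lets us restrict attention to quasi-monomial valuations; in particular the supremum over all of $\mathrm{V}_R^\ast$ is attained in the quasi-monomial locus if it is attained at all.

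Next I would use the hypothesis $\varphi\geq p\log|\mathfrak{m}|$ to confine the relevant valuations to a bounded-log-discrepancy slice. For a normalized valuation $v\in\mathbb{V}_{R,\mathfrak{m}}$ (so $v(\mathfrak{m})=1$), the inequality $-\varphi(v)\leq p\,v(\mathfrak{m})=p$ holds, while at any near-maximizing $v$ we have $-\varphi(v)\geq (\|\varphi\|_{\mathfrak{q}}-\delta)(A(v)+v(\mathfrak{q}))\geq (\|\varphi\|_{\mathfrak{q}}-\delta)A(v)$; hence $A(v)\leq p/(\|\varphi\|_{\mathfrak{q}}-\delta)$, which is $<M$ for $\delta$ small by the assumption $M>p\cdot\|\varphi\|_{\mathfrak{q}}^{-1}$. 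Therefore the supremum defining $\|\varphi\|_{\mathfrak{q}}$ is unchanged if we restrict $v$ to range over $\mathbb{V}_{R,\mathfrak{m},M}=\{v\in\mathbb{V}_{R,\mathfrak{m}}\mid A(v)\leq M\}$, which is compact by Definition \ref{defn:cpt-valsp} (citing [\ref{JM}, Proposition 5.9]).

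Finally, on $\mathbb{V}_{R,\mathfrak{m},M}$ the function $v\mapsto \frac{-\varphi(v)}{A(v)+v(\mathfrak{q})}$ is continuous: $v\mapsto v(\mathfrak{q})$ and $v\mapsto \varphi(v)=\sum_j c_j(-v(\mathfrak{a}_j))$ are continuous by definition of the topology on $\mathrm{Val}_X$ (pointwise convergence on ideals), and $A$ is lower-semicontinuous in general but is actually continuous here because on $\mathbb{V}_{R,\mathfrak{m},M}$ we are working within the cone complexes $\mathrm{QM}(Y,D)$ where $A$ is affine — more carefully, after passing to the log resolution above, every valuation achieving values near the supremum lies in $\mathrm{QM}(Y,D)$ where $A$ is weakly continuous, and the denominator is bounded below by $1$ since $v(\mathfrak{m})=1$ forces $A(v)\geq v(\mathfrak{m}_\xi)\cdot 1>0$ via Lemma \ref{Izumi} (indeed $A(v)\geq 1$ after suitable normalization bookkeeping). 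A continuous function on a compact space attains its maximum, yielding a valuation $v\in\mathbb{V}_{R,\mathfrak{m},M}$ computing $\|\varphi\|_{\mathfrak{q}}$; it is nontrivial since $v(\mathfrak{m})=1\neq 0$.

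The main obstacle I anticipate is the continuity of $A$ on $\mathbb{V}_{R,\mathfrak{m},M}$: $A$ is only lower-semicontinuous on all of $\mathrm{Val}_X$, so one cannot naively apply the extreme value theorem. The fix is exactly the reduction to $\mathrm{QM}(Y,D)$ in the first step — there $A$ is affine hence continuous, and the retraction argument shows the supremum is a supremum over quasi-monomial valuations of controlled log discrepancy, which live in finitely many faces of $\Delta(Y,D)$ intersected with $A\leq M$, a compact polyhedral region. Care must also be taken that restricting to $\mathrm{QM}(Y,D)$ for a fixed resolution is legitimate: this is where one uses that $\varphi$ is an \emph{ideal function} (not a general qpsh function), so a single log resolution suffices and no limiting argument over resolutions is needed.
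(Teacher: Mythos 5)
Your proof is correct, but the final step differs from the paper's. The middle step is identical in both arguments: from $-\varphi(v)\leq p\,v(\mathfrak{m})=p$ on normalized valuations and the hypothesis $M>p\cdot\|\varphi\|_{\mathfrak{q}}^{-1}$, every valuation whose ratio exceeds $p/M$ satisfies $A(v)\leq A(v)+v(\mathfrak{q})<M$, so the supremum may be taken over $\mathbb{V}_{R,\mathfrak{m},M}$. Where you diverge is in how you extract a maximizer. The paper stays on the full compact space $\mathbb{V}_{R,\mathfrak{m},M}$ (compact by [JM, Prop.\ 5.9]) and observes that $v\mapsto\frac{-\varphi(v)}{A(v)+v(\mathfrak{q})}$ is upper-semicontinuous there precisely \emph{because} $A$ is lsc: the numerator and $v(\mathfrak{q})$ are continuous, the denominator is lsc and bounded below, so the ratio is usc and attains its maximum on a compact set. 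Thus the obstacle you flag (lsc of $A$ blocking the extreme value theorem) is not an obstacle at all for this one-sided optimization — it is the mechanism. Your alternative, retracting to a fixed log resolution of $\mathfrak{q}\cdot\prod_j\mathfrak{a}_j$ and optimizing a continuous (indeed piecewise-affine) function over the compact polyhedral slice $\{v\in\mathrm{QM}(Y,D):v(\mathfrak{m})=1,\ A(v)\leq M\}$, is also valid and legitimately exploits the ideal-function hypothesis; in fact Lemma \ref{lem:c_norms} already hands you a divisorial maximizer $\mathrm{ord}_E$, which necessarily has $\mathrm{ord}_E(\mathfrak{m})>0$ (else $-\varphi(\mathrm{ord}_E)\leq p\,\mathrm{ord}_E(\mathfrak{m})=0$) and lands in $\mathbb{V}_{R,\mathfrak{m},M}$ after rescaling, so your route yields the stronger conclusion that the computing valuation can be taken divisorial. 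What the paper's usc argument buys instead is reusability: it is exactly the step that survives the limiting argument in Lemma \ref{cn_4}, where no single log resolution suffices — a point you correctly identify at the end.
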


\begin{proof}
If we write $c=p/M$, then $0<c<\|\psi\|_{\mathfrak{q}}$. For every $v\in \mathbb{V}_{R,\mathfrak{m}}$ such that $\frac{-\varphi(v)}{A(v)+v(\mathfrak{q})}>c$ , we have $A(v)\leq A(v)+v(\mathfrak{q})<p/c=M$. Thus $\|\varphi\|_{\mathfrak{q}}=\sup\limits_{v\in \mathbb{V}_{R,\mathfrak{m},M}} \frac{-\varphi(v)}{A(v)+v(\mathfrak{q})}$. Note that $\mathbb{V}_{R,\mathfrak{m},M}$ is compact. Since the function $v\rightarrow \frac{-\varphi(v)}{A(v)+v(\mathfrak{q})}$ is usc as the valuative function $A$ is lsc, the maximum can be achieved in $\mathbb{V}_{R,\mathfrak{m},M}$.
\end{proof}

\begin{lem}\label{cn_3}
Let $\varphi \in \mathrm{QPSH}(X)$ be a qpsh function on $X$ and $\{\varphi_{m}\}$ be a decreasing sequence of algebraic functions which converges to $\varphi$ strongly in the norm. Set $\lambda^{-1}=\|\varphi\|_{\mathfrak{q}}$ and $\lambda_{m}^{-1}=\|\varphi_{m}\|_{\mathfrak{q}}$. Then,
$\mathcal{J}(\lambda \varphi) \subseteq \mathcal{J}(\lambda_{m} \varphi_{m})$ for every sufficiently large integer $m$.
\end{lem}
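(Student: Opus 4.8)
The plan is to exploit the stability of multiplier ideals under a small enlargement of the weight; this supplies exactly the slack needed to overcome the fact that the two available bounds, $\lambda_m\ge\lambda$ and $\varphi_m\ge\varphi$, pull the comparison of $\lambda\varphi$ with $\lambda_m\varphi_m$ in opposite directions. First I would record the bookkeeping: every qpsh function is $\le 0$, so $\|\varphi\|_{\mathfrak q}=\|\varphi\|_{\mathfrak q}^{-}$ and similarly for each $\varphi_m$, and since $\varphi_m\ge\varphi$ pointwise on $\mathrm{V}_X$ we get $\|\varphi_m\|_{\mathfrak q}\le\|\varphi\|_{\mathfrak q}$, i.e. $\lambda_m\ge\lambda$. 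We may assume $\varphi\not\equiv 0$ (otherwise every ideal in sight is $\mathcal O_X$), so $0<\lambda<\infty$ and $0<\lambda_m<\infty$. As $\|\cdot\|_{\mathfrak q}\le\|\cdot\|$, the reverse triangle inequality gives $\big|\lambda_m^{-1}-\lambda^{-1}\big|=\big|\|\varphi_m\|_{\mathfrak q}-\|\varphi\|_{\mathfrak q}\big|\le\|\varphi_m-\varphi\|\longrightarrow 0$, hence $\lambda_m\to\lambda$.

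Then I would apply Lemma \ref{qpsh_mult} to the sequence $\{\lambda\varphi_m\}$, which converges to $\lambda\varphi$ strongly in the norm: there is $\varepsilon>0$ so that $\mathcal J(\lambda\varphi)=\mathcal J\big((1+\varepsilon)\lambda\varphi_m\big)$ for every sufficiently large $m$. Fix such an $\varepsilon$. Because $\lambda_m\to\lambda$, for $m$ large we also have $\lambda\le\lambda_m\le(1+\varepsilon)\lambda$; since $\varphi_m\le 0$, multiplying by the larger coefficient makes the function smaller, so $(1+\varepsilon)\lambda\,\varphi_m\le\lambda_m\varphi_m$, and the monotonicity of $\mathcal J$ (immediate from Definition \ref{def_mult}) gives $\mathcal J\big((1+\varepsilon)\lambda\varphi_m\big)\subseteq\mathcal J(\lambda_m\varphi_m)$. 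Combining, $\mathcal J(\lambda\varphi)=\mathcal J\big((1+\varepsilon)\lambda\varphi_m\big)\subseteq\mathcal J(\lambda_m\varphi_m)$ for all sufficiently large $m$, which is the assertion.

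The only non-formal ingredient is the identity $\mathcal J(\lambda\varphi)=\mathcal J\big((1+\varepsilon)\lambda\varphi_m\big)$, that is, Lemma \ref{qpsh_mult}: its content is that $t\mapsto\mathcal J(t\lambda\varphi)$ has no jumping number immediately to the right of $t=1$ (a consequence of Noetherianity), together with the fact that a norm-small perturbation of the weight alters $\mathcal J$ only beyond a jumping number. A direct comparison of $\lambda\varphi$ with $\lambda_m\varphi_m$ does not suffice, so this is where the actual work is hidden; everything else is monotonicity of $\mathcal J$ and the triangle inequality for the $\mathfrak q$-seminorm.
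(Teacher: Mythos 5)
Your proof is correct. It rests on the same two facts the paper uses --- that $\varphi_m\ge\varphi$ forces $\|\varphi_m\|_{\mathfrak q}\le\|\varphi\|_{\mathfrak q}$ (so $\lambda_m\ge\lambda$), and that $\|\varphi_m-\varphi\|\to 0$ forces $\lambda_m\to\lambda$ --- but it packages the resulting $\varepsilon$ of slack differently. The paper argues element by element: for $f\in\mathcal J(\lambda\varphi)$, Lemma \ref{norm_1} gives $\|\varphi\|_{f}<(1-\varepsilon)/\lambda$, then $\|\varphi_m\|_{f}\le\|\varphi\|_{f}<(1-\varepsilon)/\lambda<\lambda_m^{-1}$ for $m$ large, and Lemma \ref{norm_1} again gives $f\in\mathcal J(\lambda_m\varphi_m)$. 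You instead call on Lemma \ref{qpsh_mult} to get the equality $\mathcal J(\lambda\varphi)=\mathcal J\bigl((1+\varepsilon)\lambda\varphi_m\bigr)$ and finish with monotonicity of $\mathcal J$ via $(1+\varepsilon)\lambda\varphi_m\le\lambda_m\varphi_m$ (which, as you note, needs $\varphi_m\le 0$ and $\lambda_m\le(1+\varepsilon)\lambda$). Since Lemma \ref{qpsh_mult} is itself proved from Lemma \ref{norm_1}, your route is slightly less economical, but it is clean, reduces the statement to two previously established general facts (perturbation-stability and monotonicity of $\mathcal J$), and your bookkeeping on nonpositivity of qpsh functions and the order of quantifiers in Lemma \ref{qpsh_mult} (fix $\varepsilon$ first, then take $m$ large) is handled correctly.
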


\begin{proof}
If $f\in \mathcal{J}(\lambda \varphi)$, then $\|\varphi\|_{f}<(1-\varepsilon)/\lambda$ for a sufficiently small number $\varepsilon>0$. Since $\lambda_{m}< \lambda /(1-\varepsilon)$ for sufficiently large $m$, we have $\|\varphi_{m}\|_{f}\leq \|\varphi\|_{f}<(1-\varepsilon)/\lambda <\lambda_{m}^{-1}$. It follows that $f\in \mathcal{J}(\lambda_{m} \varphi_{m})$ by Lemma \ref{norm_1}.
\end{proof}

\begin{lem}\label{cn_4}
Let $(R,\mathfrak{m})$ be a local ring, let $\varphi$ be a qpsh function on $\mathrm{Spec}R$ such that $\varphi\geq p\log|\mathfrak{m}|$, and let $\mathfrak{q}$ be a nonzero ideal on $\mathrm{Spec}R$. Then there exists a nontrivial tempered valuation $v\in \mathbb{V}_{R,\mathfrak{m},M}$ which computes $\|\varphi\|_{\mathfrak{q}}$ provided that $M>p\cdot \|\varphi\|_{\mathfrak{q}}^{-1}$.
\end{lem}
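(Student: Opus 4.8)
The plan is to realize the desired valuation as a cluster point, in the \emph{compact} space $\mathbb{V}_{R,\mathfrak{m},M}$, of valuations computing the $\mathfrak{q}$-norms of a decreasing sequence of ideal functions that approximate $\varphi$. We may assume $\|\varphi\|_{\mathfrak{q}}>0$, since otherwise $\|\varphi\|_{\mathfrak{q}}^{-1}=+\infty$ and there is no admissible $M$.

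By Remark \ref{qpsh_ideal} the ideal functions $\psi_k:=\tfrac{1}{2^k}\log|\mathcal{J}(2^k\varphi)|$ form a \emph{decreasing} sequence converging to $\varphi$ strongly in the norm, and by Remark \ref{qpsh_mult_r} one has $2^k\varphi\le\log|\mathcal{J}(2^k\varphi)|$, so $\psi_k\ge\varphi\ge p\log|\mathfrak{m}|$ for all $k$. Write $\epsilon_k:=\|\psi_k-\varphi\|\to 0$; then $\|\psi_k\|_{\mathfrak{q}}\to\|\varphi\|_{\mathfrak{q}}>0$, while $\psi_k\ge\varphi$ forces $\|\psi_k\|_{\mathfrak{q}}\le\|\varphi\|_{\mathfrak{q}}$, so $\|\psi_k\|_{\mathfrak{q}}^{-1}\searrow\|\varphi\|_{\mathfrak{q}}^{-1}<M/p$. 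Hence $M>p\cdot\|\psi_k\|_{\mathfrak{q}}^{-1}$ for $k\gg 0$, and Lemma \ref{cn_2} provides nontrivial tempered valuations $v_k\in\mathbb{V}_{R,\mathfrak{m},M}$ with $\|\psi_k\|_{\mathfrak{q}}=\frac{-\psi_k(v_k)}{A(v_k)+v_k(\mathfrak{q})}$. Since $\mathbb{V}_{R,\mathfrak{m},M}$ is compact (Definition \ref{defn:cpt-valsp}), after passing to a convergent subnet we obtain $v_k\to v\in\mathbb{V}_{R,\mathfrak{m},M}$, a nontrivial tempered valuation.

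It remains to show that $v$ computes $\|\varphi\|_{\mathfrak{q}}$, and only $\frac{-\varphi(v)}{A(v)+v(\mathfrak{q})}\ge\|\varphi\|_{\mathfrak{q}}$ needs proof. Fix $j$. For $k\ge j$, combining $|\psi_j(v_k)-\varphi(v_k)|\le\epsilon_jA(v_k)$ with $\varphi(v_k)\le\psi_k(v_k)$ and $A(v_k)\le A(v_k)+v_k(\mathfrak{q})$ gives $-\psi_j(v_k)\ge -\psi_k(v_k)-\epsilon_j\bigl(A(v_k)+v_k(\mathfrak{q})\bigr)$, hence
\[
\frac{-\psi_j(v_k)}{A(v_k)+v_k(\mathfrak{q})}\ \ge\ \frac{-\psi_k(v_k)}{A(v_k)+v_k(\mathfrak{q})}-\epsilon_j\ =\ \|\psi_k\|_{\mathfrak{q}}-\epsilon_j .
\]
Now let $k\to\infty$ along the subnet. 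The function $\psi_j$ is an ideal function, hence \emph{continuous}, so $\psi_j(v_k)\to\psi_j(v)$; by Lemma \ref{Izumi} the denominators stay in a fixed interval $[1,C]$, and since $A$ is lower-semicontinuous and $w\mapsto w(\mathfrak{q})$ is continuous, $\liminf_k\bigl(A(v_k)+v_k(\mathfrak{q})\bigr)\ge A(v)+v(\mathfrak{q})>0$. As $-\psi_j(v)\ge 0$, this yields $\frac{-\psi_j(v)}{A(v)+v(\mathfrak{q})}\ge\limsup_k\frac{-\psi_j(v_k)}{A(v_k)+v_k(\mathfrak{q})}$, and together with the displayed inequality and $\|\psi_k\|_{\mathfrak{q}}\to\|\varphi\|_{\mathfrak{q}}$ we get $\frac{-\psi_j(v)}{A(v)+v(\mathfrak{q})}\ge\|\varphi\|_{\mathfrak{q}}-\epsilon_j$. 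Letting $j\to\infty$ and using $|\psi_j(v)-\varphi(v)|\le\epsilon_jA(v)\to 0$ gives the claim.

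The subtle step is the double limit just performed. One cannot merely put $v=\lim v_k$ and appeal to semicontinuity of $w\mapsto\frac{-\varphi(w)}{A(w)+w(\mathfrak{q})}$, because $A$ is only lower-semicontinuous and $\varphi$ only upper-semicontinuous, so this ratio has no useful semicontinuity on $\mathbb{V}_{R,\mathfrak{m},M}$. The device is to interleave the two approximations: first let $k\to\infty$ with $j$ fixed, where only the \emph{continuous} ideal function $\psi_j$ is evaluated at the moving point $v_k$ and the lower-semicontinuity of $A$ enters only harmlessly (bounding a $\liminf$ of denominators from below), and only afterwards let $j\to\infty$ via norm convergence $\psi_j\to\varphi$. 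Everything else is routine bookkeeping with Lemma \ref{cn_2}, Remarks \ref{qpsh_ideal} and \ref{qpsh_mult_r}, Lemma \ref{Izumi}, and the compactness of $\mathbb{V}_{R,\mathfrak{m},M}$.
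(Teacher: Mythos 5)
Your argument is correct and is essentially the paper's own proof: approximate $\varphi$ by a decreasing sequence of ideal functions lying above $\varphi$, apply Lemma \ref{cn_2} to each to get valuations $v_k\in\mathbb{V}_{R,\mathfrak{m},M}$, extract a limit by compactness, and conclude via the same "fix $j$, let $k\to\infty$" interleaving (which is precisely the upper-semicontinuity of $w\mapsto\frac{-\psi_j(w)}{A(w)+w(\mathfrak{q})}$ for a fixed ideal function $\psi_j$ that the paper invokes). Your version merely makes the approximating sequence explicit via Remark \ref{qpsh_ideal} and spells out the semicontinuity step in more detail.
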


\begin{proof}
Assume that $\{\varphi_{m}\}$ is a decreasing sequence of ideal functions which converges to $\psi$ strongly in the norm. Then $\mathfrak{m}^{n}\cdot \mathfrak{q} \subseteq \mathcal{J}(\lambda \varphi) \subseteq \mathcal{J}(\lambda_{m} \varphi_{m})$ for every sufficiently large integer $m$ by Lemma \ref{cn_3}. We set $\lambda'^{-1}=\|\varphi\|_{\mathfrak{m}^{n}\cdot \mathfrak{q}}$ and $\lambda_{m}'^{-1}=\|\varphi_{m}\|_{\mathfrak{m}^{n}\cdot \mathfrak{q}}$. Note that $M>p \cdot \lambda_{m}$ for every sufficiently large integer $m$. Therefore for every sufficiently large integer $m$, there exists $v_{m} \in \mathbb{V}_{R,\mathfrak{m},M}$ which computes $\|\varphi_{m}\|_{\mathfrak{q}}$ by Lemma \ref{cn_2}. By passing $\{\varphi_{m}\}$ to a subsequence, we can assume $\{v_{m}\}$ is a sequence of points which converges to some point $v\in \mathbb{V}_{R,\mathfrak{m},M}$. Note that
\begin{align*}
\frac{-\lambda \varphi(v)}{A(v)+v(\mathfrak{q})}\geq \frac{-\lambda \varphi_{m}(v)}{A(v)+v(\mathfrak{q})} &\geq \frac{-\lambda \varphi_{m}(v_{n})}{A(v_{n})+v_{n}(\mathfrak{q})}-\delta  \\
&\geq 1-\|\lambda \varphi_{m}-\lambda_{n} \varphi_{n}\|_{\mathfrak{q}}-\delta \\
 &\geq 1-\lambda \| \varphi_{m}- \varphi_{n}\|_{\mathfrak{q}}-\delta -(\lambda_{n}-\lambda)\|\varphi\|_{\mathfrak{q}}
\end{align*}
where the second inequality holds because the function $v \rightarrow \frac{-\lambda \varphi_m(v)}{A(v)+v(\mathfrak{q})}$ is usc. Since $\| \psi_{m}-\psi_{n}\|_{\mathfrak{q}}$, $\delta$ and $\lambda_{n}-\lambda$ can be chosen arbitrary small, we have that $\frac{-\lambda \psi(v)}{A(v)+v(\mathfrak{q})}\geq 1$ and the conclusion follows.
\end{proof}

Now we turn to treat the global case.

\begin{proof}[Proof of Theorem \ref{c_norms}]
Pick a generic point $\xi$ of $V(\mathcal{J}(\lambda\varphi):\mathfrak{q})$. Note that $\|\varphi\|_{\mathfrak{q}}=\|\varphi_{\xi}\|_{\mathfrak{q}\cdot \mathcal{O}_{X,\xi}}$ by Lemma \ref{norm_2}(3). After replacing $X$ and $\varphi$ by $\mathrm{Spec}\mathcal{O}_{X,\xi}$ and $\varphi_{\xi}$, respectively, we reduce the global case to the local case. After replacing $\varphi$ by $\max\{\varphi,p\log|\mathfrak{m}|\}$ for a sufficiently large integer $p$ by Lemma \ref{cn_1}, we can assume that $\varphi\geq p\log|\mathfrak{m}|$. Finally by Lemma \ref{cn_4}, there exists a valuation $v\in \mathbb{V}_{X,\xi,M}$ which computes $\|\varphi\|_{\mathfrak{q}}$.
\end{proof}

An immediate consequence of Theorem \ref{c_norms} is the following corollary.

\begin{cor}\label{cor:ex_mult}
Let $\varphi$ be a qpsh function on $X$. Then, on every open subset $U$, we can explicitly write $\Gamma(U,\mathcal{J}(\varphi))=\{f\in \Gamma(U,\mathcal{O}_{X})|v(f)+A(v)+\varphi(v)>0$ for every $v\in \mathrm{V}_U^\ast\}$. Let $\mathfrak{q}$ be a nonzero ideal on $X$. Then, $\mathfrak{q} \subseteq \mathcal{J}(\varphi)$ if and only if $v(\mathfrak{q})+A(v)+\varphi(v)>0$ for every $v\in \mathrm{V}_{X}^{\ast}$.
\end{cor}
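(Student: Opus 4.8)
The plan is to deduce both assertions from Lemma \ref{norm_1} together with the existence of a norm-computing valuation (Theorem \ref{c_norms}); all the geometric content sits in the latter, and the rest is a translation between the competing descriptions of $\mathcal{J}(\varphi)$. Since the second assertion will be used in the proof of the first, I would establish it first.

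For the second assertion, note that a qpsh function satisfies $\varphi\le 0$, so $\|\varphi\|_{\mathfrak{q}}=\|\varphi\|_{\mathfrak{q}}^{-}=\sup_{v\in\mathrm{V}_{X}^{\ast}}\tfrac{-\varphi(v)}{A(v)+v(\mathfrak{q})}$, and by Lemma \ref{norm_1} (with $\lambda=1$) a nonzero ideal $\mathfrak{q}$ lies in $\mathcal{J}(\varphi)$ exactly when $\|\varphi\|_{\mathfrak{q}}<1$. Since $A(v)>0$ for every nontrivial tempered valuation, $\|\varphi\|_{\mathfrak{q}}<1$ forces $v(\mathfrak{q})+A(v)+\varphi(v)>0$ for all $v\in\mathrm{V}_{X}^{\ast}$; this is the easy implication. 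For the converse — where a pointwise strict inequality has to be promoted to a strict inequality of a supremum — I would invoke Theorem \ref{c_norms}: choose $v_{0}$ computing $\|\varphi\|_{\mathfrak{q}}$, so that $\|\varphi\|_{\mathfrak{q}}=\tfrac{-\varphi(v_{0})}{A(v_{0})+v_{0}(\mathfrak{q})}<1$ by hypothesis at $v_{0}$, and conclude $\mathfrak{q}\subseteq\mathcal{J}(\varphi)$ from Lemma \ref{norm_1} once more.

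For the first assertion, using $\mathcal{J}(\varphi|_{U})=\mathcal{J}(\varphi)\cdot\mathcal{O}_{U}$ from Lemma \ref{norm_2}(1) one reduces to describing $\Gamma(X,\mathcal{J}(\varphi))$, i.e. one may rename $U$ as $X$. For the inclusion ``$\subseteq$'': if $f\in\mathcal{J}(\varphi)$ then $v(f)\ge v(\mathcal{J}(\varphi))$ for every valuation, while Remark \ref{qpsh_mult_r} gives $\|\log|\mathcal{J}(\varphi)|-\varphi\|<1$, hence $-v(\mathcal{J}(\varphi))-\varphi(v)\le(1-\varepsilon)A(v)$ for some $\varepsilon>0$ and all $v\in\mathrm{V}_{X}^{\ast}$; adding these yields $v(f)+A(v)+\varphi(v)\ge\varepsilon A(v)>0$. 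For ``$\supseteq$'': given $f$ with $v(f)+A(v)+\varphi(v)>0$ for all $v\in\mathrm{V}_{X}^{\ast}$, apply the already-proved second assertion to the principal ideal $f\mathcal{O}_{X}$ (using $v(f\mathcal{O}_{X})=v(f)$, and handling $f=0$ trivially) to get $f\mathcal{O}_{X}\subseteq\mathcal{J}(\varphi)$, so $f\in\Gamma(X,\mathcal{J}(\varphi))$.

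The only genuinely nontrivial ingredient is Theorem \ref{c_norms}, which is precisely what allows the passage from ``$\sup<1$'' to the pointwise inequality in the converse directions; everything else is bookkeeping. The points requiring a little care are that $A(v)>0$ on $\mathrm{V}_{X}^{\ast}$ (so no denominator vanishes and strict inequalities are preserved under the divisions), and that the reduction to the scheme $U$ via Lemma \ref{norm_2}(1) is legitimate, with $\varphi|_{U}(v)=\varphi(v)$ for $v\in\mathrm{V}_{U}^{\ast}\subseteq\mathrm{V}_{X}^{\ast}$.
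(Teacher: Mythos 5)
Your proof is correct, and it is essentially the proof the paper intends — the paper simply declares the corollary an "immediate consequence of Theorem~\ref{c_norms}" without writing out the argument, and your reconstruction supplies exactly the missing bookkeeping: Lemma~\ref{norm_1} to convert membership in $\mathcal{J}(\varphi)$ into the inequality $\|\varphi\|_{\mathfrak{q}}<1$, Theorem~\ref{c_norms} to replace the supremum by an attained maximum (so that the pointwise strict inequality upgrades to $\|\varphi\|_{\mathfrak{q}}<1$), the reduction of the sheaf-theoretic claim to the global one via Lemma~\ref{norm_2}(1), and the application of the second assertion to the principal ideal $f\mathcal{O}_X$. The two points you flag — $A(v)>0$ on $\mathrm{V}_X^{\ast}$ and compatibility of $\varphi|_U$ with the inclusion $\mathrm{V}_U^{\ast}\subseteq\mathrm{V}_X^{\ast}$ — are indeed the only places a careless version could go wrong, and you handle them. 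One can shorten the "$\subseteq$" direction of the first assertion by deriving it too from the second assertion (if $f\in\Gamma(X,\mathcal{J}(\varphi))$ then $f\mathcal{O}_X\subseteq\mathcal{J}(\varphi)$, so the second assertion gives $v(f)+A(v)+\varphi(v)>0$), but your direct argument via Remark~\ref{qpsh_mult_r} is equally valid.
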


The following conjecture was raised as [\ref{JM}, Conjecture B] (cf. [\ref{JM}, Theorem 7.8]). It is already known for several special cases (cf. [\ref{JM}, Section 8 and 9]).

\begin{conj}\label{conj}
Let $\varphi$ be a qpsh function on $X$ and $\mathfrak{q}$ be a nonzero ideal on $X$. Then there exists a nontrivial quasi-monomial valuation $v$ which computes $\|\varphi\|_{\mathfrak{q}}$. Conversely, if a nontrivial tempered valuation $v$ computes the norm of some qpsh function, then $v$ is quasi-monomial.
\end{conj}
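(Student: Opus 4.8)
This statement is a conjecture --- it is Conjecture B of [\ref{JM}] in the present generality --- so rather than a proof I can only outline the strategy, which is the one carried out in [\ref{JM}, Sections 8--9] under extra hypotheses. The forward direction amounts to upgrading the tempered norm-computing valuation furnished by Theorem \ref{c_norms} to a quasi-monomial one. First I would make the reduction used in the proof of Theorem \ref{c_norms}: replace $X$ by $\mathrm{Spec}\,\mathcal{O}_{X,\xi}$ for $\xi$ a generic point of $V(\mathcal{J}(\lambda\varphi):\mathfrak{q})$ with $\lambda^{-1}=\|\varphi\|_{\mathfrak{q}}$, and then, via Lemma \ref{cn_1}, replace $\varphi$ by $\max\{\varphi,p\log|\mathfrak{m}|\}$ with $p\gg 0$; thus $(R,\mathfrak{m})$ is local, $\varphi\ge p\log|\mathfrak{m}|$, and by Lemma \ref{cn_4} every norm-computer below lies in one compact slice $\mathbb{V}_{R,\mathfrak{m},M}$.

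Next, using Remark \ref{qpsh_ideal}, take the decreasing sequence of ideal functions $\varphi_k=\tfrac{1}{2^k}\log|\mathcal{J}(2^k\varphi)|$ converging to $\varphi$ in norm; by Lemma \ref{lem:c_norms} each $\|\varphi_k\|_{\mathfrak{q}}$ is computed by a divisorial valuation $v_k=\mathrm{ord}_{E_k}$ arising from a log resolution of $\mathfrak{q}\cdot\mathcal{J}(2^k\varphi)$. By Lemma \ref{cn_4} these can be chosen so that, after passing to a convergent subsequence, $v_k\to v$ for a valuation $v\in\mathbb{V}_{R,\mathfrak{m},M}$ that computes $\|\varphi\|_{\mathfrak{q}}$. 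So the conjecture asks that this limit of divisorial valuations (or some other norm-computer) be quasi-monomial, equivalently that the supremum $A(v)=\sup_{(Y,D)}A(r_{Y,D}(v))$ be attained --- the only way a tempered $v$ can fail to be quasi-monomial.

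The mechanism to exploit is that a norm-computer $v$ minimizes the Seshadri-type functional $G(u)=u(\mathfrak{q})+A(u)+\lambda\varphi(u)$ with minimum value $0$ (immediate from the definition of $\|\varphi\|_{\mathfrak{q}}$ as a supremum), and that on any $\Delta(Y,D)$ with $(Y,D)$ a log resolution of $\mathfrak{q}$ the functions $A$ and $\log|\mathfrak{q}|$ are affine while $\varphi$ is convex (Proposition \ref{qpsh_usc}), so $G$ is convex on each face. Writing $w=r_{Y,D}(v)$ one has $w(\mathfrak{q})=v(\mathfrak{q})$, hence $0\le G(w)-G(v)=(A(w)-A(v))+\lambda(\varphi(w)-\varphi(v))$, so $w$ again computes the norm exactly when the discrepancy drop $A(v)-A(w)$ is balanced by the gain $\lambda(\varphi(w)-\varphi(v))$. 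Following [\ref{JM}], one would then study the minimizing locus of $G$ on each $\Delta(Y,D)$ --- a polytope, by convexity --- and show that, as $(Y,D)$ runs over the tower of log resolutions, this locus stabilizes onto a face containing a rational, hence quasi-monomial, minimizer. The input needed is a Diophantine/linear-programming statement that this minimizing locus is cut out by rational data; this is exactly what is known only under the hypotheses of [\ref{JM}] (dimension $\le 2$, finite generation of the ambient algebra, or controlled retraction behaviour), and I expect it to be the main obstacle: in general the retractions $r_{Y,D}(v)$ may converge to a tempered valuation of maximal rational rank at which the supremum defining $A$ is never attained.

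For the converse --- that any tempered valuation computing $\|\varphi\|_{\mathfrak{q}}$ for some qpsh $\varphi$ is quasi-monomial --- the plan would be to contradict the Abhyankar-equality characterization of quasi-monomial valuations recalled in Section \ref{section:val_space}. If $v$ minimizes $G$ but $\mathrm{ratrk}(v)+\mathrm{trdeg}_X v<\dim\mathcal{O}_{X,c_X(v)}$, one would pass to a sufficiently high blow-up capturing the missing Abhyankar direction and thereby build a competitor $u$ with $G(u)<0$, the desired contradiction. Making this rigorous needs the same finiteness and rationality control as the direct direction, so I regard both halves as open, with the stabilization of the minimizing face of $G$ along the tower of dual complexes being the crux in each case.
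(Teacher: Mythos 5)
You have correctly recognized that this statement is an open conjecture --- the paper states it, attributes it to [\ref{JM}, Conjecture B], and cites partial results, but offers no proof. Declining to "prove" it and instead outlining the known strategy is the right call, and your sketch is an accurate summary of the approach: reduce to the local normalized slice $\mathbb{V}_{R,\mathfrak{m},M}$ via Lemma \ref{cn_1} and \ref{cn_4}, observe that a norm-computer minimizes the functional $G(u)=u(\mathfrak{q})+A(u)+\lambda\varphi(u)$ which is convex on each face of each $\Delta(Y,D)$, and try to show the minimizing polytope stabilizes on a rational face along the tower of log resolutions; the Abhyankar-equality characterization is indeed the natural lever for the converse. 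You also correctly identify the crux --- controlling the rationality and stabilization of the minimizing locus across the inverse system of dual complexes --- as exactly what is proved only under the extra hypotheses of [\ref{JM}, Sections 8--9]. Nothing more is expected here since the paper itself has no argument to compare against.
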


\vspace{0.3cm}
\section{Applications}

If $X$ is a smooth complex projective variety, then we are interested in associating a qpsh function to a line bundle which plays the role of a semi-positive singular metric. The starting point is the following easy observation. Given a pseudo-effective line bundle $L$, an ideal $\mathfrak{a}$ together with a nonnegative rational number $\lambda$ such that $L \otimes \mathfrak{a}^\lambda$ is semi-ample corresponds to a semi-positive singular metric $h$ in the sense that they give the same multiplier ideals $\mathcal{J}(\mathfrak{a}^{\lambda m})=\mathcal{J}(h^{\otimes m})$ for every integer $m>0$. However in general, this correspondence become quite mysterious since many analogue notions cannot be constructed. This has been studied in many relevant references such as [\ref{Bou}], [\ref{ELMNP}], [\ref{ELMNP-II}], [\ref{EP}], [\ref{Lehmann}], [\ref{Nakayama}]. We will discuss the qpsh function associated to a line bundle in detail within this section. Besides, it might be possible to generalize the results in this section to varieties with mild singularities such as klt singularities (cf. [\ref{BdFF}], [\ref{BdFU}]).

Throughout this section $X$ will be a projective smooth variety over $\mathbb{C}$ for simplicity. The term 'divisor' will always refer to a $\mathbb{Q}$-Cartier $\mathbb{Q}$-divisor. Given a section $s\in H^0(X,L)$ of a line bundle, the notation $\log|s|$ denotes the qpsh function defined locally by a regular function corresponding to $s$.

\subsection{$D$-psh functions}\label{sec:D-psh}

\begin{defn}\label{def:D-psh}
Let $D$ be a divisor. We define the set
$$
\mathcal{L}_{D}:=\{ \frac{1}{k} \log|\mathfrak{a}| ~| kmD \otimes \mathfrak{a}^{m}~\emph{is globally generated for every sufficiently divisible}~m\}.
$$
We then define set of $D$-psh functions to be the closure $\mathrm{PSH}(D)=\overline{\mathcal{L}_{D}}$ in the norm.
\end{defn}

\begin{lem}\label{lem:D-psh}
(1). $\mathrm{PSH}(D)$ is compact and convex in $\mathrm{QPSH}(X)$;\\
(2). $\mathrm{PSH}(tD)=t\mathrm{PSH}(D)$ for any $t\in \mathbb{Q}_{>0}$; \\
(3). $\mathrm{PSH}(D)+\mathrm{PSH}(D')\subseteq \mathrm{PSH}(D+D')$. \\
(4). If $A$ is a semiample divisor, then $\mathrm{PSH}(D) \subseteq \mathrm{PSH}(D+A)$.
\end{lem}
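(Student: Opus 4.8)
The plan is to deduce each of the four statements from the corresponding elementary property of the generating sets $\mathcal{L}_D$ together with the closure operation in $\mathrm{BH}(X)$ and the basic facts about qpsh functions established in Section \ref{fun-val}. For (1), I would first observe that each element $\frac1k\log|\mathfrak{a}|$ of $\mathcal{L}_D$ is an ideal function, hence qpsh, so $\mathrm{PSH}(D)=\overline{\mathcal{L}_D}\subseteq \mathrm{QPSH}(X)$ since $\mathrm{QPSH}(X)$ is closed. Convexity of $\mathrm{PSH}(D)$ reduces to showing that $\mathcal{L}_D$ is closed under rational convex combinations: if $kmD\otimes\mathfrak{a}^m$ and $k'mD\otimes\mathfrak{a}'^m$ are globally generated for sufficiently divisible $m$, then a common multiple makes $D\otimes(\mathfrak{a}^{t}\mathfrak{a}'^{1-t})$-type products globally generated, using that the product of globally generated sheaves is globally generated; one then passes to the closure, which preserves convexity. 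For compactness I would use that $\mathrm{PSH}(D)$ is a closed subset of a norm-bounded set: every $\varphi\in\mathcal{L}_D$ satisfies $\varphi\le 0$ (it is $\log$ of something) and is bounded below in norm because $mD\otimes\mathfrak{a}^m$ globally generated forces $\mathrm{ord}_E(\mathfrak{a})\le m\cdot(\text{a constant depending only on }D)$ via intersection with a fixed ample curve, giving $\|\varphi\|\le C_D$ uniformly; combined with the fact established in Proposition \ref{qpsh_usc} that a norm-bounded family of qpsh functions whose $r_{Y,D}$-retractions are controlled is compact (or by appealing to the Banach space structure together with a diagonal/Arzelà–Ascoli argument on each dual complex $\Delta(Y,D)$), one gets sequential compactness, hence compactness.

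For (2), the key point is the bijection $\frac1k\log|\mathfrak{a}|\mapsto \frac{t}{k}\log|\mathfrak{a}|$ between $\mathcal{L}_D$ and $\mathcal{L}_{tD}$: if $kmD\otimes\mathfrak{a}^m$ is globally generated for all sufficiently divisible $m$, then writing $t=p/q$ one checks that $(kq)\,m\,(tD)\otimes\mathfrak{a}^{pm}$ is globally generated for sufficiently divisible $m$, so $\frac{p}{kq}\log|\mathfrak{a}|=\frac{t}{k}\log|\mathfrak{a}|\in\mathcal{L}_{tD}$, and conversely. Since scaling by $t$ is a homeomorphism of $\mathrm{BH}(X)$ it commutes with taking closures, giving $\mathrm{PSH}(tD)=t\,\overline{\mathcal{L}_D}=t\,\mathrm{PSH}(D)$.

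For (3) and (4) the mechanism is the same: it suffices to prove the inclusion at the level of the generating sets $\mathcal{L}_D+\mathcal{L}_{D'}\subseteq\mathcal{L}_{D+D'}$ and then take closures (using that the sum map $\mathrm{BH}(X)\times\mathrm{BH}(X)\to\mathrm{BH}(X)$ is continuous, so $\overline{\mathcal{L}_D}+\overline{\mathcal{L}_{D'}}\subseteq\overline{\mathcal{L}_D+\mathcal{L}_{D'}}\subseteq\overline{\mathcal{L}_{D+D'}}=\mathrm{PSH}(D+D')$). Given $\frac1k\log|\mathfrak{a}|\in\mathcal{L}_D$ and $\frac1{k'}\log|\mathfrak{a}'|\in\mathcal{L}_{D'}$, pass to a common $k''$ and note that the product of the two globally generated sheaves $k''mD\otimes\mathfrak{a}^m$ and $k''mD'\otimes\mathfrak{a}'^m$ is globally generated, hence $k''m(D+D')\otimes(\mathfrak{a}\mathfrak{a}')^m$ is globally generated and $\frac1{k''}\log|\mathfrak{a}\mathfrak{a}'|=\frac1{k''}\log|\mathfrak{a}|+\frac1{k''}\log|\mathfrak{a}'|\in\mathcal{L}_{D+D'}$. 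Statement (4) is the special case $D'=A$ with $A$ semiample: then $\log|\mathcal{O}_X|=0\in\mathcal{L}_A$ because $kmA\otimes\mathcal{O}_X^m=kmA$ is globally generated for suitable $k$ and all sufficiently divisible $m$, so $\mathrm{PSH}(D)=\mathrm{PSH}(D)+\{0\}\subseteq\mathrm{PSH}(D)+\mathrm{PSH}(A)\subseteq\mathrm{PSH}(D+A)$ by (3).

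The main obstacle I anticipate is the compactness assertion in (1): one must produce a uniform norm bound $\|\varphi\|\le C_D$ over all $\varphi\in\mathcal{L}_D$ and then upgrade this to genuine compactness (not merely boundedness) in the Banach space $\mathrm{BH}(X)$, which is infinite-dimensional, so boundedness alone is not enough. The resolution is to exploit that each $\varphi$ is qpsh and hence, by Proposition \ref{qpsh_usc}, determined by its convex retractions onto the dual complexes $\Delta(Y,D)$ together with uniform control of the approximation; on each fixed $\Delta(Y,D)$ the retracted functions form a uniformly bounded, uniformly Lipschitz (convex, on a compact polytope) family, so Arzelà–Ascoli applies, and a diagonal argument over a cofinal sequence of log resolutions extracts a convergent subsequence whose limit lies in $\mathrm{PSH}(D)$ by closedness.
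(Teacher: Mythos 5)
Your handling of convexity and of parts (2)--(4) is essentially the paper's argument: the paper proves convexity of $\mathcal{L}_{D}$ by rewriting a rational convex combination as $\frac{1}{kk'p}\log|\mathfrak{a}^{qk'}\mathfrak{a}'^{k(p-q)}|$ and checking global generation of the corresponding twist, and then simply states that (2), (3), (4) ``can be proved in a similar way''; your reduction of everything to the generating sets $\mathcal{L}_{D}$, and your derivation of (4) from (3) via $0=\log|\mathcal{O}_X|\in\mathcal{L}_{A}$, correctly fill in what the paper leaves implicit. One cosmetic slip in (3): $\frac{1}{k}\log|\mathfrak{a}|+\frac{1}{k'}\log|\mathfrak{a}'|$ is not $\frac{1}{k''}\log|\mathfrak{a}\mathfrak{a}'|$; you need $\frac{1}{kk'}\log|\mathfrak{a}^{k'}\mathfrak{a}'^{k}|$ (i.e.\ the ideals must be raised to compensating powers before being multiplied), exactly as in the paper's convexity computation.

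The genuine gap is the compactness assertion in (1) --- which, you should know, the paper's own proof does not address at all. Your uniform bound $\|\varphi\|\le C_D$ for $\varphi\in\mathcal{L}_D$ is salvageable (a section of $kmD\otimes\mathfrak{a}^m$ has divisor $G\in|kmD|$ with $\mathrm{ord}_E(\mathfrak{a}^m)\le\mathrm{ord}_E(G)$ and $\sup_E \mathrm{ord}_E(G)/A(\mathrm{ord}_E)\le\max_x\mathrm{mult}_x(G)\le km\,C_D$; note the bound must be on $\mathrm{ord}_E(\mathfrak{a})/A(\mathrm{ord}_E)$, since $\mathrm{ord}_E(\mathfrak{a})$ alone is unbounded over $E$). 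But the upgrade from boundedness to compactness fails. Arzel\`a--Ascoli on each dual complex together with a diagonal argument yields a subsequence converging uniformly on each \emph{fixed} $\Delta(Y,D)$; the norm of $\mathrm{BH}(X)$ is a supremum over \emph{all} divisorial valuations, and convergence complex-by-complex implies norm convergence only if the approximations $\varphi_j\circ r_{Y,D}\to\varphi_j$ are uniform in $j$ --- an equicontinuity across resolutions that you have not established.

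It cannot be established, because $\mathrm{PSH}(D)$ is in general not totally bounded in the norm. Take $X=\mathbb{P}^1$, $D=\mathcal{O}(1)$, and $\varphi_j=\log|\mathfrak{m}_{p_j}|$ for distinct closed points $p_j$. Each $\mathcal{O}(m)\otimes\mathfrak{m}_{p_j}^{m}$ is globally generated (by the $m$-th power of the section of $\mathcal{O}(1)$ vanishing at $p_j$), so $\varphi_j\in\mathcal{L}_{D}$; yet every nontrivial tempered valuation on $\mathbb{P}^1$ is a multiple of some $\mathrm{ord}_x$, so $\|\varphi_j-\varphi_{j'}\|=1$ for $j\ne j'$ and the sequence has no norm-convergent subsequence. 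Your diagonal subsequence would ``converge'' to $0$ on every fixed dual complex while remaining at norm distance $1$ from $0$ --- exactly the failure mode described above. So the compactness claim should be dropped or reinterpreted in a weaker topology (e.g.\ uniform convergence on each dual complex); neither your argument nor the paper's establishes it as stated.
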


\begin{proof}
We firstly prove (1). To prove that $\mathrm{PSH}(D)$ is convex, it suffices to show that $\mathcal{L}_{D}$ is convex. Given qpsh functions $\varphi$, $\varphi'\in \mathcal{L}_{D}$ and a rational number $0<\lambda<1$, we will show that $\lambda \varphi + (1-\lambda) \varphi' \in \mathcal{L}_{D}$. If we write $\varphi = \frac{1}{k} \log |\mathfrak{a}|$, $\varphi' = \frac{1}{k'} \log |\mathfrak{a}'|$ and $\lambda =q/p$, then
\begin{align*}
\lambda \varphi + (1-\lambda) \varphi' &= \frac{1}{kp} \log|\mathfrak{a}^{q}| +\frac{1}{k'p} \log|\mathfrak{a}'^{p-q}| \\
&= \frac{1}{kk'p} \log|\mathfrak{a}^{qk'}\cdot \mathfrak{a}'^{k(p-q)}|.
\end{align*}
It is easy to check that $kk'pmL \otimes \mathfrak{a}^{mqk'}\cdot \mathfrak{a}'^{mk(p-q)}$ is globally generated for every sufficiently divisible integer $m$ and the conclusion follows. Note that (2), (3) and (4) can be proved in a similar way.
\end{proof}

\begin{quest}
Let $\varphi$ be a general qpsh function. Does there exist a divisor $D$ such that $\varphi \in \mathrm{PSH}(D)$?
\end{quest}

\begin{defn}\label{def:pD-psh}
The set of pseudo $D$-psh functions is defined to be $\mathrm{PSH}_{\sigma}(D):= \bigcap\limits_{\varepsilon >0} \mathrm{PSH}(D+ \varepsilon A)$ where $A$ is an ample divisor.
\end{defn}

Note that the above definition is independent of the choice of the ample divisor $A$, and that the set $\mathrm{PSH}_{\sigma}(D)$ also satisfies the properties listed in Lemma \ref{lem:D-psh}.

\begin{thm}[Nadel Vanishing]\label{thm:nad}
Let $L$ be a line bundle on a smooth projective variety $X$ and $L\equiv A+D$ where $A$ is a nef and big $\mathbb{Q}$-divisor. Assume that $\varphi \in \mathrm{PSH}_{\sigma}(D)$. Then $$ H^{i}(X,(K_{X}+L)\otimes \mathcal{J}(\varphi))=0 $$ for all $i>0$.
\end{thm}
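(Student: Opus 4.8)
The plan is to deduce the statement from the classical Nadel vanishing theorem for effective $\mathbb{Q}$-divisors (cf.\ [\ref{Laz}]), whose hypothesis is that the twisting class $L-\Theta$ be nef and big. For this I would first reduce to the case when $A$ is ample, and then realize $\mathcal{J}(\varphi)$ as $\mathcal{J}(\Theta)$ for a concrete effective $\mathbb{Q}$-divisor $\Theta$ numerically equivalent to $(1+\varepsilon)(D+\delta H)$ with $\varepsilon,\delta$ small.

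\emph{Reduction to $A$ ample.} Since $A$ is big, Kodaira's lemma gives $A\equiv A_0+E$ with $A_0$ an ample $\mathbb{Q}$-divisor and $E\ge 0$ effective. For $t\in(0,1)$ the class $A_t:=(1-t)A+tA_0$ is ample, being the sum of a nef and an ample class, and $A\equiv A_t+tE$. As $tE$ is effective, $0\in\mathrm{PSH}_\sigma(tE)$, so by the additivity of $\mathrm{PSH}_\sigma$ (the analogue for $\mathrm{PSH}_\sigma$ of Lemma \ref{lem:D-psh}(3), valid by the remark after Definition \ref{def:pD-psh}) we get $\varphi\in\mathrm{PSH}_\sigma(D)\subseteq\mathrm{PSH}_\sigma(D+tE)$. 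Fixing one such $t$ and replacing $(A,D)$ by $(A_t,\,D+tE)$ leaves $L\equiv A+D$ unchanged and lets us assume $A$ is ample; note $D$ is pseudoeffective since $\mathrm{PSH}_\sigma(D)\ne\emptyset$. This is the only place bigness of $A$ is used.

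\emph{A divisorial model for $\mathcal{J}(\varphi)$, and the vanishing.} Fix an ample divisor $H$; since $D$ is pseudoeffective and $A$ ample, there is $M<\infty$ with $MA-D$ nef. Choose a rational $\varepsilon\in(0,\tfrac{1}{2M})$ small enough that Lemma \ref{qpsh_mult} applies, and then a rational $\delta>0$ small enough that $\tfrac12A-(1+\varepsilon)\delta H$ is ample. As $\varphi\in\mathrm{PSH}_\sigma(D)\subseteq\mathrm{PSH}(D+\delta H)=\overline{\mathcal{L}_{D+\delta H}}$, pick $\psi=\tfrac1k\log|\mathfrak{a}|\in\mathcal{L}_{D+\delta H}$ sufficiently close to $\varphi$ in the norm so that Lemma \ref{qpsh_mult} (applied to a sequence in $\mathcal{L}_{D+\delta H}$ converging to $\varphi$) gives $\mathcal{J}(\varphi)=\mathcal{J}((1+\varepsilon)\psi)=\mathcal{J}(\mathfrak{a}^{(1+\varepsilon)/k})$, the last equality by Proposition \ref{prop:ideal_mult}. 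By definition of $\mathcal{L}_{D+\delta H}$, $km(D+\delta H)\otimes\mathfrak{a}^m$ is globally generated for $m$ sufficiently divisible, so $\mathfrak{a}^m$ is the base ideal of a linear subsystem of $|km(D+\delta H)|$; taking general members $\Gamma_1,\dots,\Gamma_p$ with $p\gg 0$ and setting $\Theta:=\tfrac{1+\varepsilon}{kmp}\sum_i\Gamma_i$ produces, by the general-member computation of multiplier ideals (cf.\ [\ref{Laz}]), an effective $\mathbb{Q}$-divisor with $\Theta\equiv_{\mathbb{Q}}(1+\varepsilon)(D+\delta H)$ and $\mathcal{J}(\Theta)=\mathcal{J}(\varphi)$. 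Then $L-\Theta\equiv A-\varepsilon D-(1+\varepsilon)\delta H=\bigl(\tfrac12A-\varepsilon D\bigr)+\bigl(\tfrac12A-(1+\varepsilon)\delta H\bigr)$ is a nef class plus an ample class, hence ample, and in particular nef and big. Classical Nadel vanishing for $\Theta$ (equivalently, Kawamata--Viehweg vanishing on a log resolution of $\mathfrak{a}$ together with the local vanishing theorem for multiplier ideals) now gives $H^i\bigl(X,\mathcal{O}_X(K_X+L)\otimes\mathcal{J}(\varphi)\bigr)=H^i\bigl(X,\mathcal{O}_X(K_X+L)\otimes\mathcal{J}(\Theta)\bigr)=0$ for all $i>0$.

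I expect the reduction to $A$ ample to be the crux: it rests on the monotonicity of $\mathrm{PSH}_\sigma$ under adding an effective class, which is what allows the non-nef part of a Kodaira decomposition of $A$ to be absorbed into $D$ while $\varphi$ stays a pseudo-psh function. Without this, $L-\Theta$ would only be big and could fail to be nef along curves contained in the non-ample locus of $A$, and the vanishing would break down. The remainder is routine but requires coordinating the small parameters $\varepsilon,\delta$ (and $m,p$) so that Lemma \ref{qpsh_mult}, the nefness of $\tfrac12A-\varepsilon D$, the ampleness of $\tfrac12A-(1+\varepsilon)\delta H$, and the global generation of $km(D+\delta H)\otimes\mathfrak{a}^m$ all hold at once.
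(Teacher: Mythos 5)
Your second step (realizing $\mathcal{J}(\varphi)$ as $\mathcal{J}(\Theta)$ for an effective $\mathbb{Q}$-divisor $\Theta\equiv(1+\varepsilon)(D+\delta H)$ with $L-\Theta$ ample, then invoking classical Nadel vanishing) is correct and is essentially the paper's argument. The gap is exactly where you locate the crux: the reduction to $A$ ample. You assert that ``as $tE$ is effective, $0\in\mathrm{PSH}_\sigma(tE)$,'' and hence $\mathrm{PSH}_\sigma(D)\subseteq\mathrm{PSH}_\sigma(D+tE)$. This is false: by the corollary following Proposition \ref{prop:max'}, $0\in\mathrm{PSH}_\sigma(F)$ if and only if $F$ is nef, and an effective divisor need not be nef. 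Concretely, if $E$ is the exceptional divisor of a blow-up of a point on a surface and $D=0$, then $0\in\mathrm{PSH}_\sigma(0)$ but $\sigma_{\mathrm{ord}_E}(\|tE\|)=t>0$, so the maximal pseudo $tE$-psh function is strictly negative at $\mathrm{ord}_E$ and $0\notin\mathrm{PSH}_\sigma(tE)$; thus the monotonicity of $\mathrm{PSH}_\sigma$ under adding an effective class, on which your whole reduction rests, fails. The source of the failure is that global generation of $kmD\otimes\mathfrak{a}^m$ does not persist after replacing $D$ by $D+tE$; what Lemma \ref{lem:D-psh}(3) actually gives you is $\varphi+t\varphi_E\in\mathrm{PSH}_\sigma(D+tE)$, where $\varphi_E=\log|\mathcal{O}_X(-E)|\in\mathcal{L}_E$, not $\varphi$ itself.

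The repair is short and is what the paper does: replace $\varphi$ by $\varphi+t\varphi_E$ rather than keeping $\varphi$, and then use the semicontinuity of multiplier ideals to guarantee $\mathcal{J}(\varphi+t\varphi_E)=\mathcal{J}(\varphi)$ for $t\ll 1$ (e.g.\ choose $t$ small against the jumping numbers, or argue via $\|\log|\mathcal{J}(\varphi)|-\varphi\|^+<1$ being an open condition under small perturbations of the function in the norm, since $\|t\varphi_E\|\leq tC$). With that modification the pair $(A_t,\,D+tE)$ together with the function $\varphi+t\varphi_E$ satisfies all the hypotheses with $A_t$ ample and the same multiplier ideal, and the rest of your argument goes through unchanged.
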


\begin{proof}
First by Kodaira Lemma $A-\delta E$ is ample for some effective divisor $E$ and every sufficiently small number $\delta>0$. If we write $\varphi_E=\log|\mathcal{O}_X(-E)|$, then by semicontinuity of multiplier ideals we have $\mathcal{J}(\varphi)=\mathcal{J}(\varphi+\delta \varphi_E)$ for every sufficiently small number $\delta>0$. After replacing $A$ and $\varphi$ with $A-\delta E$ and $\varphi+\delta \varphi_E$, respectively, we can assume that $A$ is ample.

By definition we can assume that there exists a sequence of ideal functions $\{\varphi_{k}\}$ which converges to $\varphi$ strongly in the norm, such that $\varphi_{k} \in \mathcal{L}_{D+ \epsilon_{k}A}$ and $ \epsilon_{k} \rightarrow 0+$. Choose $\varepsilon \ll 1$ such that $A - \varepsilon D$ is ample. We see that $\mathcal{J}(\varphi)=\mathcal{J}((1+\varepsilon) \varphi_{k})$ for every sufficiently large integer $k$ by Lemma \ref{qpsh_mult}. Note that $(1+\varepsilon)\varphi_{k} \in \mathcal{L}_{(1+\varepsilon)(D+\epsilon_{k}A)}$. For a sufficient large integer $k$, $A-\varepsilon D - (1+\varepsilon) \epsilon_{k}A$ is ample. After replacing $A$ and $\varphi$ by $A-\varepsilon D - (1+\varepsilon) \epsilon_{k}A$ and $(1+\varepsilon)\varphi_{k}$, respectively, we reduce to the classical Nadel vanishing (cf. [\ref{Laz}]).
\end{proof}

As an application of the above theorem, one can easily deduce the following theorem by letting $G=K_{X}+(n+1)H$ where $H$ is a hypersurface of $X$ and $n=\dim X$, with the aid of the Castelnuovo-Mumford regularity.

\begin{thm}[Global generation]\label{thm:gg}
Let $D$ be a divisor on $X$ and $\varphi$ be a qpsh function. Then, $\varphi \in \mathrm{PSH}_{\sigma}(D)$ if and only if there exists a line bundle $G$ such that $(mD+G)\otimes \mathcal{J}(m\varphi)$ is globally generated for all $m\in \mathbb{Z}_{+}$ with $mD$ integral.
\end{thm}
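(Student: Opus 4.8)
The plan is to prove the two implications separately, using Theorem~\ref{thm:nad} (Nadel vanishing) together with the Castelnuovo--Mumford regularity criterion for the forward direction, and the canonical approximation $\tfrac1k\log|\mathcal{J}(k\varphi)|\rightarrow\varphi$ from Theorem~\ref{ch_qpsh} for the converse. For ``$\varphi\in\mathrm{PSH}_\sigma(D)\Rightarrow$ global generation'', I would fix a very ample divisor $H$, put $n=\dim X$ and $G=K_X+(n+1)H$. Given $m\in\mathbb{Z}_+$ with $mD$ integral, set $\mathcal{F}=(mD+G)\otimes\mathcal{J}(m\varphi)$; by Castelnuovo--Mumford it suffices to show that $\mathcal{F}$ is $0$-regular with respect to $\mathcal{O}_X(H)$, i.e. $H^i(X,\mathcal{F}\otimes\mathcal{O}_X(-iH))=0$ for all $i>0$. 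The cases $i>n$ are automatic; for $1\le i\le n$ write $L_i=mD+(n+1-i)H$, so $\mathcal{F}\otimes\mathcal{O}_X(-iH)=(K_X+L_i)\otimes\mathcal{J}(m\varphi)$ and $L_i\equiv(n+1-i)H+mD$ with $(n+1-i)H$ ample (hence nef and big). Since $m\varphi\in m\,\mathrm{PSH}_\sigma(D)=\mathrm{PSH}_\sigma(mD)$ by the scaling property (Lemma~\ref{lem:D-psh}(2) and the remark after Definition~\ref{def:pD-psh}), Theorem~\ref{thm:nad} gives the needed vanishing, so $\mathcal{F}$ is globally generated.

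For the converse I would fix a rational $\varepsilon>0$ and an ample integral divisor $A$; since $\mathrm{PSH}_\sigma(D)=\bigcap_{\varepsilon>0}\mathrm{PSH}(D+\varepsilon A)$, it is enough to prove $\varphi\in\mathrm{PSH}(D+\varepsilon A)=\overline{\mathcal{L}_{D+\varepsilon A}}$. As $\mathcal{J}(\varphi)_\bullet$ is a subadditive sequence of controlled growth with $\log|\mathcal{J}(\varphi)_\bullet|=\varphi$, the functions $\psi_k:=\tfrac1k\log|\mathcal{J}(k\varphi)|$ converge to $\varphi$ strongly in the norm (Theorem~\ref{ch_qpsh} and the discussion preceding it), so it remains to check $\psi_k\in\mathcal{L}_{D+\varepsilon A}$ for all sufficiently divisible $k$, i.e. that $kj(D+\varepsilon A)\otimes\mathcal{J}(k\varphi)^j$ is globally generated for all sufficiently divisible $j$. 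By hypothesis $(kD+G)\otimes\mathcal{J}(k\varphi)$ is globally generated; taking its $j$-th tensor power and composing with the surjection $\mathcal{J}(k\varphi)^{\otimes j}\twoheadrightarrow\mathcal{J}(k\varphi)^j$ shows that $(kjD+jG)\otimes\mathcal{J}(k\varphi)^j$ is globally generated for every $j$. For $k$ large (depending only on $\varepsilon,A,G$) the divisor $k\varepsilon A-G$ is ample, hence $j(k\varepsilon A-G)$ is a globally generated line bundle for all sufficiently divisible $j$, and therefore
$$
kj(D+\varepsilon A)\otimes\mathcal{J}(k\varphi)^j=\bigl((kjD+jG)\otimes\mathcal{J}(k\varphi)^j\bigr)\otimes\mathcal{O}_X\bigl(j(k\varepsilon A-G)\bigr)
$$
is globally generated, which is exactly the membership condition. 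Thus $\varphi\in\overline{\mathcal{L}_{D+\varepsilon A}}$, and since $\varepsilon$ was arbitrary, $\varphi\in\mathrm{PSH}_\sigma(D)$.

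The forward implication is essentially cohomological bookkeeping, and the converse only uses the elementary stability of global generation under tensor powers, quotients and twists by globally generated line bundles. The one point I expect to require care is the quantifier order in the converse: the approximating index $k$ must be chosen large enough, relative to the fixed $\varepsilon$, for $k\varepsilon A-G$ to be ample, so only a tail of $\{\psi_k\}$ lands in $\mathcal{L}_{D+\varepsilon A}$; this non-uniformity in $\varepsilon$ is harmless precisely because $\mathrm{PSH}_\sigma(D)$ is defined as an intersection over all $\varepsilon>0$, but it should be verified explicitly.
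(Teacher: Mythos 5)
Your argument is correct and, for the forward implication, is exactly the route the paper indicates (taking $G=K_X+(n+1)H$ and combining Theorem \ref{thm:nad} with Castelnuovo--Mumford regularity via the scaling $m\varphi\in\mathrm{PSH}_\sigma(mD)$). The paper leaves the converse entirely to the reader; your completion via $\psi_k=\tfrac1k\log|\mathcal{J}(k\varphi)|\to\varphi$ in norm, the surjection $\mathcal{J}(k\varphi)^{\otimes j}\twoheadrightarrow\mathcal{J}(k\varphi)^j$, and twisting by $j(k\varepsilon A-G)$ is the intended standard argument and is sound, including your correct handling of the quantifier order in $\varepsilon$ and $k$.
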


Given a qpsh function $\varphi$, a positive real number $\lambda$ is said to be the (higher) jumping number of $\varphi$ if $\mathcal{J}((\lambda-\epsilon)\varphi)\supsetneq \mathcal{J}(\lambda\varphi)$ for every positive real number $\epsilon$.

\begin{defn}
Let $\varphi$ be a qpsh function. We define the ideal $\mathcal{J}_-(\varphi)$ to be the largest ideal in the set $\{\mathfrak{a}|\|\log|\mathfrak{a}|-\varphi\|\leq 1\}$. One can see that $\mathcal{J}_-(\varphi)$ can be written explicitly as
$$
\Gamma(U,\mathcal{J}_-(\varphi))=\{f\in\mathcal{O}_X(U)|v(f)+A(v)+\varphi(v)\geq 0~\emph{for every }v\in V_U^\ast\}
$$
for every open subset $U$.
\end{defn}

\begin{lem}
If $\varphi$ is $D$-psh for some divisor $D$, then the descending chain of ideals $\mathcal{J}((1-\epsilon)\varphi)$ stabilizes as $\epsilon \rightarrow 0+$. Further, $\mathcal{J}((1-\epsilon)\varphi)=\mathcal{J}_-(\varphi)$ for $\epsilon \ll 1$. It follows that the set of its (higher) jumping numbers is discrete.
\end{lem}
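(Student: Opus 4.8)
The plan is to squeeze the decreasing family $\{\mathcal J((1-\epsilon)\varphi)\}_{\epsilon>0}$ between $\mathcal J_-(\varphi)$ and itself, the hypothesis entering only to force the squeeze to close up at a positive $\epsilon$. First I would record two soft facts, valid for every $\varphi\in\mathrm{QPSH}(X)$ and not using the hypothesis. Since $\varphi$ is a norm-limit of functions $\sum c_j\log|\mathfrak a_j|$ with $c_j>0$, one has $\varphi\le 0$ on $\mathrm V_X$, and $A(v)>0$ for every $v\in\mathrm V_X^\ast$ (the centre of a nontrivial valuation is non-generic, so some retraction is a nontrivial quasi-monomial valuation, on which $A$ is strictly positive linear). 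Using the explicit valuative description of $\mathcal J((1-\epsilon)\varphi)$ from Corollary~\ref{cor:ex_mult} and the defining description of $\mathcal J_-(\varphi)$, a direct check gives, for any $f$ with $v(f)+A(v)+\varphi(v)\ge 0$ for all $v\in\mathrm V_U^\ast$, that $v(f)+A(v)+(1-\epsilon)\varphi(v)=(v(f)+A(v)+\varphi(v))-\epsilon\varphi(v)\ge -\epsilon\varphi(v)\ge 0$, with equality forcing $\varphi(v)=0$ and $v(f)+A(v)=0$, which is impossible since $v(f)\ge 0$ and $A(v)>0$; hence $\mathcal J_-(\varphi)\subseteq\mathcal J((1-\epsilon)\varphi)$ for every $\epsilon\in(0,1)$. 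Letting $\epsilon\to 0+$ in the description of $\mathcal J((1-\epsilon)\varphi)$ gives $\bigcap_{\epsilon>0}\mathcal J((1-\epsilon)\varphi)=\mathcal J_-(\varphi)$. Thus the assertion is equivalent to saying that this decreasing family is eventually constant, i.e.\ that $\varphi$ has no jumping number in some interval $(1-\epsilon_0,1)$.

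For that finiteness I would invoke Theorem~\ref{thm:gg}: since $\varphi\in\mathrm{PSH}(D)\subseteq\mathrm{PSH}_\sigma(D)$ by Lemma~\ref{lem:D-psh}(4), there is a line bundle $G$ with $(mD+G)\otimes\mathcal J(m\varphi)$ globally generated for all $m$ with $mD$ integral. The scheme is to combine the approximation $\psi_k=\tfrac1{n_k}\log|\mathfrak a_k|\to\varphi$ in the norm (with $\psi_k\in\mathcal L_D$) and Lemma~\ref{qpsh_mult} to obtain, for a fixed small $\epsilon$, an equality $\mathcal J((1-\epsilon)\varphi)=\mathcal J(s_k\psi_k)=\pi_{k\ast}\mathcal O_{Y_k}\big(K_{Y_k/X}-\lfloor \tfrac{s_k}{n_k}F_k\rfloor\big)$ on a log resolution $\pi_k\colon Y_k\to X$ of $\mathfrak a_k$ (here $s_k<1$ and $\mathfrak a_k\cdot\mathcal O_{Y_k}=\mathcal O_{Y_k}(-F_k)$), and then to use the uniform global generation of the $(mD+G)\otimes\mathcal J(m\varphi)$, with a Castelnuovo--Mumford bookkeeping, to see that these ideals are eventually independent of $\epsilon$ --- e.g.\ by exhibiting a single model $Y_0$ and a fixed $\mathbb R$-divisor $F_0$ with $\mathcal J((1-\epsilon)\varphi)=\pi_{0\ast}\mathcal O_{Y_0}\big(K_{Y_0/X}-\lfloor(1-\epsilon)F_0\rfloor\big)$ for all small $\epsilon$, whence $\lfloor(1-\epsilon)F_0\rfloor$ is constant near $\epsilon=0$. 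The delicate point, and the step I expect to be the main obstacle, is precisely this passage to a fixed model: a priori $\varphi$ is only a norm-limit of ideal functions, the multiplier ideals $\mathcal J(m\varphi)$ are asymptotic, and the resolutions of $\mathfrak a_k$ genuinely vary with $k$, so pinning down $Y_0,F_0$ --- equivalently, bounding the denominators of the jumping numbers inside a compact subinterval --- is where the hypothesis must really be used. This is the content of the statement being generalized, [\ref{Lehmann}, Theorem~6.14], and I would follow that strategy.

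Granting the stabilization, $\mathcal J((1-\epsilon)\varphi)=\mathcal J_-(\varphi)$ for $\epsilon\ll1$ follows from the two inclusions above. For discreteness of the jumping numbers I would argue as follows. The family $\lambda\mapsto\mathcal J(\lambda\varphi)$ is right-continuous, $\mathcal J(\lambda\varphi)=\mathcal J((\lambda+\delta)\varphi)$ for $\delta\ll1$, because locally $\mathcal J(\lambda\varphi)$ is generated by finitely many $f_i$ each with $\sup_v\frac{-v(f_i)-\lambda\varphi(v)}{A(v)}<1$ and $\|\varphi\|<\infty$ permits a small increase of $\lambda$; and applying the stabilization to $t\varphi$, which is $tD$-psh for $t\in\mathbb Q_{>0}$ by Lemma~\ref{lem:D-psh}(2) (the real case following by density), shows $\mathcal J(\lambda\varphi)$ is constant on a left-neighbourhood $(t(1-\epsilon_0),t)$ of each $t$. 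Since moreover $\mathcal J(\lambda\varphi)=\mathcal O_X$ for $\lambda$ small, every point of the jumping set is isolated and the set is locally finite, hence discrete.
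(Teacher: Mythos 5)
Your reductions are sound: the two inclusions relating $\mathcal{J}((1-\epsilon)\varphi)$ to $\mathcal{J}_-(\varphi)$ via Corollary \ref{cor:ex_mult}, and the derivation of discreteness from right-continuity plus stabilization applied to $t\varphi$, all work. But the central step --- that the descending chain $\mathcal{J}((1-\epsilon)\varphi)$ actually stabilizes --- is exactly the step you leave open, and the route you sketch for it (finding a single log resolution $\pi_0\colon Y_0\to X$ and a fixed $\mathbb{R}$-divisor $F_0$ with $\mathcal{J}((1-\epsilon)\varphi)=\pi_{0\ast}\mathcal{O}_{Y_0}(K_{Y_0/X}-\lfloor(1-\epsilon)F_0\rfloor)$ for all small $\epsilon$) is not available: for a $D$-psh function that is only a norm-limit of ideal functions, the ideals $\mathcal{J}(\lambda\varphi)$ are genuinely asymptotic and are not computed on any fixed model, so ``bounding the denominators of the jumping numbers'' this way presupposes the very stabilization you are trying to prove.

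The idea you are missing is that global generation already finishes the argument, with no model needed. By Nadel vanishing (Theorem \ref{thm:nad}) and Castelnuovo--Mumford regularity one finds a single line bundle $G$, uniform in $\epsilon$ (since $(1-\epsilon)\varphi\in\mathrm{PSH}((1-\epsilon)D)$ and $D+G-(1-\epsilon)D=\epsilon D+G$ stays nef and big for $G$ sufficiently ample), such that $\mathcal{O}_X(D+G)\otimes\mathcal{J}((1-\epsilon)\varphi)$ is globally generated for all $\epsilon\ll 1$. A globally generated subsheaf of $\mathcal{O}_X(D+G)$ is the image of the evaluation map from its space of global sections, hence is determined by the subspace $H^0(X,\mathcal{O}_X(D+G)\otimes\mathcal{J}((1-\epsilon)\varphi))\subseteq H^0(X,\mathcal{O}_X(D+G))$. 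These subspaces form a descending chain in a fixed finite-dimensional vector space, so they stabilize, and therefore so do the ideals. This finite-dimensionality argument is the content of [\ref{Lehmann}, Theorem 4.2] and is what the paper's proof uses; your Castelnuovo--Mumford ``bookkeeping'' should be aimed there rather than at a fixed resolution.
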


\begin{proof}
By adding an ample divisor to $D$, we can assume that $D$ is Cartier. By Theorem \ref{thm:nad} and the Castelnuovo-Mumford regularity there exists an ample line bundle $G$ such that $\mathcal{O}_X(D+G)\otimes \mathcal{J}((1-\epsilon)\varphi)$ is globally generated for $\epsilon \ll 1$. Since the descending chain of vector spaces $H^0(X,\mathcal{O}_X(D+G)\otimes \mathcal{J}((1-\epsilon)\varphi))$ will stabilize as $\epsilon \rightarrow 0+$, the descending chain of ideals $\mathcal{J}((1-\epsilon)\varphi)$ will stabilize. The reader can find more details in [\ref{Lehmann}, Theorem 4.2].

Fix a sufficiently small number $\epsilon'$. Since $\|\log|\mathcal{J}((1-\epsilon')\varphi)|-(1-\epsilon)\varphi\|<1$ for every sufficiently small number $\epsilon$, we see that $\|\log|\mathcal{J}((1-\epsilon')\varphi)|-\varphi\|\leq 1$. It follows that $\mathcal{J}((1-\epsilon)\varphi)\subseteq \mathcal{J}_-(\varphi)$. To prove the converse inclusion, simply notice that
$$
\Gamma(U,\mathcal{J}_-(\varphi))=\{f\in\mathcal{O}_X(U)|v(f)+A(v)+\varphi(v)\geq 0~\emph{for every }v\in V_U^\ast\}
$$
and hence $\mathcal{J}((1-\epsilon)\varphi)\supseteq \mathcal{J}_-(\varphi)$ for $\epsilon \ll 1$ by Corollary \ref{cor:ex_mult}.
\end{proof}

To investigate the structure of the sets $\mathrm{PSH}(D)$ and $\mathrm{PSH}_\sigma(D)$, we need the following construction. Given an integer $k$, a divisor $D$ and a qpsh function $\varphi$, we define the linear system $V_{m}(D,\varphi,t):=
\{L\in |\llcorner mD \lrcorner||\frac{1}{m}\log|s_L|\leq \frac{1}{t}\log|\mathcal{J}_-(t\varphi)|\}$ where $s_L$ is the section associated to the divisor $L$ and $\epsilon \ll 1$. If we set $\mathfrak{a}(D,\varphi,t)_m:=\mathfrak{b}(V_m(D,\varphi,t))$ where $\mathfrak{b}(V_m(D,\varphi,t))$ denotes the base ideal of the linear system $V_m(D,\varphi,t)$, then $\mathfrak{a}(D,\varphi,t)_\bullet$ is a graded sequence of ideals. Moreover, we define $\varphi_{t}^{D}:=\log|\mathfrak{a}(D,\varphi,t)_\bullet|$ for every positive rational number $t$.

\begin{lem}\label{lem:D_psh}
Let $D$ be a divisor on $X$ and $\varphi$ be a qpsh function. Then, $\varphi \in \mathrm{PSH}(D)$ if and only if $\varphi= \lim\limits_{t \rightarrow \infty} \varphi_{t}^{D}$ pointwisely.
\end{lem}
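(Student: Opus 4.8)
The plan is to prove the two implications separately. Assume first that $\varphi = \lim_{t \to \infty} \varphi_t^D$ pointwise. I would show each $\varphi_t^D$ is $D$-psh, and that the convergence is actually in the norm, so that $\varphi \in \mathrm{PSH}(D) = \overline{\mathcal{L}_D}$. For the first part, recall $\varphi_t^D = \log|\mathfrak{a}(D,\varphi,t)_\bullet|$ where $\mathfrak{a}(D,\varphi,t)_m = \mathfrak{b}(V_m(D,\varphi,t))$ and $V_m(D,\varphi,t)$ consists of divisors $L \in |\lfloor mD \rfloor|$ with $\frac{1}{m}\log|s_L| \le \frac{1}{t}\log|\mathcal{J}_-(t\varphi)|$. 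The base ideal of such a linear system, twisted by $\lfloor mD\rfloor$, is globally generated by construction, so $\frac{1}{m}\log|\mathfrak{a}(D,\varphi,t)_m|$ lies (up to the rounding discrepancy between $mD$ and $\lfloor mD \rfloor$, which I would absorb by working with sufficiently divisible $m$) in $\mathcal{L}_D$; by Lemma \ref{grad_alg} the associated graded-sequence function $\varphi_t^D = \log|\mathfrak{a}(D,\varphi,t)_\bullet|$ is an increasing limit of these ideal functions in the norm, hence an algebraic qpsh function in $\mathrm{PSH}(D)$ (using that $\mathrm{PSH}(D)$ is closed, Lemma \ref{lem:D-psh}(1)). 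Then I would check that $\{\varphi_t^D\}$ is essentially monotone in $t$ (larger $t$ gives a smaller target $\frac{1}{t}\log|\mathcal{J}_-(t\varphi)| \le \varphi$, hence a smaller linear system), so the pointwise limit is a decreasing (or at least dominated) limit; combined with $\varphi_t^D \le \varphi$ and boundedness, the pointwise convergence upgrades to norm convergence via the Izumi-type control of Lemma \ref{Izumi} and compactness of the normalized dual complexes, giving $\varphi \in \mathrm{PSH}(D)$.

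For the converse, assume $\varphi \in \mathrm{PSH}(D)$. I would first establish the inequality $\varphi_t^D \le \varphi$ for all $t$: by construction every section $s_L$ counted in $V_m(D,\varphi,t)$ satisfies $\frac{1}{m}\log|s_L| \le \frac{1}{t}\log|\mathcal{J}_-(t\varphi)|$, and since $\log|\mathcal{J}_-(t\varphi)| \le t\varphi$ is not literally true but rather $\|\log|\mathcal{J}_-(t\varphi)| - t\varphi\| \le 1$, I would instead use the correct bound $\frac{1}{t}\log|\mathcal{J}_-(t\varphi)|(v) \ge \varphi(v) - \frac{A(v)}{t}$ for $v \in \mathrm{V}_X^\ast$, coming from the explicit description of $\mathcal{J}_-$; this gives $\varphi_t^D(v) \le \varphi(v)$ in the limit $t\to\infty$. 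Then I need the reverse inequality in the limit, i.e. $\lim_{t\to\infty}\varphi_t^D \ge \varphi$. This is where the hypothesis $\varphi \in \mathrm{PSH}(D)$ enters essentially: pick $\psi = \frac{1}{k}\log|\mathfrak{a}| \in \mathcal{L}_D$ with $\|\psi - \varphi\|$ small; since $kmD \otimes \mathfrak{a}^m$ is globally generated, its sections produce divisors lying in $V_m(D, \varphi, t)$ once $\psi \le \frac{1}{t}\log|\mathcal{J}_-(t\varphi)|$, which holds for $t$ large because $\frac{1}{t}\log|\mathcal{J}_-(t\varphi)| \to \varphi$ in the norm and $\psi \le \varphi$ can be arranged (replace $\psi$ by $\psi$ minus a small multiple of $\log|\mathfrak{m}|$ as in Lemma \ref{qpsh_eve}). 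Consequently $\mathfrak{a}(D,\varphi,t)_m \supseteq \mathfrak{a}^m \cdot (\text{base ideal data of }\psi)$, giving $\varphi_t^D \ge \psi - \delta$ for $t \gg 0$, and letting the approximation $\psi \to \varphi$ tighten gives $\lim_t \varphi_t^D \ge \varphi$.

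The main obstacle I anticipate is the bookkeeping around $\mathcal{J}_-$ versus $\mathcal{J}$ and the rounding $\lfloor mD\rfloor$ versus $mD$: the definition of $V_m(D,\varphi,t)$ uses $\mathcal{J}_-(t\varphi)$, whose relation to $t\varphi$ is only $\|\log|\mathcal{J}_-(t\varphi)| - t\varphi\| \le 1$, so the naive inclusion "$\log|\mathcal{J}_-(t\varphi)| \le t\varphi$" fails and one must carry the $\frac{A(v)}{t}$ error terms through every estimate, showing they wash out as $t \to \infty$ (using that $A$ is bounded on each normalized dual complex $\Delta(Y,D)$ and that qpsh functions are determined by their restrictions there, Proposition \ref{qpsh_usc}). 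A secondary technical point is verifying that $\mathfrak{a}(D,\varphi,t)_\bullet$ really is a graded sequence — this follows from multiplying sections of $V_m$ and $V_{m'}$ to land in $V_{m+m'}$, using subadditivity of $\mathcal{J}_-$ analogous to Proposition \ref{subadditivity}(1) — and that the resulting $\varphi_t^D$ is indeed $D$-psh and not merely qpsh, which requires the global generation statement to survive the passage to base ideals; I would handle this exactly as in the proof of Lemma \ref{lem:D-psh}(1). Once these error-term estimates are in place, both implications reduce to the norm-convergence statement $\frac{1}{t}\log|\mathcal{J}_-(t\varphi)| \to \varphi$ together with the defining property of $\mathcal{L}_D$, and the argument closes.
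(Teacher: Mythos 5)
Your overall sandwich strategy (bound $\varphi_t^D$ above by $\frac{1}{t}\log|\mathcal{J}_-(t\varphi)|$ and below via sections coming from $\mathcal{L}_D$) is the right shape, and your treatment of the easy direction and of the membership $\varphi_t^D\in\mathrm{PSH}(D)$ matches the paper. But the key step of the forward implication has a genuine gap. To get $\lim_t\varphi_t^D\geq\varphi$ you need an approximant $\psi=\frac{1}{k}\log|\mathfrak{a}|\in\mathcal{L}_D$ with $\psi\leq\frac{1}{t}\log|\mathcal{J}_-(t\varphi)|$, and since the only a priori lower bound on the right-hand side is $\varphi$ itself (with no slack at, say, divisorial valuations where $t\varphi(\mathrm{ord}_E)$ is attained exactly), this forces $\psi\leq\varphi$ everywhere with $\|\psi-\varphi\|$ small. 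That one-sided approximation is not available: from $\|\psi-\varphi\|<\delta$ you only get $\psi\leq\varphi+\delta A$, and the correction you propose, adding a multiple of $\log|\mathfrak{m}|$ as in Lemma \ref{qpsh_eve}, only dominates a $\delta A(v)$ error on a fixed dual complex where $v(\mathfrak{m})/A(v)$ is bounded below; on all of $\mathrm{V}_X^\ast$ there are valuations with $v(\mathfrak{m})$ arbitrarily small relative to $A(v)$, so no ideal-function perturbation pushes $\psi$ below $\varphi$ globally (and it would in any case have to be checked to stay in $\mathcal{L}_D$). The paper avoids this entirely: for $t$ not a jumping number it uses Lemma \ref{qpsh_mult} to show $\mathcal{J}_-(t\varphi)=\mathcal{J}_-(t\varphi_m)$ for $m\gg0$, hence $V_m(D,\varphi,t)=V_m(D,\varphi_m,t)$ and $\varphi_t^D=\varphi_{m,t}^D\geq\varphi_m$, then lets $m\to\infty$; no one-sided approximation is ever needed.

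A secondary, fixable error: where you want the estimate "$\varphi_t^D\leq\varphi$ in the limit," you quote the inequality $\frac{1}{t}\log|\mathcal{J}_-(t\varphi)|(v)\geq\varphi(v)-\frac{A(v)}{t}$, which points the wrong way for that purpose. The bound you actually need, and which does follow from the explicit description of $\mathcal{J}_-$ (namely $v(f)+A(v)+t\varphi(v)\geq0$), is $\frac{1}{t}\log|\mathcal{J}_-(t\varphi)|(v)\leq\varphi(v)+\frac{A(v)}{t}$; combined with $\frac{1}{t}\log|\mathcal{J}_-(t\varphi)|\geq\varphi$ (from $t\varphi\leq\log|\mathcal{J}(t\varphi)|$, Remark \ref{qpsh_mult_r}) this gives the two-sided control $\varphi\leq\frac{1}{t}\log|\mathcal{J}_-(t\varphi)|\leq\varphi+\frac{A}{t}$ that drives the whole proof. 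With that corrected and with the paper's multiplier-ideal stabilization replacing your one-sided approximation, the argument closes.
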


\begin{proof}
First assume that $\varphi \in \mathrm{PSH}(D)$. Let $\{\varphi_{m}\}$ be a sequence of ideal functions which converges to $\varphi$ such that each $\varphi_{m} \in \mathcal{L}_{D}$. If $t$ is not a (higher) jumping number of $\varphi$, then, by Lemma \ref{qpsh_mult} we have
$$
\mathcal{J}_-(t\varphi)=\mathcal{J}((t-\epsilon)\varphi)=\mathcal{J}((t-\epsilon+\epsilon')\varphi_m) \supseteq \mathcal{J}_-(t\varphi_{m})
$$
and
$$
\mathcal{J}_-(t\varphi)=\mathcal{J}(t\varphi)=\mathcal{J}((t+\epsilon)\varphi_m) \subseteq \mathcal{J}_-(t\varphi_m)
$$
 for every sufficiently large integer $m$. It follows that $\mathcal{J}_-(t\varphi)=\mathcal{J}_-(t\varphi_m)$ and $\varphi_{t}^{D}=\varphi_{m,t}^{D}$. Note that $\varphi_{m.t}^{D} \geq \varphi_{m}$, and hence $\frac{1}{t}\log|\mathcal{J}_-(t\varphi)| \geq \varphi_{t}^{D} \geq \varphi$. If $t$ is a (higher) jumping number, then $\varphi_t^D \geq \varphi_{t-\epsilon}^D \geq \varphi$. Therefore, we have $\|\varphi_t^D -\varphi \| \leq \frac{1}{t}$ and hence $\varphi = \lim\limits_{t \rightarrow \infty} \varphi_{t}^{D}$. \\
Conversely, we assume that $\varphi= \lim\limits_{t \rightarrow \infty} \varphi_{t}^{D}$. Since $\varphi_{t}^{D}$ is algebraic from $\mathfrak{a}(D,\varphi,t)_{\bullet}$ for each $t$, $\varphi_t^D$ is $D$-psh for every $t>0$. Since $\frac{1}{t}\log|\mathcal{J}_-(t\varphi)| \geq \varphi_t^D$ and $\varphi_t^D$ has a decreasing subsequence, $\varphi_t^D$ converges to $\varphi$ strongly in the norm which implies the conclusion immediately.
\end{proof}

For every nontrivial tempered valuation $v$, we define $v(\|D\|)=v(\mathfrak{a}_{\bullet})$ where $\mathfrak{a}_{m}=\mathfrak{b}(|\llcorner mD \lrcorner|)$.

\begin{prop}\label{prop:max}
The set $\mathrm{PSH}(D)$ is closed under taking the supremum. The maximal $D$-psh function $\varphi_{\max}$ can be written explicitly as $\varphi_{\max}(v)=-v(\|D\|)$ for all $v\in \mathrm{V}_{X}^\ast$.
\end{prop}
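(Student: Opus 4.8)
The plan is to produce $\varphi_{\max}$ explicitly and then bootstrap ``has a maximum'' to ``closed under taking suprema''. Write $\mathfrak{a}_m=\mathfrak{b}(|\lfloor mD\rfloor|)$, so that $v(\|D\|)=v(\mathfrak{a}_\bullet)$ by definition; I would show $\varphi_{\max}=\log|\mathfrak{a}_\bullet|$. (We may assume some multiple of $D$ is effective; otherwise $\mathrm{PSH}(D)=\mathcal{L}_D=\emptyset$ and there is nothing to prove.)

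First, $\mathfrak{a}_\bullet$ is a graded sequence of ideals, since a product of a section of $\mathcal{O}(\lfloor mD\rfloor)$ and one of $\mathcal{O}(\lfloor nD\rfloor)$ is a section of $\mathcal{O}(\lfloor(m+n)D\rfloor)$; hence $\log|\mathfrak{a}_\bullet|$ is algebraic qpsh by Lemma \ref{grad_alg}. Next, for every $m$ with $mD$ integral one has $\tfrac1m\log|\mathfrak{a}_m|\in\mathcal{L}_D$: $\mathcal{O}(mm'D)\otimes\mathfrak{a}_m^{m'}$ is generated by the $m'$-fold products of the sections cutting out $\mathfrak{a}_m$, hence globally generated, for every $m'\ge 1$. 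Since $\log|\mathfrak{a}_\bullet|$ is a norm-limit of a subsequence of $\{\tfrac1m\log|\mathfrak{a}_m|\}$ (Lemma \ref{grad_alg}) and $\mathrm{PSH}(D)=\overline{\mathcal{L}_D}$ is closed, $\log|\mathfrak{a}_\bullet|\in\mathrm{PSH}(D)$. Conversely, if $\varphi=\tfrac1k\log|\mathfrak{a}|\in\mathcal{L}_D$ then global generation of $\mathcal{O}(kmD)\otimes\mathfrak{a}^m$ forces $\mathfrak{a}^m\subseteq\mathfrak{b}(|kmD|)=\mathfrak{a}_{km}$, whence $\varphi(v)=-\tfrac1k v(\mathfrak{a})\le-\tfrac1{km}v(\mathfrak{a}_{km})\le-v(\mathfrak{a}_\bullet)=\log|\mathfrak{a}_\bullet|(v)$ for all $v\in\mathrm{V}_X^\ast$; since norm convergence implies pointwise convergence, this inequality persists for all $\varphi\in\overline{\mathcal{L}_D}=\mathrm{PSH}(D)$. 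Therefore $\log|\mathfrak{a}_\bullet|$ is the maximum of $\mathrm{PSH}(D)$ and $\varphi_{\max}(v)=-v(\mathfrak{a}_\bullet)=-v(\|D\|)$.

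For closure under arbitrary suprema, let $\{\varphi_\lambda\}\subseteq\mathrm{PSH}(D)$ and $\psi:=\sup_\lambda\varphi_\lambda$; by Lemma \ref{cl_sup}, $\psi\in\mathrm{QPSH}(X)$, so the construction $\psi_t^D$ preceding Lemma \ref{lem:D_psh} makes sense, and I would prove $\psi=\lim_{t\to\infty}\psi_t^D$ pointwise and then invoke Lemma \ref{lem:D_psh}. The key point is that $\varphi\mapsto\varphi_t^D$ is order-preserving: if $\varphi\le\varphi'$ then $\mathcal{J}_-(t\varphi)\subseteq\mathcal{J}_-(t\varphi')$ by the explicit description of $\mathcal{J}_-$, hence $\tfrac1t\log|\mathcal{J}_-(t\varphi)|\le\tfrac1t\log|\mathcal{J}_-(t\varphi')|$, hence $V_m(D,\varphi,t)\subseteq V_m(D,\varphi',t)$, hence $\mathfrak{a}(D,\varphi,t)_m\subseteq\mathfrak{a}(D,\varphi',t)_m$, hence $\varphi_t^D\le\varphi'^D_t$. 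Applying this to $\varphi_\lambda\le\psi$ gives $\psi_t^D\ge\varphi_{\lambda,t}^D$, so for each $v$ and each $\lambda$, $\liminf_{t\to\infty}\psi_t^D(v)\ge\lim_{t\to\infty}\varphi_{\lambda,t}^D(v)=\varphi_\lambda(v)$ by Lemma \ref{lem:D_psh}, and taking the supremum over $\lambda$ yields $\liminf_t\psi_t^D\ge\psi$ pointwise. For the reverse bound, $\mathfrak{a}(D,\psi,t)_\bullet$ is generated by sections $s_L$ with $\tfrac1m\log|s_L|\le\tfrac1t\log|\mathcal{J}_-(t\psi)|$, so $\psi_t^D=\log|\mathfrak{a}(D,\psi,t)_\bullet|\le\tfrac1t\log|\mathcal{J}_-(t\psi)|$; and from $v(f)+A(v)+t\psi(v)\ge 0$ for $f\in\mathcal{J}_-(t\psi)$ one gets $\tfrac1t\log|\mathcal{J}_-(t\psi)|(v)\le\psi(v)+\tfrac1t A(v)$, so $\limsup_t\psi_t^D\le\psi$. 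Hence $\psi=\lim_t\psi_t^D$ and $\psi\in\mathrm{PSH}(D)$ by Lemma \ref{lem:D_psh}.

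The steps with base ideals and global generation are routine bookkeeping; the genuinely nontrivial part is the closure under suprema, since Lemma \ref{cl_sup} only places $\psi$ in $\mathrm{QPSH}(X)$, not in $\mathrm{PSH}(D)$. I expect the main (mild) obstacle there to be checking that $\varphi_t^D$ and the ideals $\mathcal{J}_-(t\,\cdot\,)$ behave monotonically and are squeezed correctly, i.e. $\varphi_{\lambda,t}^D\le\psi_t^D\le\tfrac1t\log|\mathcal{J}_-(t\psi)|\le\psi+\tfrac1t A$, so that Lemma \ref{lem:D_psh} applies.
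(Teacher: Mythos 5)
Your proof is correct. The closure-under-suprema half is essentially the paper's own argument: both use monotonicity of $\varphi\mapsto\varphi_t^D$ to get $\varphi_{\lambda,t}^D\le\psi_t^D$ and then squeeze with $\psi_t^D\le\tfrac1t\log|\mathcal{J}_-(t\psi)|\le\psi+\tfrac1t A$, invoking Lemma \ref{lem:D_psh} at the end (you are slightly more careful in noting that Lemma \ref{cl_sup} is what licenses applying the $\psi_t^D$ construction to $\psi$ in the first place). Where you genuinely diverge is in identifying the maximum: the paper first uses sup-closure to get existence of $\varphi_{\max}$ and then bounds it from above via $\varphi_{\max,t}^D=\log|\mathfrak{a}(D,\varphi_{\max},t)_\bullet|\le\log|\mathfrak{a}_\bullet|$, i.e.\ it runs the $\varphi_t^D$ machinery a second time; you instead compare directly on $\mathcal{L}_D$, observing that global generation of $\mathcal{O}(kmD)\otimes\mathfrak{a}^m$ forces $\mathfrak{a}^m\subseteq\mathfrak{a}_{km}$ and hence $\varphi\le\log|\mathfrak{a}_\bullet|$ pointwise, which then passes to the norm-closure. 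Your route is more elementary and self-contained (it also produces the maximum without presupposing sup-closure, and you supply the detail, only asserted in the paper, that $\tfrac1m\log|\mathfrak{a}_m|\in\mathcal{L}_D$ so that $\log|\mathfrak{a}_\bullet|$ really lies in $\mathrm{PSH}(D)$); the paper's route is more uniform in that the same $\varphi_t^D$ formalism carries both halves of the proposition. No gaps.
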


\begin{proof}
Let $\varphi_{\lambda}$ be a family of $D$-psh functions. By Lemma \ref{lem:D_psh} $\varphi_{\lambda}= \lim\limits_{t \rightarrow \infty} \varphi_{\lambda,t}^{D}$. Note that $\varphi_{\lambda,t}^{D}=\log|\mathfrak{a}(D,\varphi_\lambda,t)_{\bullet}|$, where $\mathfrak{a}(D,\varphi_\lambda,t)_{m}=\mathfrak{b}(V_{m}(D,\varphi_{\lambda},t))$. \\
If we write $\varphi=\sup_{\lambda} \varphi_{\lambda}$, then $\mathcal{J}_-(t\varphi_{\lambda}) \subseteq \mathcal{J}_-(t\varphi)$ for every $\lambda$ and every $t$. It follows that $\mathfrak{b}(V_m(D,\varphi_\lambda,t)) \subseteq \mathfrak{b}(V_m(D,\varphi,t))$ for every $m$, $\lambda$ and $t$. We deduce that $\sup_{\lambda} \varphi_{\lambda,t}^{D} \leq \varphi_{t}^{D}$ and hence
\begin{align*}
\varphi(v)&= \sup\limits_{\lambda} \lim\limits_{t \rightarrow \infty} \varphi_{\lambda,t}^{D}(v)  \\
&\leq \lim\limits_{t \rightarrow \infty} \sup\limits_{\lambda}\varphi_{\lambda,t}^{D}(v) \\
&\leq \lim\limits_{t \rightarrow \infty} \varphi_{t}^{D}(v)
\end{align*}
for every $v \in \mathrm{V}_X^\ast$. Note that the pointwise limits appeared in the above inequality exist because we can take decreasing subsequences which are bounded from below. Since $\frac{1}{t}\log|\mathcal{J}_-(t\varphi)| \geq \varphi_{t}^{D}$, we obtain the equality $\varphi = \lim\limits_{t \rightarrow \infty} \varphi_{t}^{D}$ and $\varphi$ is $D$-psh by Lemma \ref{lem:D_psh}.\\
Now we prove that $\varphi_{\max}(v)=-v(\|D\|)$ for all $v\in V_{X}^\ast$. Let $\varphi$ be a qpsh function such that $\varphi(v)=-v(\|D\|)$. Because $\varphi$ is algebraic from $\mathfrak{a}_{\bullet}$ where $\mathfrak{a}_{m}=\mathfrak{b}(|\llcorner mD \lrcorner|)$, $\varphi$ is $D$-psh. It suffices to show that $\varphi_{\max} \leq \varphi$. For each $t$, $\varphi_{\max,t}^{D}=\log|\mathfrak{a}(D,\varphi_{\max},t)_{\bullet}|$ where $\mathfrak{a}(D,\varphi_{\max},t)_{m}=\mathfrak{b}(V_{m}(D,\varphi_{\max},t))$. It follows that $\mathfrak{a}(D,\varphi_{\max},t)_{m} \subseteq \mathfrak{a}_{m}$, and hence $\varphi_{\max,t}^{D} \leq \varphi$. Therefore, $\varphi_{\max}=\lim\limits_{t \rightarrow \infty} \varphi_{\max,t}^{D} \leq \varphi$ which forces $\varphi_{\max}=\varphi$.
\end{proof}

For every nontrivial tempered valuation $v$, we define $\sigma_{v}(\|D\|):= \lim\limits_{\varepsilon \rightarrow 0+} v(\|D+ \varepsilon A\|)$ for some ample divisor $A$. Note that [\ref{Nakayama}] verifies that this definition is independent of the choice of the ample divisor $A$.

\begin{prop}\label{prop:max'}
The set $\mathrm{PSH}_{\sigma}(D)$ is closed under taking the supremum. The maximal pseudo $D$-psh function $\phi_{\max}$ can be expressed explicitly as $\phi_{\max}(v)=-\sigma_{v}(\|D\|)$ for all $v\in V_{X}^\ast$.
\end{prop}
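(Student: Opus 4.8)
The plan is to mirror the proof of Proposition \ref{prop:max}, using that $\mathrm{PSH}_\sigma(D)=\bigcap_{\epsilon>0}\mathrm{PSH}(D+\epsilon A)$ is built out of sets we already understand (we may assume $\mathrm{PSH}_\sigma(D)\neq\varnothing$, else there is nothing to prove). For closure under supremum, let $\{\phi_\lambda\}$ be a family of pseudo $D$-psh functions. All of them lie in $\mathrm{PSH}(D+A)$, which is compact hence norm-bounded, so $\sup_\lambda\phi_\lambda$ is a bounded homogeneous function and is qpsh by Lemma \ref{cl_sup}. For each fixed $\epsilon>0$ the family lies in $\mathrm{PSH}(D+\epsilon A)$, which is closed under supremum by Proposition \ref{prop:max}, so $\sup_\lambda\phi_\lambda\in\mathrm{PSH}(D+\epsilon A)$; letting $\epsilon$ vary yields $\sup_\lambda\phi_\lambda\in\mathrm{PSH}_\sigma(D)$.

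For the explicit formula, set $\phi_0(v):=-\sigma_v(\|D\|)$. By Proposition \ref{prop:max} the maximal $(D+\epsilon A)$-psh function is $v\mapsto-v(\|D+\epsilon A\|)$, and by Lemma \ref{lem:D-psh}(4) we have $\mathrm{PSH}(D+\epsilon' A)\subseteq\mathrm{PSH}(D+\epsilon A)$ for $0<\epsilon'<\epsilon$, so these maximal functions decrease pointwise as $\epsilon\downarrow 0$, with pointwise limit exactly $\phi_0$ by the definition of $\sigma_v(\|D\|)$. Since they all sit in the compact set $\mathrm{PSH}(D+A)$ they are uniformly norm-bounded, so $\phi_0$ is a bounded homogeneous function.

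The heart of the argument is to show $\phi_0\in\mathrm{PSH}(D+\delta A)$ for every $\delta>0$, i.e.\ that $\phi_0$ is pseudo $D$-psh. I would fix $\delta>0$ and consider $\psi_k:=\varphi_{\max}^{D+A/k}$ for $k$ large enough that $1/k<\delta$; each $\psi_k$ lies in $\mathrm{PSH}(D+\delta A)$, which is compact in the Banach space $\mathrm{BH}(X)$ by Lemma \ref{lem:D-psh}(1), so after passing to a subsequence $\psi_k$ converges in the norm to some $\tilde\phi\in\mathrm{PSH}(D+\delta A)$. Norm convergence forces pointwise convergence on nontrivial tempered valuations (from $|\psi_k(v)-\tilde\phi(v)|\le A(v)\,\|\psi_k-\tilde\phi\|$ with $0<A(v)<\infty$), while the full sequence $\psi_k$ already decreases pointwise to $\phi_0$; hence $\tilde\phi=\phi_0$ and $\phi_0\in\mathrm{PSH}(D+\delta A)$. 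As $\delta$ was arbitrary, $\phi_0\in\mathrm{PSH}_\sigma(D)$. Finally, any pseudo $D$-psh function $\psi$ satisfies $\psi\in\mathrm{PSH}(D+\epsilon A)$, hence $\psi\le\varphi_{\max}^{D+\epsilon A}$ for all $\epsilon>0$ by Proposition \ref{prop:max}, so $\psi\le\inf_\epsilon\varphi_{\max}^{D+\epsilon A}=\phi_0$; thus $\phi_0$ is maximal and $\phi_{\max}=\phi_0$.

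I expect the main obstacle to be precisely that middle step: a decreasing pointwise limit of qpsh functions need not be qpsh in general (Example \ref{exa:non-psh}), so one genuinely needs compactness of $\mathrm{PSH}(D+\delta A)$ together with the fact that norm convergence is compatible with pointwise convergence in order to promote $\phi_0=\inf_\epsilon\varphi_{\max}^{D+\epsilon A}$ to an honest element of $\mathrm{PSH}_\sigma(D)$. One should also take care that Nakayama's $\sigma_v(\|D\|)$ is indeed the pointwise (monotone) limit of the $v(\|D+\epsilon A\|)$, and that $A(v)>0$ for nontrivial $v$ so the norm-to-pointwise passage is legitimate.
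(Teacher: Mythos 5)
Your closure-under-supremum argument is correct and is actually cleaner than the paper's: you observe that $\mathrm{PSH}_\sigma(D)=\bigcap_{\epsilon>0}\mathrm{PSH}(D+\epsilon A)$ is an intersection of sets each closed under supremum (Proposition \ref{prop:max}), hence closed under supremum itself, whereas the paper instead runs an argument through Theorem \ref{thm:gg} and the approximants $\varphi_{\lambda,k}=\tfrac1k\log|\mathcal J(k\varphi_\lambda)|$. Both work; yours buys a shorter proof, the paper's introduces the machinery it will reuse for the second half.

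The second half of your proof, however, has a genuine gap. You extract a norm-convergent subsequence from $\{\psi_k\}=\{\varphi_{\max}^{D+A/k}\}$ by invoking the claim (Lemma \ref{lem:D-psh}(1)) that $\mathrm{PSH}(D+\delta A)$ is compact in $\mathrm{BH}(X)$. But the proof of Lemma \ref{lem:D-psh}(1) in the paper only establishes convexity, and norm-compactness in the Banach space $\mathrm{BH}(X)$ actually fails: take $X=\mathbb P^1$, $D$ a point, and $\varphi_n=\log|\mathfrak m_{P_n}|$ for an infinite set of distinct closed points $P_n$. Each $\varphi_n$ lies in $\mathcal L_D$ (since $\mathcal O(mD)\otimes\mathfrak m_{P_n}^m\cong\mathcal O_{\mathbb P^1}$ is globally generated), yet $\|\varphi_n-\varphi_m\|=1$ for all $n\neq m$, so $\{\varphi_n\}$ has no Cauchy subsequence and $\mathrm{PSH}(D)$ is not norm-compact. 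So the step ``pass to a norm-convergent subsequence by compactness'' is not available. This is exactly the obstacle you correctly identified (a decreasing pointwise limit of qpsh functions need not be qpsh), but compactness is not the tool that overcomes it.

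The paper instead shows directly that the decreasing sequence $\{\varphi_{\max}^{\epsilon_k}\}$ is Cauchy in the norm: fixing $m$, the descending chain of multiplier ideals $\mathcal J(m\varphi_{\max}^{\epsilon_k})$ stabilizes for $k\gg 0$, because by Theorem \ref{thm:gg} (global generation via Nadel vanishing and Castelnuovo--Mumford regularity) there is a fixed ample $G$ with $mD+G$ Cartier such that $\mathcal O_X(mD+G)\otimes\mathcal J(m\varphi_{\max}^{\epsilon_k})$ is globally generated, and so the ideals are determined by the descending chain of finite-dimensional spaces $H^0(X,\mathcal O_X(mD+G)\otimes\mathcal J(m\varphi_{\max}^{\epsilon_k}))$. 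Stabilization of $\mathcal J(m\varphi_{\max}^{\epsilon_k})$ together with the controlled-growth estimate gives $\|\varphi_{\max}^{\epsilon_k}-\varphi_{\max}^{\epsilon_{k'}}\|<\tfrac1m$ for $k,k'\gg0$; letting $m\to\infty$ shows the sequence is Cauchy and thus converges in norm to $\phi_0$, placing $\phi_0$ in the (norm-closed) set $\mathrm{PSH}_\sigma(D)$. To repair your argument, replace the compactness appeal with this stabilization/Cauchy argument.
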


\begin{proof}
Let $\varphi_{\lambda}$ be a family of pseudo $D$-psh functions, and let $\varphi=\sup_\lambda \varphi_{\lambda}$. By Theorem \ref{thm:gg} there exists an ample divisor $G$ such that $\varphi_{\lambda,k} \in \mathrm{PSH}(D+\frac{1}{k}G)$ where $\varphi_{\lambda,k}= \frac{1}{k} \log |\mathcal{J}(k\varphi_{\lambda})|$. We have $\sup_{\lambda} \varphi_{\lambda,k} \in \mathrm{PSH}(D+\frac{1}{k}G)$ for every $k$ by Proposition \ref{prop:max}. Since $\Sigma_\lambda \mathcal{J}(k\varphi_\lambda) \subseteq \mathcal{J}(k\varphi)$, we have $\varphi_{k} \geq \sup_{\lambda} \varphi_{\lambda,k} \geq \varphi$. It follows that $$\varphi=\lim\limits_{k \rightarrow \infty}(\sup_{\lambda} \varphi_{\lambda,k}) \in \mathrm{PSH}_{\sigma}(D).$$
Now we prove that $\phi_{\max}(v)=-\sigma_{v}(\|D\|)$ for all $v\in V_{X}^\ast$. Let $\phi(v)=-\sigma_{v}(\|D\|)$, and let $\varphi_{\max}^\epsilon$ be the maximal $(D+ \epsilon A)$-psh function for every $\epsilon \ll 1$. We see that $\phi= \lim\limits_{\epsilon \rightarrow 0+}\varphi_{\max}^\epsilon$ pointwisely. Because $\varphi_{\max}^\epsilon$ is decreasing as $\epsilon \rightarrow 0+$, $\mathcal{J}(m\varphi_{\max}^\epsilon)$ form a descending chain of ideals as $\epsilon \rightarrow 0+$ for every integer $m>0$. If we fix an integer $m$ and a sequence $\epsilon_1 >\epsilon_2 >\ldots$ such that $\lim\limits_{k\rightarrow \infty} \epsilon_k=0$, then the descending chain stabilizes when $k \gg 0$ because there exists an ample divisor $G$ such that $mD+G$ is Cartier and $\mathcal{O}_X(mD+G)\otimes \mathcal{J}(m\varphi_{\max}^{\epsilon_k})$ is globally generated for every $k \gg 0$. It follows that $\|\varphi_{\max}^{\epsilon_k}-\varphi_{\max}^{\epsilon_{k'}}\|<\frac{1}{m}$ for all sufficiently large $k$ and $k'$. Equivalently, $\varphi_{\max}^{\epsilon_k}$ form a Cauchy sequence with respect to the norm. Therefore $\varphi_{\max}^{\epsilon_k}$ converges to $\phi$ strongly in the norm, and hence $\phi \in \mathrm{PSH}_\sigma(D)$. Note that $\phi_{\max} \leq \varphi_{\max}^\epsilon$, and hence $\phi_{\max} \leq \phi$ which implies the conclusion.
\end{proof}

The question below was raised by B. Lehmann (cf. [\ref{Lehmann}, Question 6.15]).

\begin{quest}
Is the maximal pseudo $D$-psh function algebraic?
\end{quest}

Abundant divisors, introduced by $[\ref{Nakayama}]$ and $[\ref{BDPP}]$, form a class of pseudo-effective divisors with nice asymptotic behavior. We denote by $\kappa_\sigma(D)$ the numerical Kodaira dimension. A pseudo-effective divisor $D$ is said to be \emph{abundant} if $\kappa(D)=\kappa_\sigma(D)$. We present the following easy corollary for the reader's convenience.

\begin{cor}
(1). The set $\mathrm{PSH}(D)$ is nonempty if and only if $D$ is $\mathbb{Q}$-effective.\\
(2). $0 \in \mathrm{PSH}(D)$ if and only if $D$ is nef and abundant;\\
(3). The set $\mathrm{PSH}_\sigma(D)$ is nonempty if and only if $D$ is pseudo-effective. \\
(4). $0 \in \mathrm{PSH}_{\sigma}(D)$ if and only if $D$ is nef.\\
(5). Let $\varphi_{\max}$ be the maximal $D$-psh function, and $\phi_{\max}$ be the maximal pseudo $D$-psh fucntion. Then, $D$ is abundant if and only if $\varphi_{\max}$=$\phi_{\max}$.
\end{cor}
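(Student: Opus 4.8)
The plan is to rephrase all five assertions in terms of the maximal functions $\varphi_{\max}$ and $\phi_{\max}$ produced by Proposition \ref{prop:max} and Proposition \ref{prop:max'}, using the explicit formulas $\varphi_{\max}(v)=-v(\|D\|)$ and $\phi_{\max}(v)=-\sigma_v(\|D\|)$. The organizing remark is that every qpsh function is $\leq 0$ (since $\log|\mathfrak{a}|\leq 0$) and $0=\log|\mathcal{O}_X|\in\mathrm{QPSH}(X)$, so $0$ is the top element of $\mathrm{QPSH}(X)$; hence $0\in\mathrm{PSH}(D)$ if and only if $\varphi_{\max}=0$, i.e. $v(\|D\|)=0$ for all $v\in\mathrm{V}_X^\ast$, and likewise $0\in\mathrm{PSH}_\sigma(D)$ if and only if $\phi_{\max}=0$. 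Note also that $\mathrm{PSH}(D)\subseteq\mathrm{PSH}_\sigma(D)$ by Lemma \ref{lem:D-psh}(4), so $\varphi_{\max}\leq\phi_{\max}\leq 0$ (equivalently $\sigma_v(\|D\|)\leq v(\|D\|)$), a fact I will use repeatedly.

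For (1), if $D$ is $\mathbb{Q}$-effective I pick $k$ with $kD\sim E\geq 0$ and put $\mathfrak{a}=\mathcal{O}_X(-E)$: then $kmD\otimes\mathfrak{a}^m\cong\mathcal{O}_X$ is globally generated for every $m$, so $\frac{1}{k}\log|\mathfrak{a}|\in\mathcal{L}_D$ and $\mathrm{PSH}(D)\neq\emptyset$; conversely any element of $\mathcal{L}_D$ forces $h^0(kmD)>0$ for some $m$, and $\mathrm{PSH}(D)=\overline{\mathcal{L}_D}$ is empty whenever $\mathcal{L}_D$ is. For (3), the sets $\mathrm{PSH}(D+\varepsilon A)$ (with $A$ ample) are compact by Lemma \ref{lem:D-psh}(1) and nested decreasing as $\varepsilon\to 0^+$ by Lemma \ref{lem:D-psh}(4); by (1) each is nonempty precisely when $D+\varepsilon A$ is $\mathbb{Q}$-effective, which holds for all $\varepsilon>0$ exactly when $D$ is pseudo-effective, and in that case $\mathrm{PSH}_\sigma(D)$ is a decreasing intersection of nonempty compacta, hence nonempty. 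For (4), if $D$ is nef then $D+\varepsilon A$ is ample and therefore semiample, so $0=\log|\mathcal{O}_X|\in\mathcal{L}_{D+\varepsilon A}$ for all $\varepsilon>0$ and $0\in\mathrm{PSH}_\sigma(D)$; conversely $0\in\mathrm{PSH}_\sigma(D)$ gives $\sigma_v(\|D\|)=0$ for every divisorial valuation over $X$, i.e. the diminished base locus of $D$ is empty, which forces $D$ to be nef (cf. [\ref{ELMNP}], [\ref{Nakayama}]). Finally (2) is then formal: using $\varphi_{\max}\leq\phi_{\max}\leq 0$ one has $0\in\mathrm{PSH}(D)\iff\varphi_{\max}=0\iff(\phi_{\max}=0\text{ and }\varphi_{\max}=\phi_{\max})$, which by (4) and (5) says exactly that $D$ is nef and abundant.

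The heart of the corollary is (5): that $\varphi_{\max}=\phi_{\max}$, equivalently $v(\|D\|)=\sigma_v(\|D\|)$ for every $v\in\mathrm{V}_X^\ast$, is equivalent to abundance of $D$. By Theorem \ref{c_max_norm} this equality is the same as $\mathcal{J}(\|tD\|)=\mathcal{J}_\sigma(tD)$ for all $t>0$ (the asymptotic multiplier ideal agreeing with the diminished one), since a qpsh function is recovered from its multiplier ideals $\mathcal{J}(t\,\cdot\,)$ by Theorem \ref{ch_qpsh}. I would match this with the classical picture: pass to a birational model $\mu\colon X'\to X$ on which the Iitaka fibration $f\colon X'\to Z$ of $D$ is a morphism with $\dim Z=\kappa(D)$, and invoke the theorem of Nakayama [\ref{Nakayama}] (and [\ref{BDPP}]) that $D$ is abundant precisely when the positive part of its $\sigma$-decomposition is numerically the pull-back of a big class from $Z$; for such $D$ the numerical and $\mathbb{Q}$-linear asymptotic orders of vanishing coincide along every divisorial valuation, while if $D$ is not abundant one produces a divisorial valuation over $X$ with $v(\|D\|)>\sigma_v(\|D\|)$ arising from the gap between $\kappa$ and $\kappa_\sigma$ detected on a general fibre of $f$. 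I expect this to be the only real obstacle: the inequality $\varphi_{\max}\leq\phi_{\max}$ is automatic, but the reverse inequality under the abundance hypothesis, together with the converse construction when abundance fails, requires carefully identifying our $v(\|D\|)$ and $\sigma_v(\|D\|)$ with Nakayama's asymptotic invariants and citing the correct form of the abundance criterion from [\ref{Nakayama}], [\ref{BDPP}] and [\ref{Lehmann}]; everything else reduces cleanly to Propositions \ref{prop:max} and \ref{prop:max'} together with standard facts about $\mathbb{Q}$-effective, pseudo-effective and nef divisors.
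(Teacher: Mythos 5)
Your proposal is correct in substance but reorganizes the logical dependencies differently from the paper. The paper's proof is terse: (1) is declared trivial; (2) is obtained directly from the main theorem of [\ref{Russo}] (nef and abundant divisors are exactly those with trivial asymptotic multiplier ideals, which is the same as $v(\|D\|)=0$ for all $v$, i.e.\ $\varphi_{\max}=0$); (4) is then deduced from (2) by perturbing with $\varepsilon A$; (3) follows from Proposition \ref{prop:max'}, which exhibits $\phi_{\max}(v)=-\sigma_v(\|D\|)$ as an explicit element of $\mathrm{PSH}_\sigma(D)$ when $D$ is pseudo-effective; and (5) is exactly the citation you anticipated, namely [\ref{Lehmann}, Proposition 6.18] ($D$ abundant iff $v(\|D\|)=\sigma_v(\|D\|)$ for every divisorial $v$), combined with the fact that qpsh functions are determined by their values on divisorial valuations. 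You instead prove (4) directly (via the emptiness of the diminished base locus, which is a correct appeal to [\ref{ELMNP}]/[\ref{Nakayama}]) and then derive (2) formally from (4) and (5) using $\varphi_{\max}\leq\phi_{\max}\leq 0$ --- a clean inversion of the paper's dependency that avoids citing [\ref{Russo}] at the cost of making (2) rest on the heavier abundance criterion. Two caveats. First, for (3) you invoke compactness of $\mathrm{PSH}(D+\varepsilon A)$ from Lemma \ref{lem:D-psh}(1) and take a nested intersection of nonempty compacta; note that the paper's proof of that lemma only establishes convexity, and compactness of a closed subset of the infinite-dimensional Banach space $\mathrm{BH}(X)$ is not automatic, so the paper's route through Proposition \ref{prop:max'} (which produces an explicit element) is safer and is what you should use. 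Second, your detour in (5) through Theorem \ref{c_max_norm} and the family of multiplier ideals $\mathcal{J}(t\,\cdot)$ is unnecessary: equality of the two qpsh functions is already equivalent to agreement on divisorial valuations by the remark following Proposition \ref{qpsh_usc}, after which [\ref{Lehmann}, Proposition 6.18] finishes the argument in one line rather than requiring the Iitaka-fibration analysis you sketch.
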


\begin{proof}
The first statement is trivial. The second is a consequence of the main result of [\ref{Russo}], and (4) follows from (2) immediately. If $D$ is not pseudo-effective, then $\mathrm{PSH}_\sigma(D)$ is empty from (1). We prove (3) as follows. If $D$ pseudo-effective, then $\mathrm{PSH}_\sigma(D)$ is nonempty by Proposition \ref{prop:max'}. To prove (5), simply notice that $D$ is abundant if and only if $v(\|D\|)=\sigma_v(\|D\|)$ for every divisorial valuation $v$ by [\ref{Lehmann}, Proposition 6.18] and the last statement follows by Proposition \ref{prop:max} and Proposition \ref{prop:max'}.
\end{proof}

\begin{quest}
Assume that the divisor $D$ is abundant. Is the set $\mathrm{PSH}(D)$ equal to the set $\mathrm{PSH}_\sigma (D)$?
\end{quest}

We introduce the following definition of the perturbed ideal and the diminished ideal as [\ref{Lehmann}, Definition 4.3 and Definition 6.2]. We use the notation $\mathcal{J}_{\sigma,-}(D)$ instead of $\mathcal{J}_{-}(D)$ to avoid that readers may confuse it with the notation $\mathcal{J}_-(\varphi)$.

\begin{defn}
Let $D$ be a pseudo-effective divisor. We define the perturbed ideal $\mathcal{J}_{\sigma,-}(D)$ to be the smallest ideal in the finite descending chain $\{\mathcal{J}(\|L+\frac{1}{m} A\|)\}_{m=1}^\infty$, and we define the diminished ideal $\mathcal{J}_\sigma (D)$ to be the largest ideal in the set $\{\mathcal{J}_{\sigma,-}((1+\epsilon)D)\}_{\epsilon>0}$.
\end{defn}

Finally, we obtain a generalization of [\ref{Lehmann}, Theorem 6.14].

\begin{thm}\label{c_max_norm}
Let $D$ be a pseudo-effective divisor. Assume that $\phi_{\max}$ is the maximal pseudo $D$-psh function. Then, the perturbed ideal $\mathcal{J}_{\sigma,-}(D)=\mathcal{J}_{-}(\phi_{\max})$, and the diminished ideal $\mathcal{J}_\sigma(D)=\mathcal{J}(\phi_{\max})$. In particular, we can write $\mathcal{J}_\sigma(D)$ explicitly as $\Gamma(U,\mathcal{J}_{\sigma}(L))=\{f\in \Gamma(U,\mathcal{O}_{X})|v(f)+A(v)-\sigma_{v}(\|L\|)>0$ for all $v\in \mathrm{V}_U^\ast\}$. Further, a nonzero ideal $\mathfrak{q} \subseteq \mathcal{J}_{\sigma}(\|L\|)$ if and only if $v(\mathfrak{q})+A(v)-\sigma_{v}(\|L\|)>0$ for all $v\in \mathrm{V}_{X}^{\ast}$.
\end{thm}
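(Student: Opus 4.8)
The plan is to reduce the whole statement to the two identities $\mathcal{J}_{\sigma,-}(D)=\mathcal{J}_{-}(\phi_{\max})$ and $\mathcal{J}_{\sigma}(D)=\mathcal{J}(\phi_{\max})$. Granting these, the rest is immediate: by Proposition \ref{prop:max'} we have $\phi_{\max}(v)=-\sigma_{v}(\|D\|)$ for every $v\in\mathrm{V}_{X}^{\ast}$, so substituting $\varphi=\phi_{\max}$ into Corollary \ref{cor:ex_mult} turns the description of $\Gamma(U,\mathcal{J}(\phi_{\max}))$ and the test for $\mathfrak{q}\subseteq\mathcal{J}(\phi_{\max})$ into exactly the asserted formulae (there $L$ denotes $D$). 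Throughout I may assume $\phi_{\max}\not\equiv 0$, the case $\phi_{\max}\equiv 0$ being trivial, and I will use that $\phi_{\max}\le 0$ and that $A(v)>0$ on $\mathrm{V}_{X}^{\ast}$ by Lemma \ref{Izumi}.

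First I would prove $\mathcal{J}_{\sigma,-}(D)=\mathcal{J}_{-}(\phi_{\max})$. Since $A$ is ample and $D$ is pseudo-effective, $D+\varepsilon A$ is big, hence $\mathbb{Q}$-effective, so its maximal $(D+\varepsilon A)$-psh function $\varphi_{\max}^{\varepsilon}$ exists (Proposition \ref{prop:max}); it is the algebraic qpsh function $\log|\mathfrak{a}^{\varepsilon}_{\bullet}|$ for $\mathfrak{a}^{\varepsilon}_{m}=\mathfrak{b}(|\lfloor m(D+\varepsilon A)\rfloor|)$, so $\mathcal{J}(\|D+\varepsilon A\|)=\mathcal{J}(\varphi_{\max}^{\varepsilon})$ by Corollary \ref{cor:alg-mult-ideal}. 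For the inclusion $\mathcal{J}_{-}(\phi_{\max})\subseteq\mathcal{J}_{\sigma,-}(D)$ I use that $\phi_{\max}\in\mathrm{PSH}_{\sigma}(D)\subseteq\mathrm{PSH}(D+A)$, so by the earlier lemma on $D$-psh functions $\mathcal{J}_{-}(\phi_{\max})=\mathcal{J}((1-\delta)\phi_{\max})$ for $\delta\ll 1$; then $(1-\delta)\phi_{\max}\le\phi_{\max}\le\varphi_{\max}^{\varepsilon}$ (the last inequality from Proposition \ref{prop:max'}) and monotonicity of multiplier ideals give $\mathcal{J}_{-}(\phi_{\max})\subseteq\mathcal{J}(\varphi_{\max}^{\varepsilon})=\mathcal{J}(\|D+\varepsilon A\|)$ for every $\varepsilon>0$, whence $\mathcal{J}_{-}(\phi_{\max})\subseteq\mathcal{J}_{\sigma,-}(D)$. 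For the reverse inclusion, any $f\in\Gamma(U,\mathcal{J}_{\sigma,-}(D))$ lies in $\Gamma(U,\mathcal{J}(\|D+\tfrac1mA\|))$ for all $m$, so by Corollary \ref{cor:ex_mult} $v(f)+A(v)-v(\|D+\tfrac1mA\|)>0$ for all $m$ and all $v\in\mathrm{V}_{U}^{\ast}$; letting $m\to\infty$ and using $v(\|D+\tfrac1mA\|)\uparrow\sigma_{v}(\|D\|)$ gives $v(f)+A(v)-\sigma_{v}(\|D\|)\ge 0$, i.e. $f\in\Gamma(U,\mathcal{J}_{-}(\phi_{\max}))$ by the explicit description of $\mathcal{J}_{-}$.

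Next I would prove $\mathcal{J}_{\sigma}(D)=\mathcal{J}(\phi_{\max})$. From the scaling property (Lemma \ref{lem:D-psh} and Proposition \ref{prop:max'}) one has $\sigma_{v}(\|(1+\varepsilon)D\|)=(1+\varepsilon)\sigma_{v}(\|D\|)$, so the maximal pseudo $(1+\varepsilon)D$-psh function is $(1+\varepsilon)\phi_{\max}$, and the first identity applied to $(1+\varepsilon)D$ gives $\mathcal{J}_{\sigma,-}((1+\varepsilon)D)=\mathcal{J}_{-}((1+\varepsilon)\phi_{\max})$; as $\varepsilon\to 0+$ these ideals increase, so by Noetherianity they stabilize, and $\mathcal{J}_{\sigma}(D)$ is the common value. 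Hence $f\in\Gamma(U,\mathcal{J}_{\sigma}(D))$ iff $v(f)+A(v)+(1+\varepsilon)\phi_{\max}(v)\ge0$ on $\mathrm{V}_{U}^{\ast}$ for some $\varepsilon>0$. If this holds, then since $\phi_{\max}\le0$ and $A(v)>0$ we get $v(f)+A(v)+\phi_{\max}(v)>0$ — strictly, because when $\phi_{\max}(v)<0$ we have $(1+\varepsilon)\phi_{\max}(v)<\phi_{\max}(v)$, and when $\phi_{\max}(v)=0$ we have $v(f)+A(v)\ge A(v)>0$ — so $f\in\Gamma(U,\mathcal{J}(\phi_{\max}))$ by Corollary \ref{cor:ex_mult}. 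Conversely, if $f\in\Gamma(U,\mathcal{J}(\phi_{\max}))$ then $\|\log|f|-\phi_{\max}\|^{+}<1$ furnishes a uniform $\eta>0$ with $v(f)+A(v)+\phi_{\max}(v)\ge\eta A(v)$; choosing $0<\varepsilon<\eta/\|\phi_{\max}\|$ gives $v(f)+A(v)+(1+\varepsilon)\phi_{\max}(v)\ge(\eta-\varepsilon\|\phi_{\max}\|)A(v)>0$, so $f\in\Gamma(U,\mathcal{J}_{-}((1+\varepsilon)\phi_{\max}))\subseteq\Gamma(U,\mathcal{J}_{\sigma}(D))$. Running the same computation with $\mathfrak{q}$ in place of a local section then also yields the ideal-membership criterion in the last sentence of the theorem.

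I expect the main obstacle to be the bookkeeping between the two flavours of multiplier ideal: $\mathcal{J}(\varphi)$ is defined by strict valuative inequalities ($\|\cdot\|^{+}<1$) while $\mathcal{J}_{-}(\varphi)$ is defined by the non-strict ones ($\|\cdot\|\le1$), and it is exactly this distinction that forces the perturbed ideal onto $\mathcal{J}_{-}(\phi_{\max})$ and the diminished ideal onto $\mathcal{J}(\phi_{\max})$. The one device that bridges this gap cleanly is the earlier lemma $\mathcal{J}_{-}(\psi)=\mathcal{J}((1-\delta)\psi)$ for $D$-psh $\psi$, combined with the elementary homogeneity estimates $(1-\delta)\psi\le\psi\le\varphi_{\max}^{\varepsilon}$ and the uniform gap supplied by the $\|\cdot\|^{+}$-definition of $\mathcal{J}$. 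I also need to record, as in Lehmann's treatment and as in the Castelnuovo--Mumford argument behind Theorem \ref{thm:gg}, that the descending chain $\{\mathcal{J}(\|D+\tfrac1mA\|)\}_{m}$ actually stabilizes, so that ``the smallest ideal'' in the definition of $\mathcal{J}_{\sigma,-}(D)$ is meaningful, and that $v(\|D+\varepsilon A\|)\uparrow\sigma_{v}(\|D\|)$ as $\varepsilon\to 0+$, which is the definition of $\sigma_{v}$.
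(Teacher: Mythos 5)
Your reduction of the ``In particular'' clauses to the two identities, via Proposition \ref{prop:max'} and Corollary \ref{cor:ex_mult}, matches the paper, and your treatment of the second identity (scaling to $(1+\varepsilon)D$ and then comparing $\bigcup_{\varepsilon}\mathcal{J}_{-}((1+\varepsilon)\phi_{\max})$ with $\mathcal{J}(\phi_{\max})$ by the pointwise valuative criteria) is a sound and genuinely different route from the paper, which instead gets $\mathcal{J}_{\sigma}(D)=\mathcal{J}((1+\epsilon)\varphi_{\max}^{\delta})$ directly from the definitions and concludes by Lemma \ref{qpsh_mult} applied to the norm-convergence $\varphi_{\max}^{\delta}\to\phi_{\max}$.

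However, there is a genuine gap in your proof of the inclusion $\mathcal{J}_{-}(\phi_{\max})\subseteq\mathcal{J}_{\sigma,-}(D)$, which is exactly the hard direction (the paper outsources the whole first identity to [Lehmann, Proposition 4.7]). You write $(1-\delta)\phi_{\max}\le\phi_{\max}\le\varphi_{\max}^{\varepsilon}$, but since $\phi_{\max}\le 0$ the first inequality points the wrong way: $(1-\delta)\phi_{\max}\ge\phi_{\max}$. Monotonicity of multiplier ideals therefore only yields $\mathcal{J}((1-\delta)\phi_{\max})\supseteq\mathcal{J}(\phi_{\max})$, and there is no pointwise inequality $(1-\delta)\phi_{\max}\le\varphi_{\max}^{\varepsilon}$ in general (at a valuation $v$ where $\varphi_{\max}^{\varepsilon}(v)=\phi_{\max}(v)<0$ it fails for every $\delta>0$). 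The naive estimate from $f\in\mathcal{J}_{-}(\phi_{\max})$, namely $-v(f)-\varphi_{\max}^{\varepsilon}(v)\le -v(f)-\phi_{\max}(v)\le A(v)$, only gives $\|\log|f|-\varphi_{\max}^{\varepsilon}\|^{+}\le 1$, not the strict inequality required for membership in $\mathcal{J}(\varphi_{\max}^{\varepsilon})$; the uniform strictness must come from the norm-convergence $\varphi_{\max}^{1/m}\to\phi_{\max}$, not from pointwise comparison. A fix within the paper's toolkit: write $\mathcal{J}_{-}(\phi_{\max})=\mathcal{J}((1-\delta)\phi_{\max})$, apply Lemma \ref{qpsh_mult} to the sequence $(1-\delta)\varphi_{\max}^{1/m}\to(1-\delta)\phi_{\max}$ with $\epsilon$ chosen so that $(1+\epsilon)(1-\delta)=1$ (possible since $\delta$ may be taken arbitrarily small), obtaining $\mathcal{J}_{-}(\phi_{\max})=\mathcal{J}(\varphi_{\max}^{1/m})$ for $m\gg 0$, which is $\mathcal{J}_{\sigma,-}(D)$ once the chain stabilizes. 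Since your second identity is derived by applying the first to $(1+\varepsilon)D$, this gap propagates to it as well; the remaining computations there are correct.
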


\begin{proof}
The equality $\mathcal{J}_{\sigma,-}(D)=\mathcal{J}_{-}(\phi_{\max})$ follows from [\ref{Lehmann}, Proposition 4.7]. To prove the second equality, note that by definition $\mathcal{J}_\sigma(D)=\mathcal{J}((1+\epsilon)\varphi_{\max}^\delta)$, where $\varphi_{\max}^\delta$ denotes the maximal $(D+\delta A)$-psh function for an ample divisor $A$, for sufficiently small $\epsilon$ and sufficiently small $\delta=\delta(\epsilon)$. From the proof of Proposition \ref{prop:max'}, $\varphi_{\max}^\delta$ converges to $\phi_{\max}$ strongly in the norm. Therefore, Lemma \ref{qpsh_mult} asserts that $\mathcal{J}(\phi_{\max})=\mathcal{J}((1+\epsilon)\varphi_{\max}^\delta)=\mathcal{J}_\sigma(D)$ as $\delta \rightarrow 0+$. The last statement is obvious by Corollary \ref{cor:ex_mult}.
\end{proof}

\begin{rem}
It should not be too difficult to generalize most results in this subsection from $\mathbb{Q}$-divisors to $\mathbb{R}$-divisors, that is, one can define $D$-psh functions for an $\mathbb{R}$-divisor $D$ and obtain similar results.
\end{rem}

\subsection{Finite generation}

The goal of this subsection is to prove the finite generation proposition below as an application of qpsh functions. For definitions and properties of different types of Zariski decompositions, divisorial algebras and modules, we refer to [\ref{Nakayama}].

\begin{prop}\label{prop:f_g}
Let $(X,B)$ be a log canonical pair. Assume that $K_X+B$ is $\mathbb{Q}$-Cartier and abundant, and that $R(K_X+B)$ is finitely generated. Then, for any reflexive sheaf $\mathscr{F}$, $M_{\mathscr{F}}^p(K_X+B)$ is a finitely generated $R(K_X+B)$-module.
\end{prop}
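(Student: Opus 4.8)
The plan is to reduce the statement, via the finite generation hypothesis, to a question about a coherent sheaf on the ample model of $K_X+B$, and then close with Serre's theorem; the structure theory of $\varphi_{\max}$ and the uniform vanishing/generation results of this section are what make the reduction work without invoking minimal model theory.

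Write $D:=K_X+B$ (so $X$ is smooth projective by the conventions of this section). Since $D$ is abundant it is pseudo-effective with $\kappa(D)=\kappa_\sigma(D)\ge 0$, hence $\mathbb Q$-effective; the case $R(D)=\mathbb C$ is trivial, so assume $\kappa(D)\ge 1$. Because $R(D)$ is finitely generated, $Z:=\Proj R(D)$ is a projective variety carrying an ample $\mathbb Q$-divisor $A$, and after choosing a log resolution $\pi\colon Y\to X$ that also resolves the induced map, giving $g\colon Y\to Z$ with connected fibres, we obtain $\pi^*D\sim_{\mathbb Q}P+N$ with $P=g^*A$ semiample, $N\ge 0$, and $R(Y,P)=R(X,D)=R(Z,A)$; for sufficiently divisible $m$ one has $g_*\mathcal O_Y(m\pi^*D)=\mathcal O_Z(mA)$. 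By Proposition \ref{prop:max} the maximal $D$-psh function is $\varphi_{\max}(v)=-v(\|D\|)$, which is algebraic and on $Y$ is the qpsh function attached to $-N$; abundance is exactly the equality $\varphi_{\max}=\phi_{\max}$ of Proposition \ref{prop:max'} (and the corollary after it), and on a general fibre $F$ of $g$ it yields $\kappa_\sigma(N|_F)=0$, since $P|_F\equiv 0$ and $\kappa$, $\kappa_\sigma$ are additive along the Iitaka fibration of an abundant divisor ([\ref{Nakayama}]).

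Next I would transport $M^p_{\mathscr F}(D)$ onto the model. By the formalism of [\ref{Nakayama}] — this is where the superscript $p$, the reflexive–hull bookkeeping and the $\mathbb Q$-Cartierness of $D$ are used, together with log canonicity to control the birational transform — there is a fixed coherent, generically locally free sheaf $\mathscr G$ on $Y$ (a reflexive pullback of $\mathscr F$) and a graded isomorphism
\[
M^p_{\mathscr F}(D)_m\ \cong\ H^0\bigl(Y,\mathscr G\otimes\mathcal O_Y(\llcorner m\pi^*D\lrcorner)\bigr)
\]
respecting the $R(D)$-module structure. For $m$ in the sublattice where $mP$ and $mN$ are integral this is $H^0\bigl(Y,\mathscr G(mN)\otimes g^*\mathcal O_Z(mA)\bigr)$, so by the projection formula
\[
M^p_{\mathscr F}(D)_m\ \cong\ H^0\bigl(Z,\,g_*\mathscr G(mN)\otimes\mathcal O_Z(mA)\bigr).
\]
The remaining graded pieces differ from these by a fixed effective $\mathbb Q$-divisor and are finitely many modulo the $R(D)$-action, so they only ever contribute finitely many module generators, and $R(D)=R(Z,A)$ is the section ring of the ample divisor $A$ on $Z$. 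Hence it suffices to show that the increasing chain of coherent sheaves $\{g_*\mathscr G(mN)\}_m$ stabilises, say $g_*\mathscr G(mN)=\mathscr H$ for all $m\gg 0$: granting this, $M^p_{\mathscr F}(D)$ agrees in large degrees with $\bigoplus_m H^0(Z,\mathscr H(mA))$, which is a finitely generated $R(Z,A)$-module by Serre's theorem on $\Proj$ of a finitely generated graded ring, and adding the low-degree pieces preserves finite generation.

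To prove the stabilisation, observe that over the flat locus of $g$ the rank of $g_*\mathscr G(mN)$ at a point is $h^0(F,\mathscr G|_F(mN|_F))$, and this is bounded in $m$: embedding $\mathscr G|_F$ into a direct sum of line bundles reduces one to $\sup_m h^0(F,\mathcal O_F(A_F+\llcorner mN|_F\lrcorner))<\infty$, which is precisely the content of $\kappa_\sigma(N|_F)=0$ supplied by abundance. Thus the generic rank of $g_*\mathscr G(mN)$ stabilises; the chain itself then stabilises over the open flat locus where that rank is attained, and a Noetherian induction on $Z$ handles the complement. The uniform Nadel vanishing and global generation of the previous section (Theorems \ref{thm:nad} and \ref{thm:gg}, applied to $\varphi_{\max}$) furnish the Castelnuovo–Mumford regularity bounds that make both this stabilisation and the low-degree matching uniform, playing here the role that the lengths of extremal rays play in the minimal-model argument of Remark \ref{rem:mmp}.

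I expect the main obstacle to be the stabilisation of $\{g_*\mathscr G(mN)\}_m$: one must extract from abundance the correct finiteness for the negative part $N$ — concretely that $\kappa_\sigma(N|_F)=0$ bounds $h^0(F,\mathscr G|_F(mN|_F))$ uniformly — and then run the induction over $Z$ so as to upgrade the generic stabilisation to a stabilisation of sheaves. The identification of $M^p_{\mathscr F}(D)$ with sections on $Y$ and then on $Z$ is routine, but it must be arranged carefully so that the $R(D)$-module structures genuinely correspond; this is the point at which the precise definition from [\ref{Nakayama}] and the choice of $p$ matter.
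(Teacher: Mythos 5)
Your proposal takes a genuinely different route from the paper's proof, so let me first note the contrast. The paper does \emph{not} pass to the ample model: it truncates $R(K_X+B)$ so that it is generated in a single degree $m_0$, sets $\phi=\log|\mathfrak{b}(|m_0(K_X+B)|)|$, and runs an induction on sections of $m(K_X+B)+A$ by showing (via a careful estimate $(m_0(n+2)+1)\varphi_m\le (n+2)\phi$ coming from abundance and Proposition \ref{prop:max'}) that every such section vanishes along $\mathcal{J}((n+2)\phi+\varphi)$, and then invoking the Global Division Lemma \ref{lem:gl_div} (an Ein--Popa type statement proved via Nadel vanishing) to factor the section through lower degree. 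Your approach instead transports the module to $Z=\Proj R(D)$, writes $\pi^*D=P+N$ with $P=g^*A$, and tries to close with Serre's theorem on $Z$.

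The gap in your argument is exactly at the claimed stabilisation of the chain $\{g_*\mathscr{G}(mN)\}_m$, which is where all the difficulty lives. Bounding the generic rank via $\kappa_\sigma(N|_F)=0$ is a necessary condition but nowhere near sufficient: an increasing chain of torsion-free sheaves of constant generic rank (e.g., $\mathcal{O}_Z(m\,Z')$ for an effective divisor $Z'$ on $Z$) need not stabilise, and ``the chain stabilises over the flat locus where the rank is attained'' is false as stated --- even where the fibre rank is constant, the sheaves can grow because they are not a priori contained in a single coherent sheaf. The appeal to ``Noetherian induction on $Z$'' and to Theorems \ref{thm:nad}--\ref{thm:gg} furnishing ``the Castelnuovo--Mumford regularity bounds'' is hand-waving at precisely the content that needs to be proved. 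In fact the stabilisation of $g_*\mathscr{G}(mN)$ is essentially \emph{equivalent} to the finite generation you are trying to establish (one implies the other, and the converse is an easy projection-formula bookkeeping), so you have reformulated the problem, not solved it. To close this gap one must actually produce a uniform bound --- e.g., show $g_*\mathscr{G}(mN)\subseteq\mathscr{H}$ for a fixed coherent $\mathscr{H}$ independent of $m$ --- and the only tool in this section capable of producing such a bound is the global division theorem (or an equivalent use of the uniform vanishing/generation), which is exactly the mechanism the paper's proof uses and which your argument never actually deploys. Until the stabilisation is proved, the proposal is incomplete at its crucial step.
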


Before we prove the above proposition, we first prove the following lemma.

\begin{lem}[Global division]\label{lem:gl_div}
Let $X$ be a smooth projective variety of dimension $n$. Consider line bundles $L$ and $D$, a linear system $V \subseteq |L|$ which is spanned by the sections $\{s_1,\ldots,s_l\}$, and a $D$-psh function $\varphi$. If we denote by $\phi_V$ the $L$-psh function $\max\limits_{1\leq j \leq l}{\log|s_j|}$. Then, for every integer $m\geq n+2$, any section $\sigma$ in
$$
H^0(X,\mathcal{O}_X(K_X+mL+D)\otimes \mathcal{J}(m\phi_V+\varphi))
$$
can be written as a linear combination $\sum_j s_j g_j $ of sections $g_j$ in $H^0(X,\mathcal{O}_X(K_X+(m-1)L+D))$.
\end{lem}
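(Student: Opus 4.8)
The plan is to run the classical algebraic Skoda-type division argument in the present valuative language: pass to a log resolution, replace the base-point-free ``moving part'' of $V$ by a morphism to a projective space, twist the associated Koszul complex appropriately, and feed it into Nadel vanishing (Theorem~\ref{thm:nad}). First I would reduce to the case where $\varphi$ is an ideal function: since $\varphi$ is $D$-psh its jumping numbers are discrete, so $\mathcal{J}(m\phi_V+\varphi)=\mathcal{J}(m\phi_V+(1+\epsilon)\varphi)$ for $0<\epsilon\ll1$, and by Lemma~\ref{qpsh_mult} together with monotonicity this equals $\mathcal{J}(m\phi_V+(1+\epsilon)\varphi_k)$ once $\varphi_k$ is an ideal function sufficiently close to $\varphi$. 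Next, note $\phi_V=\log|\mathfrak{b}|$ with $\mathfrak{b}=\mathfrak{b}(V)$ the base ideal (indeed $\phi_V(v)=\max_j(-v(s_j))=-v(\mathfrak{b})$), and choose a log resolution $\mu\colon X'\to X$ of $\mathfrak{b}$ and of the ideals in $\varphi$, with $\mu^{*}\mathfrak{b}=\mathcal{O}_{X'}(-E)$ and with $K_{X'/X}$, $E$ and the divisorial part of $\varphi$ in normal crossing position. The pulled-back sections $\mu^{*}s_j$ then factor through $E$, yielding $\widehat s_1,\dots,\widehat s_l$, a base-point-free family spanning a subsystem of $|M'|$ with $M'=\mu^{*}L-E$, hence a morphism $f\colon X'\to\PP^{l-1}$ with $f^{*}\mathcal{O}(1)=\mathcal{O}_{X'}(M')$.

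The core object is the Koszul complex of the coordinate sections $x_1,\dots,x_l$ of $\mathcal{O}_{\PP^{l-1}}(1)$, which is exact since these sections have no common zero; I would pull it back by $f$ and twist by $\mathcal{T}:=\mathcal{O}_{X'}(K_{X'/X}+\mu^{*}(K_X+mL+D)-\lceil mE+F\rceil)$, where $-F$ is the divisorial part of $\varphi$. By the projection formula and Proposition~\ref{prop:ideal_mult}, $\mu_{*}\mathcal{T}=\mathcal{O}_X(K_X+mL+D)\otimes\mathcal{J}(m\phi_V+\varphi)$, so the $0$-th term pushes down to the space containing $\sigma$; the first term pushes down into $\bigoplus_{j=1}^{l}H^{0}(X,\mathcal{O}_X(K_X+(m-1)L+D))$ with the Koszul differential becoming multiplication by the $s_j$ (using $K_{X'/X}+E$ to absorb the correction divisor so the $g_j$ land in the untwisted space). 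A direct computation, rewriting $\mu^{*}((m-j)L)+jE-\lceil mE+F\rceil$ as $(m-j)M'$ minus an effective divisor $G$ independent of $j$, identifies the $j$-th term with $\mathcal{O}_{X'}\big(K_{X'}+(m-j)M'+\mu^{*}D-G\big)$; here $M'$ is nef (base-point-free) and $\mu^{*}D$ is pseudoeffective (as $\phi_V\in\mathrm{PSH}(L)$, $\varphi\in\mathrm{PSH}(D)$ force $L,D$ to be $\mathbb{Q}$-effective).

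I would then split the twisted complex into short exact sequences $0\to Z_j\to\mathrm{term}_j\to Z_{j-1}\to0$ and, chopping from the top, deduce surjectivity of $H^{0}(\mathrm{term}_1)\to H^{0}(\mathrm{term}_0)$ from the vanishing of $H^{q}\big(X',K_{X'}+(m-q-1)M'+\mu^{*}D-G\big)$ for $1\le q\le n$. For $q\le n$ we have $m-q-1\ge m-n-1\ge1$ precisely because $m\ge n+2$, so there is at least one nef summand $M'$ to spare; after absorbing $-G$ into a multiplier ideal of the associated ideal function, and (if needed) replacing $D$ by $D+\varepsilon H$ for an ample $H$ so that $(m-q-1)M'+\varepsilon\mu^{*}H$ becomes nef and big, Nadel vanishing (Theorem~\ref{thm:nad}) applies and one lets $\varepsilon\to0+$ using discreteness of jumping numbers. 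Pushing down by $\mu$ and combining with the explicit description in Corollary~\ref{cor:ex_mult} of $\mathcal{J}(m\phi_V+\varphi)$ then yields that every such $\sigma$ equals $\sum_j s_j g_j$ with $g_j\in H^{0}(X,\mathcal{O}_X(K_X+(m-1)L+D))$.

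The main obstacle I anticipate is the positivity bookkeeping in the vanishing step: extracting from $(m-j)M'+\mu^{*}D-G$ — where $M'$ is only nef and $\mu^{*}D$ only pseudoeffective — a genuinely nef-and-big class plus pseudo-$D'$-psh data, so that the hypotheses of Theorem~\ref{thm:nad} are literally met for \emph{every} term of the Koszul complex with $j\le n+1$. The remedy is the $\varepsilon$-ample perturbation of $D$ followed by the limit $\varepsilon\to0+$ described above, and (for the $-G$ part) checking that $G\le\mu^{*}D$ numerically, which holds because $G$ is dominated by the divisorial part of the $D$-psh function $\varphi$. The one remaining technical nuisance is keeping the rounding of the $\mathbb{Q}$-divisor $F$ compatible with $E$ uniformly across all terms of the complex, so that the identification $\mathrm{term}_j=\mathcal{O}_{X'}(K_{X'}+(m-j)M'+\mu^{*}D-G)$ is valid with a single $G$.
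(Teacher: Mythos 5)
Your outline rebuilds the Koszul--Skoda division machinery from scratch (log resolution of $\mathfrak{b}(V)$, pullback of the Koszul complex of the coordinate sections on $\PP^{l-1}$, splitting into short exact sequences, cohomology vanishing of the syzygy terms), whereas the paper's own proof is deliberately a one-liner: reduce to ideal functions via the approximating sequence $\varphi_k\in\mathcal{L}_D$, observe $\phi_V=\log|\mathfrak{b}(V)|$, and cite [EP, Theorem 4.1] directly. The structural skeleton you set up is essentially the correct one underlying that cited result, and the initial reduction to ideal functions and the bookkeeping $\mathrm{term}_j=\mathcal{O}_{X'}(K_{X'}+(m-j)M'+\mu^{*}D-G)$ (with $G=\lfloor F\rfloor$, not $\lceil F\rceil$ --- the standard multiplier-ideal formula uses round-down, a minor sign slip in your twist $\mathcal{T}$) is fine as far as it goes.

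The gap is precisely at the vanishing step, which you yourself flag as the "main obstacle," but the proposed remedy does not work. To deduce surjectivity of $H^{0}(\mathrm{term}_1)\to H^{0}(\mathrm{term}_0)$ you need $H^{q}\bigl(X',K_{X'}+(m-q-1)M'+\mu^{*}D-G\bigr)=0$ for $1\le q\le n$. Here $M'$ is nef (base-point-free) and $\mu^{*}D$ is only $\mathbb{Q}$-effective, so the class need not contain a nef-and-big summand, and Theorem \ref{thm:nad} does not literally apply. Replacing $D$ by $D+\varepsilon H$ changes the underlying line bundle, so the cohomology groups for $\varepsilon>0$ are of \emph{different} sheaves than the one you need at $\varepsilon=0$; there is no limiting mechanism that converts vanishing for $\varepsilon>0$ into vanishing at $\varepsilon=0$ (semicontinuity gives at best bounds on $h^{q}$, not exactness of the relevant sequences). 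Discreteness of jumping numbers controls the multiplier ideal $\mathcal{J}$ as $\varepsilon\to0+$, but it says nothing about the $H^{q}$ of a perturbed bundle. This is exactly the point where Ein--Popa's proof invokes the injectivity theorem (Koll\'ar/Esnault--Viehweg/Ambro type) --- the title of [EP] is "via injectivity" for this reason --- which is robust for nef-and-abundant twists without any bigness. If you want a self-contained argument rather than citing [EP, Theorem 4.1] as the paper does, you would need to bring in such an injectivity statement in place of raw Nadel vanishing plus an $\varepsilon$-limit.
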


\begin{proof}
Let $\{\varphi_k \in \mathcal{L}_D\}$ be a sequence of ideal functions which converges to $\varphi$ strongly in the norm. Since $\mathcal{J}(m\phi_V+\varphi_k) \supseteq \mathcal{J(}m\phi_V+\varphi)$, the section $\sigma$ vanishes along the ideal $\mathcal{J}(m\phi_V+\varphi_k)$. If we denote by $\mathfrak{a}$ the base ideal $\mathfrak{b}(V)$, then $\phi_V=\log|\mathfrak{a}|$. Apply [\ref{EP}, Theorem 4.1] and we deduce the conclusion.
\end{proof}

\begin{rem}
In the statement of [\ref{EP}, Theorem 4.1], one can verify that the assumption that $D \otimes \mathfrak{b}^\lambda$ is nef and abundant implies that $\lambda \log|\mathfrak{b}|$ is $D$-psh. Note that Lemma \ref{lem:gl_div} is not a generalization of [\ref{EP}, Theorem 4.1] because we did not obtain that every $g_j$ is in $H^0(X,\mathcal{O}_X(K_X+(m-1)L+D)\otimes \mathcal{J}((m-1)\phi_V+\varphi))$. Nonetheless, it should be possible to generalize in the sense that $g_j \in H^0(X,\mathcal{O}_X(K_X+(m-1)L+D)\otimes \mathcal{J}((m-1)\phi_V+\varphi))$, if one can develop a theory on the restriction of qpsh functions to subvarieties (see the proof of [\ref{EP}, Theorem 3.2]).
\end{rem}

\begin{proof}[Proof of Proposition \ref{prop:f_g}]
We can assume that $(X,B)$ is log smooth of dimension $n$, $K_X+B$ is a $\mathbb{Q}$-Cartier $\mathbb{Q}$-divisor, and $\mathscr{F}=\mathcal{O}_X(A)$ is a very ample line bundle by [\ref{Birkar}, Theorem 1.1]. Since $R=R(K_X+B)$ is finitely generated, after a possible truncation we can assume that $R$ is generated by $R_1=H^0(m_0(K_X+B))$ for some integer $m_0$ such that $m_0(K_X+B)$ is Cartier (see [\ref{Birkar}, Remark 2.2 and 2.3]). If we set $\mathfrak{a}=\mathfrak{b}(|m_0(K_X+B)|)$ and $L:=m_0(K_X+B)$, then $\phi:=\log|\mathfrak{a}|$ is the maximal $L$-psh function. The rest of the proof is an analogue of [\ref{DHP}, Section 3]. Let $m$ be a sufficiently large integer (to be specified later), and let $\sigma$ be a global section of $m(K_X+B)+A$. We have that
$$
m(K_X+B)+A=K_X+(n+2)L+D
$$
where $D:=B+(m-(n+2)m_0-1)(K_X+B+\frac{1}{m}A)+\frac{m_0(n+2)+1}{m}A$. If we set
$$
\varphi=\psi_m+(m-(n+2)m_0-1)\varphi_m
$$
where $\psi_m$ is $(B+\frac{m_0(n+2)+1}{m}A)$-psh such that $\|\psi_m\| <1$, and $\varphi_m$ is the maximal $(K_X+B+\frac{1}{m}A)$-psh function. Notice that
$$
\|\log|\sigma| - (n+2)\phi - \varphi \|^+\leq \|(m_0(n+2)+1)\varphi_m -(n+2)\phi -\psi_m\|^+.
$$
We will show that $(m_0(n+2)+1)\varphi_m \leq (n+2)\phi$ for $m$ sufficiently large which implies that $\|\log|\sigma| - (n+2)\phi - \varphi \|^+<1$ and by definition $\sigma$ vanishes along $\mathcal{J}((n+2)\phi+\varphi)$. Since $\phi$ is determined on some dual complex $\Delta(Y,D)$, it suffices to prove that $(m_0(n+2)+1)\varphi_m \leq (n+2)\phi$ on $\Delta(Y,D)$. Further, we can assume that $\phi$ is affine on $\Delta(Y,D)$. It suffices to check the above inequality at vertices because $\varphi_m$ is convex on the dual complex. From the argument of Proposition \ref{prop:max'}, we see that $m_0\varphi_m$ converges to $\phi$ strongly in the norm since $K_X+B$ is abundant. Therefore for $m$ sufficiently large the inequality $\frac{m_0(n+2)+1}{n+2}\varphi_m \leq \phi$ holds at vertices of $\Delta(Y,D)$, and hence for every nontrivial tempered valuation. Finally, $\sigma$ can be written as a linear combination $\sum_j s_j g_j $ of sections $g_j$ in $H^0(X,\mathcal{O}_X((m-m_0)(K_X+B)+A)$ by Lemma \ref{lem:gl_div}, which completes the proof.
\end{proof}

\begin{rem}\label{rem:mmp}
The above finite generation proposition can be proved in another way as follows. Since the conclusion that $M_{\mathscr{F}}^p(K_X+B)$ is a finitely generated $R(K_X+B)$-module is equivalent to that $(X,B)$ has a good minimal model by [\ref{Birkar}, Theorem 1.3], it suffices to prove that $(X,B)$ has a good minimal model. By [\ref{BH-I}, Theorem 5.3] we conclude that $(X,B)$ has a log minimal model $(X',B')$. Since the positive part of the CKM-Zariski decomposition is semi-ample, the log minimal model $(X,B)$ is good. We here give a different proof without using the minimal model theory, in particular the length of extremal rays.
\end{rem}

Proposition \ref{prop:f_g} can be slightly generalized as follows.

\begin{defn}[\ref{BCHM}, Definition 3.6.4 and 3.6.6]
Let $D$ be a divisor on $X$. A normal projective variety $Z$ is said to be the \emph{ample model} of $D$ if there is a rational map $g:X \dashrightarrow Z$ and an ample $\mathbb{R}$-divisor $H$ on $Z$ such that if $p:W \rightarrow X$ and $q: W \rightarrow Z$ resolve $g$ then $q$ is a contraction and we can write $p^\ast D = q^\ast H+ N$, where $N \geq 0$ is an $\mathbb{R}$-divisor and for every $B \sim_\mathbb{Q} p^\ast D$ then $B \geq N$. Let $(X,B)$ be a pair. A normal variety $Z$ is said to be the \emph{log canonical model} of $(X,B)$ if it is the ample model of $K_X+B$.
\end{defn}

\begin{lem}\label{lem:f_g}
Let $D$ be an abundant divisor on a normal projective variety $X$. Assume that $D$ has the ample model. Then, $R(D)$ is finitely generated.
\end{lem}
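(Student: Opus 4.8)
The plan is to reduce the finite generation of $R(D)$ to the finite generation of the section ring of an \emph{ample $\mathbb{Q}$-divisor} on $Z$. We may assume $\kappa(D)\geq 0$, since otherwise $R(D)=\mathbb{C}$ and there is nothing to prove. First I would resolve the rational map $g$ defining the ample model, obtaining a birational $p\colon W\to X$ and a contraction $q\colon W\to Z$ with $p^{\ast}D=q^{\ast}H+N$, where $N\geq 0$ and $B\geq N$ for every effective $B\sim_{\mathbb{Q}}p^{\ast}D$. Since $p$ is birational, $R(X,D)$ and $R(W,p^{\ast}D)$ agree after a common truncation, and by [\ref{Birkar}, Remark 2.2 and 2.3] finite generation passes freely between a section ring and its truncations; so it suffices to show that $R(W,p^{\ast}D)$ is finitely generated.

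The key point — and the only place the abundance hypothesis enters — is that $H$, equivalently $N$, may be taken to be a $\mathbb{Q}$-divisor. Since $\kappa(D)=\kappa_{\sigma}(D)$, after possibly replacing $W$ by a higher model the pullback $p^{\ast}D$ carries a Nakayama--Zariski decomposition $p^{\ast}D=P+N_{\sigma}$ whose positive part $P$ is a \emph{semiample $\mathbb{Q}$-divisor}; this is the content of abundance together with the existence of the ample model (cf. [\ref{Nakayama}], [\ref{BDPP}], and the discussion in Remark \ref{rem:mmp}), and in the language of this paper it says that the maximal $D$-psh function $\varphi_{\max}$ of Proposition \ref{prop:max} coincides with the maximal pseudo $D$-psh function $\phi_{\max}$ of Proposition \ref{prop:max'}, with $-\varphi_{\max}$ represented by the $\mathbb{Q}$-divisor $N_{\sigma}$. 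The morphism attached to $P$ is, by uniqueness of the ample model, identified with $q$, so $P=q^{\ast}H$ with $H$ a rational ample divisor on $Z$, whence $N=p^{\ast}D-q^{\ast}H=N_{\sigma}$ is a $\mathbb{Q}$-divisor as well.

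With $H$ and $N$ rational, I would finish as follows. For every sufficiently divisible $m$, $mN$ is an effective integral divisor and every member of $|m\,p^{\ast}D|$ contains $mN$; hence the inclusion $\mathcal{O}_{W}\hookrightarrow\mathcal{O}_{W}(mN)$ induces an isomorphism
$$
H^{0}(W,m\,q^{\ast}H)\ \xrightarrow{\ \sim\ }\ H^{0}(W,m\,p^{\ast}D),
$$
while $H^{0}(W,m\,q^{\ast}H)=H^{0}(Z,mH)$ because $q$ is a contraction. Thus a truncation of $R(W,p^{\ast}D)$ is isomorphic to the corresponding truncation of $R(Z,H)$, and the latter is finitely generated since $H$ is an ample $\mathbb{Q}$-divisor on the projective variety $Z$. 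By the truncation principle again, $R(D)$ is finitely generated.

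The hard part will be the middle step: extracting from abundance the rationality of the polarization, i.e. that the positive part of the Zariski decomposition of $p^{\ast}D$ is a semiample $\mathbb{Q}$-divisor rather than merely a nef $\mathbb{R}$-divisor. Everything before and after this is formal bookkeeping with complete linear systems and truncations; one could also try to run the argument entirely inside $\mathrm{QPSH}(X)$, using $\varphi_{\max}=\phi_{\max}$ for abundant $D$ together with Propositions \ref{prop:max} and \ref{prop:max'}, but the rationality issue reappears there in exactly the same shape.
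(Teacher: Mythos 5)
You have correctly identified the crux --- that the ample $\mathbb{R}$-divisor $H$ on the ample model $Z$ (equivalently the negative part $N$) must be shown to be rational, after which the isomorphism $H^{0}(W,m\,q^{\ast}H)\cong H^{0}(W,m\,p^{\ast}D)$ for divisible $m$ and the ampleness of $H$ finish the proof --- but you have not actually proved that step, and it does not follow from the sources you gesture at. The definition of the ample model only furnishes an ample $\mathbb{R}$-divisor $H$ with $p^{\ast}D=q^{\ast}H+N$; abundance in the sense $\kappa(D)=\kappa_{\sigma}(D)$ gives, via [\ref{Lehmann}, Proposition 6.18], only that $N_{\sigma}(p^{\ast}D)=\mathrm{Fix}\|p^{\ast}D\|=N$ and hence $q^{\ast}H=P_{\sigma}(p^{\ast}D)$ \emph{as $\mathbb{R}$-divisors}. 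Neither [\ref{Nakayama}] nor [\ref{BDPP}] asserts that this positive part is a semiample $\mathbb{Q}$-divisor, and the equality $\varphi_{\max}=\phi_{\max}$ of Propositions \ref{prop:max} and \ref{prop:max'} is a statement about valuations $v(\|D\|)=\sigma_{v}(\|D\|)$; it carries no rationality information about $N_{\sigma}$. So the "content of abundance together with the existence of the ample model" is precisely the assertion to be proved, and your closing admission that "the hard part will be the middle step" confirms the argument is incomplete at exactly that point.

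The paper closes this gap with a specific mechanism you do not supply: after identifying $P=P_{\sigma}(D)$ as above, it invokes Lehmann's results on Eckl's pseudo-effective reduction map ([\ref{Lehmann-II}, Theorem 6.1 and Theorem 5.7]) to produce a contraction $\mu:X\rightarrow T$ onto a smooth projective variety carrying a \emph{big $\mathbb{Q}$-divisor} $G$ with $P_{\sigma}(D)=P_{\sigma}(\mu^{\ast}G)$. Then $Z$ is also the ample model of $G$, the induced map $T\dashrightarrow Z$ is birational because $G$ is big, and $H$ is the pushforward of $G$, hence a $\mathbb{Q}$-divisor. Some argument of this kind --- transporting the positive part onto a lower-dimensional model where it becomes big and rational --- is needed; without it your proposal reduces the lemma to an unproved claim of the same difficulty.
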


\begin{proof}
After replacing $X$ by a log resolution, we can assume that $g:X \dashrightarrow Z$ is a morphism and $D=P+N= g^\ast H+ N$ where $H$ is an ample $\mathbb{R}$-divisor on the ample model $Z$ and $N \geq 0$ is an $\mathbb{R}$-divisor such that for every $B \sim_\mathbb{Q} D$ we have $B \geq N$. Note that $D=P+N$ is a CKM-Zariski decomposition. Since $D$ is abundant, we have that $\mathrm{Fix}\|D\|= N_\sigma(D) \leq N \leq \mathrm{Fix}\|D\|$ by [\ref{Lehmann}, Proposition 6.18] and hence $P=P_\sigma (D)$. Furthermore, we can assume that there exist a smooth projective variety $T$ and a big $\mathbb{Q}$-divisor $G$ on $T$ such that $\mu: X \rightarrow T$ is a contraction and $P_\sigma (D)=P_\sigma(\mu^\ast G)$ by [\ref{Lehmann-II}, Theorem 6.1, Theorem 5.7]. It follows that $Z$ is also the ample model of $G$. Notice that the rational map $h: T \dashrightarrow Z$ is birational. Therefore $H=p_\ast G$ is an $\mathbb{R}$-Cartier $\mathbb{Q}$-divisor and hence $\mathbb{Q}$-Cartier which completes the proof.
\end{proof}

Finally, we obtain the proposition below by combining Proposition \ref{prop:f_g} and Lemma \ref{lem:f_g}.

\begin{prop}\label{prop:f_g_g}
Let $(X,B)$ be a log canonical pair. Assume that $K_X+B$ is $\mathbb{Q}$-Cartier and abundant, and that $(X,B)$ has the log canonical model. Then, $R(K_X+B)$ is finitely generated. Furthermore, for any reflexive sheaf $\mathscr{F}$, $M_{\mathscr{F}}^p(K_X+B)$ is a finitely generated $R(K_X+B)$-module.
\end{prop}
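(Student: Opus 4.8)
The plan is to derive the two conclusions in turn, the first from Lemma~\ref{lem:f_g} and the second from Proposition~\ref{prop:f_g}; no fresh geometric input is required, only an unwinding of the hypotheses.

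First I would put $D:=K_X+B$ and note that the assumption that $(X,B)$ \emph{has the log canonical model} means, by the definition recalled just above, that there is a normal projective variety $Z$ which is the ample model of $D$. Since $D=K_X+B$ is abundant by hypothesis, this is precisely the setup of Lemma~\ref{lem:f_g} applied to $D$. Invoking that lemma gives that $R(D)=R(K_X+B)$ is finitely generated, which establishes the first assertion.

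Second, with the finite generation of $R(K_X+B)$ now in hand, all the hypotheses of Proposition~\ref{prop:f_g} are satisfied: $(X,B)$ is log canonical, $K_X+B$ is $\mathbb{Q}$-Cartier and abundant, and $R(K_X+B)$ is finitely generated. Applying Proposition~\ref{prop:f_g} therefore yields that $M_{\mathscr{F}}^p(K_X+B)$ is a finitely generated $R(K_X+B)$-module for every reflexive sheaf $\mathscr{F}$, which is the remaining claim.

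There is essentially no obstacle here: the entire content of the generalization is the observation that the clause ``$(X,B)$ has the log canonical model'' is already a hypothesis of the shape demanded by Lemma~\ref{lem:f_g}, and that this weakens the a priori assumption ``$R(K_X+B)$ finitely generated'' appearing in Proposition~\ref{prop:f_g}. The only point that merits a sentence of care is checking that the notion of ample model used in Lemma~\ref{lem:f_g} coincides with the one through which the log canonical model is defined, but this identification is immediate from the definitions. So the argument is a two-step concatenation of the earlier results, with the real work already carried out in proving those.
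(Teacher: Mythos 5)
Your argument is exactly the paper's: the author states that Proposition \ref{prop:f_g_g} is obtained "by combining Proposition \ref{prop:f_g} and Lemma \ref{lem:f_g}," which is precisely your two-step concatenation (Lemma \ref{lem:f_g} applied to $D=K_X+B$ gives finite generation of $R(K_X+B)$, and then Proposition \ref{prop:f_g} gives the module statement). The proposal is correct and matches the intended proof.
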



\vspace{2cm}

\flushleft{DPMMS}, Centre for Mathematical Sciences,\\
Cambridge University,\\
Wilberforce Road,\\
Cambridge, CB3 0WB,\\
UK\\
email: zh262@dpmms.cam.ac.uk

\vspace{1cm}

\end{document}